\newtheorem*{Claim}{Claim}
\newtheorem{Lemma}{Lemma}[section]
\newtheorem{Proposition}{Proposition}[section]
\newtheorem{Theorem}[Lemma]{Theorem}
\newtheorem{Corollary}[Lemma]{Corollary}
\theoremstyle{definition}
\DeclareMathOperator{\R}{\mathbb{R}}
\DeclareMathOperator{\C}{\mathbb{C}}
\newcommand{\dfix}[2]{\mathfrak{#1}_{#2}}
\newcommand{\dproj}[1]{\mathrm{proj}\big[ #1 \big]} 
\newcommand{\dep}[2]{\mathrm{EP}[#1]\left(#2\right)} 
\newcommand{\dEP}[2]{\mathrm{EP}[#1]\left(\oplus #2\right)} 
\newcommand{\drm}[2]{\mathrm{#1} \hspace{0.6mm} #2} 
\newcommand{\dAM}[1]{\mathbf{A}\left( #1 \right)} 
\newcommand{\dChMat}[1]{\mathbf{R}_{ #1 }}
\newcommand{\dbipart}[2]{[\mathfrak{#1}_{#2'},\mathfrak{#1}_{#2''}]}
\newcommand{\dubipart}[3]{[\mathfrak{#1}_{#2},\mathfrak{#1}_{#3}]}
\title{\bf\upshape On the Automorphism Group of a Graph }
\date{}
\author{ Wenxue Du \\
{\small  \it School of Mathematical Science, Anhui University, Hefei,
230039, China} \\
{\footnotesize E-mail:  wenxuedu@gmail.com }
}
\begin{document}
\maketitle

\begin{abstract} 
An automorphism of a graph $G$ with $n$ vertices is a bijective map $\phi$ from $V(G)$ to itself such that $\phi(v_i)\phi(v_j)\in E(G)$ $\Leftrightarrow$ $v_i v_j\in E(G)$ for any two vertices $v_i$ and $v_j$ of $G$. Denote by $\mathfrak{G}$ the group consisting of all automorphisms of $G$. As well-known, the structure of the action of $\mathfrak{G}$ on $V(G)$ is represented definitely by its block systems. On the other hand for each permutation $\sigma$ on $[n]$, there is a natural action on any vector $\pmb{v}=(v_1,v_2,\ldots,v_n)^t\in \R^n$ such that $\sigma\pmb{v}=(v_{\sigma^{-1}1},v_{\sigma^{-1}2},\ldots,v_{\sigma^{-1} n})^t$. Accordingly, we actually have a permutation representation of $\mathfrak{G}$ in $\R^n$.  In this paper, we establish the some connections between block systems of $\mathfrak{G}$ and its irreducible representations, and by virtue of that we finally devise an algorithm outputting a generating set and all block systems of $\mathfrak{G}$ within time $n^{C \log n}$ for some constant $C$.
\end{abstract}

\vspace{3mm}
2010 {\it Mathematics Subject Classification}. Primary 05C25, 05C50, 05C60; Secondary 05C85.

\section{Introduction}

Let $G$ and $H$ be two simple graphs. A bijective map $\phi:V(G)\rightarrow V(H)$ is called an {\it isomorphism} between $G$ and $H$ if $\phi(v_i)\phi(v_j)\in E(H)$ $\Leftrightarrow$ $v_i v_j\in E(G)$ for any two vertices $v_i$ and $v_j$ of $G$. If there is such an isomorphism between $G$ and $H$, we say that $G$ and $H$ are isomorphic, denoted by $G \cong H$.  Naturally, for two given graphs we are interested in whether or not they are isomorphic, that is the the problem of {\it Graph Isomorphism} (GI). 

One of striking facts about GI is the following established by Whitney in 1930s.

\begin{Theorem}\label{Thm-WhitneyIsomorphism} 
Two connected graphs are isomorphic if and only if their line graphs are isomorphic, with a single exception: $K_3$ and $K_{1,3}$, which are not isomorphic but both have $K_3$ as their line graph.
\end{Theorem}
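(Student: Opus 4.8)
The plan is to prove the easy direction by an induced edge map and then to recover a connected graph from its line graph, showing that the recovery is unique except in the stated case.

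\textbf{Forward direction.} First I would show that an isomorphism $\phi\colon V(G)\to V(H)$ induces an isomorphism $L(G)\to L(H)$: since $\phi$ preserves adjacency, it sends each edge $v_iv_j$ of $G$ to an edge $\phi(v_i)\phi(v_j)$ of $H$, and two edges of $G$ share an endpoint if and only if their images do. Hence the induced map on edges is a graph isomorphism between the line graphs, and this half requires no exception.

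\textbf{Reconstruction principle.} For the converse, the key object is the family of \emph{canonical cliques}: for each vertex $v$ of positive degree the set $Q_v$ of edges incident with $v$ is a clique in $L(G)$, every vertex $e=uv$ of $L(G)$ lies in exactly the two cliques $Q_u,Q_v$, and $Q_u\cap Q_v\neq\varnothing$ precisely when $uv\in E(G)$, the intersection then being a single vertex. Consequently $G$ is recovered up to isomorphism as the intersection graph of $\{Q_v\}$. It therefore suffices to prove that any isomorphism $L(G)\to L(H)$ carries the canonical family of $L(G)$ onto that of $L(H)$; equivalently, that the canonical cliques admit a description internal to $L(G)$ that is forced outside the exceptional case.

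\textbf{Separating true and false cliques.} The obstruction is that a triangle $xyz$ in $G$ yields a $3$-clique $\{xy,yz,zx\}$ in $L(G)$ that is not canonical yet resembles the canonical triangle produced by a vertex of degree $3$. Here I would apply a local test based on common neighbours of a triangle $T=\{a,b,c\}$ of $L(G)$: if $T$ is the star of a vertex $v$, then every common neighbour of $T$ also contains $v$, so $T$ together with its common neighbourhood is again a clique; if instead $T$ arises from a triangle of $G$, then $T$ has no common neighbour outside itself. This distinguishes the two types whenever the relevant vertex of $G$ has degree at least $4$, and it shows in general that the canonical partition is the essentially unique decomposition of $E(L(G))$ into cliques covering each vertex exactly twice.

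\textbf{Main obstacle and the exception.} The genuinely hard part is the degenerate overlap when the star centre has degree exactly $3$, for then a star triangle and a true triangle of $G$ are locally indistinguishable. I would resolve this by a global consistency argument: propagating the forced choices through the connected structure of $L(G)$ and using the exact double-cover condition, one shows that any two admissible canonical families yield isomorphic reconstructed graphs. Tracing where this propagation can fail confines the ambiguity to a single connected component isomorphic to $K_3$; since the only connected graphs whose line graph is $K_3$ are $K_3$ and $K_{1,3}$, this is exactly the asserted exception. Carrying out the bookkeeping of these small degree-$3$ and triangle cases, rather than any single device, is where the real effort lies.
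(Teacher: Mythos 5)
First, a point of comparison: the paper never proves this statement at all --- it quotes it as Whitney's classical theorem (its Theorem 1.1) purely as motivation for studying graph isomorphism --- so your proposal can only be measured against the standard proof of Whitney's theorem, not against anything in the paper itself.

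Your outline follows the standard route (canonical star cliques, i.e.\ the Krausz decomposition of $L(G)$), and the parts you actually argue are correct: the forward direction, the fact that $G$ is recovered as the intersection graph of the family $\{Q_v\}$, and the common-neighbourhood test, which indeed certifies star triangles whose centre has degree at least $4$ and shows that a triangle of $G$ has empty common neighbourhood in $L(G)$ outside itself. The gap is the final step, and it is not routine bookkeeping: you assert that a ``global consistency argument'' propagating forced choices shows that any two admissible clique families reconstruct isomorphic graphs, with failure confined to a $K_3$ component, but this assertion is essentially the theorem itself restated. Note that the intermediate claim you lean on --- that the canonical partition is the \emph{essentially unique} decomposition of $E(L(G))$ into cliques covering each vertex at most twice --- is false if read as uniqueness: $L(K_4)$ is the octahedron $K_{2,2,2}$, whose twelve edges are partitioned by \emph{two} disjoint families of four triangles (the four vertex stars of $K_4$ and the four face triangles of $K_4$), and $L(G)$ for $G$ the paw (a triangle with a pendant edge) is $K_4 - e$, which likewise carries two distinct admissible families. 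In both examples every vertex of the underlying graph has degree at most $3$, so your local test is silent on every triangle involved, and propagation does not force a unique outcome --- it produces two genuinely different decompositions. The theorem survives in these cases only because the competing decompositions happen to reconstruct isomorphic graphs ($K_4$ and the paw, respectively), and that must be verified by explicit case analysis of the small ambiguous configurations (connected graphs of maximum degree at most $3$ containing triangles), after which $K_3$ versus $K_{1,3}$ emerges as the unique pair yielding non-isomorphic reconstructions. This enumeration is precisely what Whitney's proof and the modern Krausz-based proofs carry out, and it is the step your proposal names but does not perform; as written, it is a correct strategy outline with the decisive step missing.
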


Clearly, the relation above offers a reduction of GI from general graphs to a special class of graphs - line graphs, which accounts only for a small fraction of all graphs. This fact suggests that GI may not be very hard. In fact, GI is well solved from practical point of view and there are a number of efficient algorithms available \cite{McP}. Even from worst-case point of view, GI may not be as hard as NP-complete problems. As a matter of fact GI is not NP-complete unless the polynomial hierarchy collapses to its second level \cite{BHZ,Schonig}. On the other hand, however, there is no efficient algorithm for general graphs in worst-case analysis, while for restricted graph classes we have efficient algorithms, for graphs with bounded degree \cite{Luks} and for graphs with bounded eigenvalue multiplicity \cite{BaGrMu} for instance. L. Babai \cite{Babai} recently declared an algorithm resolving GI for all graphs within time $\exp\big\{ (\log n)^{O(1)} \big\}$ in worst-case analysis.

In the case that two graphs considered are the same, a bijection $\phi$ is called a {\it permutation} of the vertex set $V$, and in the case that $\phi$ preserves all adjacency relations among vertices, it is called an {\it automorphism} of the graph $G$. It is plain to see that all permutations of $V$ can  form a group under composition of maps, that is the {\it symmetric group on $V$} and  denoted by $\drm{Sym}{V}$. A {\it permutation group} is a subgroup of some symmetric group. Obviously, all automorphisms of $G$ form a permutation group under composition of maps, which is denoted by $\mathrm{Aut} \hspace{0.6mm} G$, or $\mathfrak{G}$ for short. 

The problem of finding a generating set of $\mathfrak{G}$ is the {\it Automorphism Group Problem} (AG), which has a close relation to GI. In fact, GI can be reduced to AG, by dealing with AG with respect to a new graph $G$ constructed by combining two originally given graphs $G_1$ and $G_2$ such that $V(G) = V(G_1) \cup V(G_2)$ and two vertices of $G$ are adjacent if and only if they are adjacent in $G_1$ or in $G_2$. Apparently, if we can resolve AG for $G$ then we can determine whether those two graphs $G_1$ and $G_2$ are isomorphic or not. 

However, only by a generating set of $\mathfrak{G}$ we cannot see those key features possessed by $\mathfrak{G}$, so we need to reveal more information about $\mathfrak{G}$. In order to analyze the structure of $\mathfrak{G}$, one of effective ways is to investigate the action of $\mathfrak{G}$ on an object. In our case, $\mathfrak{G}$ has a natural action on the vertex set $V$, which means for any vertex $v$ we can get some vertex in $V$ by $\sigma v$, where $\sigma$ belongs to $\mathfrak{G}$. Clearly there are two possibilities: 
\begin{equation}\label{Equ-ActionGonV}
\mbox{either } \sigma v = v \mbox{ or } \sigma v \neq v.
\end{equation}
In this way, we can obtain a subset $\{ \sigma v : \sigma \in \mathfrak{G} \}$ of $V$, which is called an {\it orbit} of $\mathfrak{G}$.
 
It is interesting that there might be some subsets of $V$ possessing that property (\ref{Equ-ActionGonV}). Let $B$ be a non-empty subset of some orbit $T$ of a $\mathfrak{G}$, which is called a {\it block} for $\mathfrak{G}$ if for any $\gamma\in\mathfrak{G}$, either $\gamma B = B$ or $\gamma B \cap B = \emptyset$. Obviously, any element $y$ of $T$ and the orbit $T$ itself are blocks for $\mathfrak{G}$. If the group $\mathfrak{G}$ has only two such kinds of blocks in $T$ we say the action of $\mathfrak{G}$ on $T$ is {\it primitive}, otherwise {\it imprimitive.} Apparently, the family of subsets $\{ \gamma B \mid \gamma \in \mathfrak{G} \}$ forms a partition of $T$, that is call the {\it system of blocks containing $B$}. As well-known orbits and block systems are vitally important in characterizing the structure of a permutation group.

Suppose the vertex set $V$ of the graph $G$ is $[n] = \{1,2,\ldots,n\}$ and  $\pmb{v} = (v_1,v_2,\ldots,v_n)^t$ is a vector in $\R^n$. Each permutation $\sigma$ in $\drm{Sym}{[n]}$ can naturally  act on $\pmb{v}$ such that 
\begin{equation}\label{Equ-TheRepresent}
\sigma\pmb{v}=(v_{\sigma^{-1}1},v_{\sigma^{-1}2},\ldots,v_{\sigma^{-1} n})^t.
\end{equation} 
Accordingly, any permutation $\sigma$ in $\drm{Sym}{[n]}$ can be regarded, through the action on vectors, as a linear operator on $\R^n$, which is denoted by $\mathcal{T}_{\sigma}$. In terms of linear representation of a group, it is indeed a permutation representation of $\drm{Sym}{[n]}$ in $\R^n$, and clearly every permutation group possesses such a representation.

Recall that a non-trivial subspace $U$ of $\R^n$ is said to be {\it $\mathcal{T}$-invariant} if $\mathcal{T} U \subseteq U$, where $\mathcal{T}$ is a linear operator on $\R^n$. Suppose $\mathfrak{G}$ is a permutation group in $\drm{Sym}{[n]}$. A subspace $U$ is said to be {\it $\mathfrak{G}$-invariant} if $U$ is $\mathcal{T}_{\sigma}$-invariant for all $\sigma \in \mathfrak{G}$. We are particularly interested in those minimally $\mathfrak{G}$-invariant subspaces which turns out to be truly useful in representing the action of $\mathfrak{G}$ on $[n]$. If a $\mathfrak{G}$-invariant subspace contains no proper subspace being also $\mathfrak{G}$-invariant, we say the subspace {\it irreducible}. Let $W$ be an irreducible subspace for $\mathfrak{G}$. Then the permutation representation of $\mathfrak{G}$ could be restricted to $W$, so $W$ is called an {\it irreducible representation} (IR) of $\mathfrak{G}$. The permutation representation of $\drm{Sym}{[n]}$ in $\R^n$ possesses only two IRs: $\drm{span}{\{\pmb{1}\}}$, the subspace spanned by the vector $\pmb{1}$, and its orthogonal complement in $\R^n$. In general however IRs of a permutation group can be tremendously colorful. 

The {\it adjacency matrix} of a simple graph $G$ on $n$ vertices is a $n\times n$ (0,1)-matrix where an entry $a_{ij}$ of the matrix is equal to 1 if and only if the two vertices $v_i$ and $v_j$ are adjacent. We denote the adjacency matrix of $G$ by $\mathbf{A}(G)$, or $\mathbf{A}$ for short, in the case that one can easily identify the corresponding graph from the context.

\vspace{3mm}
The problem of finding a generating set of $\mathfrak{G}$ and of determining all block systems and a decomposition of all eigenspaces of $\mathbf{A}(G)$ into IRs of $\mathfrak{G}$ is called the {\it structure problem of an automorphism group} (SAG)

A function $f(n)$ is called {\it feeble-exponential bounded} if for large enough $n$ there exists a constant $C$ so that $f(n) \leq n^{C \log n}$. We use {\it feeble-exponential time} to refer to feeble-exponentially bounded time. Our main result is the following. 

\begin{Theorem}\label{MainTheorem}
The SAG can be solved in feeble-exponential time.   
\end{Theorem}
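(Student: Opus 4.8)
The plan is to exploit that automorphisms commute with the adjacency matrix, turning the spectral decomposition of $\mathbf{A}(G)$ into a free supply of $\mathfrak{G}$-invariant subspaces, and then to drive a divide-and-conquer recursion along the lattice of block systems. Since every $\sigma\in\mathfrak{G}$ preserves adjacency, the operator $\mathcal{T}_{\sigma}$ commutes with $\mathbf{A}(G)$; hence each eigenspace of $\mathbf{A}(G)$ is $\mathfrak{G}$-invariant and splits into IRs of $\mathfrak{G}$. I would first compute, by pure linear algebra in polynomial time, the eigenvalues and eigenspaces of $\mathbf{A}(G)$, obtaining an orthogonal decomposition $\R^n=\bigoplus_i E_i$ whose $\mathfrak{G}$-invariance requires no prior knowledge of $\mathfrak{G}$. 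Refining each $E_i$ into IRs is then intertwined with the determination of $\mathfrak{G}$ itself, and constitutes the analytic core of the argument.

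The decisive ingredient is the promised correspondence between block systems and IRs. A block system $\mathcal{B}=\{B_1,\ldots,B_k\}$ produces the $\mathfrak{G}$-invariant subspace $\spa\{\pmb{1}_{B_1},\ldots,\pmb{1}_{B_k}\}$, which carries precisely the permutation representation of the induced action of $\mathfrak{G}$ on the $k$ blocks. I would establish that the lattice of block systems embeds into the lattice of $\mathfrak{G}$-invariant subspaces, so that every block system is recoverable from the IR data above. This converts the combinatorial search for blocks into a bounded inspection of sums of IRs, and in particular extracts a finest nontrivial block system $\mathcal{B}$ whenever the action on an orbit is imprimitive.

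With such a $\mathcal{B}$ in hand I would recurse through the short exact sequence $1\to K\to\mathfrak{G}\to Q\to 1$, where $Q\le\drm{Sym}{[k]}$ is the induced action on blocks and $K$ fixes every block setwise. Solving SAG recursively for $Q$ (degree $k<n$) and for the restriction of $K$ to one block (degree $m=n/k<n$), I would reassemble a generating set for $\mathfrak{G}$ from the lifted generators of $Q$ and $K$, and lift all block systems back to $V$; the required decomposition of each eigenspace into IRs then follows by computing isotypic projections once the generating set is known. Because every descent strictly shrinks the degree by a nontrivial factor, the recursion tree has depth $O(\log n)$, so if each node costs $n^{O(1)}$ the total time is $n^{O(\log n)}$, i.e.\ feeble-exponential.

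The main obstacle is the primitive base case, where no proper block system exists and the recursion can descend no further. Here I would lean on the IR structure together with the eigenvalue multiplicities: a primitive group that simultaneously respects a rich eigenspace decomposition is heavily constrained, so one either recognizes a full symmetric- or alternating-type action directly—whose generators are trivial to output—or enumerates the finitely many orthogonal actions on the now-irreducible invariant subspaces that are realizable by permutations, invoking that a primitive group of degree $n$ not containing the alternating group has order $n^{O(\log n)}$. Keeping this enumeration inside feeble-exponential time, and verifying that the reassembly faithfully reconstructs all of $\mathfrak{G}$ together with its complete list of block systems, are the delicate points.
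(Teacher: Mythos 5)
Your proposal founders on a circularity that the paper's entire machinery exists to break. You propose to extract a finest block system and the IR decomposition ``from the IR data,'' and to recurse from there; but block systems and irreducible subspaces are attributes of $\mathfrak{G}$, and $\mathfrak{G}$ is precisely what SAG asks for. The only objects computable for free are the eigenspaces of $\mathbf{A}(G)$, and these are merely $\mathfrak{G}$-invariant, not irreducible: splitting them into IRs, or deciding which sums of characteristic vectors $\spa\{\pmb{1}_{B_1},\ldots,\pmb{1}_{B_k}\}$ actually come from block systems, already requires knowledge of automorphisms, stabilizer orbits, or both. The paper resolves this chicken-and-egg problem not by any group-theoretic recursion but by an iterative interleaving of two computable operations: refining equitable partitions via the distribution of the orthogonal projections of the standard basis (relations (\ref{TheBinaryRelation}) and (\ref{TheBinaryRelation-2})), and splitting invariant subspaces via quotient-graph eigenspaces and cell-spanned subspaces (Lemma \ref{Lem-SplittingEigSpaces}), individualizing a vertex whenever both stall. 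Its $n^{C\log n}$ bound comes from showing that each individualization splits every large cell into at least three nearly equal pieces, so at most $\log n$ vertices are ever fixed and at most $(n/2)^{\log n}$ candidate matchings are tried; blocks are then found from stabilizer-orbit data via the bipartite graphs of Lemmas \ref{LemFindBlocks-1}--\ref{LemFindBlocks-3} and Theorem \ref{ThmNeceAndSuffForPrimitiveness}, and IR decompositions via the Isomorphism Theorem \ref{Thm-IsomorphismThmBetweenIRs}. None of this is replaceable by the assertion that the block lattice embeds in the invariant-subspace lattice.

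The recursion you build on top of this assumption is also ill-founded in two places. First, the quotient action $Q$ is a subgroup of $\mathrm{Aut}(G/\mathcal{B})$ but in general a \emph{proper} one, so recursively solving SAG on the quotient graph computes the wrong group; similarly the kernel $K$ restricted to one block is not presented as the automorphism group of any graph, so ``recursing on SAG'' does not typecheck for it, and since $K$ is only a subdirect product of its restrictions to the blocks, generators of $K|_{B_1}$ together with lifted generators of $Q$ do not reassemble into generators of $\mathfrak{G}$ (lifts of elements of $Q$ to automorphisms need not exist element by element, and the extension need not split). A correct version of this descent is exactly what forces Luks-type algorithms into the heavier framework of color-preserving isomorphism within a given ambient group, which you have not set up. Second, in the primitive base case you invoke the Cameron-type bound that a primitive group of degree $n$ not containing $A_n$ has order $n^{O(\log n)}$; besides importing classification-dependent group theory entirely foreign to the paper's spectral methods, a bound on $|\mathfrak{G}|$ is not an enumeration procedure--you still have no way to locate those $n^{O(\log n)}$ permutations among the $n!$ candidates within the claimed time. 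The paper never needs such a bound: in the primitive case it produces an automorphism directly by matching the projection distributions attached to two individualized vertices and then applies Lemma \ref{LemPrimitiveness1stNeceSuffCond} to conclude $\mathfrak{G}=\langle \mathfrak{G}_v,\gamma\rangle$.
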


As one can see in the last section, our algorithm can not only cope with SAG for simple graphs but also for  non-simple ones, in feeble-exponential time,  with some weight function on $V(G)\cup E(G)$, as long as the adjacency matrix of the graph is symmetric.

There are two key problems in resolving SAG:
\begin{enumerate}

\item how to determine whether or not two vertices considered are symmetric, {\it i.e.,} in the same orbit of $\mathfrak{G}$, and in the case of being symmetric to figure out one automorphism moving one of two vertices to another;

\item how to find out one block system of $\mathfrak{G}$ and a decomposition of an eigenspace of $\mathbf{A}(G)$ into IRs of $\mathfrak{G}$.

\end{enumerate}

\subsection{Automorphisms between two given vertices}

Let us begin with an algebraic description of automorphisms of $G$. We call a (0,1)-square matrix a {\it  permutation matrix} if in each row and each column there is exactly one entry which is equal to 1. It is easy to check that the matrix $\mathbf{P}_{\sigma}$ of the operator $\mathcal{T}_{\sigma}$ with respect to the standard basis $\pmb{e}_1,\ldots,\pmb{e}_n$ is a permutation matrix, where each $\pmb{e}_i$ $(i=1,\ldots,n)$ has exactly one non-trivial entry on $i$th coordinate which is equal to 1, and all other entries of $\pmb{e}_i$ are equal to 0. One moment's reflection shows that
\begin{equation}\label{Equation00} 
\mbox{a permutation }\sigma\mbox{ of } [n] \mbox{ is an automorphism of }G\mbox{ if and only if }\mathbf{P}_{\sigma}^{-1}\mathbf{A}\mathbf{P}_{\sigma}=\mathbf{A}.
\end{equation}

Evidently, the adjacency matrix $\mathbf{A}(G)$ is symmetric and thus $\mathbf{A}(G)$ can be viewed as the matrix of a self-adjoint operator $\mathcal{T}_G$ on $\R^n$ with respect to an ordered basis $\pmb{b}_1,\ldots,\pmb{b}_n$, {\it i.e.,} $\mathcal{T}_G(\pmb{b}_1,\ldots,\pmb{b}_n)=(\pmb{b}_1,\ldots,\pmb{b}_n)\mathbf{A}$.  Accordingly,
$$\mathcal{T}_G \pmb{b}_i=\sum_{~\hspace{5mm} v_j\sim v_i} \pmb{b}_j,~~~i=1,\ldots,n,$$
where the symbol $v_j\sim v_i$ indicates that the two vertices $v_j$ and $v_i$ are adjacent in $G$, and  $1\le i, j \le n$. In other words $\mathcal{T}_G$ provides  the adjacency information about the graph $G$ and thus the standard basis $\pmb{e}_1,\ldots,\pmb{e}_n$ would be appropriate for $\mathbf{A}(G)$, since in that case, one can find out in virtue of $\mathcal{T}_G \pmb{e}_i$ the neighbors of the vertex $v_i$, $i=1,\ldots,n$.

We now can formulate another way of describing automorphisms of $G$ via the eigenspaces of $\mathcal{T}_G$.

\begin{Lemma}\label{Lem-AutomorphismAndOperator} Let $G$ be a graph with the vertex set $[n]$ and let $\sigma$ be a permutation in $\drm{Sym}{[n]}$. Then $\sigma$ is an automorphism of $G$ if and only if every eigenspace of $\mathcal{T}_G$ is $\mathcal{T}_{\sigma}$-invariant. \end{Lemma}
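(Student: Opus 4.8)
The plan is to translate the automorphism condition into a commutativity statement between the two operators $\mathcal{T}_G$ and $\mathcal{T}_\sigma$, and then to recognise the lemma as the familiar fact that two operators commute precisely when one preserves the eigenspaces of the other, provided the latter is diagonalisable. First I would invoke the algebraic characterisation (\ref{Equation00}): $\sigma$ is an automorphism of $G$ if and only if $\mathbf{P}_{\sigma}^{-1}\mathbf{A}\mathbf{P}_{\sigma}=\mathbf{A}$, which, after multiplying on the left by $\mathbf{P}_{\sigma}$, is equivalent to $\mathbf{A}\mathbf{P}_{\sigma}=\mathbf{P}_{\sigma}\mathbf{A}$. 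Since $\mathbf{A}$ and $\mathbf{P}_{\sigma}$ are the matrices of $\mathcal{T}_G$ and $\mathcal{T}_{\sigma}$ with respect to the standard basis, this says exactly that $\mathcal{T}_G\mathcal{T}_{\sigma}=\mathcal{T}_{\sigma}\mathcal{T}_G$. Thus it suffices to prove that $\mathcal{T}_G$ and $\mathcal{T}_{\sigma}$ commute if and only if every eigenspace of $\mathcal{T}_G$ is $\mathcal{T}_{\sigma}$-invariant.

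For the forward implication I would assume $\mathcal{T}_G\mathcal{T}_{\sigma}=\mathcal{T}_{\sigma}\mathcal{T}_G$. Let $E_{\lambda}$ be the eigenspace of $\mathcal{T}_G$ for an eigenvalue $\lambda$ and take any $\pmb{x}\in E_{\lambda}$. Then $\mathcal{T}_G(\mathcal{T}_{\sigma}\pmb{x})=\mathcal{T}_{\sigma}(\mathcal{T}_G\pmb{x})=\lambda\,\mathcal{T}_{\sigma}\pmb{x}$, so $\mathcal{T}_{\sigma}\pmb{x}\in E_{\lambda}$; this is precisely the $\mathcal{T}_{\sigma}$-invariance of $E_{\lambda}$. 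This half uses nothing beyond the defining eigenvalue equation.

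The substantive direction is the converse, and it is where I would use that $\mathcal{T}_G$ is self-adjoint. Because $\mathbf{A}$ is real symmetric, $\mathcal{T}_G$ is diagonalisable and $\R^n$ is the orthogonal direct sum $E_{\lambda_1}\oplus\cdots\oplus E_{\lambda_k}$ of the eigenspaces belonging to the distinct eigenvalues $\lambda_1,\ldots,\lambda_k$. Assuming each $E_{\lambda_i}$ is $\mathcal{T}_{\sigma}$-invariant, I would verify commutativity on an arbitrary vector $\pmb{x}=\sum_i\pmb{x}_i$ with $\pmb{x}_i\in E_{\lambda_i}$. On one side, $\mathcal{T}_{\sigma}\mathcal{T}_G\pmb{x}=\sum_i\lambda_i\,\mathcal{T}_{\sigma}\pmb{x}_i$; on the other side, using that each $\mathcal{T}_{\sigma}\pmb{x}_i$ again lies in $E_{\lambda_i}$ and is therefore an eigenvector of $\mathcal{T}_G$ with eigenvalue $\lambda_i$, we get $\mathcal{T}_G\mathcal{T}_{\sigma}\pmb{x}=\sum_i\lambda_i\,\mathcal{T}_{\sigma}\pmb{x}_i$. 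The two expressions coincide for every $\pmb{x}$, so $\mathcal{T}_G\mathcal{T}_{\sigma}=\mathcal{T}_{\sigma}\mathcal{T}_G$, and by (\ref{Equation00}) the permutation $\sigma$ is an automorphism of $G$.

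The only point that needs care, and the one I would flag as the main (mild) obstacle, is that the converse genuinely depends on the diagonalisability of $\mathcal{T}_G$: it is the decomposition of $\R^n$ into eigenspaces that allows invariance of each summand to be reassembled into global commutativity. Here this is guaranteed by the symmetry of the adjacency matrix via the real spectral theorem, so no hypothesis beyond $\mathbf{A}=\mathbf{A}^{t}$ is required. I would state this dependence explicitly, so that the equivalence is not mistakenly transported to a setting where the relevant operator fails to be diagonalisable.
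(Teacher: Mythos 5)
Your proof is correct and follows essentially the same route as the paper: both reduce the automorphism condition via (\ref{Equation00}) to the commutation $\mathbf{A}\mathbf{P}_{\sigma}=\mathbf{P}_{\sigma}\mathbf{A}$, prove eigenspace invariance by applying the eigenvalue equation to $\mathcal{T}_{\sigma}\pmb{x}$, and prove the converse by decomposing an arbitrary vector over an eigenbasis (equivalently, the orthogonal eigenspace decomposition) furnished by the real spectral theorem. Your explicit remark that the converse hinges on diagonalisability is a nice clarification, but the substance of the argument is identical to the paper's.
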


Recall that the $n$-dimensional vector space $\R^n$ is endowed with the {\it  inner product} $\langle\cdot,\cdot\rangle$ such that $\langle\pmb{u},\pmb{v}\rangle=\pmb{v}^t\pmb{u}=\sum_{i=1}^n u_i\cdot v_i$ for any vectors $\pmb{u}=(u_1,\ldots,u_n)^t$ and $\pmb{v}=(v_1,\ldots,v_n)^t$ in $\R^n$. Two vectors $\pmb{u}$ and $\pmb{v}$ are said to be {\it  orthogonal} if $\langle\pmb{u},\pmb{v}\rangle=0$.
Since the matrix $\mathbf{A}(G)$ of $\mathcal{T}_G$ is symmetric, the operator $\mathcal{T}_G$ is self-adjoint. In accordance with the real spectral theorem (see \cite{Axler} for example), there is an orthonormal basis of $\R^n$ consisting of eigenvectors of $\mathcal{T}_G$.

\begin{proof}[\bf Proof]  As we have pointed out before, $\mathbf{A}$ and $\mathbf{P}_{\sigma}$ are the matrices, respectively, of two linear operators $\mathcal{T}_G$ and $\mathcal{T}_{\sigma}$ with respect to the standard basis $\pmb{e}_1,\ldots,\pmb{e}_n$. Consequently, $\mathcal{T}_G$ and $\mathcal{T}_{\sigma}$ can be replaced, respectively, with $\mathbf{A}$ and $\mathbf{P}_{\sigma}$ in the statements of the lemma.

We begin with the necessity of the assertion. In accordance with the relation (\ref{Equation00}), $\sigma$ is an automorphism of $G$ if and only if $\mathbf{P}_{\sigma}^t\mathbf{A}\mathbf{P}_{\sigma}=\mathbf{A}$, so for any eigenvector $\pmb{v}$ of $\mathbf{A}$ associated to some eigenvalue $\lambda$, $$\mathbf{P}_{\sigma}^t\mathbf{A}\mathbf{P}_{\sigma}\pmb{v}=\mathbf{A}\pmb{v}=\lambda\pmb{v}.$$
Consequently, $\mathbf{A}\mathbf{P}_{\sigma}\pmb{v}=\lambda\mathbf{P}_{\sigma}\pmb{v}$, which means $\mathbf{P}_{\sigma}\pmb{v}$ is also an eigenvector of $\mathbf{A}$ associated to the eigenvalue $\lambda$, and thus every eigenspace of $\mathbf{A}$ is $\mathbf{P}_{\sigma}$-invariant.

Conversely, let us select an orthonormal basis $\pmb{x}_1,\ldots,\pmb{x}_n$ of $\R^n$, consisting of eigenvectors of $\mathbf{A}$ such that $\mathbf{A}\pmb{x}_i = \lambda_i\pmb{x}_i$, $i=1,\ldots,n$. Since every eigenspace of $\mathbf{A}$ is $\mathbf{P}_{\sigma}$-invariant, for every $\pmb{x}_i$ we have
$$
\mathbf{A}\mathbf{P}_{\sigma}\pmb{x}_i
 = \lambda_i\mathbf{P}_{\sigma}\pmb{x}_i
 = \mathbf{P}_{\sigma} \lambda_i\pmb{x}_i
 =\mathbf{P}_{\sigma}\mathbf{A}\pmb{x}_i.
$$
Consequently, for an arbitrary vector $\pmb{v}=\sum_{i=1}^na_i\pmb{x}_i$ in $\R^n$,
$$
\mathbf{P}_{\sigma}\mathbf{A}\pmb{v}=\mathbf{P}_{\sigma}\mathbf{A}\sum_{i=1}^na_i\pmb{x}_i
=\sum_{i=1}^na_i\mathbf{P}_{\sigma}\mathbf{A}\pmb{x}_i=
\sum_{i=1}^na_i\mathbf{A}\mathbf{P}_{\sigma}\pmb{x}_i
=\mathbf{A}\mathbf{P}_{\sigma}\sum_{i=1}^na_i\pmb{x}_i
=\mathbf{A}\mathbf{P}_{\sigma}\pmb{v}.
$$
As a result, $\mathbf{P}_{\sigma}\mathbf{A}=\mathbf{A}\mathbf{P}_{\sigma}$, and thus the permutation $\sigma$ belongs to $\mathfrak{G}$.
\end{proof}

According to Lemma \ref{Lem-AutomorphismAndOperator}, we can describe automorphisms of $G$ and so the group $\mathfrak{G}$ in terms of eigenspaces of $\mathbf{A}(G)$. Let $U$ be a non-trivial subspace in $\R^n$. Set 
$$
\drm{Aut}{U} = \{ \sigma \in \drm{Sym}{[n]} : \sigma U \subseteq U \}.
$$
Then
\begin{equation}\label{Equ-AutG-EigSpace}
\drm{Aut}{G} = \bigcap_{\lambda \hspace{0.5mm} \in  \hspace{0.5mm} \drm{spec}{\mathbf{A}}} \drm{Aut}{V_{\lambda}}.
\end{equation}
For convenience, we denote the right hand side of the equation above by $\drm{Aut}{\oplus V_{\lambda}}$.

It is plain to see that we cannot determine $\drm{Aut}{\oplus V_{\lambda}}$ by checking permutations in $\drm{Sym}{[n]}$ one by one, for there are $n!$ permutations there. As a matter of fact, it is the distribution of the orthogonal projections of the standard basis ({\it dist}. of OPSB) onto those subspaces that reveals symmetries contained in the subspaces, so we modify the statement in the lemma \ref{Lem-AutomorphismAndOperator} in the following form. Recall that a linear operator $\mathcal{T}$ on $\R^n$ is said to be an {\it isometry} if $\| \mathcal{T} \pmb{v} \| = \| \pmb{v} \|$ for any vector $\pmb{v}$ in $\R^n$. It is easy to check that a permutation on $[n]$ is an isometry on $\R^n$. 

\begin{Lemma}\label{ProjOperatorCommutative} 
Let $\mathcal{T}$ be an isometry on $\mathbb{R}^n $, and let $U$ be a subspace of  $\mathbb{R}^n $. Then the following statements are equivalent. 
\begin{enumerate}
    \item $U$ is $\mathcal{T}$-invariant.
    \item $\mathcal{T}\circ\dproj{U} =\dproj{U}\circ\mathcal{T}$, where $\dproj{U}$ is the orthogonal projection onto the subspace $U$. 
    \item There exists a basis $\pmb{b}_1,\ldots,\pmb{b}_n$ of $\R^n$ so that $\mathcal{T}\circ\dproj{U}(\pmb{b}_i) =\dproj{U}\circ\mathcal{T}(\pmb{b}_i)$, $i=1,\ldots,n$. 
\end{enumerate}
\end{Lemma}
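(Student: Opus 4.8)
The plan is to separate the substantive equivalence (i) $\Leftrightarrow$ (ii) from the essentially formal equivalence (ii) $\Leftrightarrow$ (iii). The latter is immediate: (ii) $\Rightarrow$ (iii) holds for any choice of basis, since (ii) asserts that the two operators $\mathcal{T}\circ\dproj{U}$ and $\dproj{U}\circ\mathcal{T}$ coincide on all of $\R^n$ and in particular on some basis; conversely (iii) $\Rightarrow$ (ii) because two linear operators on $\R^n$ that agree on a basis must agree everywhere. Thus the whole weight of the lemma rests on proving the equivalence (i) $\Leftrightarrow$ (ii).

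The crux is the implication (i) $\Rightarrow$ (ii), and here the hypothesis that $\mathcal{T}$ is an isometry is indispensable. First I would record that an isometry on the finite-dimensional space $\R^n$ is injective, hence invertible, so that the inclusion $\mathcal{T}U\subseteq U$ actually upgrades to the equality $\mathcal{T}U=U$ by comparing dimensions. The key step is then to show that the orthogonal complement $U^\perp$ is $\mathcal{T}$-invariant as well, and this is where I expect the main (and really the only) obstacle to sit: it relies on the fact that an isometry preserves the inner product, $\langle\mathcal{T}\pmb{x},\mathcal{T}\pmb{y}\rangle=\langle\pmb{x},\pmb{y}\rangle$. Given $\pmb{w}\in U^\perp$ and any $\pmb{u}\in U$, I would write $\pmb{u}=\mathcal{T}\big(\mathcal{T}^{-1}\pmb{u}\big)$ with $\mathcal{T}^{-1}\pmb{u}\in U$ (using $\mathcal{T}U=U$), so that $\langle\mathcal{T}\pmb{w},\pmb{u}\rangle=\langle\mathcal{T}\pmb{w},\mathcal{T}(\mathcal{T}^{-1}\pmb{u})\rangle=\langle\pmb{w},\mathcal{T}^{-1}\pmb{u}\rangle=0$; since $\pmb{u}$ was arbitrary in $U$, this shows $\mathcal{T}\pmb{w}\in U^\perp$.

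With both $U$ and $U^\perp$ shown to be $\mathcal{T}$-invariant, (ii) follows by a direct computation on the orthogonal decomposition. For an arbitrary $\pmb{v}\in\R^n$ I would write $\pmb{v}=\dproj{U}(\pmb{v})+\pmb{r}$ with $\dproj{U}(\pmb{v})\in U$ and the remainder $\pmb{r}\in U^\perp$. Applying $\mathcal{T}$ gives $\mathcal{T}\pmb{v}=\mathcal{T}\dproj{U}(\pmb{v})+\mathcal{T}\pmb{r}$, where now $\mathcal{T}\dproj{U}(\pmb{v})\in U$ and $\mathcal{T}\pmb{r}\in U^\perp$; because this is precisely the orthogonal decomposition of $\mathcal{T}\pmb{v}$ relative to the splitting $\R^n=U\oplus U^\perp$, reading off the $U$-component yields $\dproj{U}(\mathcal{T}\pmb{v})=\mathcal{T}\dproj{U}(\pmb{v})$, which is exactly the commutation asserted in (ii).

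Finally, for the converse (ii) $\Rightarrow$ (i), which completes the cycle, I would simply note that any $\pmb{u}\in U$ satisfies $\dproj{U}(\pmb{u})=\pmb{u}$, whence $\mathcal{T}\pmb{u}=\mathcal{T}\dproj{U}(\pmb{u})=\dproj{U}(\mathcal{T}\pmb{u})\in U$; as $\pmb{u}$ ranges over $U$ this gives $\mathcal{T}U\subseteq U$, that is, $U$ is $\mathcal{T}$-invariant. Combining (i) $\Leftrightarrow$ (ii) with the formal equivalence (ii) $\Leftrightarrow$ (iii) noted at the outset establishes the lemma. It is worth stressing that the isometry assumption enters only in (i) $\Rightarrow$ (ii), through the invariance of $U^\perp$; without it the commutation with the \emph{orthogonal} projection would generally fail.
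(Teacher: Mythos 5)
Your proof is correct and follows essentially the same route as the paper's: the substantive direction (i) $\Rightarrow$ (ii) rests on the orthogonal decomposition $\R^n = U \oplus U^{\perp}$ together with the fact that the isometry $\mathcal{T}$ carries $U^{\perp}$ into itself, and the basis statement (iii) is handled by linearity. If anything you are more careful than the paper, which compresses the key step into the phrase ``since $\mathcal{T}$ is an isometry and $U$ is a $\mathcal{T}$-invariant subspace'' where you explicitly establish $\mathcal{T}U = U$ and the invariance of $U^{\perp}$; the only organizational difference is that the paper closes a cycle (iii) $\Rightarrow$ (i) directly, while you route (iii) $\Rightarrow$ (ii) $\Rightarrow$ (i).
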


\begin{proof}[\bf Proof]  We first verify that {\it i)}$\Rightarrow${\it ii)}. Let $\pmb{v}$ be a vector of $\R^n$. Then there exist uniquely $\pmb{u}\in U$ and $\pmb{u}'\in U^{\bot}$ so that $\pmb{v}=\pmb{u}+\pmb{u}'$. Consequently,   $\mathcal{T}\circ\dproj{U}(\pmb{v})=\mathcal{T}\circ\dproj{U}(\pmb{u}+\pmb{u}')=\mathcal{T}(\pmb{u})=\dproj{U}(\mathcal{T}(\pmb{u})+\mathcal{T}(\pmb{u}'))=\dproj{U}\circ\mathcal{T}(\pmb{v})$ since $\mathcal{T}$ is an isometry and $U$ is an $\mathcal{T}$-invariant subspace.

Clearly, the 2nd statement can imply the 3rd one. So now we turn to the last part and show that the 3rd statement implies the 1st one.

Let us first recall a fact that 
\begin{equation}\label{AFact} \mathcal{T}U=U \mbox{ if and only if }\mathcal{T}(\pmb{u}) = \dproj{U}\circ\mathcal{T}(\pmb{u}), ~\forall \pmb{u}\in U. \end{equation}  
Since $\pmb{b}_1,\ldots,\pmb{b}_n $ is a basis of $\mathbb{R}^n$, for any vector $\pmb{u}\in U$, $\pmb{u}=\sum_{i=1}^n u_i\pmb{b}_i$ where $u_i\in\R$ and $i=1,\ldots,n$. In accordance with the 3rd statement, we have
\begin{align*} \dproj{U}\circ\mathcal{T}(\pmb{u}) 
&= \sum_{i=1}^n u_i\cdot\dproj{U}\circ\mathcal{T}(\pmb{b}_i) 
= \sum_{i=1}^n u_i\cdot\mathcal{T}\circ\dproj{U}(\pmb{b}_i) \\
&= \mathcal{T}\circ\dproj{U}\left(\sum_{i=1}^n u_i\pmb{b}_i\right)
= \mathcal{T}\left(\pmb{u}\right). \end{align*}
\end{proof}
 
In order to determine whether or not two chosen vertices are symmetric in $G$, we actually need a more general relation below, which can be proved in the way similar to proving Lemma \ref{ProjOperatorCommutative}.   
 
\begin{Lemma}\label{ProjOperatorCommutative-2} Let $\mathcal{T}$ be an isometry on $\mathbb{R}^n $, and let $U$ and $W$ be two subspaces of  $\mathbb{R}^n $. Then the following statements are equivalent. 
\begin{enumerate}
    \item $W = \mathcal{T} U$.
    \item $\mathcal{T}\circ\dproj{U} =\dproj{W}\circ\mathcal{T}$.  
    \item There exists a basis $\pmb{b}_1,\ldots,\pmb{b}_n$ so that $\mathcal{T}\circ\dproj{U}(\pmb{b}_i) =\dproj{W}\circ\mathcal{T}(\pmb{b}_i)$, $i=1,\ldots,n$. 
\end{enumerate}
\end{Lemma}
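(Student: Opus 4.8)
The plan is to follow the template of the proof of Lemma~\ref{ProjOperatorCommutative}, exploiting the one extra feature of an isometry that the earlier argument did not have to name: since $\mathcal{T}$ preserves the inner product it is invertible and carries orthogonal complements to orthogonal complements, so $\mathcal{T}(U^{\bot}) = (\mathcal{T}U)^{\bot}$. I would prove the cycle {\it i)}$\Rightarrow${\it ii)}$\Rightarrow${\it iii)}$\Rightarrow${\it i)}.

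For {\it i)}$\Rightarrow${\it ii)}, assume $W = \mathcal{T}U$ and take any $\pmb{v}\in\R^n$, decomposed as $\pmb{v} = \pmb{u} + \pmb{u}'$ with $\pmb{u}\in U$ and $\pmb{u}'\in U^{\bot}$. Then $\mathcal{T}\circ\dproj{U}(\pmb{v}) = \mathcal{T}\pmb{u}$, while on the other side $\mathcal{T}\pmb{u}\in\mathcal{T}U = W$ and $\mathcal{T}\pmb{u}'\in\mathcal{T}(U^{\bot}) = W^{\bot}$, so $\dproj{W}\circ\mathcal{T}(\pmb{v}) = \dproj{W}(\mathcal{T}\pmb{u} + \mathcal{T}\pmb{u}') = \mathcal{T}\pmb{u}$, and the two sides agree. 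The implication {\it ii)}$\Rightarrow${\it iii)} is immediate, since any basis will do.

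The substance is in {\it iii)}$\Rightarrow${\it i)}. Because $\mathcal{T}\circ\dproj{U}$ and $\dproj{W}\circ\mathcal{T}$ are linear and agree on a basis, they agree as operators, so I recover the full identity {\it ii)}, and from it I read off two containments. Evaluating on an arbitrary $\pmb{u}\in U$ gives $\mathcal{T}\pmb{u} = \dproj{W}(\mathcal{T}\pmb{u})$, hence $\mathcal{T}\pmb{u}\in W$ and $\mathcal{T}U\subseteq W$; evaluating on an arbitrary $\pmb{u}'\in U^{\bot}$ gives $0 = \dproj{W}(\mathcal{T}\pmb{u}')$, hence $\mathcal{T}\pmb{u}'\in W^{\bot}$ and $\mathcal{T}(U^{\bot})\subseteq W^{\bot}$. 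Since $\mathcal{T}$ is a bijection preserving orthogonality, $\R^n = \mathcal{T}U\oplus\mathcal{T}(U^{\bot})$; comparing this with $\R^n = W\oplus W^{\bot}$ together with the two containments forces $\dim\mathcal{T}U = \dim W$, and an inclusion of subspaces of equal dimension is an equality, so $\mathcal{T}U = W$.

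The only point needing care — and where the generalization genuinely departs from Lemma~\ref{ProjOperatorCommutative} — is upgrading the inclusion $\mathcal{T}U\subseteq W$ to an equality, so I expect this to be the main obstacle. One cannot quote an analogue of the fact (\ref{AFact}) directly: there $\mathcal{T}U\subseteq U$ gave $\mathcal{T}U = U$ only because $\mathcal{T}|_{U}$ is a dimension-preserving injection into $U$ itself, whereas here the target $W$ is a different subspace. This is exactly why I bring in the complementary inclusion $\mathcal{T}(U^{\bot})\subseteq W^{\bot}$ and close the dimension count. Equivalently, I could take adjoints in $\mathcal{T}\circ\dproj{U} = \dproj{W}\circ\mathcal{T}$, using that orthogonal projections are self-adjoint and $\mathcal{T}^{*} = \mathcal{T}^{-1}$, to obtain $\mathcal{T}^{-1}\circ\dproj{W} = \dproj{U}\circ\mathcal{T}^{-1}$, and then run the same inclusion argument on this identity to get $W\subseteq\mathcal{T}U$ directly; either route supplies the reverse inclusion that the single-subspace case did not require.
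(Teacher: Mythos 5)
Your proposal is correct, and it is worth pointing out that the paper never writes out a proof of this lemma at all: it only remarks that the statement ``can be proved in the way similar to proving Lemma~\ref{ProjOperatorCommutative}.'' Your argument follows exactly that template for {\it i)}$\Rightarrow${\it ii)} and {\it ii)}$\Rightarrow${\it iii)}: the same decomposition $\pmb{v}=\pmb{u}+\pmb{u}'$ with $\pmb{u}\in U$, $\pmb{u}'\in U^{\bot}$, the same use of the isometry (now in the sharper form $\mathcal{T}(U^{\bot})=(\mathcal{T}U)^{\bot}=W^{\bot}$), and the same passage from a basis to all of $\R^n$ by linearity. Where you genuinely add something is in {\it iii)}$\Rightarrow${\it i)}. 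You correctly observe that the literal analogue of the fact (\ref{AFact}) only yields the inclusion $\mathcal{T}U\subseteq W$: the mechanism behind (\ref{AFact}) is that an injective map of $U$ into itself must be onto, and this breaks down when the target is a different subspace $W$, which a priori could be strictly larger. Your repair --- extracting the complementary inclusion $\mathcal{T}(U^{\bot})\subseteq W^{\bot}$ from the full operator identity and closing with a dimension count, or equivalently taking adjoints with $\mathcal{T}^{*}=\mathcal{T}^{-1}$ to obtain $W\subseteq\mathcal{T}U$ directly --- is precisely the extra step that the paper's phrase ``similar'' silently relies on. Either variant is sound; making this step explicit is an improvement over the paper's treatment, which leaves the one place where the two-subspace case really differs from Lemma~\ref{ProjOperatorCommutative} entirely to the reader.
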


\vspace{3mm}
To figure out a generating set of $\mathfrak{G}$, there are two important targets: 
\begin{enumerate}

\item[1)] decomposing eigenspaces of $\mathbf{A}$ into IRs of $\mathfrak{G}$;

\item[2)] partitioning the vertex set of $G$ into orbits of $\mathfrak{G}$.

\end{enumerate}

In order to achieve the 2nd target, we need a powerful apparatus - equitable partition (EP). So let us first present some of basic properties of equitable partitions. Suppose $\Pi$ is a partition of $V(G)$ with a group of cells $C_1,\ldots,C_t$, which is said to be {\it equitable} if for any vertex $u$ in $C_i$, the number of neighbors of $u$ in $C_j$ is a constant $b_{ij}$, $(1\leq i,j \leq t)$, which is independent of the vertex $u$. A moment's reflection would show that the partition of $V(G)$ comprised of orbits of a subgroup of $\mathfrak{G}$ is equitable.

It is interesting that one can construct a new graph $G / \Pi$ from $G$ and its equitable partition $\Pi$, which is called the {\it quotient graph} of $G$ over $\Pi$. The vertex set of $G / \Pi$ consists of cells of $\Pi$ and there are $b_{ij}$ arcs from the $i$th to the $j$th vertices of $V(G  / \Pi)$. 

For each cell $C$ of a partition, one can build a vector $\pmb{R}_{C}$ to indicate $C$, that is called the {\it characteristic vector} of the cell $C$ of which the $k$th coordinate is 1 if $k$ belongs to $C$ otherwise it is 0 ($k=1,\ldots,n$). In virtue of characteristic vectors, we can define the {\it characteristic matrix} $\mathbf{R}$ of the partition as $(\pmb{R}_1 \pmb{R}_2 \cdots \pmb{R}_t)$, where $\pmb{R}_i$ is the characteristic vector of the $i$th cell. As we can see below, equitable partitions have a simple feature in linear algebra terms. 

\begin{Lemma}[Godsil and Royle \cite{GodRoy}]\label{Lemma-Book-BasicPropertyofEP}
If $\Pi$ is an equitable partition of $G$ then $\mathbf{A}(G)\cdot\mathbf{R} = \mathbf{R}\cdot\mathbf{A}(G / \Pi)$ and $\mathbf{A}(G / \Pi) = (\mathbf{R}^T \mathbf{R})^{-1} \cdot \mathbf{R}^T \mathbf{A}(G) \mathbf{R}.$
\end{Lemma}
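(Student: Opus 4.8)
The plan is to prove the first identity $\mathbf{A}(G)\cdot\mathbf{R} = \mathbf{R}\cdot\mathbf{A}(G/\Pi)$ by a direct entry-by-entry comparison, and then to derive the second identity as a formal consequence of the first together with the elementary observation that $\mathbf{R}^T\mathbf{R}$ is diagonal and hence invertible.

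First I would fix a vertex $k\in[n]$ and a cell index $j\in\{1,\ldots,t\}$ and compute the $(k,j)$ entry of each side of the first identity. On the left, since the $l$th entry of the $j$th column of $\mathbf{R}$ equals $1$ exactly when $l\in C_j$, the entry $\big(\mathbf{A}(G)\cdot\mathbf{R}\big)_{kj}$ equals $\sum_{l\in C_j}\mathbf{A}(G)_{kl}$, which is precisely the number of neighbors of the vertex $k$ lying in the cell $C_j$. On the right, because the cells partition $[n]$, the vertex $k$ belongs to exactly one cell, say $C_i$; hence $\big(\mathbf{R}\cdot\mathbf{A}(G/\Pi)\big)_{kj} = \sum_{s}\mathbf{R}_{ks}\,b_{sj} = b_{ij}$, as only the term $s=i$ survives. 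The equitable hypothesis says exactly that for every vertex $k\in C_i$ the number of its neighbors in $C_j$ is the constant $b_{ij}$, so the two entries agree, and the first identity follows.

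Next I would record that $\mathbf{R}^T\mathbf{R}$ is the $t\times t$ diagonal matrix whose $i$th diagonal entry is $\langle\pmb{R}_i,\pmb{R}_i\rangle = |C_i|$, the off-diagonal entries vanishing because distinct cells are disjoint and so their characteristic vectors are orthogonal. Since each cell is non-empty, every diagonal entry is positive, whence $\mathbf{R}^T\mathbf{R}$ is invertible. The second identity then drops out formally: multiplying the first identity on the left by $\mathbf{R}^T$ gives $\mathbf{R}^T\mathbf{A}(G)\mathbf{R} = (\mathbf{R}^T\mathbf{R})\,\mathbf{A}(G/\Pi)$, and multiplying on the left by $(\mathbf{R}^T\mathbf{R})^{-1}$ yields $\mathbf{A}(G/\Pi) = (\mathbf{R}^T\mathbf{R})^{-1}\mathbf{R}^T\mathbf{A}(G)\mathbf{R}$.

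There is no serious obstacle here: the content lies entirely in the bookkeeping of the first identity, where the only point requiring care is to keep the two roles of the index sets straight, namely that the rows of $\mathbf{A}(G)$ and $\mathbf{R}$ are indexed by the $n$ vertices whereas the columns of $\mathbf{R}$ and both indices of $\mathbf{A}(G/\Pi)$ are indexed by the $t$ cells. The equitable condition is what forces the left-hand entry to depend on $k$ only through the cell containing $k$, which is exactly the dependence already exhibited by the right-hand side; this matching is the crux of the lemma.
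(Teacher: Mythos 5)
Your proof is correct and complete: the entry-by-entry verification of $\mathbf{A}(G)\mathbf{R}=\mathbf{R}\mathbf{A}(G/\Pi)$ is exactly where the equitable hypothesis enters, and the derivation of the second identity from the invertibility of the diagonal matrix $\mathbf{R}^T\mathbf{R}$ is sound. Note that the paper itself gives no proof of this lemma --- it is quoted from Godsil and Royle's book --- so there is nothing in the paper to compare against; your argument is the standard textbook proof of this fact.
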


\begin{Lemma}[Godsil and Royle \cite{GodRoy}]\label{Lemma-Book-CharacterizationToEP}
A partition $\Pi$ of $V(G)$ is equitable if and only if the column space of $\mathbf{R}$ is $\mathbf{A}$-invariant.
\end{Lemma}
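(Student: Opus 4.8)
The plan is to reduce the $\mathbf{A}$-invariance of the column space to a statement about the individual characteristic vectors $\pmb{R}_1,\ldots,\pmb{R}_t$, and then to read off the equitable condition coordinate by coordinate. The single identity that carries the whole content of the lemma is an interpretation of the products $\mathbf{A}\pmb{R}_i$: since the $k$th coordinate of $\mathbf{A}\pmb{R}_i$ equals $\sum_{l\in C_i}a_{kl}$, it is exactly the number of neighbors of the vertex $k$ that lie in the cell $C_i$. I would establish this observation first, as both directions are immediate consequences of it.

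For the forward direction, suppose $\Pi$ is equitable and fix a column $\pmb{R}_i$. By hypothesis the number of neighbors in $C_i$ of a vertex $k\in C_j$ is a constant $b_{ji}$ depending only on the pair of cells, not on the particular $k$. Together with the interpretation above, this says that $\mathbf{A}\pmb{R}_i$ is constant on every cell $C_j$, with value $b_{ji}$, so $\mathbf{A}\pmb{R}_i=\sum_{j=1}^t b_{ji}\pmb{R}_j$ lies in the column space of $\mathbf{R}$. As $i$ was arbitrary and the columns span that space, the column space is $\mathbf{A}$-invariant. This computation is precisely the matrix identity $\mathbf{A}\mathbf{R}=\mathbf{R}\,\mathbf{A}(G/\Pi)$ of Lemma \ref{Lemma-Book-BasicPropertyofEP}, read here in the reverse direction.

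For the converse I would lean on the fact that the vectors $\pmb{R}_1,\ldots,\pmb{R}_t$ have pairwise disjoint supports, hence are linearly independent and form a basis of the column space in which expansions are unique. Assuming $\mathbf{A}$-invariance, write $\mathbf{A}\pmb{R}_i=\sum_{j=1}^t c_{ji}\pmb{R}_j$. Evaluating the $k$th coordinate for a vertex $k\in C_j$ picks out the single nonzero term and yields $(\mathbf{A}\pmb{R}_i)_k=c_{ji}$; comparing with the interpretation of $\mathbf{A}\pmb{R}_i$ shows that every vertex of $C_j$ has exactly $c_{ji}$ neighbors in $C_i$. Since this holds for all pairs of cells, the partition is equitable, with $b_{ji}:=c_{ji}$.

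I do not expect a real obstacle here: the statement is essentially a bookkeeping identity. The only point deserving care is the coordinatewise argument in the converse, where one must invoke the disjointness of the cell supports to be sure that the coefficients $c_{ji}$ are uniquely determined and that each equals the relevant neighbor-count exactly; everything else follows directly from the definition of matrix-vector multiplication and of an equitable partition.
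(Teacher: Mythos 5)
Your proof is correct. The paper itself gives no proof of this lemma — it is quoted from Godsil and Royle — and your argument is exactly the standard one from that source: interpret the $k$th coordinate of $\mathbf{A}\pmb{R}_i$ as the number of neighbors of vertex $k$ in cell $C_i$, deduce the forward direction from constancy of these counts (equivalently, the identity $\mathbf{A}\mathbf{R}=\mathbf{R}\,\mathbf{A}(G/\Pi)$ of Lemma \ref{Lemma-Book-BasicPropertyofEP}), and for the converse use the disjoint supports of $\pmb{R}_1,\ldots,\pmb{R}_t$ to read off the uniquely determined coefficients $c_{ji}$ coordinatewise as neighbor counts. Both directions are sound and complete as written.
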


By means of the lemmas above, it is not difficult to prove the following result, which not only reveals a connection between eigenvalues of $\mathbf{A}(G)$ and that of $\mathbf{A}(G / \Pi)$ but actually provides a way of decomposing eigenspaces of $\mathbf{A}(G)$.
\begin{Theorem}[Godsil and Royle \cite{GodRoy}]\label{Lemma-Book-CharacteristicPolynomialOfEP}
If $\Pi$ is an equitable partition of $G$ then the characteristic polynomial of $\mathbf{A}(G / \Pi)$ divides the characteristic polynomial of $\mathbf{A}(G)$.
\end{Theorem}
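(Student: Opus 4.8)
The plan is to convert the two structural facts about an equitable partition already recorded into a similarity between $\mathbf{A}(G/\Pi)$ and the restriction of $\mathbf{A}(G)$ to the column space of $\mathbf{R}$, and then read off the divisibility from a block decomposition of $\mathbf{A}(G)$. Write $U$ for the column space of $\mathbf{R}$. First I would observe that the columns of $\mathbf{R}$, being the characteristic vectors of the pairwise disjoint non-empty cells of $\Pi$, are mutually orthogonal and non-zero, hence linearly independent; thus they form a basis of $U$, and $\mathbf{R}$, regarded as a map $\R^t\to\R^n$, is injective with image $U$.

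Next, by Lemma \ref{Lemma-Book-CharacterizationToEP} the subspace $U$ is $\mathbf{A}(G)$-invariant. Because $\mathbf{A}(G)$ is symmetric, its orthogonal complement $U^{\bot}$ is $\mathbf{A}(G)$-invariant as well, so $\R^n = U\oplus U^{\bot}$ is a decomposition into two $\mathbf{A}(G)$-invariant subspaces. Choosing any basis of $U$ together with any basis of $U^{\bot}$ yields a basis of $\R^n$ in which the matrix of $\mathbf{A}(G)$ is block diagonal, whence
\begin{equation*}
\chi_{\mathbf{A}(G)}(x) = \chi_{\mathbf{A}(G)|_U}(x)\cdot\chi_{\mathbf{A}(G)|_{U^{\bot}}}(x),
\end{equation*}
where $\chi$ denotes the characteristic polynomial and $\mathbf{A}(G)|_U$ the matrix of the restriction with respect to the chosen basis of $U$. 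In particular $\chi_{\mathbf{A}(G)|_U}(x)$ divides $\chi_{\mathbf{A}(G)}(x)$; it therefore remains only to identify this factor with $\chi_{\mathbf{A}(G/\Pi)}(x)$.

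For that identification I would take the basis of $U$ to be precisely the columns of $\mathbf{R}$. The intertwining relation $\mathbf{A}(G)\mathbf{R} = \mathbf{R}\,\mathbf{A}(G/\Pi)$ of Lemma \ref{Lemma-Book-BasicPropertyofEP} says exactly that, on the image $U$ of the injective map $\mathbf{R}$, the operator $\mathbf{A}(G)$ agrees with the operator obtained by transporting $\mathbf{A}(G/\Pi)$ through $\mathbf{R}$; reading the coordinates off column by column, the matrix of $\mathbf{A}(G)|_U$ in the column basis of $\mathbf{R}$ is $\mathbf{A}(G/\Pi)$ itself. Consequently $\mathbf{A}(G)|_U$ and $\mathbf{A}(G/\Pi)$ are similar, so $\chi_{\mathbf{A}(G)|_U}(x) = \chi_{\mathbf{A}(G/\Pi)}(x)$, and the divisibility follows. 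Note that this argument never requires $\mathbf{A}(G/\Pi)$ itself to be symmetric; symmetry is used only for the original matrix $\mathbf{A}(G)$, to split off $U^{\bot}$.

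The routine parts are the linear independence of the columns of $\mathbf{R}$ and the block-diagonalisation. The main obstacle is the precise bookkeeping in the third paragraph: one must check that the $j$th column of $\mathbf{R}\,\mathbf{A}(G/\Pi)$ expresses $\mathbf{A}(G)$ applied to the $j$th column of $\mathbf{R}$ as a linear combination of the columns of $\mathbf{R}$ whose coefficients are exactly the $j$th column of $\mathbf{A}(G/\Pi)$. Since $\mathbf{R}$ has full column rank this expansion is unique, which is what legitimises reading off $\mathbf{A}(G/\Pi)$ as the matrix of the restriction; once this is pinned down, the similarity and hence the divisibility are immediate.
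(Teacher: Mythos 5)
Your proof is correct and follows exactly the route the paper intends: the paper does not prove this theorem itself (it cites Godsil--Royle and remarks only that it is "not difficult to prove" from Lemmas \ref{Lemma-Book-BasicPropertyofEP} and \ref{Lemma-Book-CharacterizationToEP}), and your argument derives it from precisely those two lemmas, with the intertwining relation $\mathbf{A}(G)\mathbf{R} = \mathbf{R}\,\mathbf{A}(G/\Pi)$ and full column rank of $\mathbf{R}$ identifying $\mathbf{A}(G/\Pi)$ as the matrix of the restriction $\mathbf{A}(G)|_U$. The only cosmetic difference is that you invoke the symmetry of $\mathbf{A}(G)$ to split off $U^{\bot}$ and obtain a block-diagonal form; extending the column basis of $\mathbf{R}$ by an arbitrary basis of any complement would give a block upper-triangular matrix and yield the same divisibility without using symmetry at all.
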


Now let us turn to a notable connection between the eigenvectors of $\mathbf{A}(G)$ and that of $\mathbf{A}(G / \Pi)$. Let $\pmb{x}_{\lambda}$ be an eigenvector of $\mathbf{A}(G / \Pi)$ corresponding to the eigenvalue $\lambda$. Then the vector $\mathbf{R} \pmb{x}_{\lambda}$ cannot vanish and in fact
$$
\mathbf{A}(G) \cdot \mathbf{R} \pmb{x}_{\lambda} 
= \mathbf{R}\mathbf{A}(G / \Pi) \cdot \pmb{x}_{\lambda}
= \lambda \mathbf{R} \pmb{x}_{\lambda}.
$$
Hence $\mathbf{R} \pmb{x}_{\lambda}$ is an eigenvector of $\mathbf{A}(G)$. In this situation, we say that the eigenvector $\pmb{x}_{\lambda}$ of $\mathbf{A}(G / \Pi)$ {\it ``lifts''} to an eigenvector of $\mathbf{A}(G)$.

Since the column space $U_\Pi$ of $\mathbf{R}$ is $\mathbf{A}$-invariant due to Lemma \ref{Lemma-Book-CharacterizationToEP}, $U_\Pi$ must have a basis comprised of eigenvectors of $\mathbf{A}(G)$. As a result, each of these eigenvectors is constant on the cells of $\Pi$. In other words, if $x$ and $y$ are two vertices of $G$  belonging to the same cell of $\Pi$ and $V_{\lambda}$ is an eigenspace of $\mathbf{A}(G)$, then 
$$
\langle \pmb{e}_x,\dproj{V_\lambda}(\pmb{R}_j) \rangle = \langle \pmb{e}_y,\dproj{V_\lambda}(\pmb{R}_j) \rangle, ~\forall~\lambda\in\mathrm{spec}~\mathbf{A}(G) \mbox{\it ~and } j\in [t].\footnote{As we shall see in Lemma \ref{Lemma-EquitablePartProj}, this relation  is also sufficient for being equitable.}
$$
Hence, the eigenvectors of $\mathbf{A}$ could be divided into two classes: those that are constant on every cell of $\Pi$ and those that sum to zero on each cell of $\Pi$. Accordingly, we could use eigenspaces of $\mathbf{A}(G/\Pi)$ to split eigenspaces of $\mathbf{A}(G)$, which is one of two major tools we employ to decompose eigenspaces of $\mathbf{A}(G)$. 

It is well-known that given a partition of $V(G)$ there is a unique coarsest equitable partition finer than the original one, and there are a number of efficient algorithms to find the coarsest EP for a given partition. As a matter of fact, the lemma \ref{Lemma-EquitablePartProj} provides an efficient way of figuring out that kind of EPs.

\vspace{3mm}
Now let us illustrate how our algorithm works with  determining whether two vertices belong to one orbit of the automorphism group of the Petersen graph (see Figure 1). We here try to determine if vertices 1 and 7 are in the same orbit, and to figure out an automorphism moving 1 to 7. In order to realize our goal, we need to make use of the geometric information about the permutation representation of $\mathfrak{G}$ and of its subgroups in $\R^{10}$. The key is to work out a group of EPs of $G$, which enable us to split eigenspaces of $\mathbf{A}$ and therefore to obtain IRs of the stabilizers $\mathfrak{G}_1$ and $\mathfrak{G}_7$, where $\mathfrak{G}_v$ is the point stabilizer of $v$ defined as $\{ \xi \in \mathfrak{G} : \xi v = v \}$.  

\begin{figure}[H]
\begin{center}
\includegraphics[angle=2,totalheight=6cm]{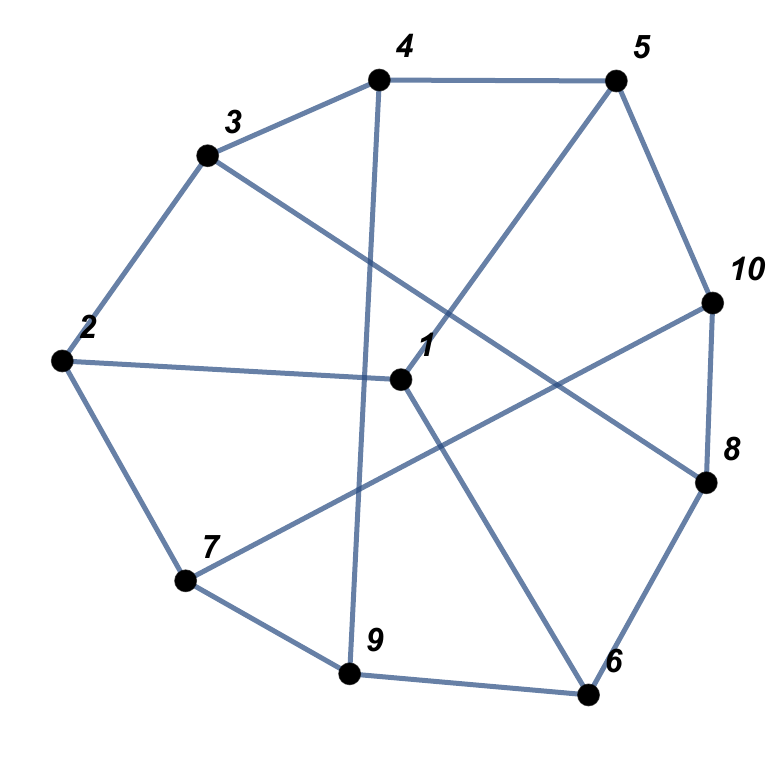} 
\end{center}
\caption[subsection]{The Petersen Graph}
\end{figure}

First of all, let us gather the evidence of being symmetric for 1 and 7 by means of the {\it dist.} of OPSB onto eigenspaces of the adjacency matrix $\mathbf{A}$. As we shall see, it is the {\it dist.} of OPSB onto subspaces relevant that reveals symmetries among vertices. However in the case that the vertex set is of huge order and eigenspaces involved are of really large dimension, the {\it dist.} of OPSB is a real mess, and hence we employ EPs to group vertices and to decompose eigenspaces so we can ultimately clarify symmetries of the graph.

One can easily compute the eigenvalues and corresponding eigenspaces of the graph $G$.\footnote{ In order to decide whether or not two eigenvalues or two vectors are the same, we need high precision arithmetic - eigenvalues and coordinates of eigenvectors may have to be calculated to $n^K$ digits accuracy for some integer $K$. We refer readers to \cite{GolubLoan} for more information. In this paper, we use {\it Mathematica} 10 to compute all data and to draw most of figures.} Actually, $\mathbf{A}$ possesses three eigenvalues 3, -2 and 1 of multiplicity 1, 4 and 5, respectively, and the eigenspace corresponding to 3 shows us nothing about the structure of $\mathfrak{G}$ because it is spanned by the vector $\pmb{1}$. Consequently, we examine the rest of two eigenspaces. 

It is easy to check that all lengths of the OPSB onto the eigenspace $V_{-2}$ and onto $V_1$ respectively are the same, so in order to see angles among the OPSB onto those eigenspaces it suffices to see the OPSB onto $V_{-2}$ and $V_1$. They are displayed by the following matrices, in which the $j$th column is the orthogonal projection of $\pmb{e}_j$ onto the eigenspace relevant.

{ \scriptsize
$$
 \begin{array}{c}
\begin{pmatrix} 
 \begin{array}{rrrrrrrrrr}
  0.400000  & -0.266667  & 0.0666667  & 0.0666667  & -0.266667  & -0.266667  & 
  0.0666667  & 0.0666667  & 0.0666667  & 0.0666667 \\
   -0.266667  & 0.400000  & -0.266667  & 0.0666667  & 0.0666667  & 
  0.0666667  & -0.266667  & 0.0666667  & 0.0666667  & 0.0666667 \\
   0.0666667  & -0.266667  & 0.400000  & -0.266667  & 0.0666667  & 0.0666667  & 
  0.0666667  & -0.266667  & 0.0666667  & 0.0666667 \\
   0.0666667  & 0.0666667  & -0.266667  & 0.400000  & -0.266667  & 0.0666667  & 
  0.0666667  & 0.0666667  & -0.266667  & 0.0666667 \\
   -0.266667  & 0.0666667  & 0.0666667  & -0.266667  & 0.400000  & 0.0666667  & 
  0.0666667  & 0.0666667  & 0.0666667  & -0.266667 \\
   -0.266667  & 0.0666667  & 0.0666667  & 0.0666667  & 0.0666667  & 0.400000  & 
  0.0666667  & -0.266667  & -0.266667  & 0.0666667 \\
   0.0666667  & -0.266667  & 0.0666667  & 0.0666667  & 0.0666667  & 0.0666667  & 
  0.400000  & 0.0666667  & -0.266667  & -0.266667 \\
   0.0666667  & 0.0666667  & -0.266667  & 0.0666667  & 0.0666667  & -0.266667  & 
  0.0666667  & 0.400000  & 0.0666667  & -0.266667 \\
   0.0666667  & 0.0666667  & 0.0666667  & -0.266667  & 
  0.0666667  & -0.266667  & -0.266667  & 0.0666667  & 0.400000  & 0.0666667 \\
   0.0666667  & 0.0666667  & 0.0666667  & 0.0666667  & -0.266667  & 
  0.0666667  & -0.266667  & -0.266667  & 0.0666667  & 0.400000 
  \end{array}
\end{pmatrix} \\
  ~ \\ 
  { \text{ \footnotesize The OPSB matrix corresponding to }\displaystyle  V_{-2} }
  \end{array}
$$ }

{ \scriptsize
$$
\begin{array}{c}
 \begin{pmatrix} 
  \begin{array}{rrrrrrrrrr}
 0.500000 & 0.166667 & -0.166667 & -0.166667 & 0.166667 & 
  0.166667 & -0.166667 & -0.166667 & -0.166667 & -0.166667 \\
   0.166667 & 0.500000 & 0.166667 & -0.166667 & -0.166667 & -0.166667 & 
  0.166667 & -0.166667 & -0.166667 & -0.166667 \\
   -0.166667 & 0.166667 & 0.500000 & 
  0.166667 & -0.166667 & -0.166667 & -0.166667 & 
  0.166667 & -0.166667 & -0.166667 \\
   -0.166667 & -0.166667 & 0.166667 & 0.500000 & 
  0.166667 & -0.166667 & -0.166667 & -0.166667 & 0.166667 & -0.166667 \\
   0.166667 & -0.166667 & -0.166667 & 0.166667 & 
  0.500000 & -0.166667 & -0.166667 & -0.166667 & -0.166667 & 0.166667 \\
   0.166667 & -0.166667 & -0.166667 & -0.166667 & -0.166667 & 
  0.500000 & -0.166667 & 0.166667 & 0.166667 & -0.166667 \\
   -0.166667 & 0.166667 & -0.166667 & -0.166667 & -0.166667 & -0.166667 & 
  0.500000 & -0.166667 & 0.166667 & 0.166667 \\
   -0.166667 & -0.166667 & 0.166667 & -0.166667 & -0.166667 & 
  0.166667 & -0.166667 & 0.500000 & -0.166667 & 0.166667 \\
   -0.166667 & -0.166667 & -0.166667 & 0.166667 & -0.166667 & 0.166667 & 
  0.166667 & -0.166667 & 0.500000 & -0.166667 \\
   -0.166667 & -0.166667 & -0.166667 & -0.166667 & 0.166667 & -0.166667 & 
  0.166667 & 0.166667 & -0.166667 & 0.500000
   \end{array}
  \end{pmatrix}\\
  ~ \\ 
  { \text{ \footnotesize The OPSB matrix corresponding to }\displaystyle  V_{1} }
 \end{array}
$$ }

Given two vertices $u$ and $v$, one useful necessary condition for being symmetric is that there exists a permutation $\sigma$ such that $\sigma \hspace{0.6mm} \dproj{V_\lambda}( \pmb{e}_u ) = \dproj{V_\lambda}( \pmb{e}_v )$, $\forall \lambda\in\mathrm{spec} \hspace{0.6mm} \mathbf{A}$, so in order to determine orbits of $\mathfrak{G}$ we first focus on partitions of $V(G)$ erected by projections relevant to the vertex considered.

Let us begin with a partition of $V$ relevant to the vertex 1. In virtue of the projections of $\pmb{e}_1$ onto some eigenspaces $\oplus_{\lambda \in \Lambda} V_{\lambda}$ where $\Lambda \subseteq \drm{spec}{\mathbf{A}(G)}$, one can see a natural binary relation among vertices: two vertices $x$ and $y$ are said to be {\it projection-related with respect to vertex 1 and $\Lambda$} if for any  
$\lambda \in \Lambda$,
\begin{equation}\label{TheBinaryRelation} 
\| \dproj{V_{\lambda}}( \pmb{e}_x ) \| = \| \dproj{V_{\lambda}}( \pmb{e}_y ) \|
\mbox{ and }
\langle \pmb{e}_x,\dproj{V_{\lambda}}( \pmb{e}_1 )
\rangle = \langle \pmb{e}_y,\dproj{V_{\lambda}}( \pmb{e}_1 )
\rangle. 
\end{equation}
Obviously, the relation above is an equivalence relation, so we have a partition of the vertex set $[10]$. We take here the subset $\Lambda$ to be $\drm{spec}{\mathbf{A}(G)}$, and hence the partition is $\{ \{1\},\{2,5,6\},\{3,4,7,8,9,10\} \}$. It is quite clear that one can define similar relations for other vertices and build similar partitions of $[10]$. For the vertex 7, the partition is $\{ \{7\}, \{2,9,10\}, \{1,3,4,5,6,8\} \}$. Let $\dEP{1}{V_\lambda}$ and $\dEP{7}{V_\lambda}$ be the coarsest EPs, respectively, of those two partitions above, which are actually the same as those two partitions, for they are themselves equitable, but in general $\dEP{v}{ V_\lambda }$ would be finer than the partition erected by the relation (\ref{TheBinaryRelation}).   

In virtue of Lemma \ref{Lemma-EquitablePartProj}, one can readily see that two cells $C_1$ and $C_2$ of an equitable partition can be distinguished by two sets of projections $\{ \dproj{V_{\lambda}}( \pmb{R}_{C_1} ) : \lambda \in \drm{spec}{\dAM{G}} \}$ and $\{ \dproj{V_{\lambda}}( \pmb{R}_{C_2} ) : \lambda \in \drm{spec}{\dAM{G}} \}$. As a result, we can associate two EPs of a graph by means of those projections. Suppose $u$ and $v$ are two vertices of $G$, and $C_u$ and $C_v$ are two cells of $\dEP{u}{ V_\lambda }$ and $\dEP{v}{ V_\lambda }$, respectively, which are said to be {\it in the same type} if for every eigenspace $V_{\lambda}$ of $\mathbf{A}$, 
\begin{equation}\label{Equ-AssociateTwoCells}
\begin{minipage}{12.6cm}
$ \langle \pmb{e}_x,\dproj{V_{\lambda}}( \pmb{R}_{C_u} )
\rangle = \langle \pmb{e}_y,\dproj{V_{\lambda}}( \pmb{R}_{C_v} )
\rangle$ $\forall x \in C_u$ and $y \in C_v,$

and
$\exists \hspace{0.6mm} \sigma\in\mathrm{Sym}\hspace{0.6mm} V ~ s.t., $ $\sigma \hspace{0.6mm} C_u = C_v$ and 
$\sigma\dproj{V_\lambda}( \pmb{R}_{C_u} ) 
= 
\dproj{V_\lambda}( \pmb{R}_{C_v} ),
$
\end{minipage}
\end{equation} 
where $\pmb{R}_{C_u}$ is the characteristic vector of the cell $C_u$. Accordingly, we can define type for equitable partitions $\dEP{u}{ V_\lambda }$ and $\dEP{v}{ V_\lambda }$, which are said to be {\it in the same type}, denoted by $\dEP{u}{ V_\lambda } \asymp \dEP{v}{ V_\lambda }$, if there is a bijection $\psi$ from the set of cells of $\dEP{u}{ V_\lambda }$ to that of $\dEP{v}{ V_\lambda }$ such that $C$ and $\psi( C )$ are in the same type for any cell $C$ of $\dEP{u}{ V_\lambda }$.

In our example, $\dEP{1}{ V_\lambda }$ and $\dEP{7}{ V_\lambda }$ are in the same type and the correspondence is the following:  
$$
\{1\} \mapsto \{7\}, \{2,5,6\} \mapsto \{2,9,10\} \mbox{ and } \{3,4,7,8,9,10\} \mapsto \{1,3,4,5,6,8\}.
$$ 
It is plain to see that if two vertices $u$ and $v$ belong to the same orbit of $\mathfrak{G}$ then two partitions $\dEP{u}{ V_\lambda }$ and $\dEP{v}{ V_\lambda }$ must be in the same type, and the bijection $\psi$ between two EPs is unique.

Two vertices $u$ and $v$ of a graph $G$ are said to be {\it projection-symmetric with respect to $\oplus_{\lambda\in\Lambda} V_\lambda$}, where $\Lambda \subseteq \drm{spec}{\mathbf{A}(G)}$, 
if 
\begin{equation}\label{TheBinaryRelation-2}
\dEP{u}{_{\lambda\in\Lambda} V_\lambda } \asymp \dEP{v}{_{\lambda\in\Lambda} V_\lambda }
\mbox{ and }
G / \dEP{u}{_{\lambda\in\Lambda} V_\lambda } \cong G / \dEP{v}{_{\lambda\in\Lambda} V_\lambda }
\end{equation}
with respect to $\psi$. The isomorphism between two graphs $G / \dEP{u}{_{\lambda\in\Lambda} V_\lambda }$ and $G / \dEP{v}{_{\lambda\in\Lambda} V_\lambda }$ is defined in the way similar to simple graphs, but we here require that the bijection $\phi$ preserves the direction and weights on arcs. Note that with the help of the correspondence $\psi$ one can quickly verify whether or not $G / \dEP{u}{_{\lambda\in\Lambda} V_\lambda } \cong G / \dEP{v}{_{\lambda\in\Lambda} V_\lambda }$.  

Apparently, the relation defined by (\ref{TheBinaryRelation-2}) is an equivalence one, so it induces a partition of $V$. We denote the uniquely coarsest EP of the partition by $\dEP{\mathfrak{G}}{_{\lambda\in\Lambda} V_\lambda }$. In the case that $\Lambda = \drm{spec}{\mathbf{A}(G)}$ we denote for the convenience the equitable partition by $\dEP{\mathfrak{G}}{V_\lambda }$. It is plain to see that any orbit of $\mathfrak{G}$ must be entirely contained in some cell of $\dEP{\mathfrak{G}}{V_\lambda}$, for every eigenspace of $\mathbf{A}$ is $\mathfrak{G}$-invariant. On the other hand, according to the process above, if we have for each eigenspace $V_{\lambda}$ of $\mathbf{A}$ a group of subspaces $X_{\lambda,1},\ldots,X_{\lambda,t}$ such that $\oplus X_{\lambda,k}$ is an orthogonal decomposition of $V_{\lambda}$ ($\lambda \in \drm{spec}{\mathbf{A}}$) and every $X_{\lambda,k}$ ($k=1,\ldots,t$) is $\mathfrak{G}$-invariant, then we could build a new partition $\dEP{\mathfrak{G}}{ X_{\lambda,k} }$, which is refiner than the partition $\dEP{\mathfrak{G}}{V_\lambda}$ but coarser than the partition $\Pi^*$ comprised of orbits of $\mathfrak{G}$. 

In order to determine orbits of $\mathfrak{G}$, we need to split every eigenspace $V_{\lambda}$ further and further. As a matter of fact, if $\oplus W_{\lambda,p}$ is a decomposition of $\R^n$ into IRs of $\mathfrak{G}$, where $W_{\lambda,p} \subseteq V_{\lambda}$ and $\lambda \in \drm{spec}{\mathbf{A}}$, then $\dEP{\mathfrak{G}}{ W_{\lambda,p} }$ is the same as the partition $\Pi^*$. This is because $\Pi^*$ is itself an equitable partition and thus there exists an IR $W_{\lambda,x}$ of $\mathfrak{G}$ of dimension 1 such that $\langle \pmb{e}_u,\pmb{x} \rangle \neq \langle \pmb{e}_v,\pmb{x} \rangle$ on condition that $u$ and $v$ do not belong to the same orbit of $\mathfrak{G}$, where $\pmb{x}$ is a non-trivial vector in $W_{\lambda,x}$. 

In our example, $\dEP{\mathfrak{G}}{V_\lambda}$ possesses only one trivial cell $[10]$, but at this stage we cannot be sure that the eigenspaces of $\mathbf{A}(G)$ are irreducible for $\mathfrak{G}$, so we cannot decide whether $\mathfrak{G}$ is transitive. On the other hand, the matrix $\mathbf{A}(G/ \dEP{\mathfrak{G}}{V_\lambda})$ has only one eigenvalue 3, so we cannot employ the eigenspace of $\mathbf{A}(G/ \dEP{\mathfrak{G}}{V_\lambda})$ to split eigenspaces of $\mathbf{A}(G)$. As a result, we have to use some of the OPSB to split those eigenspaces of $\mathbf{A}(G)$ and thus turn to figuring out IRs of $\mathfrak{G}_v$ for vertices we have chosen instead of trying to decompose eigenspaces of $\mathbf{A}(G)$ into IRs of $\mathfrak{G}$. However we could ultimately obtain the decomposition of $\R^{10}$ into IRs of $\mathfrak{G}$ by means of IRs of $\mathfrak{G}_v$ due to Lemma \ref{Lemma-IRsFeatureGvNonTrivial}. 

Recall that $\dEP{1}{V_\lambda} = \{ \{1\},\{2,5,6\},\{3,4,7,8,9,10\} \}$. It is easy to check that 
$$ 
\mathrm{spec} \hspace{0.6mm} \mathbf{A}( G / \dEP{1}{V_\lambda} ) = \{ 3, -2, 1 \}.
$$ 
In accordance with the relation between eigenvectors of $\mathbf{A}( G / \dEP{1}{V_\lambda} )$ and $\mathbf{A}( G )$, those two projections $\dproj{V_{-2}}( \pmb{e}_1 )$ and $\dproj{V_{1}}( \pmb{e}_1 )$ are lifted from eigenspaces of $\mathbf{A}( G / \dEP{1}{V_\lambda} )$, so we can now split $V_{-2}$ and $V_1$ by eigenspaces of $\mathbf{A}( G / \dEP{1}{V_\lambda} )$ relevant. Note that if $U$ is an $\sigma$-invariant subspace, where $\sigma$ is a permutation, then the orthogonal complement $U^{\perp}$ is also $\sigma$-invariant. Consequently, it is sufficient, to determine if 1 and 7 are symmetric, to analyze one of eigenspaces $V_{-2}$ and $V_1$. In what follows, we focus on the subspace $V_{-2}$.

Set $X_{-2,1,0}[1] =  \mathbf{R}_{ \dEP{1}{V_{\lambda}} } V_{-2}^{ G/\dEP{1}{V_{\lambda}} }$ where $V_{-2}^{ G/\dEP{1}{V_{\lambda}} }$ is the eigenspace of $\mathbf{A}(G/\dEP{1}{V_{\lambda}})$ corresponding to the eigenvalue -2. We now can decompose $V_{-2}$ as $X_{-2,1,0}[1] \oplus X_{-2,1,1}[1]$, where $X_{-2,1,1}[1]$ is the orthogonal complement of $X_{-2,1,0}[1]$ in $V_{-2}$. For convenience, we define $X_{-2,1,1}[1]$ as
$$
V_{-2} \ominus X_{-2,1,0}[1].
$$
Evidently, $\dim X_{-2,1,1}[1] = 3$. Let us now turn to determining orbits of $\mathfrak{G}_1$ by means of the equitable partition $\dep{\mathfrak{G}_1}{ X_{-2,1,0}[1] \oplus X_{-2,1,1}[1] }$.

We first list the OPSB onto $X_{-2,1,1}[1]$ and then figure out $\dep{1;v}{ X_{-2,1,0}[1] \oplus X_{-2,1,1}[1] }$ for every vertex $v \in [10] \setminus \{1\}$ by means of the relation (\ref{TheBinaryRelation}). After that, we can figure out the partition $\dep{\mathfrak{G}_1}{ X_{-2,1,0}[1] \oplus X_{-2,1,1}[1] }$ by means of another relation (\ref{TheBinaryRelation-2}).

{ \scriptsize
$$ 
\begin{array}{c}
\begin{pmatrix} 
 \begin{array}{rrrrrrrrrr}
 0 & 0 & 0 & 0 & 0 & 0 & 0 & 0 & 0 & 0 \\
 0 & 0.222222 & -0.222222 & 0.111111 & -0.111111 & -0.111111 & -0.222222 & 
  0.111111 & 0.111111 & 0.111111 \\
 0 & -0.222222 & 0.388889 & -0.277778 & 0.111111 & 0.111111 & 
  0.0555556 & -0.277778 & 0.0555556 & 0.0555556 \\
 0 & 0.111111 & -0.277778 & 0.388889 & -0.222222 & 0.111111 & 0.0555556 & 
  0.0555556 & -0.277778 & 0.0555556 \\
 0 & -0.111111 & 0.111111 & -0.222222 & 0.222222 & -0.111111 & 0.111111 & 
  0.111111 & 0.111111 & -0.222222 \\
 0 & -0.111111 & 0.111111 & 0.111111 & -0.111111 & 0.222222 & 
  0.111111 & -0.222222 & -0.222222 & 0.111111 \\
 0 & -0.222222 & 0.0555556 & 0.0555556 & 0.111111 & 0.111111 & 0.388889 & 
  0.0555556 & -0.277778 & -0.277778 \\
 0 & 0.111111 & -0.277778 & 0.0555556 & 0.111111 & -0.222222 & 0.0555556 & 
  0.388889 & 0.0555556 & -0.277778 \\
 0 & 0.111111 & 0.0555556 & -0.277778 & 0.111111 & -0.222222 & -0.277778 & 
  0.0555556 & 0.388889 & 0.0555556 \\
 0 & 0.111111 & 0.0555556 & 0.0555556 & -0.222222 & 
  0.111111 & -0.277778 & -0.277778 & 0.0555556 & 0.388889
  \end{array}
\end{pmatrix}\\
  ~ \\ 
  { \text{ \footnotesize The OPSB matrix relevant to 1 in }\displaystyle  X_{-2,1,1}[1] }
 \end{array}
$$ }

It is easy to check that $\dep{\mathfrak{G}_1}{ X_{-2,1,0}[1] \oplus X_{-2,1,1}[1] } = \{ \{1\},\{2,5,6\},\{3,4,7,8,9,10\} \}$. It is different from $\dEP{\mathfrak{G}}{V_{\lambda}}$ above that we now could decompose $X_{-2,1,1}[1]$ further by means of one cell $\{2,5,6\}$ of $\dep{\mathfrak{G}_1}{ X_{-2,1,0}[1] \oplus X_{-2,1,1}[1] }$. To be precise 
$$
X_{-2,1,1}[1] = \drm{span}{ \{ X_{-2,1,1}[1] : \{2,5,6\} \} } \oplus X_{-2,1,3}[1],
$$ 
where 
$$
\drm{span}{ \{ X_{-2,1,1}[1] : \{2,5,6\} \} } = 
\drm{span}{ \{ \dproj{X_{-2,1,1}[1]}( \pmb{e}_x ) :  x \in \{2,5,6\} \} },
$$ 
which is of dimension 2 and denoted by $X_{-2,1,2}[1]$, and $X_{-2,1,3}[1]$ is the orthogonal complement of $X_{-2,1,2}[1]$ in $X_{-2,1,1}[1]$, which is spanned by the first row in the following matrix. Note that the subspace $X_{-2,1,2}[1]$ is invariant for $\mathfrak{G}_1$, since the set $\{2,5,6\}$ is a cell of $\mathrm{EP}[\mathfrak{G}_1]( X_{-2,1,0}[1] \oplus X_{-2,1,1}[1] )$. Consequently, the subspace $X_{-2,1,3}[1]$ is also $\mathfrak{G}_1$-invariant.

{ \scriptsize
$$ 
\begin{array}{c}
\begin{pmatrix} 
 \begin{array}{rrrrrrrrrr}
 0 & 0 & 0.408248 & -0.408248 & 0 & 0 & -0.408248 & -0.408248 & 0.408248 & 0.408248 \\
 0.408248 &	0 &	-0.408248 &	0.408248 &	-0.408248 &	-0.408248 &	0 &	0.408248 &	0 &	0
\end{array}
\end{pmatrix}\\
  ~ \\ 
  { \text{\small vectors spanning } X_{-2,1,3}[1] \mbox{ and } X_{-2,1,3}[7] }
 \end{array}
$$ }

Due to corollaries \ref{Cor-DecomposeInvariantSubspaces} and \ref{Cor-SpanInvariantSubspaces-Isomorphic}, it is often the case that we can split subspaces like $X_{-2,1,1}[1]$ by virtue of a subspace spanned by one of cells of $\dEP{\mathfrak{H}}{X_{\lambda,k}}$, where $\mathfrak{H}$ is a subgroup of $\mathfrak{G}$. As we shall see in the last section, it is one of major tools we use to split subspaces involved and then to refine equitable partitions we have.

\begin{figure}[H]
\begin{center}
\includegraphics[angle=2,totalheight=5cm]{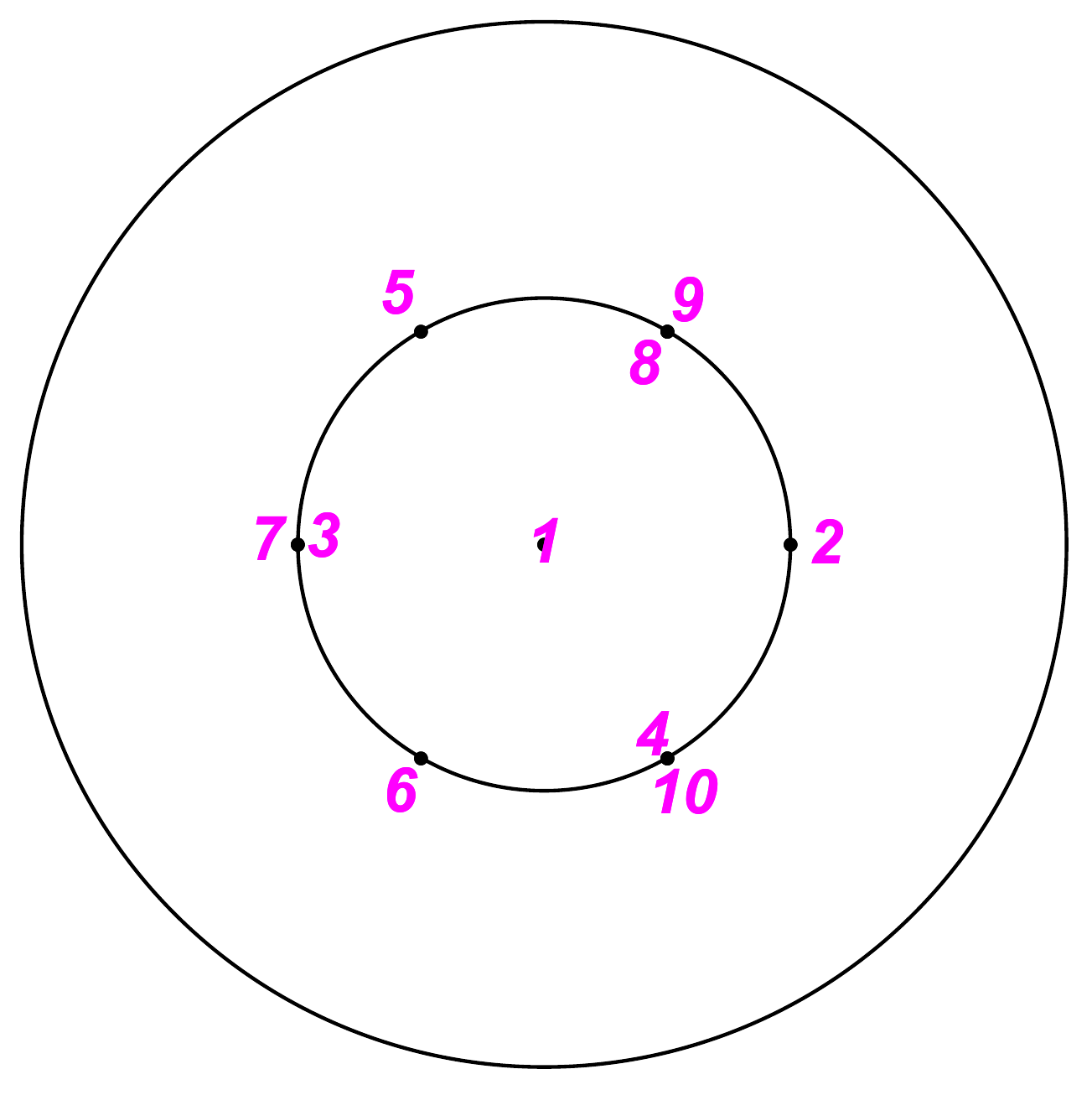} 
~~~~~~~~~ 
\includegraphics[angle=2,totalheight=5cm]{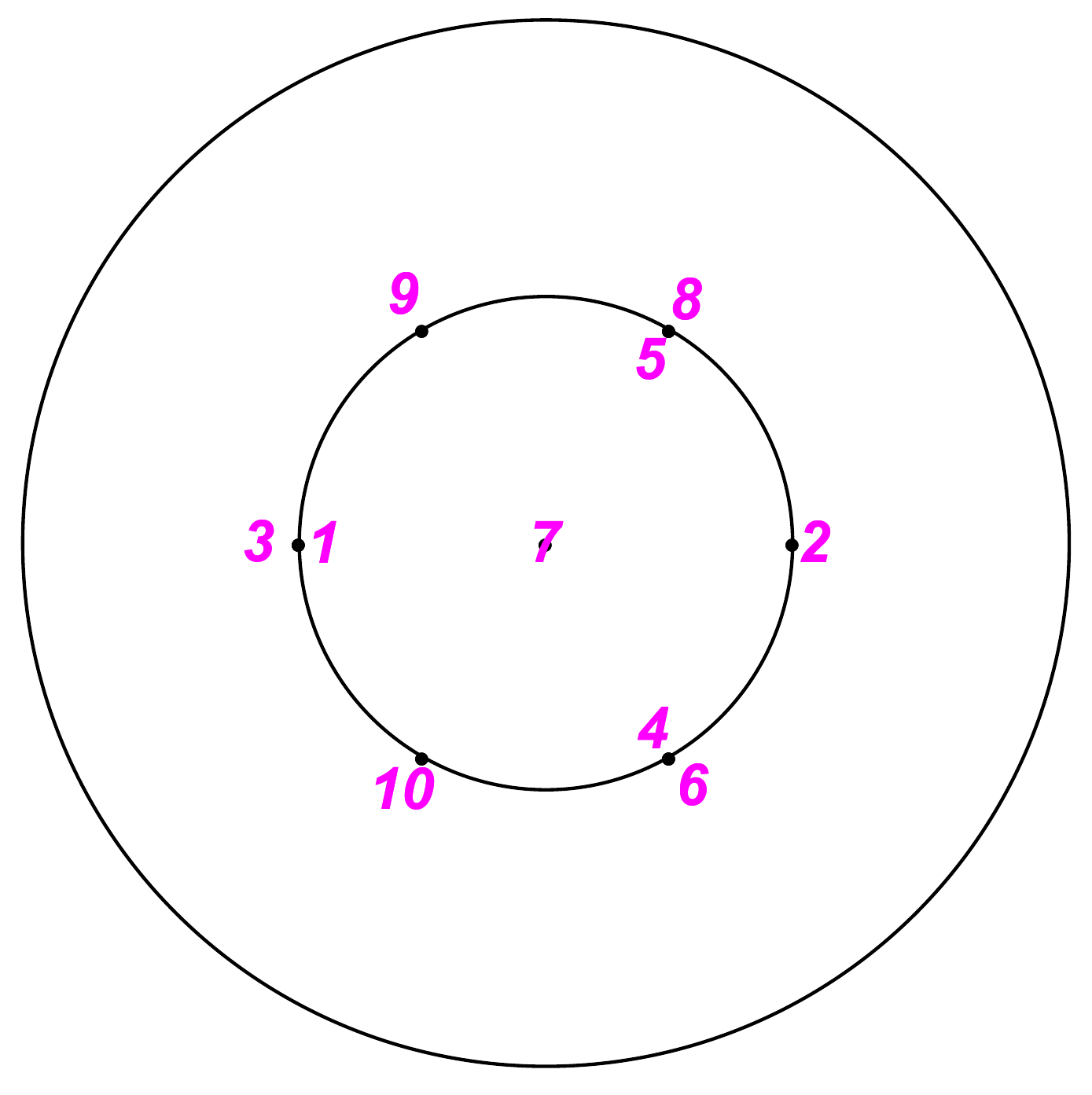}
\end{center}
\caption{The {\it dist.} of OPSB onto $X_{-2,1,2}[1]$ and $X_{-2,1,2}[7]$ }
\end{figure}

In summary, we now decompose $V_{-2}$ into an orthogonally direct sum $\oplus_{ k \in \{0,2,3\} } X_{-2,1,k}[1]$.
By analyzing the {\it dist.} of OPSB onto those three subspaces respectively, one can readily see that the partition $\{ \{1\},\{2,5,6\},\{3,4,7,8,9,10\} \}$ is comprised of orbits of $\mathfrak{G}_1$, and furthermore those three subspaces above are indeed IRs of $\mathfrak{G}_1$ contained in $V_{-2}$. According to Lemma \ref{Lemma-BlockCHARACTERISEDbyProj}, for any block system of $\mathfrak{G}$, there are IRs of $\mathfrak{G}$ displaying the system via the {\it dist.} of OPSB onto those IRs. In our example, those three IRs show us that the action of $\mathfrak{G}_1$ on $\{2,5,6\}$ is primitive but that on $\{3,4,7,8,9,10\}$ is endowed with two block systems $\{ \{3,7\},\{4,10\},\{8,9\} \}$ and $\{ \{3,9,10\},\{4,7,8\} \}$. 

\vspace{3mm}
It is obvious that if 1 and 7 belong to the same orbit of $\mathfrak{G}$ then the stabilizer $\mathfrak{G}_7$ would have three IRs in $V_{-2}$ such that the {\it dist.} of OPSB on them possesses the same structure as that on $X_{-2,1,k}[1]$, $k = 0,2,3$, respectively. Therefore, we now know how to decompose $V_{-2}$ with respect to $\mathfrak{G}_7$. The following matrix displays the OPSB onto the subspace $X_{-2,1,1}[7]$ of $V_{-2}$, which is obtained by $ V_{-2} \ominus X_{-2,1,0}[7]$ where $X_{-2,1,0}[7] = \mathbf{R}_{ \dEP{7}{V_{\lambda}} } V_{-2}^{ G/\dEP{7}{V_{\lambda}} }$, and similarly, $X_{-2,1,1}[7]$ can be decomposed into $X_{-2,1,2}[7] \oplus X_{-2,1,3}[7]$, where the subspace $X_{-2,1,2}[7] = \drm{span}{ \{ X_{-2,1,1}[7] : \{2,9,10\} \} }$, which is spanned by $\{ \dproj{X_{-2,1,1}[7]}( \pmb{e}_x ) :  x \in \{2,9,10\} \}$ and of dimension 2, and $X_{-2,1,3}[7]$ is the orthogonal complement of $X_{-2,1,2}[7]$ in $X_{-2,1,1}[7]$.

{ \scriptsize
$$ 
\begin{array}{c}
\begin{pmatrix} 
 \begin{array}{rrrrrrrrrr} 
  0.388889 & -0.222222 & 0.0555556 & 0.0555556 & -0.277778 & -0.277778 & 0 & 
  0.0555556 & 0.111111 & 0.111111 \\
  -0.222222 & 0.222222 & -0.222222 & 0.111111 & 0.111111 & 0.111111 & 0 & 
  0.111111 & -0.111111 & -0.111111 \\
  0.0555556 & -0.222222 & 0.388889 & -0.277778 & 0.0555556 & 0.0555556 & 
  0 & -0.277778 & 0.111111 & 0.111111 \\
  0.0555556 & 0.111111 & -0.277778 & 0.388889 & -0.277778 & 0.0555556 & 0 & 
  0.0555556 & -0.222222 & 0.111111 \\
  -0.277778 & 0.111111 & 0.0555556 & -0.277778 & 0.388889 & 0.0555556 & 0 & 
  0.0555556 & 0.111111 & -0.222222 \\
  -0.277778 & 0.111111 & 0.0555556 & 0.0555556 & 0.0555556 & 0.388889 & 
  0 & -0.277778 & -0.222222 & 0.111111 \\
  0 & 0 & 0 & 0 & 0 & 0 & 0 & 0 & 0 & 0 \\
  0.0555556 & 0.111111 & -0.277778 & 0.0555556 & 0.0555556 & -0.277778 & 0 & 
  0.388889 & 0.111111 & -0.222222 \\
  0.111111 & -0.111111 & 0.111111 & -0.222222 & 0.111111 & -0.222222 & 0 & 
  0.111111 & 0.222222 & -0.111111 \\
  0.111111 & -0.111111 & 0.111111 & 0.111111 & -0.222222 & 0.111111 & 
  0 & -0.222222 & -0.111111 & 0.222222
  \end{array}
\end{pmatrix}\\
  ~ \\ 
  { \text{ \footnotesize The OPSB matrix relevant to 7 in }\displaystyle  X_{-2,1,1}[7]  }
 \end{array}
$$ }

Accordingly, provided that 1 and 7 are in the same orbit of $\mathfrak{G}$, $\mathfrak{G}_7$ would have three orbits $\{7\}$, $\{2,9,10\}$, $\{1,3,4,5,6,8\}$ and its action on $\{2,9,10\}$ would be primitive and that on $\{1,3,4,5,6,8\}$ would have two block systems $\{ \{1,3\}, \{4,6\},\{5,8\} \}$ and $\{ \{1,4,8\},\{3,5,6\} \}$. In virtue of those relations and  two decompositions of the eigenspace $V_{-2}$ 
$$
\bigoplus_{ k \in \{0,2,3\} } X_{-2,1,k}[1]
\mbox{ and }
\bigoplus_{ k \in \{0,2,3\} } X_{-2,1,k}[7],
$$
one can easily obtain a permutation $\sigma$ on $V_{-2}$ such that 
\begin{align*} 
\sigma \hspace{0.6mm} \dproj{ V_{-2} }( \pmb{e}_1 )   & = \dproj{ V_{-2} }( \pmb{e}_7 ),  \\ 
\sigma \hspace{0.6mm} \dproj{ X_{-2,1,2}[1] }( \pmb{e}_2 ) & = \dproj{ X_{-2,1,2}[7] }( \pmb{e}_2 ), \\
\sigma \hspace{0.6mm} \dproj{ X_{-2,1,2}[1] }( \pmb{e}_5 ) & = \dproj{ X_{-2,1,2}[7] }( \pmb{e}_9 ), \\
\sigma \hspace{0.6mm} \dproj{ X_{-2,1,3}[1] }( \pmb{e}_3 ) & = \dproj{ X_{-2,1,3}[7] }( \pmb{e}_1 ).
\end{align*}
and thus
$$  \sigma = 
\begin{pmatrix} 
 \begin{array}{rrrrrrrrrr}
 1 & 2 & 5 & 6 & 3 & 7 & 10 & 4 & 9 & 8 \\ 
 7 & 2 & 9 & 10 & 1 & 3 & 4 & 6 & 8 & 5
\end{array}
\end{pmatrix}.
$$ 
By means of the relation (\ref{Equation00}), one can quickly check that $\sigma$ is an automorphism of the Petersen graph, so two vertices 1 and 7 are symmetric in the Petersen, and consequently two partitions $\{ \{1\},\{2,5,6\},\{3,4,7,8,9,10\} \}$ and $\{ \{7\}, \{2,9,10\}, \{1,3,4,5,6,8\} \}$ do consist respectively of orbits of $\mathfrak{G}_1$ and $\mathfrak{G}_7$. 

\vspace{3mm}
Now let us see how to characterize the structure of $\mathfrak{G}$ by virtue of what we have had. First of all, let us build a bipartite graph $\dubipart{G}{1}{7}$. The vertex set of $\dubipart{G}{1}{7}$ consists of orbits of those two stabilizers $\mathfrak{G}_{1}$ and $\mathfrak{G}_{7}$, and two vertices in the graph are adjacent if the intersection of two subsets corresponding to the vertices is not empty (see Fig. 3). Evidently, the graph $\dubipart{G}{1}{7}$ is connected, that due to Lemma \ref{LemFindBlocks-1} shows the action of $\mathfrak{G}$ on $V$ is transitive. Moreover, in accordance with Lemma \ref{Lemma-IRsFeatureGvNonTrivial}, two eigenspaces $V_{-2}$ and $V_1$ are indeed irreducible for $\mathfrak{G}$, for the subspaces $V_{-2}[1]$ and $V_{1}[1]$ are both of dimension 1, where $V_{\lambda}[1]$ is defined as $\{ \pmb{x} \in V_{\lambda} : \xi \hspace{0.5mm} \pmb{x} = \pmb{x}, \forall\xi\in\mathfrak{G}_1 \}$ and actually equal to $\mathbf{R}_{ \Pi_1^* } V_{\lambda}^{ G/ \Pi_1^* }$, where $\Pi_1^*$ is the partition consisting of orbits of $\mathfrak{G}_1$ and $\lambda = -2,1$. As a result, the action of $\mathfrak{G}$ on $[10]$ is primitive due to Lemma \ref{Lemma-BlockCHARACTERISEDbyProj}. 

\begin{figure}[H]
\begin{center}
\includegraphics[totalheight=5.2cm]{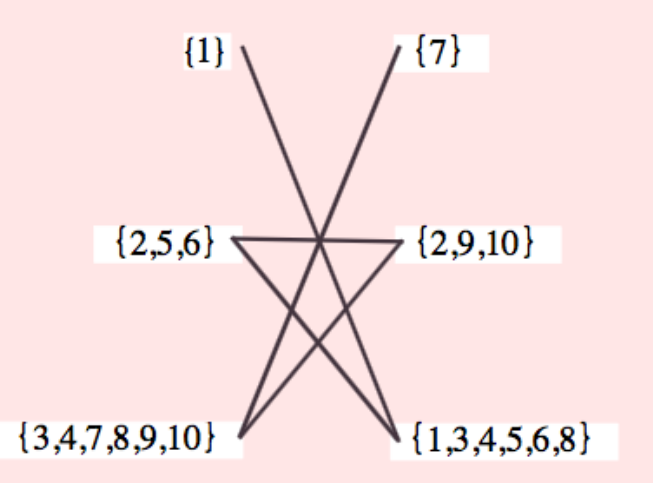}
\end{center}
\caption{The bipartite graph $\dubipart{G}{1}{7}$ }
\end{figure}

As having seen, we show two vertices 1 and 7 are symmetric in the Petersen graph by comparing the {\it dist.} of OPSB on IRs of $\mathfrak{G}_1$ and that of $\mathfrak{G}_7$ relevant. There is another way of displaying the symmetry via the OPSB matrix, that is somewhat better than the first one, for it enables us to see symmetries among vertices in subspaces of dimension larger than 3. For instance, we reorganize the OPSB matrix onto $V_{-2}$ in accordance with the order 1,2,5,6,3,7,10,4,9,8, that means we rearrange the order of entries in each column of the OPSB matrix according to the order (see the matrix below). Similarly, we can reorganize the OPSB matrix onto $V_{-2}$ in accordance with the order 7,2,9,10,1,3,4,6,8,5. It is easy to check that those two reorganized matrices are the same. As the matter of fact, it is the relation that shows 1 and 7 are indeed symmetric in the Petersen graph.

{ \scriptsize
$$ 
\begin{array}{c}
\begin{pmatrix} 
 \begin{array}{rrrrrrrrrr}
  0.400000 & -0.266667 & -0.266667 & -0.266667 & 0.0666667 & 0.0666667 & 
  0.0666667 & 0.0666667 & 0.0666667 & 0.0666667 \\
 -0.266667 & 0.400000 & 0.0666667 & 0.0666667 & -0.266667 & -0.266667 & 
  0.0666667 & 0.0666667 & 0.0666667 & 0.0666667 \\
 -0.266667 & 0.0666667 & 0.400000 & 0.0666667 & 0.0666667 & 
  0.0666667 & -0.266667 & -0.266667 & 0.0666667 & 0.0666667 \\
 -0.266667 & 0.0666667 & 0.0666667 & 0.400000 & 0.0666667 & 0.0666667 & 
  0.0666667 & 0.0666667 & -0.266667 & -0.266667 \\
 0.0666667 & -0.266667 & 0.0666667 & 0.0666667 & 0.400000 & 0.0666667 & 
  0.0666667 & -0.266667 & 0.0666667 & -0.266667 \\
 0.0666667 & -0.266667 & 0.0666667 & 0.0666667 & 0.0666667 & 
  0.400000 & -0.266667 & 0.0666667 & -0.266667 & 0.0666667 \\
 0.0666667 & 0.0666667 & -0.266667 & 0.0666667 & 0.0666667 & -0.266667 & 
  0.400000 & 0.0666667 & 0.0666667 & -0.266667 \\
 0.0666667 & 0.0666667 & -0.266667 & 0.0666667 & -0.266667 & 0.0666667 & 
  0.0666667 & 0.400000 & -0.266667 & 0.0666667 \\
 0.0666667 & 0.0666667 & 0.0666667 & -0.266667 & 0.0666667 & -0.266667 & 
  0.0666667 & -0.266667 & 0.400000 & 0.0666667 \\
 0.0666667 & 0.0666667 & 0.0666667 & -0.266667 & -0.266667 & 
  0.0666667 & -0.266667 & 0.0666667 & 0.0666667 & 0.400000
\end{array}
\end{pmatrix}\\
  ~ \\ 
  { \text{ \footnotesize Reorganizing the OPSB matrix onto } V_{-2} \text{\it ~according to two orders: }\displaystyle 1,2,5,6,3,7,10,4,9,8 \text{ and } 7,2,9,10,1,3,4,6,8,5  }
 \end{array}
$$ }

{ \scriptsize
$$ 
\begin{array}{c}
\begin{pmatrix} 
 \begin{array}{rrrrrrrrrr}
 0.500000 & 0.166667 & 0.166667 & 
  0.166667 & -0.166667 & -0.166667 & -0.166667 & -0.166667 & -0.166667 & \
-0.166667\\
 0.166667 & 0.500000 & -0.166667 & -0.166667 & 0.166667 & 
  0.166667 & -0.166667 & -0.166667 & -0.166667 & -0.166667\\
 0.166667 & -0.166667 & 0.500000 & -0.166667 & -0.166667 & -0.166667 & 
  0.166667 & 0.166667 & -0.166667 & -0.166667\\
 0.166667 & -0.166667 & -0.166667 & 
  0.500000 & -0.166667 & -0.166667 & -0.166667 & -0.166667 & 0.166667 & 
  0.166667\\
 -0.166667 & 0.166667 & -0.166667 & -0.166667 & 
  0.500000 & -0.166667 & -0.166667 & 0.166667 & -0.166667 & 0.166667\\
 -0.166667 & 0.166667 & -0.166667 & -0.166667 & -0.166667 & 0.500000 & 
  0.166667 & -0.166667 & 0.166667 & -0.166667\\
 -0.166667 & -0.166667 & 0.166667 & -0.166667 & -0.166667 & 0.166667 & 
  0.500000 & -0.166667 & -0.166667 & 0.166667\\
 -0.166667 & -0.166667 & 0.166667 & -0.166667 & 
  0.166667 & -0.166667 & -0.166667 & 0.500000 & 0.166667 & -0.166667\\
 -0.166667 & -0.166667 & -0.166667 & 0.166667 & -0.166667 & 
  0.166667 & -0.166667 & 0.166667 & 0.500000 & -0.166667\\
 -0.166667 & -0.166667 & -0.166667 & 0.166667 & 0.166667 & -0.166667 & 
  0.166667 & -0.166667 & -0.166667 & 0.500000
\end{array}
\end{pmatrix}\\
  ~ \\ 
   { \text{ \footnotesize Reorganizing the OPSB matrix onto } V_{1} \text{\it ~according to two orders: }\displaystyle 1,2,5,6,3,7,10,4,9,8 \text{ and } 7,2,9,10,1,3,4,6,8,5  }
 \end{array}
$$ }

\subsection{A block family of $\mathfrak{G}$}

As we have mentioned, a non-empty subset $B$ of some orbit of $\mathfrak{G}$ is a block for $\mathfrak{G}$ if for any permutation $\sigma \in \mathfrak{G}$, either $\sigma B = B$ or $\sigma B \cap B =\emptyset$. That feature makes $B$ impressive, for the action of $\mathfrak{G}$ on $B$ is like that on one vertex, although $B$ may contain a large number of vertices. As a result, the action of $\mathfrak{G}$ on a block system is somewhat like that on the whole set $V$.

Let us first present a fundamental characterization of blocks, which suggests that the stabilizer of a block is to some extent maximal in $\mathfrak{G}$. It is an essential feature of blocks and we shall give two other characterizations of the feature in the next section.

\begin{Lemma}\label{LemBlock}
Let $\mathfrak{H}$ be a subgroup of a permutation group $\mathfrak{G}$ that acts on a finite set $V$. If $T$ is an orbit of $\mathfrak{H}$ such that $\dfix{G}{t}\leq\mathfrak{H}$ for some $t\in T$, then $T$ is a block for $\mathfrak{G}$ and $\mathfrak{H}$ is the stabilizer of $T$, 
{\it i.e.,} $\mathfrak{H}=\dfix{G}{T}$. Conversely, if $B$ is a block for $\mathfrak{G}$ then $\dfix{G}{b}$ must be a subgroup of $\dfix{G}{B}$ for some $b\in B$. \end{Lemma}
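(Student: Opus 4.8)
The plan is to treat the two implications separately. The converse is the quicker one, so I would dispatch it first as a warm-up: given a block $B$, pick any $b\in B$ and verify $\dfix{G}{b}\le\dfix{G}{B}$ directly. If $\gamma\in\dfix{G}{b}$ then $\gamma b=b$, so $b\in\gamma B\cap B$; since $B$ is a block this forces $\gamma B=B$, i.e.\ $\gamma\in\dfix{G}{B}$. This uses nothing beyond the block axiom and in fact works for every $b\in B$, which is stronger than the assertion for merely some $b$.

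For the forward direction the central observation is that, because $T$ is an orbit of $\mathfrak{H}$ containing $t$, we have $T=\mathfrak{H}t$; everything else is orbit--stabilizer bookkeeping made available by the hypothesis $\dfix{G}{t}\le\mathfrak{H}$. First I would establish $\mathfrak{H}=\dfix{G}{T}$ by two inclusions. The inclusion $\mathfrak{H}\le\dfix{G}{T}$ is immediate: for $h\in\mathfrak{H}$ we have $hT=h(\mathfrak{H}t)=\mathfrak{H}t=T$. For the reverse inclusion, take $\gamma\in\dfix{G}{T}$; since $t\in T=\gamma T$ we may write $\gamma t=ht$ for some $h\in\mathfrak{H}$, whence $h^{-1}\gamma\in\dfix{G}{t}\le\mathfrak{H}$ and therefore $\gamma\in\mathfrak{H}$. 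This proves $\dfix{G}{T}\le\mathfrak{H}$, and the equality $\mathfrak{H}=\dfix{G}{T}$ follows.

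It remains to show that $T$ is a block. I would take $\gamma\in\mathfrak{G}$ with $\gamma T\cap T\neq\emptyset$ and prove $\gamma T=T$; the disjoint alternative is then automatic. Choosing $s_1,s_2\in T$ with $\gamma s_1=s_2$ and writing $s_i=h_it$ with $h_i\in\mathfrak{H}$ via $T=\mathfrak{H}t$, the element $h_2^{-1}\gamma h_1$ fixes $t$, so it lies in $\dfix{G}{t}\le\mathfrak{H}$; this forces $\gamma\in\mathfrak{H}=\dfix{G}{T}$ and hence $\gamma T=T$. Thus $T$ satisfies the block axiom.

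The only genuinely delicate point is the reverse inclusion $\dfix{G}{T}\le\mathfrak{H}$, together with the structurally identical block step: both hinge on the ability to \emph{absorb} into $\mathfrak{H}$ an element that fixes $t$, which is exactly where the hypothesis $\dfix{G}{t}\le\mathfrak{H}$ is consumed. Without it one obtains only $\mathfrak{H}\le\dfix{G}{T}$, and $T$ need not be a block; so the care lies in the logical ordering---establishing $\mathfrak{H}=\dfix{G}{T}$ before invoking it in the block verification---rather than in any computation.
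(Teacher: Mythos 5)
Your proof is correct. The converse is the standard one-line absorption argument (and, as you note, it holds for \emph{every} $b\in B$, not just some $b$); in the forward direction you correctly obtain both $\mathfrak{H}=\dfix{G}{T}$ and the block property by writing points of $T$ as $\mathfrak{H}$-translates of $t$ and absorbing the resulting element of $\dfix{G}{t}$ into $\mathfrak{H}$. The only tacit step is that $T$ lies inside a single $\mathfrak{G}$-orbit, which the paper's definition of a block requires; this is immediate from $T=\mathfrak{H}t\subseteq\mathfrak{G}t$ and deserves one sentence.

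As for comparison: the paper actually states Lemma \ref{LemBlock} without any proof and then invokes it later (in the proofs of Lemma \ref{KeyThmMaximalSubgroup} and of Lemma \ref{Lemma-EquitablePartProjOntoIRs}), so your argument fills a genuine omission rather than paralleling an existing proof. The nearest argument the paper does supply is its proof of Lemma \ref{LemBlockStructure}, a companion statement whose hypothesis is the counting condition $|S|/|T|=[\mathfrak{G}:\mathfrak{H}]$ rather than $\dfix{G}{t}\leq\mathfrak{H}$: there one maps cosets onto translates via $\sigma\mathfrak{H}\mapsto\sigma T$, shows this map is onto, and squeezes cardinalities until the translates of $T$ are forced to be pairwise disjoint. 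Your stabilizer-absorption route is the classical one (essentially the Dixon--Mortimer argument): it needs no counting and no finiteness of $V$ or $\mathfrak{G}$, and it consumes the hypothesis $\dfix{G}{t}\leq\mathfrak{H}$ exactly at the two absorption steps you single out. The paper's counting route, by contrast, is tailored to its different hypothesis and is what yields the index formula $|S|/|B|=[\mathfrak{G}:\dfix{G}{B}]$ of Lemma \ref{LemBlockStructure}, a conclusion your hypothesis neither requires nor directly produces.
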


The following is a classical result characterizing the relation between blocks and their stabilizers, that explains the reason why blocks are vitally important in finding out a generating set of $\mathfrak{G}$.

\begin{Lemma}[Dixon and Mortimer \cite{DM}] \label{LemBlock&Stabilizers} 
Let $\mathfrak{G}$ be a permutation group acting on a finite set $V$ transitively, let $\mathcal{B}$ be the set of all blocks $B$ for $\mathfrak{G}$ with $b\in B$, where $b\in V$, and let $\mathcal{S}$ be the set of all subgroups $\mathfrak{H}$ of $\mathfrak{G}$ with $\dfix{G}{b} \leq \mathfrak{H}$. Then there is a bijection $\Psi$ of $\mathcal{B}$ onto $\mathcal{S}$ defined by $\Psi(B) = \dfix{G}{B}$, and furthermore the mapping $\Psi$ is order-preserving in the sense that if $B_1$ and $B_2$ are two blocks in $\mathcal{B}$ then $B_1 \subseteq B_2$ if and only if $\Psi(B_1) \leq \Psi(B_2)$. 
\end{Lemma}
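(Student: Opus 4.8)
The plan is to exhibit an explicit two-sided inverse to $\Psi$ and then verify the order relation directly, leaning on Lemma \ref{LemBlock} at every step. Fix the base point $b\in V$ once and for all. I would define the candidate inverse $\Phi:\mathcal{S}\to\mathcal{B}$ by sending a subgroup $\mathfrak{H}\geq\mathfrak{G}_b$ to its orbit through $b$, namely $\Phi(\mathfrak{H})=\mathfrak{H}b=\{\eta b:\eta\in\mathfrak{H}\}$. The first task is to confirm that both maps land where they should. For $\Psi$: if $B$ is a block with $b\in B$ and $\gamma\in\mathfrak{G}_b$, then $b=\gamma b\in B\cap\gamma B$, so the defining property of a block forces $\gamma B=B$; hence $\mathfrak{G}_b\leq\mathfrak{G}_B$ and $\Psi(B)\in\mathcal{S}$. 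For $\Phi$: applying Lemma \ref{LemBlock} to the subgroup $\mathfrak{H}$, its orbit $T=\mathfrak{H}b$, and the point $b$ (admissible since $\mathfrak{G}_b\leq\mathfrak{H}$) tells me at once that $T$ is a block for $\mathfrak{G}$ and that $\mathfrak{H}=\mathfrak{G}_T$, so $\Phi(\mathfrak{H})\in\mathcal{B}$.

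Next I would show $\Psi$ and $\Phi$ are mutually inverse. The composite $\Psi\circ\Phi$ is immediate, since Lemma \ref{LemBlock} already yields $\mathfrak{G}_{\mathfrak{H}b}=\mathfrak{H}$, which is precisely $\Psi(\Phi(\mathfrak{H}))=\mathfrak{H}$. The substance lies in $\Phi\circ\Psi=\mathrm{id}$, that is, in proving $\mathfrak{G}_B\cdot b=B$ for every block $B\ni b$. The inclusion $\mathfrak{G}_B\cdot b\subseteq B$ is free, as each element of $\mathfrak{G}_B$ preserves $B$. For the reverse inclusion I would invoke transitivity of $\mathfrak{G}$ on $V$: given any $b'\in B$, choose $\gamma\in\mathfrak{G}$ with $\gamma b=b'$; then $b'=\gamma b\in B\cap\gamma B$, so the block property gives $\gamma B=B$, i.e. $\gamma\in\mathfrak{G}_B$, whence $b'\in\mathfrak{G}_B\cdot b$. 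This establishes $B\subseteq\mathfrak{G}_B\cdot b$ and closes the loop, so $\Psi$ is a bijection with inverse $\Phi$.

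Finally, for the order-preserving claim I would treat the two implications separately. If $\mathfrak{G}_{B_1}\leq\mathfrak{G}_{B_2}$, then applying the identity $B_i=\mathfrak{G}_{B_i}\cdot b$ just proved gives $B_1=\mathfrak{G}_{B_1}\cdot b\subseteq\mathfrak{G}_{B_2}\cdot b=B_2$. Conversely, if $B_1\subseteq B_2$ and $\gamma\in\mathfrak{G}_{B_1}$, then $\gamma b\in\gamma B_1=B_1\subseteq B_2$, while also $\gamma b\in\gamma B_2$ because $b\in B_1\subseteq B_2$; thus $\gamma b\in B_2\cap\gamma B_2$, the block property yields $\gamma B_2=B_2$, and therefore $\mathfrak{G}_{B_1}\leq\mathfrak{G}_{B_2}$.

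The one genuinely delicate point is the surjectivity-type identity $\mathfrak{G}_B\cdot b=B$: this is exactly where the hypothesis that $\mathfrak{G}$ acts transitively becomes indispensable, since it is transitivity that supplies, for each $b'\in B$, a group element carrying $b$ to $b'$, which the block property then upgrades to a member of the setwise stabilizer $\mathfrak{G}_B$. Everything else is a formal consequence of Lemma \ref{LemBlock} together with the defining dichotomy of a block.
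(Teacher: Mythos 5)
Your proof is correct. Note that the paper itself offers no proof of this lemma at all — it is imported by citation from Dixon and Mortimer — so there is nothing internal to compare against; what you have written supplies exactly the argument the paper omits. Your route is the standard one: Lemma \ref{LemBlock} already does the heavy lifting for one composite ($\Psi\circ\Phi=\mathrm{id}$ follows verbatim from $\mathfrak{G}_{\mathfrak{H}b}=\mathfrak{H}$), and you correctly isolate the only point where transitivity is genuinely needed, namely the identity $\mathfrak{G}_B\cdot b=B$: transitivity produces some $\gamma\in\mathfrak{G}$ with $\gamma b=b'$, and the block dichotomy upgrades $\gamma$ to an element of the setwise stabilizer. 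Both directions of the order-preservation claim are also handled correctly — the forward direction reuses $B_i=\mathfrak{G}_{B_i}\cdot b$, and the converse direction again runs the intersection argument $\gamma b\in B_2\cap\gamma B_2\Rightarrow\gamma B_2=B_2$. One small caveat worth being aware of: Lemma \ref{LemBlock}, on which you lean at every step, is itself stated in the paper without proof; your argument is sound given that lemma, and that lemma is a true (and easily verified) standard fact, but a fully self-contained write-up would include its short proof as well.
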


In virtue of the relation above, one can readily prove the following.

\begin{Lemma}\label{KeyThmMaximalSubgroup}  
Let $\mathfrak{G}$ be a permutation group acting transitively on a set $V$, and let $B$ be a block for $\mathfrak{G}$. Then $B$ is a maximal block if and only if $\dfix{G}{B}$ is a maximal subgroup of $\mathfrak{G}$.   
\end{Lemma}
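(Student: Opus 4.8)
The plan is to deduce the statement directly from the order-preserving bijection $\Psi$ furnished by Lemma \ref{LemBlock&Stabilizers}, treating both sides as posets and checking that $\Psi$ matches up maximal elements. First I would fix a point $b\in B$. By the converse half of Lemma \ref{LemBlock} we have $\dfix{G}{b}\le\dfix{G}{B}$, so $B$ lies in the poset $\mathcal{B}$ of all blocks containing $b$ and its image $\Psi(B)=\dfix{G}{B}$ lies in the poset $\mathcal{S}$ of all subgroups containing $\dfix{G}{b}$. Since $V$ is itself a block containing $b$ with $\Psi(V)=\dfix{G}{V}=\mathfrak{G}$, the top elements of the two posets correspond; as $\Psi$ is a bijection this gives $B=V$ if and only if $\dfix{G}{B}=\mathfrak{G}$, so $B$ is a proper block exactly when $\dfix{G}{B}$ is a proper subgroup.

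The one point that genuinely needs care — and the only real obstacle — is that being a \emph{maximal subgroup of} $\mathfrak{G}$ is a condition relative to \emph{all} proper subgroups, whereas $\Psi$ only records subgroups containing $\dfix{G}{b}$. This gap closes at once: if $\mathfrak{K}$ is any subgroup with $\dfix{G}{B}<\mathfrak{K}<\mathfrak{G}$, then $\dfix{G}{b}\le\dfix{G}{B}<\mathfrak{K}$ forces $\mathfrak{K}\in\mathcal{S}$. Hence the subgroups lying strictly between $\dfix{G}{B}$ and $\mathfrak{G}$ are precisely the members of $\mathcal{S}$ strictly above $\dfix{G}{B}$, and so the assertion ``$\dfix{G}{B}$ is a maximal subgroup of $\mathfrak{G}$'' is equivalent to ``$\dfix{G}{B}$ is a maximal element of $\mathcal{S}\setminus\{\mathfrak{G}\}$.''

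With these two observations in place the conclusion is immediate. Because $\Psi$ is an order-preserving bijection sending $V$ to $\mathfrak{G}$, it restricts to an isomorphism of posets $\mathcal{B}\setminus\{V\}\to\mathcal{S}\setminus\{\mathfrak{G}\}$, and any poset isomorphism carries maximal elements to maximal elements in both directions. By definition $B$ is a maximal block exactly when it is maximal in $\mathcal{B}\setminus\{V\}$, while by the previous paragraph $\dfix{G}{B}$ is a maximal subgroup exactly when it is maximal in $\mathcal{S}\setminus\{\mathfrak{G}\}$; the equivalence follows. If one prefers an explicit contrapositive for each direction: a strictly intermediate block $B\subsetneq B'\subsetneq V$ maps under $\Psi$ to $\dfix{G}{B}<\dfix{G}{B'}<\mathfrak{G}$, and symmetrically a strictly intermediate subgroup $\dfix{G}{B}<\mathfrak{K}<\mathfrak{G}$ (which lies in $\mathcal{S}$ by the argument above) pulls back through $\Psi^{-1}$ to a block strictly between $B$ and $V$, the order-preservation of $\Psi$ guaranteeing that strictness is respected throughout.
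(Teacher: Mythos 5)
Your proof is correct and follows essentially the same route as the paper: both rest on Lemma \ref{LemBlock} (to place $\dfix{G}{B}$ in $\mathcal{S}$) together with the order-preserving bijection $\Psi$ of Lemma \ref{LemBlock&Stabilizers}, and both close the only real gap by noting that any subgroup strictly between $\dfix{G}{B}$ and $\mathfrak{G}$ automatically contains $\dfix{G}{b}$ and hence lies in $\mathcal{S}$. If anything, your write-up is more careful than the paper's (which dispatches one direction in a single sentence), but the underlying argument is identical.
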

\begin{proof}[\bf Proof] 
Since $B$ is a block for $\mathfrak{G}$, $\dfix{G}{b}$ is a subgroup of $\dfix{G}{B}$ due to Lemma \ref{LemBlock}, and thus $\dfix{G}{B}$ is one member of the set $\mathcal{S}$ consisting of all subgroups $\mathfrak{H}$ of $\mathfrak{G}$ with $\dfix{G}{b} \leq \mathfrak{H}$. By means of Lemma \ref{LemBlock&Stabilizers}, $B$ must be a maximal block for $\mathfrak{G}$.

Now we turn to the necessity of the assertion. Suppose $\dfix{G}{B}$ is not a maximal subgroup of $\mathfrak{G}$ and  there exists a subgroup $\mathfrak{H}$ of $\mathfrak{G}$ such that $\dfix{G}{B} \lneq \mathfrak{H} \lneq \mathfrak{G}$. Then $\mathfrak{H}$ contains $\dfix{G}{b}$ as a subgroup and thus it is a member of $\mathcal{S}$. Since $B$ is a maximal and  $\Psi$ is an order-preserving map, $\Psi^{-1}(\mathfrak{H})$ should be a subset of $B$, and thereby $\mathfrak{H}$ would be a subgroup of $\dfix{G}{B}$, which is in contradiction with the assumption that $\dfix{G}{B} \lneq \mathfrak{H} \lneq \mathfrak{G}$.
\end{proof}

This lemma leads immediately to the following characterization of primitive groups. 

\begin{Lemma}[Dixon and Mortimer \cite{DM}]\label{LemPrimitiveness1stNeceSuffCond}  
Let $\mathfrak{G}$ be a permutation group acting transitively on a set $V$ with at least two elements. Then $\mathfrak{G}$ is primitive if and only if each stabilizer $\mathfrak{G}_v$ is a maximal subgroup of $\mathfrak{G}$, where $v$ is an element of $V$.   
\end{Lemma}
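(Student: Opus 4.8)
The plan is to route the whole statement through Lemma \ref{KeyThmMaximalSubgroup} by applying it to the singleton block. First I would observe that for any $v \in V$ the one-point set $\{v\}$ is a block for $\mathfrak{G}$, and that its setwise stabilizer $\dfix{G}{\{v\}}$ coincides with the point stabilizer $\mathfrak{G}_v$. Since $|V| \geq 2$, the whole orbit $V$ is a strictly larger block, so $\{v\}$ is a proper block and $\mathfrak{G}_v$ a proper subgroup; thus it is meaningful to ask whether either is maximal. Lemma \ref{KeyThmMaximalSubgroup}, applied with $B = \{v\}$, then yields at once that $\{v\}$ is a maximal block if and only if $\mathfrak{G}_v = \dfix{G}{\{v\}}$ is a maximal subgroup of $\mathfrak{G}$. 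Hence the lemma reduces to the purely combinatorial equivalence: $\mathfrak{G}$ is primitive if and only if $\{v\}$ is a maximal block.

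Next I would establish that equivalence directly. For the forward direction, suppose $\mathfrak{G}$ is primitive. Then the only blocks are the singletons and $V$ itself, so there is no block $B$ with $\{v\} \subsetneq B \subsetneq V$, and by definition $\{v\}$ is a maximal block. For the converse I argue by contraposition: if $\mathfrak{G}$ is imprimitive, there is a non-trivial block $C$, i.e.\ a block with $1 < |C| < |V|$. Picking $c \in C$ and, by transitivity, some $g \in \mathfrak{G}$ with $gc = v$, the set $gC$ is again a block, it contains $v$, and $|gC| = |C|$, so $\{v\} \subsetneq gC \subsetneq V$. This shows $\{v\}$ is not a maximal block, which is exactly the contrapositive.

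Finally I would dispatch the universal quantifier ``each stabilizer $\mathfrak{G}_v$''. Because $\mathfrak{G}$ acts transitively, any two point stabilizers are conjugate: if $gu = v$ then $\mathfrak{G}_v = g\,\mathfrak{G}_u\,g^{-1}$. Conjugation is a lattice automorphism of $\mathfrak{G}$, so it carries maximal subgroups to maximal subgroups; hence $\mathfrak{G}_v$ is maximal for one $v$ if and only if it is maximal for every $v$. Combined with the previous two paragraphs, this delivers the stated equivalence uniformly over all $v$.

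The argument is essentially a bookkeeping reduction, so there is no deep obstacle; the one point demanding care is the converse of the combinatorial equivalence, where I must use transitivity to relocate an arbitrary non-trivial block so that it contains the chosen point $v$, and must invoke the hypothesis $|V| \geq 2$ to guarantee that $\{v\}$ is genuinely a proper block (so that ``maximal block'' and ``maximal subgroup'' are non-vacuous). Everything else follows formally from Lemma \ref{KeyThmMaximalSubgroup} together with the observation that the setwise stabilizer of a singleton is the point stabilizer.
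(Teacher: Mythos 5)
Your proposal is correct and follows essentially the same route as the paper: the paper derives this lemma "immediately" from Lemma \ref{KeyThmMaximalSubgroup}, and your argument simply makes that derivation explicit by applying it to the singleton block $B=\{v\}$, identifying $\dfix{G}{\{v\}}$ with $\mathfrak{G}_v$, and checking that primitivity is equivalent to maximality of $\{v\}$ as a block (using transitivity to translate any non-trivial block onto $v$). The attention you give to the hypothesis $|V|\geq 2$ and to the quantifier over all $v$ via conjugacy of stabilizers is sound bookkeeping that the paper leaves implicit.
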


First of all, note that in virtue of the same argument for establishing those three results above one could prove a more general version for each one of those results without the restriction that the action of $\mathfrak{G}$ is transitive. In the case of being not transitive, we focus on one orbit, say, $T$, of $G$ and naturally require that $B$ is subset of $T$.

According to Lemma \ref{LemPrimitiveness1stNeceSuffCond}, if $\mathfrak{G}$ is primitive then we only need, provided that we have the subgroup $\mathfrak{G}_v$, to figure out one permutation $\gamma$ in $\mathfrak{G}$ moving the vertex $v$ to anther vertex, for $\mathfrak{G} = \langle \mathfrak{G}_v,\gamma \rangle$ in this case. However, in order to use the lemma above, we need to determine whether $\mathfrak{G}$ is primitive, so in the next section we establish another way to characterize the primitiveness (see Theorem \ref{ThmNeceAndSuffForPrimitiveness} for details).

In the case of being imprimitive, we need a group of blocks $B_1 \subsetneq B_2 \subsetneq \cdots \subsetneq B_l$ such that $B_1$ is a minimal block for $\mathfrak{G}$, $B_i$ is maximal in $B_{i+1}$ ($i=1,\ldots,l-1$) and $B_l$ is a maximal block. Such a group is called a {\it block family} of $\mathfrak{G}$. According to Lemma \ref{LemBlock&Stabilizers}, to generate the subgroup $\mathfrak{G}_{B_{i+1}}$, we only need, provided that we have the stabilizer $\mathfrak{G}_{B_i}$, to figure out one permutation $\gamma_{i}$ in $\mathfrak{G}$ moving one vertex $b_i$ in $B_i$ to a vertex in $B_{i+1} \setminus B_i$, for $B_i$ is maximal in $B_{i+1}$, so $\mathfrak{G}_{B_{i+1}} = \langle \mathfrak{G}_{B_i},\gamma_i \rangle$. As a result, if we have the point stabilizer $\mathfrak{G}_{b_1}$, where $b_1 \in B_1$, then the subset $\{ \mathfrak{G}_{b_1}, \gamma_1, \gamma_2,\ldots, \gamma_l \}$ is a generating set of $\mathfrak{G}$, where $\gamma_l$ is a permutation in $\mathfrak{G}$ moving $B_l$ to another block.

In order to find out a generating set of $\mathfrak{G}_{b_1}$, we do the same thing on an orbit of $\mathfrak{G}_{b_1}$. It is clear that we can ultimately have a generating set of $\mathfrak{G}$.

\begin{figure}
\begin{center}\setlength{\unitlength}{0.5bp}
 \includegraphics[width=6.5cm]{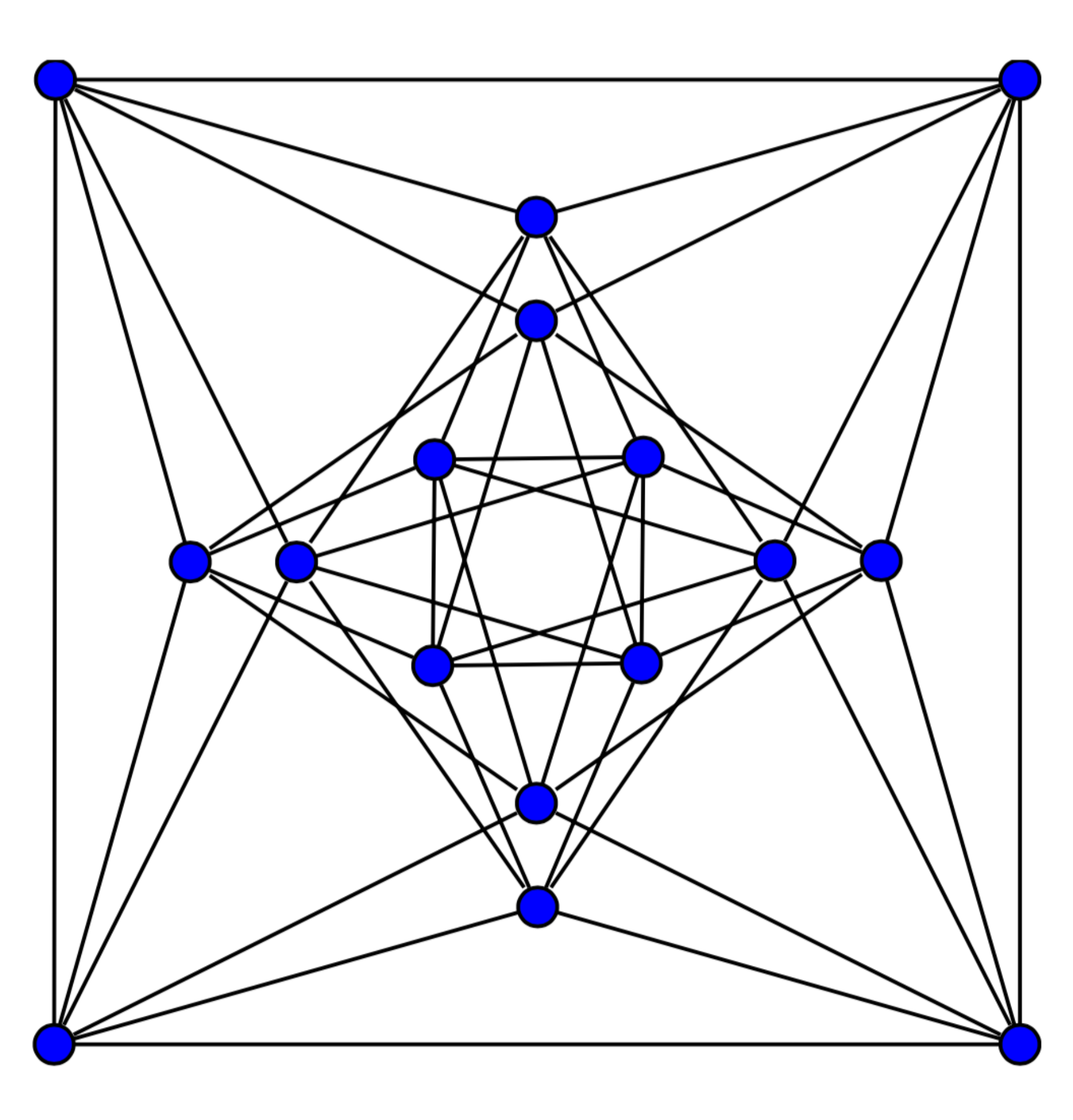}
 {\color[rgb]{1,0,0} 
 \put(-375,360){\fontsize{11}{2.73}\selectfont 1}
 \put(-373,7){\fontsize{11}{2.73}\selectfont 2}
 \put(-17,7){\fontsize{11}{2.73}\selectfont 3}
 \put(-17,360){\fontsize{11}{2.73}\selectfont 4}
 \put(-195,320){\fontsize{11}{2.73}\selectfont 5}
 \put(-330,182){\fontsize{11}{2.73}\selectfont 6}
 \put(-195,47){\fontsize{11}{2.73}\selectfont 7}
 \put(-63,182){\fontsize{11}{2.73}\selectfont 8}
 \put(-195,248){\fontsize{11}{2.73}\selectfont 9}
 \put(-265,182){\fontsize{11}{2.73}\selectfont 10}
 \put(-200,117){\fontsize{11}{2.73}\selectfont 11}
 \put(-140,182){\fontsize{11}{2.73}\selectfont 12}
 \put(-230,208){\fontsize{11}{2.73}\selectfont 13}
 \put(-230,160){\fontsize{11}{2.73}\selectfont 14}
 \put(-184,160){\fontsize{11}{2.73}\selectfont 15}
 \put(-180,208){\fontsize{11}{2.73}\selectfont 16}
 }
\end{center}
\caption{ The Shrikhande graph with four blocks $\{1,3,13,15\},\{2,4,14,16\},\{5,6,7,8\},\{9,10,11,12\}$ }\label{Fig-3}
\end{figure} 

\vspace{3mm}
Now let us show by an example how to find out a block family of $\mathfrak{G}$. The graph considered here is the Shrikhande graph shown in Fig. \ref{Fig-3}, and its adjacency matrix has 3 distinct eigenvalues 6, 2 and $-2$ of multiplicity 1, 6 and 9, respectively. According to Lemma \ref{Lem-AutomorphismAndOperator}, the automorphism group $\mathfrak{G}$ is determined by the eigenspace $V_2$, corresponding to the eigenvalue 2, {\it i.e.,} $\drm{Aut}{G} = \drm{Aut}{V_2}$. One of key ingredients we need is a partition $\Pi_v^*$ of $V$ consisting of orbits of a point stabilizer $\mathfrak{G}_v$ and we here try to determine $\Pi_1^*$. The following is the OPSB onto the eigenspace.

$$ 
\begin{array}{c}
{ \scriptsize
\left( 
 \begin{array}{rrrrrrrrrrrrrrrr}
 0.375 & 0.125 & -0.125 & 0.125 & 0.125 & 0.125 & -0.125 & -0.125 & 0.125 & 
0.125 & -0.125 & -0.125 & -0.125 & -0.125 & -0.125 & -0.125 \\ 0.125 & 
0.375 & 0.125 & -0.125 & -0.125 & 0.125 & 0.125 & -0.125 & -0.125 & 0.125 & 
0.125 & -0.125 & -0.125 & -0.125 & -0.125 & -0.125 \\ -0.125 & 0.125 & 
0.375 & 0.125 & -0.125 & -0.125 & 0.125 & 0.125 & -0.125 & -0.125 & 0.125 & 
0.125 & -0.125 & -0.125 & -0.125 & -0.125 \\ 0.125 & -0.125 & 0.125 & 0.375 & 
0.125 & -0.125 & -0.125 & 0.125 & 0.125 & -0.125 & -0.125 & 0.125 & -0.125 & 
-0.125 & -0.125 & -0.125 \\ 0.125 & -0.125 & -0.125 & 0.125 & 0.375 & 
-0.125 & -0.125 & -0.125 & -0.125 & 0.125 & -0.125 & 0.125 & 0.125 & -0.125 & 
-0.125 & 0.125 \\ 0.125 & 0.125 & -0.125 & -0.125 & -0.125 & 0.375 & -0.125 & 
-0.125 & 0.125 & -0.125 & 0.125 & -0.125 & 0.125 & 0.125 & -0.125 & -0.125 \\ 
-0.125 & 0.125 & 0.125 & -0.125 & -0.125 & -0.125 & 0.375 & -0.125 & -0.125 & 
0.125 & -0.125 & 0.125 & -0.125 & 0.125 & 0.125 & -0.125 \\ -0.125 & -0.125 & 
0.125 & 0.125 & -0.125 & -0.125 & -0.125 & 0.375 & 0.125 & -0.125 & 0.125 & 
-0.125 & -0.125 & -0.125 & 0.125 & 0.125 \\ 0.125 & -0.125 & -0.125 & 0.125 & 
-0.125 & 0.125 & -0.125 & 0.125 & 0.375 & -0.125 & -0.125 & -0.125 & -0.125 & 
0.125 & 0.125 & -0.125 \\ 0.125 & 0.125 & -0.125 & -0.125 & 0.125 & -0.125 & 
0.125 & -0.125 & -0.125 & 0.375 & -0.125 & -0.125 & -0.125 & -0.125 & 0.125 & 
0.125 \\ -0.125 & 0.125 & 0.125 & -0.125 & -0.125 & 0.125 & -0.125 & 0.125 & 
-0.125 & -0.125 & 0.375 & -0.125 & 0.125 & -0.125 & -0.125 & 0.125 \\ 
-0.125 & -0.125 & 0.125 & 0.125 & 0.125 & -0.125 & 0.125 & -0.125 & -0.125 & 
-0.125 & -0.125 & 0.375 & 0.125 & 0.125 & -0.125 & -0.125 \\ -0.125 & 
-0.125 & -0.125 & -0.125 & 0.125 & 0.125 & -0.125 & -0.125 & -0.125 & -0.125 & 
0.125 & 0.125 & 0.375 & 0.125 & -0.125 & 0.125 \\ -0.125 & -0.125 & -0.125 & 
-0.125 & -0.125 & 0.125 & 0.125 & -0.125 & 0.125 & -0.125 & -0.125 & 0.125 & 
0.125 & 0.375 & 0.125 & -0.125 \\ -0.125 & -0.125 & -0.125 & -0.125 & 
-0.125 & -0.125 & 0.125 & 0.125 & 0.125 & 0.125 & -0.125 & -0.125 & -0.125 & 
0.125 & 0.375 & 0.125 \\ -0.125 & -0.125 & -0.125 & -0.125 & 0.125 & -0.125 & 
-0.125 & 0.125 & -0.125 & 0.125 & 0.125 & -0.125 & 0.125 & -0.125 & 0.125 & 
0.375
 \end{array}
\right) }\\
  ~ \\ 
  { \text{ \footnotesize The OPSB onto the eigenspace }\textstyle  V_2 }
 \end{array}
$$ 

Apparently, it is hard to hold our desire only by examining the {\it dist}. of OPSB in the eigenspace $V_2$, so let us try to split the subspace into smaller ones. In virtue of the OPSB onto $V_2$, we can work out a family of equitable partitions $\{ \dep{v}{V_2} : v \in V \}$ by the relation (\ref{TheBinaryRelation}) and then obtain the partition $\dep{\mathfrak{G}}{V_2}$ by anther relation (\ref{TheBinaryRelation-2}). It is easy to check that $\dep{\mathfrak{G}}{V_2}$ possesses only one cell $[16]$, so the adjacency matrix of the quotient graph $G / \dep{\mathfrak{G}}{V_2}$ has only one eigenvalue 6, and therefore we cannot use the eigenspace of $\mathbf{A}( G / \dep{\mathfrak{G}}{V_2} )$ to split the eigenspace $V_2$. Consequently, let us turn to the partition $\dep{1}{V_2} = \{ \{1\},\{2,4,5,6,9,10\},\{3,7,8,11,12,13,14,15,16\} \}$.

One can easily check that $\dAM{ G / \dep{1}{V_2} }$ possesses 3 eigenvalues 6, 2 and $-2$, every one of which is of multiplicity 1, and the vector $\dproj{V_2}( \pmb{e}_1 )$ belongs to $\dChMat{\dep{1}{V_2}} V_2^{G / \dep{1}{V_2}}$, {\it i.e.,} it is lifted from the eigenspace $V_2^{G / \dep{1}{V_2}}$. Hence we can use the subspace $X_{2,1,0}[1] := \dChMat{\dep{1}{V_2}} V_2^{G / \dep{1}{V_2}}$ to split $V_2$. Set 
$$
X_{2,1,1}[1] = V_2 \ominus X_{2,1,0}[1].
$$
Clearly, $\dim X_{2,1,1}[1] = 5$.

At this stage, it is important to verify whether or not some of cells of $\dep{1}{V_2}$ can split the subspace $X_{2,1,1}[1]$ further.  
Note that
$$
\dim \drm{span}{ \{ X_{2,1,1}[1] : \{2,4,5,6,9,10\} \} } = 4
$$ 
while 
$$
\dim \drm{span}{ \{ X_{2,1,1}[1] : \{3,7,8,11,12,13,14,15,16\} \} } = 5,
$$ 
so we could use the first subspace to split $X_{2,1,1}[1]$, where $\drm{span}{ \{ X_{2,1,1}[1] : \{2,4,5,6,9,10\} \} }$ is the subspace spanned by vectors $\{ \dproj{ X_{2,1,1}[1] }( \pmb{e}_x ) : x \in \{2,4,5,6,9,10\} \}$ and denoted by $X_{2,1,2}[1]$. Set $X_{2,1,3}[1] = X_{2,1,1}[1] \ominus X_{2,1,2}[1]$, which is of dimension 1 and spanned by the following vector 
$$
(0,0,0,0,0,0,-0.408248,0.408248,0,0, 0.408248, -0.408248,0,-0.408248,0, 0.408248).
$$
Since the subspace $X_{2,1,3}[1]$ is $\mathfrak{G}_1$-invariant and the projections of $\pmb{e}_3$, $\pmb{e}_{13}$ and $\pmb{e}_{15}$ onto $X_{2,1,3}[1]$ are equal to $\pmb{0}$, the cell $\{3,7,8,11,12,13,14,15,16\}$ cannot be one orbit of $\mathfrak{G}_1$. Consequently, we now can refine the partition $\dep{1}{V_2}$ by means of the decomposition $\oplus_{k\in\{0,2,3\}} X_{2,1,k}[1]$ of $V_2$.

Once again, we first work out a family of partitions $\{ \dEP{1;v}{X_{2,1,k}[1]} : v \in V \setminus\{1\} \}$ by the relation (\ref{TheBinaryRelation}) and then obtain the partition $\dEP{\mathfrak{G}_1}{X_{2,1,k}[1]}$ by anther relation (\ref{TheBinaryRelation-2}). One can readily check that  
$$
\dEP{\mathfrak{G}_1}{X_{2,1,k}[1]} = \{ \{1\},\{2,4,5,6,9,10\},\{3,13,15\},\{7,8,11,12,14,16\} \}.
$$

It is easy to compute the spectrum of $\dAM{G / \dEP{\mathfrak{G}_1}{X_{2,1,k}[1]}}$. In fact, the matrix possesses 3 distinct eigenvalues 6, 2 and $-2$ of multiplicity 1, 1 and 2 respectively. Denote the subspace $\dChMat{\dEP{\mathfrak{G}_1}{X_{2,1,k}[1]}} V_2^{G / \dEP{\mathfrak{G}_1}{X_{2,1,k}[1]}}$ by $X_{2,2,0}[1]$. Note that $\dim X_{2,2,0}[1] = 1$, so we cannot use it to split the subspace $X_{2,1,2}[1]$, which is of dimension 4, for $X_{2,2,0}[1] = X_{2,1,0}[1]$. We have however another apparatus to split the subspace at this stage - those subspaces spanned by cells of $\dEP{\mathfrak{G}_1}{X_{2,1,k}[1]}$. As a matter of fact, 
$$
\dim \drm{span}{ \{ X_{2,1,2}[1] : \{3,13,15\} \} } = 2.
$$
As a result we now can decompose $X_{2,1,2}[1]$ into two subspacec $X_{2,2,1}[1]$ and $X_{2,2,2}[1]$, where $X_{2,2,1}[1]$ stands for $\drm{span}{ \{ X_{2,1,2}[1] : \{3,13,15\} \} }$ and $X_{2,2,2}[1]$ is the orthogonal complement of $X_{2,2,1}[1]$ in $X_{2,1,2}[1]$. By examining the {\it dist}. of OPSB onto those two subspaces (see Fig. \ref{Fig-dist-SH} for details), we can see 
$$
\Pi_1^* = \{ \{1\},\{2,4,5,6,9,10\},\{3,13,15\},\{7,8,11,12,14,16\} \}.
$$
Furthermore, one can readily see that those four subspaces $X_{2,1,0}[1]$, $X_{2,1,3}[1]$, $X_{2,2,1}[1]$ and $X_{2,2,2}[1]$ are IRs of $\mathfrak{G}_1$ in the eigenspace $V_2$.

\begin{figure}
\begin{center}
\includegraphics[angle=2,totalheight=5.5cm]{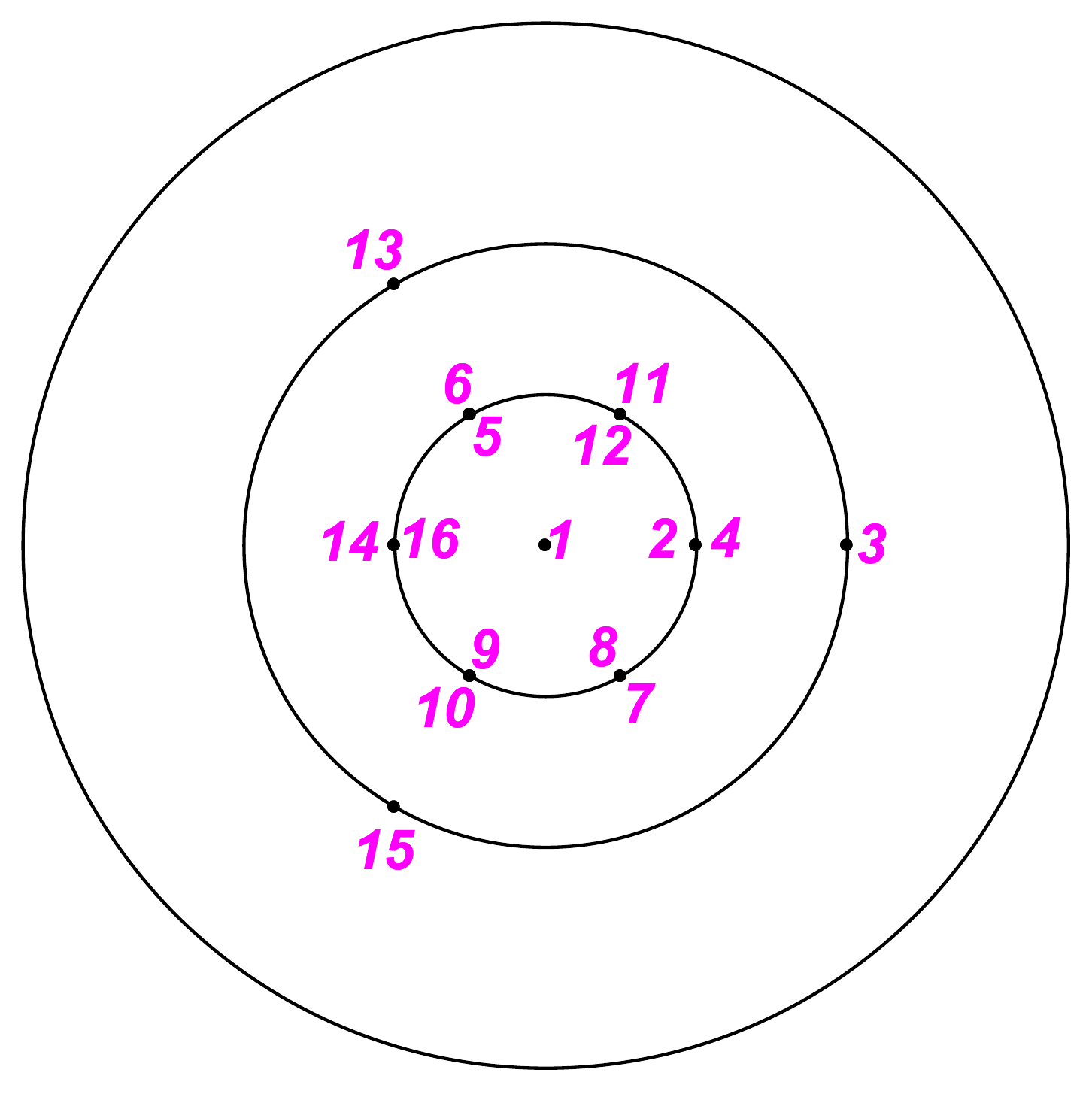} 
~~~~~~~~~ 
\includegraphics[angle=2,totalheight=5.5cm]{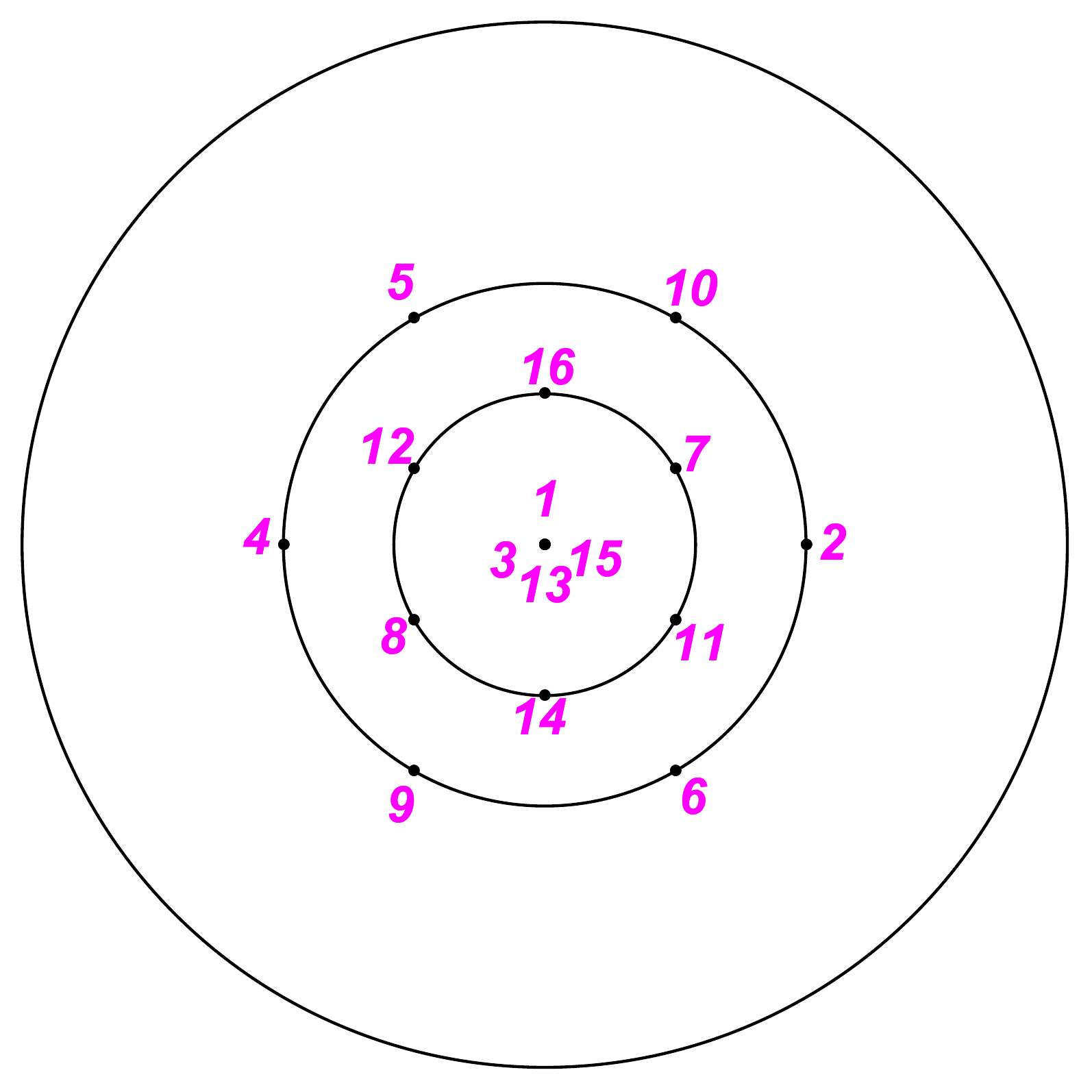}
\end{center}
\caption{The {\it dist.} of OPSB onto $X_{2,2,1}[1]$ and $X_{2,2,2}[1]$ }\label{Fig-dist-SH}
\end{figure}

Another key ingredient in finding out a block family is to figure out a group of automorphisms of $G$, by means of which we can determine the block system containing one specific block that we are interested in. Let us begin with figuring out one automorphism moving 1 to 2. As we have seen in the previous subsection, if two vertices 2 and 1 are symmetric in $G$, we can observe the same {\it dist}. of OPSB in subspaces obtained in the way same as what we did for the vertex 1, so we could see those automorphisms, one of which is shown below
$$
\sigma_{1,2} = (1 \hspace{0.8mm} 2)  (3 \hspace{0.8mm} 4)  (5 \hspace{0.8mm} 11)  (6 \hspace{0.8mm} 10)  (7 \hspace{0.8mm} 9)  (8 \hspace{0.8mm} 12)  (13 \hspace{0.8mm} 16)  (14 \hspace{0.8mm} 15).
$$
Consequently, 
$$
\Pi_2^* = \{ \{2\},\{1, 3, 6, 7, 10, 11\}, \{4, 14, 16\}, \{5, 8, 9, 12, 13, 15\} \}.
$$
Note that the graph $\dubipart{G}{1}{2}$ is connected, so the action of $\mathfrak{G}$ on $V$ is transitive due to Lemma \ref{LemFindBlocks-1}. As a result, in order to determine the orbits of a point stabilizer $\mathfrak{G}_v$, $v\in V \setminus \{1,2\}$, we do not have to decompose $V_2$ into IRs of $\mathfrak{G}_v$ but only need to find the one dimension subspace similar to $X_{2,1,3}[1]$, which shows us how to split the biggest cell in $\dep{v}{V_2}$ containing nine vertices. The following matrix displays the family $\{ \Pi_v^* : v \in V \}$.
$$
\left( 
 \begin{array}{rrrrrrrrrrrrrrrrrrr}
1 & || &  2 & 4 & 5 & 6 & 9 & 10 & || & 3 & 13 & 15 & || &  7 & 8 & 11 & 12 & 14 & 
   16\\
2 & || &  1 & 3 & 6 & 7 & 10 & 11 & || &     4 & 14 & 16 & || &  5 & 8 & 9 & 12 &
     13 & 15\\
3 & || &  2 & 4 & 7 & 8 & 11 & 12 & || &     1 & 13 & 15 & || &  5 & 6 & 9 & 10 &
     14 & 16\\

4 & || &  1 & 3 & 5 & 8 & 9 & 12 & || &       2 & 14 & 16 & || &  6 & 7 & 10 & 
    11 & 13 & 15\\

5 & || &  1 & 4 & 10 & 12 & 13 & 16 & || & 6 & 7 & 8 & || &       2 & 3 & 9 & 
    11 & 14 & 15\\

6 & || &  1 & 2 & 9 & 11 & 13 & 14 & || &  5 & 7 & 8 & || &       3 & 4 & 10 & 
    12 & 15 & 16\\

7 & || &  2 & 3 & 10 & 12 & 14 & 15 & || & 5 & 6 & 8 & || &       1 & 4 & 9 & 
    11 & 13 & 16\\

8 & || &  3 & 4 & 9 & 11 & 15 & 16 & || &  5 & 6 & 7 & || &       1 & 2 & 10 & 
    12 & 13 & 14\\
9 & || &  1 & 4 & 6 & 8 & 14 & 15 & || &     10 & 11 & 12 & || & 2 & 3 & 5 & 7 & 
    13 & 16\\

10 & || & 1 & 2 & 5 & 7 & 15 & 16 & || &     9 & 11 & 12 & || &  3 & 4 & 6 & 8 & 
    13 & 14\\

11 & || & 2 & 3 & 6 & 8 & 13 & 16 & || &     9 & 10 & 12 & || &  1 & 4 & 5 & 7 & 
    14 & 15\\

12 & || & 3 & 4 & 5 & 7 & 13 & 14 & || &     9 & 10 & 11 & || &  1 & 2 & 6 & 8 & 
    15 & 16\\
13 & || & 5 & 6 & 11 & 12 & 14 & 16 & || & 1 & 3 & 15 & || &    2 & 4 & 7 & 8 & 9 &
     10\\

14 & || & 6 & 7 & 9 & 12 & 13 & 15 & || &   2 & 4 & 16 & || &    1 & 3 & 5 & 8 & 
    10 & 11\\
15 & || & 7 & 8 & 9 & 10 & 14 & 16 & || &   1 & 3 & 13 & || &    2 & 4 & 5 & 6 & 
    11 & 12\\

16 & || & 5 & 8 & 10 & 11 & 13 & 15 & || & 2 & 4 & 14 & || &    1 & 3 & 6 & 7 & 9 & 
   12

\end{array}
\right)
$$
In accordance with Lemma \ref{LemFindBlocks-2} and Theorem \ref{ThmNeceAndSuffForPrimitiveness}, by checking  bipartite graphs $\dubipart{G}{1}{v}$, $v \in V \setminus \{1\}$, we can determine minimal blocks for $\mathfrak{G}$ containing the vertex 1. Accordingly, one can readily see that the subset $B := \{1,3,13,15\}$ is a minimal block, and thus the block system $\mathscr{B}$ containing $B$ must have four members. 

We now try to determine the system $\mathscr{B}$. First of all, note that the vertex 2 does not belong to the block $B$, so the automorphism $\sigma_{1,2}$ yields anther block $\{ 2,4,14,16 \}$ in the system $\mathscr{B}$. Clearly, there are eight vertices in $V \setminus ( B \cup \sigma_{1,2} B)$, so in order to find other members of $\mathscr{B}$, we need two more automorphisms of $G$. By virtue of the method we use in figuring out $\sigma_{1,2}$, one can readily figure out the following automorphisms:
$$
\sigma_{1,8} = 
(1 \hspace{0.8mm} 8 \hspace{0.8mm} 10 \hspace{0.8mm} 3 \hspace{0.8mm} 5 \hspace{0.8mm} 11) (2 \hspace{0.8mm} 4 \hspace{0.8mm} 16) (6 \hspace{0.8mm} 9 \hspace{0.8mm} 15 \hspace{0.8mm} 7 \hspace{0.8mm} 12 \hspace{0.8mm} 13),
$$
and 
$$
\sigma_{1,9} = (1 \hspace{0.8mm} 9 \hspace{0.8mm} 14 \hspace{0.8mm} 13 \hspace{0.8mm} 11 \hspace{0.8mm} 2) (3 \hspace{0.8mm} 10 \hspace{0.8mm} 4 \hspace{0.8mm} 15 \hspace{0.8mm} 12 \hspace{0.8mm} 16) (5 \hspace{0.8mm} 8 \hspace{0.8mm} 7).
$$
Accordingly, 
$$
\mathscr{B} = \{ \{1,3,13,15\},\{2,4,14,16\},\{5,6,7,8\},\{9,10,11,12\} \}.
$$

In order to obtain a minimal block for $\mathfrak{G}$ containing properly $B$ as one member, we construct a new bipartite graph $\dubipart{G}{B'}{B''}$, which is similar to $\dubipart{G}{v'}{v''}$. The vertex set of the graph is comprised of orbits of the action of $\mathfrak{G}_{B'}$ and of $\mathfrak{G}_{B''}$ on the system $\mathscr{B}$, and again two vertices of the graph are adjacent if the intersection of two subsets corresponding to the vertices is not empty. By means of Lemma \ref{LemFindBlocks-1}, we can easily determine the orbits of the action of $\mathfrak{G}_{B'}$ on the system $\mathscr{B}$ with the aid of the bipartite graph $\dubipart{G}{x'}{y'}$, where $x'$ and $y'$ both belong to the block $B'$.

In our case, one can readily verify that each one of six graphs $\dubipart{G}{B'}{B''}$ is connected, where $B',B'' \in \mathscr{B}$, and hence the action of $\mathfrak{G}$ on the system $\mathscr{B}$ is primitive due to Theorem \ref{ThmNeceAndSuffForPrimitiveness}. Therefore, the block $B$ itself is a block family of $\mathfrak{G}$.

\vspace{3mm}
As we have seen, only by the family $\{ \Pi_v^* : v \in V \}$ and a group of automorphisms $\{ \sigma_{1,2}, \sigma_{1,8}, \sigma_{1,9} \}$, we figure out a block family of $\mathfrak{G}$. As a matter of fact, we can also decompose only by them every eigenspace of $\dAM{G}$ into IRs of $\mathfrak{G}$.

Recall that $V_{\lambda}[v] = \{ \pmb{x} \in V_{\lambda} : \xi \hspace{0.5mm} \pmb{x} = \pmb{x}, \forall \hspace{0.6mm} \xi \in \mathfrak{G}_v \}$. Note $\dim V_2[1] = 1$, so the eigenspace $V_2$ is indeed irreducible for $\mathfrak{G}$ due to Lemma \ref{Lemma-IRsFeatureGvNonTrivial}, while $\dim V_{-2}[1] = 2$, so the eigenspace $V_{-2}$ contains two IRs of $\mathfrak{G}$ due to the same lemma.

There are two ways of decomposing $V_{-2}$ into IRs of $\mathfrak{G}$. The 1st one is to use Theorem \ref{Thm-EquitablePart&BlockSystem}, which asserts that any block system of $\mathfrak{G}$ is indeed an equitable partition. Let $\Pi_{\mathscr{B}}$ is the partition of $V$ consisting of blocks in $\mathscr{B}$. Then $\dChMat{\Pi_{\mathscr{B}}} V_{-2}^{ G / \Pi_{\mathscr{B}} }$, denoted by $X$ for short, is a subspace in $V_{-2}$ representing the system $\mathscr{B}$, which is of dimension 3. As a result, the eigenspace $V_{-2}$ can be decomposed into $X$ and its orthogonal complement in $V_{-2}$, that are the two IRs in $V_{-2}$.

The 2nd way is to use Theorem \ref{Thm-IsomorphismThmBetweenIRs}. According to the result, the intersection $\cap_{v \in \{ 1,3,13,15 \}} V_{-2}[v]$ is not trivial but spanned by the vector which belongs to the IR $X$ representing the block $\{ 1,3,13,15 \}$. In this way we can determine other three vectors in $X$ representing blocks $\{2,4,14,16\}$, $\{5,6,7,8\}$ and $\{9,10,11,12\}$ respectively. Apparently, $X$ is spanned by those four vectors, so we can single out the subspace $X$ from $V_{-2}$.

As we shall see in the following sections, we in general need only two sets of data - a partition $\Pi_{t}^*$, consisting of orbits of $\mathfrak{G}_t$, and a subset $E(t)$ of $\mathfrak{G}$ such that $\forall\hspace{0.5mm} x \in T$, $\exists| \sigma\in E(t)$ {\it s.t.,} $\sigma\hspace{0.5mm} t = x$, in order to find out a block system of $\mathfrak{G}$ contained in some orbit $T$ of $\mathfrak{G}$ and to decompose an eigenspace of $\dAM{G}$ into IRs of $\mathfrak{G}$ onto which the projections of $\pmb{e}_t$ is not trivial.

\section{Blocks and block families of $\mathfrak{G}$ }

In this section, our goal is to establish a series of results which enable us to find out blocks and block families of $\mathfrak{G}$. We start with two characterizations of blocks, which reveal the connection between the action of $\mathfrak{G}$ on a block system and that on a family of subsets in $\mathfrak{G}$: in the 1st lemma the family is comprised of the  cosets of $\mathfrak{G}_B$ while in the 2nd lemma the family consists of the conjugations of $\mathfrak{G}_B$. In the case that $\mathfrak{G}$ is not transitive, the connections enables us to see how block systems in distinct orbits of $\mathfrak{G}$ are related to one another.

\begin{Lemma} \label{LemBlockStructure}
Let $\mathfrak{G}$ be a permutation group acting on $V$ with an orbit $S$ and let $\mathfrak{H}$ be a subgroup of $\mathfrak{G}$ with an orbit $T$. If the subset $T$ is contained in $S$ and $|S|/|T| = [\mathfrak{G}:\mathfrak{H}]$, 
then $T$ is a block for $\mathfrak{G}$ and $\mathfrak{H}$ is the stabilizer of $T$, 
{\it i.e.,} $\mathfrak{H}=\dfix{G}{T}$. 
Conversely, if $B$ is a block for $\mathfrak{G}$ 
then its stabilizer $\dfix{G}{B}$ enjoys the relationship that $|S|/|B|=[\mathfrak{G}:\dfix{G}{B}]$.  \end{Lemma}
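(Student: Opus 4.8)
The plan is to treat the two directions separately, using the orbit--stabilizer theorem as the common engine and letting the numerical hypothesis $|S|/|T| = [\mathfrak{G}:\mathfrak{H}]$ do its work through a counting inequality that is forced to become an equality. Throughout I use that $\mathfrak{G}$, being a permutation group on the finite set $V$, is finite.

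For the converse, suppose $B$ is a block for $\mathfrak{G}$ contained in the orbit $S$. The block system $\{\gamma B : \gamma \in \mathfrak{G}\}$ partitions $S$ into translates all of cardinality $|B|$. Letting $\mathfrak{G}$ act on subsets of $V$ by $\gamma \cdot W = \gamma W$, the orbit of $B$ is exactly this block system and its stabilizer is $\dfix{G}{B}$, so orbit--stabilizer gives that the number of blocks equals $[\mathfrak{G}:\dfix{G}{B}]$. Since the blocks partition $S$, I obtain $|S| = |B|\cdot[\mathfrak{G}:\dfix{G}{B}]$, that is $|S|/|B| = [\mathfrak{G}:\dfix{G}{B}]$.

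For the forward direction I would first record that $\mathfrak{H}$ fixes $T$ setwise: since $T$ is an orbit of $\mathfrak{H}$, we have $hT = T$ for every $h \in \mathfrak{H}$, whence $\mathfrak{H} \le \dfix{G}{T}$ and consequently $[\mathfrak{G}:\dfix{G}{T}] \le [\mathfrak{G}:\mathfrak{H}] = |S|/|T|$. Next, applying orbit--stabilizer to the action of $\mathfrak{G}$ on subsets, the number of distinct translates $\gamma T$ equals $[\mathfrak{G}:\dfix{G}{T}]$, and every such translate lies in $S$ with $|\gamma T| = |T|$. Because $\mathfrak{G}$ is transitive on $S$, fixing any $t_0 \in T$ and choosing for each $s \in S$ some $\gamma$ with $\gamma t_0 = s$ shows $\bigcup_{\gamma} \gamma T = S$.

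The crux is then a squeeze. Writing $k := [\mathfrak{G}:\dfix{G}{T}]$ for the number of translates, the union bound gives $|S| \le k\,|T| \le (|S|/|T|)\,|T| = |S|$, so every inequality is an equality. Equality in $|S| = k\,|T|$ forces the $k$ translates to be pairwise disjoint, so they form a partition of $S$; hence for any $\gamma \in \mathfrak{G}$ the set $\gamma T$ is a single cell of this partition, giving $\gamma T = T$ or $\gamma T \cap T = \emptyset$, i.e. $T$ is a block. Moreover the equalities yield $[\mathfrak{G}:\dfix{G}{T}] = k = |S|/|T| = [\mathfrak{G}:\mathfrak{H}]$, which together with $\mathfrak{H} \le \dfix{G}{T}$ and finiteness forces $\mathfrak{H} = \dfix{G}{T}$. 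The one delicate point is keeping the counting tight: I must use both that the translates cover $S$ and that there are at most $|S|/|T|$ of them, so that the index hypothesis is precisely what converts the covering into a disjoint partition and simultaneously pins down the stabilizer.
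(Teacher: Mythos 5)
Your proof is correct and follows essentially the same route as the paper's: both rest on the fact that the translates $\sigma T$ cover $S$, that their number is bounded by $[\mathfrak{G}:\mathfrak{H}]$, and that the index hypothesis forces a counting squeeze which yields disjointness (the block property) and pins down the stabilizer, with the converse handled by the coset--translate correspondence. The only difference is organizational: you package the coset-to-translate counting as the orbit--stabilizer theorem applied to the setwise stabilizer $\dfix{G}{T}$ and extract both conclusions from a single squeeze, whereas the paper constructs the surjections $\sigma\mathfrak{H}\mapsto\sigma T$ and $\sigma\dfix{G}{T}\mapsto\sigma T$ explicitly and treats the block property and the identity $\mathfrak{H}=\dfix{G}{T}$ in two separate steps.
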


According to the relation above, if $B$ is a block then $\#\{ \sigma B : \sigma \in \mathfrak{G} \} = \#\{ \sigma \mathfrak{G}_B : \sigma \in \mathfrak{G} \}$, while in general $\#\{ \sigma B : \sigma \in \mathfrak{G} \} \geq \#\{ \sigma \mathfrak{G}_B \sigma^{-1} : \sigma \in \mathfrak{G} \}$, and the equality holds if and only if  $\gamma \mathfrak{G}_B \gamma^{-1} \neq \mathfrak{G}_B$, $\forall \gamma \in \mathfrak{G}\setminus\mathfrak{G}_B$. As a matter of fact, the last relation is essentially sufficient for being a block.

\begin{Lemma}\label{Lem-BlockFeature}
Let $\mathfrak{G}$ be a permutation group acting on $V$. Suppose $\mathfrak{H}$ is a subgroup of $\mathfrak{G}$ and $T$ is one of orbits of $\mathfrak{H}$ such that 
\begin{enumerate}

\item[(1)] the normalizer $N(\mathfrak{H})$ is $\mathfrak{H}$ itself, {\it i.e.,} $\gamma \mathfrak{H} \gamma^{-1} \neq \mathfrak{H}$, $\forall \gamma \in \mathfrak{G} \setminus \mathfrak{H}$;

\item[(2)] if $\sigma\mathfrak{H}\sigma^{-1} \neq \mathfrak{H}$, $\sigma\in\mathfrak{G}$, then $\sigma T \cap T = \emptyset$.
 
\end{enumerate}
Then $T$ is a block for $\mathfrak{G}$.
 \end{Lemma}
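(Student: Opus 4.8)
The plan is to verify directly the defining dichotomy of a block: for an arbitrary $\sigma \in \mathfrak{G}$ I must show that either $\sigma T = T$ or $\sigma T \cap T = \emptyset$. Before doing so I would first record that $T$ is a legitimate candidate, namely a non-empty subset of a single orbit of $\mathfrak{G}$. Indeed, since $\mathfrak{H} \leq \mathfrak{G}$, the $\mathfrak{H}$-orbit $T$ is contained in the $\mathfrak{G}$-orbit through any of its points, and it is non-empty by the definition of an orbit. Thus $T$ is exactly the kind of subset for which the notion of ``block'' is defined.

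The core of the argument is a case split governed by whether $\sigma$ conjugates $\mathfrak{H}$ to itself. In the first case, suppose $\sigma \mathfrak{H}\sigma^{-1} = \mathfrak{H}$. Then $\sigma$ lies in the normalizer $N(\mathfrak{H})$, and hypothesis (1) forces $N(\mathfrak{H}) = \mathfrak{H}$, whence $\sigma \in \mathfrak{H}$. Since $T$ is an orbit of $\mathfrak{H}$, it is invariant under every element of $\mathfrak{H}$, so $\sigma T = T$. In the complementary case, suppose $\sigma \mathfrak{H}\sigma^{-1} \neq \mathfrak{H}$; then hypothesis (2) applies verbatim and yields $\sigma T \cap T = \emptyset$. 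As these two cases are exhaustive and each delivers one of the two required alternatives, $T$ satisfies the block condition.

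The argument is short and carries no computational obstacle; the only point requiring care is the logical matching of the two hypotheses to the two halves of the block dichotomy. Specifically, one must notice that the self-normalizing hypothesis (1) is exactly what converts the algebraic condition $\sigma\mathfrak{H}\sigma^{-1} = \mathfrak{H}$ into the set-theoretic membership $\sigma \in \mathfrak{H}$, which in turn is what upgrades ``$\sigma T \cap T \neq \emptyset$'' to the genuine equality $\sigma T = T$ needed for a block. I would also remark, as a byproduct of the same case analysis, that the stabilizer of $T$ in $\mathfrak{G}$ coincides with $\mathfrak{H}$: an element fixes $T$ setwise precisely when it normalizes $\mathfrak{H}$, that is, precisely when it lies in $N(\mathfrak{H}) = \mathfrak{H}$.
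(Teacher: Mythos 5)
Your proof is correct and takes essentially the same approach as the paper's: the paper simply starts from $\sigma T \cap T \neq \emptyset$, applies the contrapositive of hypothesis (2) to get $\sigma\mathfrak{H}\sigma^{-1} = \mathfrak{H}$, then hypothesis (1) to get $\sigma \in \mathfrak{H}$ and hence $\sigma T = T$ — which is your case split read in reverse. Your added remarks (that $T$ sits inside a single $\mathfrak{G}$-orbit, and that the stabilizer of $T$ equals $\mathfrak{H}$) are correct and consistent with the paper's surrounding discussion.
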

 
\begin{proof}[\bf Proof to Lemma \ref{LemBlockStructure}] 
We begin with a simple observation that if $\xi\mathfrak{H}=\zeta\mathfrak{H}$ then $\xi T = \zeta T$, where $\xi$ and $\zeta$ are two permutations in $\mathfrak{G}$. 
In fact, if $\xi\mathfrak{H}=\zeta\mathfrak{H}$ then there exists $\delta$ in $\mathfrak{H}$ so that $\xi = \zeta\cdot\delta$, 
and thus $\xi T = \zeta\cdot\delta T$. Since $T$ is an orbit of $\mathfrak{H}$ and $\delta\in\mathfrak{H}$, 
$\zeta\cdot\delta T = \zeta T$ and therefore $\xi T = \zeta T$. 
As a result, we can define a map $\phi$ from the set $\{ \sigma \mathfrak{H} : \sigma \in \mathfrak{G} \}$ to the set $O_T=\{\sigma T : \sigma\in\mathfrak{G}\}$, 
{\it s.t.,}  $\phi(\sigma\mathfrak{H})=\sigma T$.

One can readily see that $\phi$ is surjective. This is because left cosets of $\mathfrak{H}$ in $\mathfrak{G}$ constitute a partition of $\mathfrak{G}$, so for any element $\sigma$ in $\mathfrak{G}$ there is a left coset $\alpha\mathfrak{H}$ such that $\sigma \in\alpha\mathfrak{H}$ and $\alpha\mathfrak{H}=\sigma\mathfrak{H}$, and thus $\phi(\sigma\mathfrak{H})=\sigma T$. Accordingly, $|O_T|\leq [\mathfrak{G}:\mathfrak{H}]$. 

Further, it is apparent that $|S|/|T|\leq |O_T|$ for the action of $\mathfrak{G}$ on $S$ is transitive.
In accordance with the assumption that $|S|/|T|=[\mathfrak{G}:\mathfrak{H}]$, we now have 
$$ [\mathfrak{G}:\mathfrak{H}] = |O_T| = |S|/|T|.$$
Consequently, if $\sigma \notin \mathfrak{H}$ then $\sigma T \cap T = \emptyset$, and hence $T$ is a block for $\mathfrak{G}$.

\vspace{2mm}
For the second claim that $\mathfrak{H}$ is the stabilizer of $T$, we first note that $\mathfrak{H}\leq\dfix{G}{T}$, so $[\mathfrak{G}:\dfix{G}{T}] \leq  [\mathfrak{G}:\mathfrak{H}]$. On the other hand, by the same argument one can readily show that there exists a surjection from $\{ \sigma \mathfrak{G}_T : \sigma \in \mathfrak{G} \}$ to $\{ \sigma T : \sigma \in \mathfrak{G} \}$. Consequently, $[\mathfrak{G}:\dfix{G}{T}] \geq \# \{ \sigma T : \sigma \in \mathfrak{G} \} = |S| / |T|$, and therefore $[\mathfrak{G}:\dfix{G}{T}] \geq  [\mathfrak{G}:\mathfrak{H}]$.

\vspace{2mm}
We now turn to the second assertion that if $B$ is a block for $\mathfrak{G}$ 
then $|S|/|B|=[\mathfrak{G}:\dfix{G}{B}]$. First, we have $|O_B|=|S|/|B|$ for $B$ is a block, where $O_B:=\{\sigma B : \sigma\in\mathfrak{G}\}$. Similarly, we can define a map $\phi$ from $\{ \sigma\dfix{G}{B} : \sigma\in\mathfrak{G} \}$ to $O_B$. It is easy to see that $\phi$ is a surjection.

To hold our desire, it is sufficient to show that $\phi$ is an injection. Suppose that $\xi$ and $\zeta$ are two permutations in $\mathfrak{G}$ so that $\xi B = \zeta B$. Then $\zeta^{-1} \xi B = B$ and thus $\zeta^{-1}\xi \in \dfix{G}{B}$, thereby $\xi\dfix{G}{B}=\zeta\dfix{G}{B}$. Hence $\phi$ is an injection.
\end{proof}

\begin{proof}[\bf Proof to Lemma \ref{Lem-BlockFeature}]
Let $\sigma$ be a permutation in $\mathfrak{G}$ such that $\sigma T \cap T \neq \emptyset$. Then $\sigma \mathfrak{H} \sigma^{-1} = \mathfrak{H}$ according to the 2nd relation above, and thus $\sigma \in \mathfrak{H}$ since $N(\mathfrak{H}) = \mathfrak{H}$. Consequently, $\sigma T = T$ and therefore $T$ is a block for $\mathfrak{G}$. 
\end{proof}

In addition, block systems of $\mathfrak{G}$ have a close relation with normal subgroups.

\begin{Lemma}[Dixon and Mortimer \cite{DM}] \label{LemNormalSubgTOBlock}  
Let $\mathfrak{G} $ be a permutation group acting on a finite set $V$, and let $\mathfrak{N}$ be a normal subgroup  of $\mathfrak{G} $. Then the orbits of $\mathfrak{N}$ comprise a system $\mathscr{B}$ of blocks for $\mathfrak{G}$, and furthermore $\mathfrak{N}\le\bigcap_{B\in \mathscr{B}} \mathfrak{G}_{B}$.  
\end{Lemma}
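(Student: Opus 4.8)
The plan is to prove the statement directly from the definition of a block, with the normality of $\mathfrak{N}$ doing all the real work in a single step. First I would fix an orbit $B$ of $\mathfrak{N}$ and write $B = \mathfrak{N} v$ for some chosen $v \in B$. The crux is the identity $\sigma B = \mathfrak{N}(\sigma v)$ valid for every $\sigma \in \mathfrak{G}$: since $\mathfrak{N}$ is normal in $\mathfrak{G}$ we have $\sigma \mathfrak{N} = \mathfrak{N}\sigma$, whence $\sigma B = \sigma \mathfrak{N} v = \mathfrak{N}\sigma v = \mathfrak{N}(\sigma v)$. Thus the image of an $\mathfrak{N}$-orbit under any element of $\mathfrak{G}$ is again an $\mathfrak{N}$-orbit, namely the one through $\sigma v$.

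Second, I would invoke the elementary fact that the orbits of any subgroup partition $V$, so two $\mathfrak{N}$-orbits are either equal or disjoint. Applying this to $B$ and to $\sigma B = \mathfrak{N}(\sigma v)$ yields at once that either $\sigma B = B$ or $\sigma B \cap B = \emptyset$. As $\sigma \in \mathfrak{G}$ was arbitrary, $B$ meets the defining condition of a block for $\mathfrak{G}$. To see that these blocks genuinely assemble into a block system, I would note that if $T$ is the $\mathfrak{G}$-orbit containing $B$, then $\mathfrak{G}$ acts transitively on the $\mathfrak{N}$-orbits lying inside $T$: given another $\mathfrak{N}$-orbit $\mathfrak{N} w \subseteq T$ there is $\sigma \in \mathfrak{G}$ with $\sigma v = w$, and then $\sigma B = \mathfrak{N} w$. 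Hence $\{\sigma B : \sigma \in \mathfrak{G}\}$ is precisely the collection of $\mathfrak{N}$-orbits contained in $T$, which is the block system $\mathscr{B}$ containing $B$; ranging over all $\mathfrak{G}$-orbits recovers the full partition of $V$ into $\mathfrak{N}$-orbits.

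Finally, for the inclusion $\mathfrak{N} \le \bigcap_{B \in \mathscr{B}} \dfix{G}{B}$, I would observe that every $\nu \in \mathfrak{N}$ fixes each $\mathfrak{N}$-orbit setwise: writing $B = \mathfrak{N} v$, we get $\nu B = \nu \mathfrak{N} v = \mathfrak{N} v = B$ because $\nu \mathfrak{N} = \mathfrak{N}$. Therefore $\nu \in \dfix{G}{B}$ for every block $B \in \mathscr{B}$, which gives the asserted containment.

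I do not expect a serious obstacle here; the only point requiring care is the normality step $\sigma \mathfrak{N} = \mathfrak{N}\sigma$, which is exactly what converts the pushforward $\sigma B$ of an $\mathfrak{N}$-orbit back into an $\mathfrak{N}$-orbit, and everything else is the routine partition-into-orbits observation. As an alternative I could deduce the block property from Lemma \ref{Lem-BlockFeature} by verifying its two hypotheses, but the direct computation above is shorter and, moreover, needs no transitivity assumption on $\mathfrak{G}$.
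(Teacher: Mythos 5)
Your proof is correct. Note that the paper offers no proof of this lemma at all --- it is quoted from Dixon and Mortimer --- so there is no internal argument to compare against; what you give is the standard textbook proof. Its three steps are all sound: normality yields $\sigma(\mathfrak{N}v) = \sigma\mathfrak{N}v = \mathfrak{N}\sigma v = \mathfrak{N}(\sigma v)$, so the image of an $\mathfrak{N}$-orbit under any $\sigma \in \mathfrak{G}$ is again an $\mathfrak{N}$-orbit; since distinct orbits of a subgroup are disjoint, this gives both the block property and the fact that $\{\sigma B : \sigma \in \mathfrak{G}\}$ exhausts the $\mathfrak{N}$-orbits inside the $\mathfrak{G}$-orbit containing $B$, i.e. that they genuinely form a block system; and the containment $\mathfrak{N} \le \bigcap_{B \in \mathscr{B}} \mathfrak{G}_{B}$ is immediate from $\nu\mathfrak{N} = \mathfrak{N}$ for $\nu \in \mathfrak{N}$. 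One caution about your closing aside: the suggested alternative route through Lemma \ref{Lem-BlockFeature} would \emph{not} go through with $\mathfrak{H} = \mathfrak{N}$, because hypothesis (1) of that lemma requires $\mathfrak{H}$ to be self-normalizing, $N(\mathfrak{H}) = \mathfrak{H}$, whereas a proper normal subgroup has normalizer equal to all of $\mathfrak{G}$. Since that remark is explicitly an alternative you did not pursue, it does not affect the validity of your main argument.
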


\begin{Lemma}\label{LemBlockTONormalSubgroup} 
Let $\mathfrak{G}$ be a permutation group acting on a finite set $V$, and let $B$ be a block for $\mathfrak{G}$ contained in one orbit of $\mathfrak{G}$. Then $\dfix{G}{\sigma B} =\sigma\dfix{G}{B}\sigma^{-1}$, $\forall \sigma \in \mathfrak{G}$, and $\mathfrak{N} := \bigcap_{\sigma\in\mathfrak{G}} \mathfrak{G}_{\sigma B}$ is a normal subgroup of $\mathfrak{G}$. Moreover, $\mathfrak{N} b = B$ if and only if $\mathfrak{N}\dfix{G}{b} = \dfix{G}{B}$, where $b$ is one member of $B$.  \end{Lemma}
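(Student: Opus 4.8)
The plan is to handle the three claims in sequence, since each feeds the one after it.

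I would begin with the conjugation identity $\dfix{G}{\sigma B}=\sigma\dfix{G}{B}\sigma^{-1}$, which is a direct membership check: a permutation $\tau$ satisfies $\tau\sigma B=\sigma B$ precisely when $\sigma^{-1}\tau\sigma B=B$, that is, when $\sigma^{-1}\tau\sigma\in\dfix{G}{B}$, equivalently $\tau\in\sigma\dfix{G}{B}\sigma^{-1}$. Substituting this into the definition of $\mathfrak{N}$ gives $\mathfrak{N}=\bigcap_{\sigma\in\mathfrak{G}}\sigma\dfix{G}{B}\sigma^{-1}$, the normal core of $\dfix{G}{B}$ in $\mathfrak{G}$. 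To see it is normal I would fix $\gamma\in\mathfrak{G}$ and reindex: conjugation sends each term $\sigma\dfix{G}{B}\sigma^{-1}$ to $(\gamma\sigma)\dfix{G}{B}(\gamma\sigma)^{-1}$, and as $\sigma$ runs over $\mathfrak{G}$ so does $\gamma\sigma$, whence $\gamma\mathfrak{N}\gamma^{-1}=\mathfrak{N}$. (One may equally observe that $\mathfrak{N}$ is the kernel of the action of $\mathfrak{G}$ on the block system $\{\sigma B:\sigma\in\mathfrak{G}\}$.) Taking $\sigma$ to be the identity in the intersection also records the containment $\mathfrak{N}\le\dfix{G}{B}$, and hence $\mathfrak{N} b\subseteq B$ for the chosen $b\in B$.

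Before the equivalence I would collect two structural facts about $B$. First, $\dfix{G}{b}\le\dfix{G}{B}$: if $\gamma$ fixes $b$, then $b=\gamma b\in\gamma B\cap B$, so the block property forces $\gamma B=B$ (cf. the converse half of Lemma~\ref{LemBlock}). Second, $\dfix{G}{B}$ acts transitively on $B$: given $x\in B$, since $B$ lies in a single orbit of $\mathfrak{G}$ there is $\gamma\in\mathfrak{G}$ with $\gamma b=x$, and then $x\in\gamma B\cap B$ again forces $\gamma\in\dfix{G}{B}$. Because $\mathfrak{N}$ is normal, the product $\mathfrak{N}\dfix{G}{b}$ is a genuine subgroup of $\mathfrak{G}$, and it is contained in $\dfix{G}{B}$ by the two containments just noted.

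For the forward direction, assume $\mathfrak{N} b=B$. The inclusion $\mathfrak{N}\dfix{G}{b}\subseteq\dfix{G}{B}$ is already in hand; conversely, if $\gamma\in\dfix{G}{B}$ then $\gamma b\in B=\mathfrak{N} b$, so $\gamma b=\nu b$ for some $\nu\in\mathfrak{N}$, whence $\nu^{-1}\gamma\in\dfix{G}{b}$ and $\gamma\in\mathfrak{N}\dfix{G}{b}$. For the reverse direction, assume $\mathfrak{N}\dfix{G}{b}=\dfix{G}{B}$. Since $\mathfrak{N} b\subseteq B$, it suffices to take an arbitrary $x\in B$, use transitivity to pick $\gamma\in\dfix{G}{B}$ with $\gamma b=x$, write $\gamma=\nu h$ with $\nu\in\mathfrak{N}$ and $h\in\dfix{G}{b}$, and conclude $x=\gamma b=\nu h b=\nu b\in\mathfrak{N} b$.

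I do not expect a deep obstacle here; the only care needed is to keep the two kinds of object apart — $\mathfrak{N} b$ is an $\mathfrak{N}$-orbit (a subset of $V$), while $\mathfrak{N}\dfix{G}{b}$ is a product of subgroups — and to locate precisely where the earlier parts are used: the normality of $\mathfrak{N}$ guarantees that $\mathfrak{N}\dfix{G}{b}$ is in fact a subgroup, whereas the transitivity of $\dfix{G}{B}$ on $B$ supplies the surjectivity that drives the reverse implication.
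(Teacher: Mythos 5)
Your proof is correct, and its skeleton matches the paper's: conjugation identity first, normality by reindexing the intersection, then the equivalence via the transitivity of $\dfix{G}{B}$ on $B$. But two of the three components are argued by genuinely different means, and in both cases your route is the more economical one. For the identity $\dfix{G}{\sigma B}=\sigma\dfix{G}{B}\sigma^{-1}$, you use a single chain of membership equivalences ($\tau\sigma B=\sigma B\Leftrightarrow\sigma^{-1}\tau\sigma\in\dfix{G}{B}$), whereas the paper proves two one-sided containments and then forces equality by comparing cardinalities $|\dfix{G}{B}|=|\sigma\dfix{G}{B}\sigma^{-1}|\le|\dfix{G}{\sigma B}|\le|\dfix{G}{B}|$ — an argument that leans on finiteness where none is needed. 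More substantially, for the direction $\mathfrak{N}b=B\Rightarrow\mathfrak{N}\dfix{G}{b}=\dfix{G}{B}$, you give a direct factorization: $\gamma\in\dfix{G}{B}$ implies $\gamma b=\nu b$ for some $\nu\in\mathfrak{N}$, hence $\nu^{-1}\gamma\in\dfix{G}{b}$ and $\gamma\in\mathfrak{N}\dfix{G}{b}$. The paper instead runs a counting argument: it defines a map $\delta\dfix{G}{b}\mapsto\delta b$ on the $\mathfrak{N}$-cosets of $\dfix{G}{b}$, shows it is surjective onto $B$, bounds $\#\{\delta\dfix{G}{b}:\delta\in\mathfrak{N}\}$ above by $|B|=[\dfix{G}{B}:\dfix{G}{b}]$, and concludes the coset families coincide. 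Your version extracts the same conclusion without any enumeration, again dropping the reliance on finiteness; the paper's version has the mild virtue of exhibiting the "well-proportioned distribution" of $\mathfrak{N}$ inside $\dfix{G}{B}$ that the remark following the lemma exploits. Your explicit observations that normality of $\mathfrak{N}$ makes $\mathfrak{N}\dfix{G}{b}$ a subgroup, and that $\mathfrak{N}$ is the kernel of the action on the block system, are correct refinements the paper leaves implicit.
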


\begin{proof}[\bf Proof] Clearly $\left(\sigma\dfix{G}{B}\sigma^{-1}\right) \sigma B = \sigma B$. Thus $\sigma\dfix{G}{B}\sigma^{-1} \le \dfix{G}{\sigma B}$.  Similarly, $\left(\sigma^{-1} \dfix{G}{\sigma B} \sigma \right) B$ $=$ $B$, so $\sigma^{-1} \dfix{G}{\sigma B} \sigma \le \dfix{G}{B}$. Accordingly, we have $$|\dfix{G}{B}| = |\sigma \dfix{G}{B} \sigma^{-1}| \le |\dfix{G}{\sigma B}| = |\sigma^{-1} \dfix{G}{\sigma B} \sigma| \le |\dfix{G}{B}|.$$
Consequently,  $\dfix{G}{\sigma B} =\sigma\dfix{G}{B}\sigma^{-1}$, $\forall \sigma \in \mathfrak{G}$, and hence $\bigcap_{\sigma\in\mathfrak{G}} \dfix{G}{\sigma B} = \bigcap_{\sigma\in\mathfrak{G}} \left(\sigma \dfix{G}{B} \sigma^{-1} \right) $. 

On the other hand, for any permutation $\gamma \in \mathfrak{G}$, 
\begin{align*} 
\gamma\left(\bigcap_{\sigma\in\mathfrak{G}} \left(\sigma \dfix{G}{B} \sigma^{-1} \right) \right)\gamma^{-1} 
& =  \bigcap_{\sigma\in\mathfrak{G}} \left( (\gamma\sigma) \dfix{G}{B} (\gamma\sigma)^{-1}  \right)\\ 
& = \bigcap_{\sigma\in\mathfrak{G}} \left(\sigma \dfix{G}{B} \sigma^{-1} \right) 
\end{align*}
As a result, $\bigcap_{\sigma\in\mathfrak{G}} \dfix{G}{\sigma B}$ is a normal subgroup of $\mathfrak{G}$.

\vspace{2mm}
We now turn to the second part of the lemma and first show the sufficiency of the assertion. Since $B$ is a block for $\mathfrak{G}$, $\dfix{G}{B} b = B$ for some $b \in B$, which means $\forall \hspace{0.5mm} b' \in B$, $\exists \hspace{0.5mm} \xi\in\dfix{G}{B}$ {\it s.t.,} $b' = \xi b$. On the other hand, in accordance with the assumption that $\mathfrak{N}\dfix{G}{b} = \dfix{G}{B}$, one can find $\delta\in\mathfrak{N}$ and $\zeta\in\dfix{G}{b}$ such that $\xi = \delta\zeta$. Consequently, $b' = \delta\zeta b = \delta b$ and therefore, $B = \mathfrak{N} b$.

For the necessity, one can easily check that if $\delta_1\dfix{G}{b} = \delta_2\dfix{G}{b}$ then $\delta_1 b = \delta_2 b$, where $\delta_1, \delta_2 \in \mathfrak{N}$. Consequently, we can define a map $\phi$ from $\{ \delta\dfix{G}{b} : \delta\in\mathfrak{N} \}$ to $B$ such that $\phi(\delta\dfix{G}{b}) = \delta b$. In accordance with the condition that $\mathfrak{N} b = B$, the map $\phi$ is surjective, so $\# \{ \delta\dfix{G}{b} : \delta\in\mathfrak{N} \}\geq |B|$.

On the other hand, since $B$ is an orbit of $\dfix{G}{B}$, $|B| = [\dfix{G}{B} : \dfix{G}{b}]$. Moreover, according to the definition of $\mathfrak{N}$, it is easy to see that 
$\{ \delta\dfix{G}{b} : \delta\in\mathfrak{N} \} \subseteq 
\{ \xi\dfix{G}{b} : \xi\in\dfix{G}{B} \}$. Consequently, the order of $\{ \delta\dfix{G}{b} : \delta\in\mathfrak{N} \}$ is not more than $|B|$, i.e., 
$\# \{ \delta\dfix{G}{b} : \delta\in\mathfrak{N} \}\leq |B|$.
As a result, the map $\phi$ is a bijection and therefore $\{ \delta\dfix{G}{b} : \delta\in\mathfrak{N} \} = 
\{ \xi\dfix{G}{b} : \xi\in\dfix{G}{B} \}$.\end{proof}

According to the relation above, if $\mathfrak{G}$ possesses such a normal subgroup $\mathfrak{N}$ so that $\mathfrak{N} b = B$ then the distribution of $\mathfrak{N}$ in $\dfix{G}{B}$ is well-proportioned. To be precise, for any left coset $\zeta\dfix{G}{b}$, it must contain some of members of $\mathfrak{N}$. In fact, suppose $\dfix{G}{B} = \cup_{i=1}^l \zeta_i\dfix{G}{b}$ and $\xi \in \mathfrak{G}_B \setminus \left( \cup_{j=1}^k \hspace{0.5mm} \zeta_{i_j} \dfix{G}{b} \right)$ $(1\leq k\leq l-1)$. Since the relationship that $\mathfrak{N} b = B$ is equivalent to that $\mathfrak{N}\dfix{G}{b} = \mathfrak{G}_B$, one can find $\delta\in\mathfrak{N}$ and $\gamma\in\dfix{G}{b}$ such that $\xi = \delta\gamma$, that is an element of the left coset $\delta\dfix{G}{b}$. As a result, $\delta\dfix{G}{b}\cap \cup_{j=1}^k \zeta_{i_j} \dfix{G}{b} = \emptyset$ and thus $\delta\notin \cup_{j=1}^k \zeta_{i_j} \dfix{G}{b}$.

However there does exist such permutation groups such that $\mathfrak{N} b \subsetneq B$, $\forall \hspace{0.5mm} b \in B$. For instance, we consider the action of left product of $A_5$ on itself. Evidently, $\langle (1 2 3 4 5) \rangle$ is a subgroup of $A_5$, that is generated by the permutation $(1 2 3 4 5)$, and the family consisting of left cosets of $\langle (1 2 3 4 5) \rangle$ is indeed a system of blocks of the action by virtue of Lemma \ref{LemBlockStructure}. But it is well-known that $A_5$ is simple and thus it possesses no any non-trivial normal subgroups.

\vspace{3mm}
We are now ready to show how to find blocks for $\mathfrak{G}$. As we have seen in the 1st section, the bipartite graph $\dbipart{G}{v}$ plays a critical role in working out blocks for $\mathfrak{G}$. Recall that the vertex set of $\dbipart{G}{v}$ consists of orbits of $\mathfrak{G}_{v'}$ and of $\mathfrak{G}_{v''}$, and two vertices in the graph are adjacent if the intersection of two subsets corresponding to the vertices is not empty. There are essentially two kinds of blocks for a permutation group, and the component $C[v']$ in $\dbipart{G}{v}$ shows us one of them.  

\begin{Lemma}\label{LemFindBlocks-1}
Let $C[x]$ be a component in $\dbipart{G}{v}$, where $x$ is an element of $V$. Then $C[x] = \langle \mathfrak{G}_{v'}, \mathfrak{G}_{v''} \rangle x$.
\end{Lemma}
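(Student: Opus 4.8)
The plan is to identify the component $C[x]$, regarded as the union in $V$ of all the orbits that serve as its vertices, with the orbit of $x$ under $\mathfrak{H} := \langle \mathfrak{G}_{v'}, \mathfrak{G}_{v''} \rangle$, and to establish this by proving the two inclusions $\mathfrak{H}x \subseteq C[x]$ and $C[x] \subseteq \mathfrak{H}x$ separately. The first thing I would pin down is the meaning of $C[x]$ itself: each element $w \in V$ lies in exactly one orbit $O'(w)$ of $\mathfrak{G}_{v'}$ and exactly one orbit $O''(w)$ of $\mathfrak{G}_{v''}$, and these two orbits are adjacent in $\dbipart{G}{v}$ since they share the element $w$. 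Hence ``the component containing $x$'' is unambiguous (the two orbits through $x$ are joined by an edge), and an element $w$ lies in $C[x]$ precisely when $O'(w)$, equivalently $O''(w)$, is a vertex of that component.

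For the inclusion $\mathfrak{H}x \subseteq C[x]$, I would show that the subset $C[x]$ is invariant under $\mathfrak{H}$; since $\mathfrak{H}$ is generated by $\mathfrak{G}_{v'}$ and $\mathfrak{G}_{v''}$, it suffices to check invariance under these generators. If $w \in C[x]$ and $h \in \mathfrak{G}_{v'}$, then $hw$ lies in the same $\mathfrak{G}_{v'}$-orbit $O'(w)$ as $w$; because $O'(w)$ is a vertex of the component, $hw \in O'(w) \subseteq C[x]$, and symmetrically for $h \in \mathfrak{G}_{v''}$ using $O''(w)$. As $x \in C[x]$ and $C[x]$ is $\mathfrak{H}$-stable, this yields $\mathfrak{H}x \subseteq C[x]$.

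For the reverse inclusion, which is the substantive direction, I would take any $w \in C[x]$ and exploit a path $O_0 - O_1 - \cdots - O_m$ in the bipartite graph from an orbit $O_0 \ni x$ to an orbit $O_m \ni w$, each $O_i$ being an orbit of one of the two stabilizers and consecutive ones meeting. Choosing a witness $z_i \in O_{i-1} \cap O_i$ for each edge, and setting $z_0 := x$ and $z_{m+1} := w$, I would observe that for every $i$ the two points $z_i$ and $z_{i+1}$ both lie in $O_i$, an orbit of one of the stabilizers, so there is an element $h_{i+1}$ of that stabilizer carrying $z_i$ to $z_{i+1}$; composing these writes $w = h_{m+1} \cdots h_1 \, x$ with each $h_j \in \mathfrak{G}_{v'} \cup \mathfrak{G}_{v''} \subseteq \mathfrak{H}$, whence $w \in \mathfrak{H}x$. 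The hard part here is bookkeeping rather than depth: one must fix at the outset the convention translating the graph-theoretic component into a subset of $V$ (using that the two orbits through each point are adjacent), and one must arrange the path and its witnesses $z_i$ so that each consecutive pair genuinely lies in a single stabilizer-orbit; once both are in place, the telescoping product goes through routinely and the two inclusions combine to give $C[x] = \langle \mathfrak{G}_{v'}, \mathfrak{G}_{v''} \rangle x$.
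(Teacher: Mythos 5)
Your proof is correct and follows essentially the same route as the paper's: the paper likewise proves the inclusion $\langle \mathfrak{G}_{v'}, \mathfrak{G}_{v''} \rangle x \subseteq C[x]$ by observing that alternating products of stabilizer elements stay within the component, and disposes of the reverse inclusion with ``the same argument,'' which is precisely the path-and-witness telescoping you carry out explicitly. Your version is simply a more careful write-up of both directions, including the bookkeeping the paper leaves implicit.
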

\begin{proof}[\bf Proof]
Suppose $y$ is in the subset $\langle \mathfrak{G}_{v'}, \mathfrak{G}_{v''} \rangle x$. Then $\exists ~\sigma_1\ldots\sigma_m\in\mathfrak{G}_{v'}$ and $\gamma_1\ldots\gamma_m\in\mathfrak{G}_{v''}$ {\it s.t.,} $y = (\Pi_{i} \sigma_i\gamma_i) x$. According to the definition to $\dbipart{G}{v}$, it is easy to see that $y \in C[x]$, and thus $\langle \mathfrak{G}_{v'}, \mathfrak{G}_{v''} \rangle x \subseteq C[x]$.

By using the same argument, one can readily see that $C[x] \subseteq \langle \mathfrak{G}_{v'}, \mathfrak{G}_{v''} \rangle x$.
\end{proof}

\begin{Lemma}\label{LemFindBlocks-2}
The component $C[v']$ in $\dbipart{G}{v}$ is a block for $\mathfrak{G}$.
\end{Lemma}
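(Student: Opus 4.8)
The plan is to reduce the statement directly to the two structural facts already available: Lemma~\ref{LemFindBlocks-1}, which identifies the component $C[v']$ with a single group orbit, and Lemma~\ref{LemBlock}, which turns an orbit of a subgroup into a block as soon as that subgroup contains a point stabilizer. Write $\mathfrak{H} := \langle \mathfrak{G}_{v'}, \mathfrak{G}_{v''} \rangle$ for the subgroup generated by the two point stabilizers that define the bipartite graph $\dbipart{G}{v}$. Applying Lemma~\ref{LemFindBlocks-1} with $x = v'$ gives $C[v'] = \mathfrak{H}\, v'$, so the component is exactly the orbit of $v'$ under $\mathfrak{H}$. In particular $C[v'] \subseteq \mathfrak{G}\, v'$, so it is contained in a single orbit of $\mathfrak{G}$, which is precisely the setting in which the notion of block is defined.

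Next I would check the one hypothesis demanded by Lemma~\ref{LemBlock}, namely that the stabilizer of some point of $T := C[v']$ lies inside $\mathfrak{H}$. The natural representative is $t = v'$ itself: since $v'$ is fixed by every element of $\mathfrak{G}_{v'}$, its point stabilizer is $\mathfrak{G}_{v'}$, and this group is by definition one of the two generators of $\mathfrak{H}$, so $\mathfrak{G}_{v'} \leq \mathfrak{H}$ is immediate. Thus $T = C[v']$ is an orbit of $\mathfrak{H}$ with $\dfix{G}{t} = \mathfrak{G}_{v'} \leq \mathfrak{H}$ for $t = v' \in T$, and Lemma~\ref{LemBlock} yields at once that $T$ is a block for $\mathfrak{G}$. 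As a by-product, the same lemma also identifies $\mathfrak{H}$ as the full stabilizer $\dfix{G}{T}$, which is worth recording since it is useful for the later discussion of block families.

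I do not expect any real obstacle here: the genuine combinatorial content — that traversing edges of $\dbipart{G}{v}$ coincides with applying alternating words in $\mathfrak{G}_{v'}$ and $\mathfrak{G}_{v''}$ — has already been absorbed into Lemma~\ref{LemFindBlocks-1}, so the present statement is essentially a one-step corollary. The only bookkeeping point I would make explicit is that $v'$ is a legitimate starting vertex: it occurs as the singleton orbit $\{v'\}$ on the $\mathfrak{G}_{v'}$-side of the bipartite graph, so $C[v']$ is well defined and genuinely contains $v'$, which is what licenses the choice $t = v'$ above. Stating this avoids any ambiguity about which side of $\dbipart{G}{v}$ the vertex $v'$ sits on before invoking Lemma~\ref{LemBlock}.
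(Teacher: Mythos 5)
Your proposal is correct and takes essentially the same route as the paper: both proofs first invoke Lemma~\ref{LemFindBlocks-1} to identify $C[v']$ with the orbit $\langle \mathfrak{G}_{v'},\mathfrak{G}_{v''}\rangle v'$, and then use the trivial containment $\mathfrak{G}_{v'} \leq \langle \mathfrak{G}_{v'},\mathfrak{G}_{v''}\rangle$ to conclude. The only difference is organizational: you finish by citing Lemma~\ref{LemBlock}, while the paper inlines the identical one-line verification (if $\sigma\xi v' = \zeta v'$ with $\xi,\zeta \in \langle \mathfrak{G}_{v'},\mathfrak{G}_{v''}\rangle$ then $\sigma \in \zeta\,\mathfrak{G}_{v'}\,\xi^{-1} \subseteq \langle \mathfrak{G}_{v'},\mathfrak{G}_{v''}\rangle$), which is precisely the proof of the direction of Lemma~\ref{LemBlock} that you invoke.
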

\begin{proof}[\bf Proof]
In accordance with Lemma \ref{LemFindBlocks-1}, $C[v'] = \langle \mathfrak{G}_{v'}, \mathfrak{G}_{v''} \rangle v'$. To show $C[v']$ is a block, it is sufficient to prove that if $\sigma$ is a permutation in $\mathfrak{G}$ such that $\sigma \big(\langle \mathfrak{G}_{v'}, \mathfrak{G}_{v''} \rangle v'\big) \cap \big(\langle \mathfrak{G}_{v'}, \mathfrak{G}_{v''} \rangle v'\big) \neq \emptyset$, then $\sigma \big(\langle \mathfrak{G}_{v'}, \mathfrak{G}_{v''} \rangle v'\big) = \langle \mathfrak{G}_{v'}, \mathfrak{G}_{v''} \rangle v'$.

Suppose $\exists ~\xi,\zeta \in \langle \mathfrak{G}_{v'}, \mathfrak{G}_{v''} \rangle$, {\it s.t.,} $\sigma\xi v' = \zeta v'$. Then $\zeta^{-1} \sigma \xi v' = v'$ $\Rightarrow$ $\sigma \in \zeta\mathfrak{G}_{v'}\xi^{-1} \subseteq \langle \mathfrak{G}_{v'}, \mathfrak{G}_{v''} \rangle$.
\end{proof}

Although component $C[v']$ in $\dbipart{G}{v}$ must be a block for $\mathfrak{G}$, it is possible that $C[v']$ contains only one vertex in $V$. For instance, $C[1]$ in $\dubipart{G}{1}{8}$ only contains the vertex 1 and $C[8]$ only 8, where $\mathfrak{G}$ stands for the automorphism group of the cube (see the diagram below).

\begin{center}\setlength{\unitlength}{0.5bp}
\hspace{-6mm}\includegraphics[width=4cm]{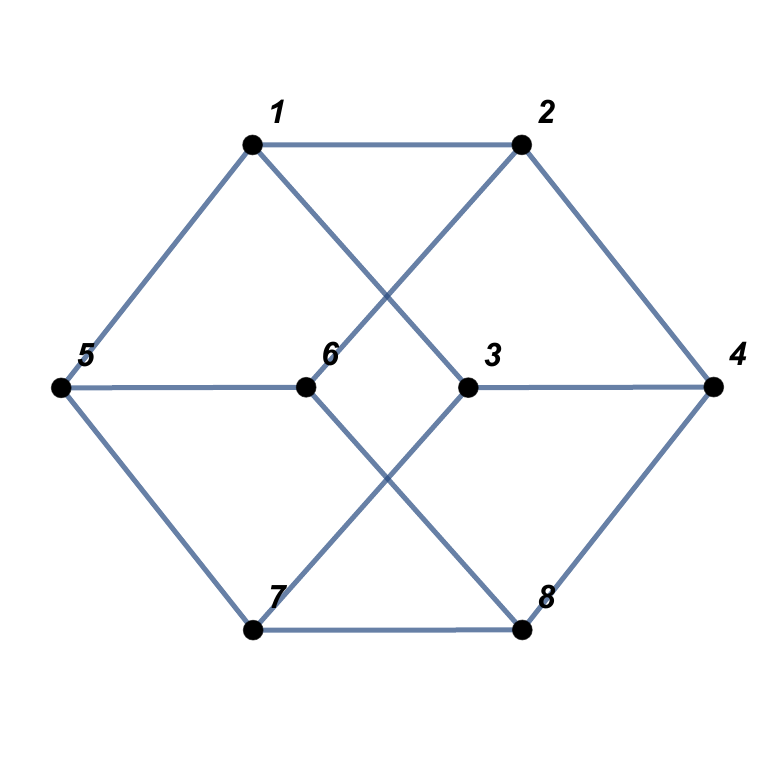}
\end{center}

In order to deal with that case, we introduce a binary relation among vertices in $V$. Evidently, the orbits of $\mathfrak{G}_v$ constitue a partition of $V$, which is denoted by $\Pi_v^*$. Accordingly, one can define a binary relation for any $x$ and $y$ in some orbit $T$ of $\mathfrak{G}$ such that $x \sim y$ if $\Pi_x^*= \Pi_y^*$. Obviously, it is an equivalence relation on $T$, so it could induce a partition of $T$, which is denoted by $\widetilde{\Pi}[T]$.

\begin{Lemma}\label{LemFindBlocks-3}
Every cell in $\widetilde{\Pi}[T]$ contains the same number of vertices, and all cells of $\widetilde{\Pi}[T]$ constitute a block system of $\mathfrak{G}$.
\end{Lemma}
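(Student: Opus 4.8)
The plan is to exploit the conjugation relationship between point stabilizers and their orbit partitions. First I would record the basic identity $\dfix{G}{\sigma x} = \sigma\,\dfix{G}{x}\,\sigma^{-1}$ for every $\sigma \in \mathfrak{G}$ and $x \in V$, which is the point-wise analogue of the relation proved in Lemma \ref{LemBlockTONormalSubgroup} (a singleton is a trivial block). From this I would deduce that $\sigma$ carries each orbit of $\dfix{G}{x}$ onto an orbit of $\dfix{G}{\sigma x}$: for any $w \in V$,
$$
\sigma\big(\dfix{G}{x}\,w\big) = \big(\sigma\,\dfix{G}{x}\,\sigma^{-1}\big)(\sigma w) = \dfix{G}{\sigma x}\,(\sigma w),
$$
so that applying $\sigma$ cell-by-cell sends $\Pi_x^*$ bijectively onto $\Pi_{\sigma x}^*$. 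In short, $\sigma\,\Pi_x^* = \Pi_{\sigma x}^*$.

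The next step is to show that the equivalence relation $\sim$ defining $\widetilde{\Pi}[T]$ is $\mathfrak{G}$-invariant, that is, $x \sim y$ if and only if $\sigma x \sim \sigma y$ for every $\sigma \in \mathfrak{G}$. This follows at once from the identity above: since $\sigma$ acts as a bijection on the partitions of $V$, we have $\Pi_x^* = \Pi_y^*$ precisely when $\sigma\,\Pi_x^* = \sigma\,\Pi_y^*$, i.e.\ when $\Pi_{\sigma x}^* = \Pi_{\sigma y}^*$. Consequently $\sigma$ maps each equivalence class onto another equivalence class, so $\mathfrak{G}$ permutes the cells of $\widetilde{\Pi}[T]$.

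From the induced permutation action on cells both assertions drop out. For the block property, if $C$ is a cell and $\sigma \in \mathfrak{G}$, then $\sigma C$ is again a cell of $\widetilde{\Pi}[T]$; since distinct cells are disjoint, either $\sigma C = C$ or $\sigma C \cap C = \emptyset$, which is exactly the definition of a block, so $\widetilde{\Pi}[T]$ is a block system. For the equal-cardinality claim I would use that $\mathfrak{G}$ acts transitively on the orbit $T$: given cells $C_1$ and $C_2$, pick $x_i \in C_i$ and choose $\sigma$ with $\sigma x_1 = x_2$; then $\sigma C_1$ is the cell containing $x_2$, namely $C_2$, whence $|C_1| = |\sigma C_1| = |C_2|$.

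I do not expect a serious obstacle here: the whole argument hinges on the single identity $\sigma\,\Pi_x^* = \Pi_{\sigma x}^*$. The one point requiring a little care is verifying that $\sigma$ genuinely induces a \emph{well-defined} bijection on equivalence classes, rather than merely sending individual related pairs to related pairs; this is handled by establishing the invariance of $\sim$ in its ``if and only if'' form, so that both $\sigma$ and $\sigma^{-1}$ preserve the relation and no class can be split or merged.
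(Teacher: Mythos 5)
Your proof is correct. It turns on the same elementary fact as the paper's argument---the conjugation identity $\mathfrak{G}_{\sigma x} = \sigma\,\mathfrak{G}_x\,\sigma^{-1}$ (so $\sigma\,\Pi_x^* = \Pi_{\sigma x}^*$)---but deploys it differently. You upgrade the identity to the statement that $\sim$ is a $\mathfrak{G}$-invariant equivalence relation on $T$, so that $\mathfrak{G}$ permutes the cells of $\widetilde{\Pi}[T]$; after that both claims are formal consequences: two cells of a partition are equal or disjoint (block property), and transitivity on $T$ carries any cell onto any other (equal cardinality, and the cells form a single system). The paper instead works with a concrete description of a cell: its argument in effect shows that $y \in C_s$ exactly when $\mathfrak{G}_s y = y$, equivalently $\mathfrak{G}_y = \mathfrak{G}_s$, and then proves the disjointness $\sigma C_s \cap C_s = \emptyset$ for $\sigma s \notin C_s$ by comparing which vertices occur as singletons of $\Pi_{\sigma s}^*$. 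That route needs an extra ingredient your argument avoids entirely: since point stabilizers on one orbit are conjugate and hence of equal order, the containment $\mathfrak{G}_y \leq \mathfrak{G}_s$ forces equality. Your equivariance argument is shorter and settles transparently the remaining case $\sigma C_s \cap C_s \neq \emptyset$, which the paper dispatches with ``one can readily see''; what the paper's computation buys is the explicit identification of each cell with the fixed-point set of a point stabilizer, a description that is reused later (for instance in the proof of Theorem \ref{ThmNeceAndSuffForPrimitiveness}, where the dichotomy $\mathfrak{G}_{v'} = \mathfrak{G}_{v''}$ or not is exactly what matters).
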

\begin{proof}[\bf Proof]
Since the action of $\mathfrak{G}$ on $T$ is transitive, if $\sigma$ is a permutation in $\mathfrak{G}$ such that $\sigma x = y$ then $\mathfrak{G}_y = \sigma \mathfrak{G}_x \sigma^{-1}$ due to Lemma \ref{LemBlockTONormalSubgroup}, where $x, y \in T$. Accordingly, if $\mathfrak{G}_x X = X$, $X\subseteq T$, then $\mathfrak{G}_y (\sigma X)= \sigma X$. Hence for any $t \in T$, $\Pi_t^*$ have the same number of cells of order one, so the first claim holds.

Suppose $C_s$ is a cell of $\widetilde{\Pi}[T]$ containing the vertex $s$. We pick arbitrarily a member $y$ in $T \setminus C_s$. Let $\sigma$ be a permutation in $\mathfrak{G}$ such that $\sigma s = y$. Then $\sigma C_s \neq C_s$. To show $C_s$ is a block for $\mathfrak{G}$, it is sufficient to prove that $\sigma C_s \cap C_s = \emptyset$.

Note that $y\notin C_s$, which means $s\nsim y$. Consequently, the cell containing $s$ in $\Pi_y^*$ cannot be singleton, otherwise $\mathfrak{G}_y s = s$ $\Rightarrow$ $\mathfrak{G}_y \leq \mathfrak{G}_s$. Then $\mathfrak{G}_y = \mathfrak{G}_s $ and thus $\Pi_y^*=\Pi_s^*$, which contradicts the assumption that $y\notin C_s$. As a result, any member in $C_s$ cannot be singleton in $\Pi_y^*$. On the other hand, $\mathfrak{G}_y (\sigma x) = \left( \sigma \mathfrak{G}_s \sigma^{-1} \right) (\sigma x) = \sigma x$ for any $x \in C_s$, {\it i.e.,} $\sigma x$ is a singleton in $\Pi_y^*$. Therefore $C_s \cap \sigma C_s = \emptyset$.

Accordingly, one can readily see that for any $\sigma \in \mathfrak{G}$, $\sigma C_s$ also belongs to $\widetilde{\Pi}[T]$, so $\widetilde{\Pi}[T]$ is a block system.
\end{proof}

Apparently, there are only two such kinds of blocks for a permutation group, which can be found out by $\{ \Pi_v^* : v \in V \}$. Before we show how to obtain a block family of $\mathfrak{G}$, let us present a different characterization of primitive permutation groups.

\begin{Theorem}\label{ThmNeceAndSuffForPrimitiveness}
Let $\mathfrak{G}$ be a permutation group acting on $V$ transitively. Then $\mathfrak{G}$ is primitive if and only if one of two cases below occurs 
\begin{enumerate}
\item[{\rm i)}] $\dbipart{G}{v}$ is connected, $\forall\hspace{0.6mm} v',v'' \in V$;

\item[{\rm ii)}] $\dbipart{G}{v}$ is a perfect matching consisting of $|S|$ edges, $\forall\hspace{0.6mm} v',v'' \in V$, and $|V|$ is a prime number. In fact, $\mathfrak{G}$ is a circulant group of prime order in this case.
\end{enumerate}
\end{Theorem}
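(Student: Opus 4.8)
The plan is to reduce everything to the maximality criterion for primitivity (Lemma~\ref{LemPrimitiveness1stNeceSuffCond}) together with the identification, furnished by Lemmas~\ref{LemFindBlocks-1} and~\ref{LemFindBlocks-2}, of the component $C[v']$ of $\dbipart{G}{v}$ with the orbit $\langle \mathfrak{G}_{v'}, \mathfrak{G}_{v''}\rangle v'$. The key observation is that, for distinct $v',v''$, the graph $\dbipart{G}{v}$ is connected exactly when $C[v'] = V$, i.e.\ exactly when the subgroup $\langle \mathfrak{G}_{v'}, \mathfrak{G}_{v''}\rangle$ acts transitively on $V$. Since this subgroup always contains $\mathfrak{G}_{v'}$, the whole argument turns on the lattice of subgroups lying above a point stabilizer, which under primitivity collapses to the two-element chain $\mathfrak{G}_{v'} \lneq \mathfrak{G}$. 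A minor point to settle at the outset is the convention for $v'=v''$, where $\dbipart{G}{v}$ degenerates; the statement is to be read for distinct $v',v''$, and every argument below invokes only such pairs.

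For the forward implication I would fix primitivity and examine a pair of distinct points $v',v''$. By Lemma~\ref{LemPrimitiveness1stNeceSuffCond} the stabilizer $\mathfrak{G}_{v'}$ is maximal, so $\langle \mathfrak{G}_{v'}, \mathfrak{G}_{v''}\rangle$ is either $\mathfrak{G}_{v'}$ or $\mathfrak{G}$. If it equals $\mathfrak{G}$ for every such pair, then $C[v'] = \mathfrak{G} v' = V$ throughout and we are in case~i). Otherwise some pair yields $\langle \mathfrak{G}_{v'}, \mathfrak{G}_{v''}\rangle = \mathfrak{G}_{v'}$, forcing $\mathfrak{G}_{v''} \le \mathfrak{G}_{v'}$ and hence, by equality of orders of conjugate stabilizers, $\mathfrak{G}_{v'} = \mathfrak{G}_{v''}$. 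Then $\Pi_{v'}^* = \Pi_{v''}^*$, so $v' \sim v''$ in the sense of Lemma~\ref{LemFindBlocks-3} and the cell of $\widetilde{\Pi}[V]$ through $v'$ has at least two elements; since all cells share one size and the group is primitive, the block system $\widetilde{\Pi}[V]$ must be the single cell $V$. This makes every $\Pi_x^*$ equal, hence equal to the partition into singletons, so every point stabilizer is trivial and $\mathfrak{G}$ is regular. As $\{e\}$ is then maximal, $\mathfrak{G}$ has no proper nontrivial subgroup, so $|\mathfrak{G}| = |V|$ is prime and $\mathfrak{G}$ is the circulant group $\mathbb{Z}_p$; with all stabilizers trivial every $\dbipart{G}{v}$ is a perfect matching on the $|S|$ singletons, which is case~ii).

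For the converse I would argue each case separately. Assuming i), suppose $\mathfrak{G}$ were imprimitive with a nontrivial block $B$; choosing distinct $v',v'' \in B$ one checks $\mathfrak{G}_{v'},\mathfrak{G}_{v''} \le \mathfrak{G}_B$, whence $C[v'] = \langle \mathfrak{G}_{v'}, \mathfrak{G}_{v''}\rangle v' \subseteq \mathfrak{G}_B v' \subseteq B \subsetneq V$, so $\dbipart{G}{v}$ is disconnected, contradicting~i). Assuming ii), a perfect matching with $|S|$ edges forces $\mathfrak{G}_{v'}$ to have $|V|$ singleton orbits, i.e.\ $\mathfrak{G}_{v'} = \{e\}$; thus $\mathfrak{G}$ is regular of prime order $|V|$, its point stabilizer $\{e\}$ is maximal, and primitivity follows from Lemma~\ref{LemPrimitiveness1stNeceSuffCond}.

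The main obstacle is the second alternative of the forward direction: turning the single coincidence $\mathfrak{G}_{v'} = \mathfrak{G}_{v''}$ for one pair of distinct points into the global conclusion that $\mathfrak{G}$ is regular of prime degree. The cleanest route is the block-system argument above through Lemma~\ref{LemFindBlocks-3}; an equivalent alternative is to show directly that $\mathrm{Fix}(\mathfrak{G}_{v'}) = \{\, w : \mathfrak{G}_w = \mathfrak{G}_{v'} \,\}$ is a block, using that distinct conjugates of $\mathfrak{G}_{v'}$ have disjoint fixed-point sets. This set has size at least two, so by primitivity it is all of $V$, again driving $\mathfrak{G}_{v'}$ into the kernel of the faithful action and hence to $\{e\}$. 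Everything else is a routine unwinding of the component-orbit dictionary of Lemmas~\ref{LemFindBlocks-1} and~\ref{LemFindBlocks-2}.
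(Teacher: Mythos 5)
Your proof is correct and takes essentially the same route as the paper: both rest on the maximality criterion of Lemma~\ref{LemPrimitiveness1stNeceSuffCond}, the component--orbit dictionary $C[v'] = \langle \mathfrak{G}_{v'}, \mathfrak{G}_{v''}\rangle v'$ of Lemmas~\ref{LemFindBlocks-1} and~\ref{LemFindBlocks-2}, and the stabilizer-equality block system $\widetilde{\Pi}[V]$ of Lemma~\ref{LemFindBlocks-3}. The only differences are organizational --- you case-split on whether $\langle \mathfrak{G}_{v'}, \mathfrak{G}_{v''}\rangle$ equals $\mathfrak{G}$ for all pairs rather than on triviality of point stabilizers, and you supply explicitly the regularity and prime-order arguments (for sufficiency of case ii) and for forcing trivial stabilizers from one coincidence $\mathfrak{G}_{v'}=\mathfrak{G}_{v''}$) that the paper compresses or labels as obvious.
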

\begin{proof}[\bf Proof]
Let us begin with the sufficiency of our assertion. In the case i), if there exists a non-trivial block $B$ for $\mathfrak{G}$, then the bipartite graph $\dbipart{G}{b}$ cannot be connected for any vertices $b'$ and $b''$ in $B$. In fact, the component $C[b']$ in $\dbipart{G}{b}$, due to Lemma \ref{LemFindBlocks-1}, consisting of vertices in $\left\langle \mathfrak{G}_{b'},\mathfrak{G}_{b''} \right\rangle b'$, is contained in $\mathfrak{G}_B b' = B$. This is in contradiction with the assumption that $\dbipart{G}{b}$ is connected.

Obviously, the action of $\mathfrak{G}$ on $V$ is primitive in the case ii).

\vspace{2mm} We now turn to the necessity of the assertion. Clearly, there are only two possibilities for each stabilizer $\mathfrak{G}_v$: $\mathfrak{G}_v \supsetneq \{ 1 \}$ or $\mathfrak{G}_v = \{ 1 \}$. Because $\mathfrak{G}$ is primitive, the subgroup $\mathfrak{G}_v$ is maximal due to Lemma \ref{LemPrimitiveness1stNeceSuffCond}. Hence for any permutation $\xi \in \mathfrak{G} \setminus \{1\}$, $\langle \xi \rangle = \langle \xi,\mathfrak{G}_v \rangle = \mathfrak{G}$ in the second case, which implies that $\mathfrak{G}$ is a circulant group of prime order.

According to Lemma \ref{LemFindBlocks-3}, $\forall\hspace{0.5mm} v',v'' \in V$, $\mathfrak{G}_{v'} \neq \mathfrak{G}_{v''}$, provided that $\mathfrak{G}_v \supsetneq \{ 1 \}$. On the other hand, the primitiveness of $\mathfrak{G}$ implies that $\langle \mathfrak{G}_{v'},\mathfrak{G}_{v''} \rangle = \mathfrak{G}$. By means of Lemma \ref{LemFindBlocks-1}, $C[v'] = \langle \mathfrak{G}_{v'}, \mathfrak{G}_{v''} \rangle v' = \mathfrak{G} v' = V$, so the graph $\dbipart{G}{v}$ is connected.
\end{proof}

\vspace{3mm}
It is easy to see that if $T$ is one of orbits of $\mathfrak{G}$ such that $\mathfrak{G}_t$ is trivial, where $t \in T$, then $|\mathfrak{G}| = |T|$. Accordingly, in order to obtain a generating set of $\mathfrak{G}$, we do not have to find out a block family of it, but figure out by our algorithm those permutations $\sigma_x$ for every $x \in T$ so that $\sigma_x t = x$.   
 
There are however some interesting properties possessed by $\mathfrak{G}_B$, where $B$ is one of minimal blocks for $\mathfrak{G}$. Let us begin with a straightforward property.

\begin{Lemma}\label{Lemma-BlockStabilizerFeatureGvTrivial}
Let $\mathfrak{G}$ be a permutation group acting on $V$. Suppose for any vertex $v \in V$ the stabilizer $\mathfrak{G}_v$ is trivial and $B$ is one of minimal blocks for $\mathfrak{G}$ contained in some orbit of $\mathfrak{G}$. Then 
\begin{enumerate}

\item[(1)] The action of $\mathfrak{G}_B$ on $B$ is primitive;

\item[(2)] $| \mathfrak{G}_B | = |B|$.
\end{enumerate}
\end{Lemma}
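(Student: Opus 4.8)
The plan is to exploit the fact that the hypothesis ``$\mathfrak{G}_v = \{1\}$ for every $v$'' forces $\mathfrak{G}$ to act regularly on each of its orbits; in particular, on the orbit $T$ containing $B$ the map $\sigma \mapsto \sigma b$ is a bijection from $\mathfrak{G}$ to $T$ for any fixed $b \in B$, so $|T| = |\mathfrak{G}|$. The key consequence of triviality of stabilizers is that \emph{every} subgroup produces a block: since $\mathfrak{G}_b = \{1\}$ lies inside any subgroup $\mathfrak{K} \le \mathfrak{G}$, Lemma \ref{LemBlock} tells us that the orbit $\mathfrak{K} b$ is a block for $\mathfrak{G}$ and that $\mathfrak{K}$ is its stabilizer. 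This dictionary between subgroups and blocks is the engine driving both parts.

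For part (2) I would first invoke the standard fact (already used inside the proof of Lemma \ref{LemBlockTONormalSubgroup}) that, because $\mathfrak{G}$ is transitive on $T$ and $B$ is a block, the stabilizer $\mathfrak{G}_B$ acts transitively on $B$, i.e. $\mathfrak{G}_B b = B$. The stabilizer of $b$ inside $\mathfrak{G}_B$ is $\mathfrak{G}_B \cap \mathfrak{G}_b = \{1\}$, so $\mathfrak{G}_B$ acts regularly on $B$; orbit--stabilizer applied within $\mathfrak{G}_B$ then yields $|B| = [\mathfrak{G}_B : \{1\}] = |\mathfrak{G}_B|$. Equivalently, one could combine $|T| = |\mathfrak{G}|$ with the relation $|T|/|B| = [\mathfrak{G} : \mathfrak{G}_B]$ furnished by Lemma \ref{LemBlockStructure}.

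For part (1) the idea is to show that a block of the restricted action of $\mathfrak{G}_B$ on $B$ is automatically a block of the full action of $\mathfrak{G}$ on $T$, so that the minimality of $B$ forces it to be trivial. Concretely, I would take a block $C \subseteq B$ of $\mathfrak{G}_B$ on $B$ with $b \in C$. Since $\mathfrak{G}_B$ is transitive on $B$, its subgroup $\mathfrak{K} := \{h \in \mathfrak{G}_B : hC = C\}$ is transitive on $C$, whence $C = \mathfrak{K} b$. But $\mathfrak{K}$ is a subgroup of $\mathfrak{G}$ with $\mathfrak{G}_b = \{1\} \le \mathfrak{K}$, so Lemma \ref{LemBlock} makes $C = \mathfrak{K} b$ a block for $\mathfrak{G}$ contained in $B$. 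Because $B$ is a \emph{minimal} block for $\mathfrak{G}$, the only such blocks are $\{b\}$ and $B$ itself. Transitivity of $\mathfrak{G}_B$ on $B$ then promotes this to the statement that every block of $\mathfrak{G}_B$ on $B$ is trivial (a non-trivial one could be translated to a non-trivial block through $b$), so $\mathfrak{G}_B$ acts primitively on $B$.

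The main obstacle, and the only place where genuine content enters, is precisely the passage from a block of the subgroup action of $\mathfrak{G}_B$ on $B$ to a block of the ambient action of $\mathfrak{G}$ on $T$: this is exactly the step that uses triviality of all point stabilizers, since it is that hypothesis which lets Lemma \ref{LemBlock} turn the orbit $\mathfrak{K} b$ of an arbitrary subgroup into a block for the larger group. As a consistency check I would note a second route to part (1): $\mathfrak{G}_B$ is transitive on $B$ with trivial point stabilizer, so by Lemma \ref{LemPrimitiveness1stNeceSuffCond} primitivity is equivalent to $\{1\}$ being a maximal subgroup of $\mathfrak{G}_B$, i.e. to $\mathfrak{G}_B$ having no proper non-trivial subgroup; and an intermediate subgroup $\{1\} \subsetneq \mathfrak{K} \subsetneq \mathfrak{G}_B$ would, through the same correspondence, produce an intermediate block $\{b\} \subsetneq \mathfrak{K} b \subsetneq B$, contradicting minimality. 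Both routes hinge on the subgroup--block dictionary, so that is the step I would state most carefully.
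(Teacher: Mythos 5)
Your proof is correct. The paper actually offers no proof of this lemma at all — it is introduced as ``a straightforward property'' and left unproved — so there is nothing to compare against step by step; your argument is the natural completion, built from exactly the tools the paper itself supplies: transitivity of $\mathfrak{G}_B$ on $B$ (the fact $\dfix{G}{B}b=B$, used in the paper's proof of Lemma \ref{LemBlockTONormalSubgroup}), and the subgroup--block dictionary of Lemma \ref{LemBlock}, which, as you correctly isolate, is the one place where triviality of the point stabilizers is genuinely needed to promote a block of the action of $\mathfrak{G}_B$ on $B$ to a block of $\mathfrak{G}$. Both of your routes to part (1) are sound (the second, via Lemma \ref{LemPrimitiveness1stNeceSuffCond} and the nonexistence of intermediate subgroups, also previews the paper's next result that $\mathfrak{G}_B$ is cyclic of prime order), and part (2) via regularity of $\mathfrak{G}_B$ on $B$ is immediate.
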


\begin{Lemma}\label{Lem-BlockStabilizerCharacterization-GvTrivial}
Let $\mathfrak{G}$ be a permutation group acting on $V$. Suppose for any vertex $v \in V$ the stabilizer $\mathfrak{G}_v$ is trivial and $B$ is one of minimal blocks for $\mathfrak{G}$ contained in some orbit of $\mathfrak{G}$. Then $\mathfrak{G}_B$ is a circulant group of prime order.
\end{Lemma}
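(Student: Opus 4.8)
The plan is to transfer the problem to the induced action of $\dfix{G}{B}$ on the block $B$ itself and to exploit that this action is simultaneously regular and primitive. First I would record the two facts already supplied by Lemma~\ref{Lemma-BlockStabilizerFeatureGvTrivial}: the action of $\dfix{G}{B}$ on $B$ is primitive, and $|\dfix{G}{B}| = |B|$. Combining the latter equality with the standing hypothesis that every point stabilizer $\dfix{G}{v}$ is trivial, I would observe that $\dfix{G}{B}$ acts on $B$ with trivial point stabilizers as well, since $(\dfix{G}{B})_b \le \dfix{G}{b} = \{1\}$. Hence the orbit of any $b\in B$ under $\dfix{G}{B}$ has cardinality $|\dfix{G}{B}| = |B|$ and therefore equals $B$; that is, $\dfix{G}{B}$ acts transitively, indeed regularly, on $B$. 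Because $B$ is a (nontrivial) minimal block, we also have $|B|\ge 2$.

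Next I would apply Lemma~\ref{LemPrimitiveness1stNeceSuffCond} to this transitive action of $\dfix{G}{B}$ on the set $B$, which has at least two elements. Primitivity then forces each stabilizer $(\dfix{G}{B})_b$ to be a maximal subgroup of $\dfix{G}{B}$. But we have just seen that $(\dfix{G}{B})_b = \{1\}$, so $\{1\}$ is a maximal subgroup of $\dfix{G}{B}$; equivalently, $\dfix{G}{B}$ possesses no proper nontrivial subgroup.

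Finally I would invoke the elementary structure of finite groups having no proper nontrivial subgroup. Choosing any $\xi \neq 1$ in $\dfix{G}{B}$, the cyclic subgroup $\langle\xi\rangle$ is nontrivial and must therefore coincide with $\dfix{G}{B}$, so $\dfix{G}{B}$ is cyclic; and a nontrivial cyclic group with no proper nontrivial subgroup has prime order, because $\mathbb{Z}/n\mathbb{Z}$ contains a subgroup of order $p$ for every prime $p \mid n$. Thus $\dfix{G}{B}$ is a circulant group of prime order, as claimed.

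All of these steps are short, so the only point demanding genuine care is the legitimacy of restricting the ambient group to $\dfix{G}{B}$ and the ground set to $B$ when applying Lemma~\ref{LemPrimitiveness1stNeceSuffCond}; this is secured precisely by the transitivity established in the first paragraph, together with the convention that a minimal block is nontrivial so that $|B|\ge 2$ and the hypothesis of that lemma is met. I would also double-check that the primitivity asserted in Lemma~\ref{Lemma-BlockStabilizerFeatureGvTrivial} is the primitivity of the action of $\dfix{G}{B}$ on $B$, and not of $\mathfrak{G}$ on $V$, since it is the former that feeds into Lemma~\ref{LemPrimitiveness1stNeceSuffCond}.
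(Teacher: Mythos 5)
Your proof is correct, but it reaches the conclusion by a different route than the paper. The paper argues constructively: for each $\xi \neq 1$ in $\dfix{G}{B}$ it shows, using the triviality of point stabilizers, that the cyclic orbit $\langle \xi \rangle b$ is itself a block for $\dfix{G}{B}$; primitivity (claim (1) of Lemma \ref{Lemma-BlockStabilizerFeatureGvTrivial}) then forces $\langle \xi \rangle b = B$, and comparing orders via claim (2) gives $\langle \xi \rangle = \dfix{G}{B}$, so every non-identity element generates the group. You instead observe that the action of $\dfix{G}{B}$ on $B$ is regular, and then outsource the block--subgroup correspondence to Lemma \ref{LemPrimitiveness1stNeceSuffCond}: primitivity makes the point stabilizer $(\dfix{G}{B})_b = \{1\}$ a maximal subgroup, so $\dfix{G}{B}$ has no proper nontrivial subgroup, and a finite group with that property is cyclic of prime order. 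Both arguments lean on Lemma \ref{Lemma-BlockStabilizerFeatureGvTrivial} in the same way; the difference is only in the middle step. The paper's version is self-contained (it never needs Lemma \ref{LemPrimitiveness1stNeceSuffCond}) and exhibits the witnessing block explicitly, which is in the spirit of the algorithmic sections that follow; your version is shorter and more structural, at the cost of having to check the hypotheses of Lemma \ref{LemPrimitiveness1stNeceSuffCond} --- transitivity of the restricted action, $|B| \geq 2$, and (implicitly) faithfulness of the restriction of $\dfix{G}{B}$ to $B$ so that it is a bona fide permutation group on $B$, which your regularity observation does secure. Your closing caveats identify exactly the right points of care, so the proposal stands as a valid alternative proof.
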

\begin{proof}[\bf Proof]
We first show that for any permutation $\xi \in \mathfrak{G}_B\setminus\{1\}$, the subset $\langle \xi \rangle b := \{ \xi^k b \mid k = 1,\ldots,\mathrm{deg}[\xi] \}$ is a block for $\mathfrak{G}_B$. 

Let $\zeta$ be a permutation in $\mathfrak{G}_B$ such that $\langle \xi \rangle b \cap \zeta\langle \xi \rangle b \neq \emptyset$. Then there exist $s$ and $t$ such that $\xi^s b = \zeta \xi^t b$, so $b = \xi^{-s} \zeta \xi^{t} b$. Consequently, $\xi^{-s} \zeta \xi^{t} = 1$ for $\mathfrak{G}_v$ is trivial, and thus $\zeta = \xi^s\cdot\xi^{-t}$  which belongs to $\langle \xi \rangle$. Therefore, $\langle \xi \rangle b = \zeta\langle \xi \rangle b$, so $\langle \xi \rangle b$ is a block for $\mathfrak{G}_B$.

According to the 1st claim of Lemma \ref{Lemma-BlockStabilizerFeatureGvTrivial}, the action of $\mathfrak{G}_B$ on $B$ is primitive, so $\langle \xi \rangle b = B$ for $\xi$ is not the identity. As a result the order of $\langle \xi \rangle$ is equal to $|B|$, and therefore $\langle \xi \rangle = \mathfrak{G}_B$ due to the 2nd claim in Lemma \ref{Lemma-BlockStabilizerFeatureGvTrivial}. 

As a result, if $\xi$ belongs to $\mathfrak{G}_B$ not being the identity then $\langle \xi \rangle = \mathfrak{G}_B$, that means $\mathfrak{G}_B$ is a circulant group of prime order.
\end{proof}

By means of the result above, one can readily prove the following claims.

\begin{enumerate}
\item[(A)] If $\xi$ is not the identity in $\mathfrak{G}_B$, then any two cycles in the cycle decomposition of $\xi$ are of the same length which is a prime.

\item[(B)] Every orbit of $\mathfrak{G}_B$ is a block for $\mathfrak{G}$.

\end{enumerate}

Let $T$ be the orbit of $\mathfrak{G}$ containing $B$. Since $\mathfrak{G}_t$ is trivial for any $t$ of $T$, $\xi$ has no trivial cycles. Moreover, the fact that $\mathfrak{G}_B$ is a circulant group of prime order implies that any two cycles in the cycle decomposition of $\xi$ are of the same length.

Let $x$ be an element of $T$. Set $\mathfrak{G}_B x = \{ \gamma x \mid \gamma\in\mathfrak{G}_B \}$. Suppose $\sigma$ is a permutation in $\mathfrak{G}$ and $\sigma\mathfrak{G}_B x \cap \mathfrak{G}_B x \neq \emptyset$. Then there exist $\xi$ and $\zeta$ in $\mathfrak{G}_B$ so that $\sigma\xi x = \zeta x$. Consequently, $\zeta^{-1} \sigma \xi x = x$ and thus $\zeta^{-1} \sigma \xi = 1$ because $\mathfrak{G}_t$ is trivial for any $t$ in $T$. As a result, $\sigma = \zeta\xi^{-1}$, belonging to $\mathfrak{G}_B$, so $\sigma\mathfrak{G}_B x = \mathfrak{G}_B x$. Therefore, $\mathfrak{G}_B x$ is a block for $\mathfrak{G}$.

\vspace{2mm}
Recall that if $\mathfrak{N}$ is a normal subgroup of $\mathfrak{G}$ then every orbit of the action of $\mathfrak{N}$ on $V$ is a block for $\mathfrak{G}$, and moreover those orbits indeed form a block system, so Claim (B) above suggests that $\mathfrak{G}_B$ is somewhat like a normal subgroup. As a matter of fact, it is not difficult to see that if $\sigma B$ is also an orbit of $\mathfrak{G}_B$ for any $\sigma$ in $\mathfrak{G}$  then it is a normal subgroup.

\vspace{2mm}
A subset $E(t)$ of $\mathfrak{G}$ is said to be {\it adequate with respect to some orbit $T$} if for any vertex $v$ in $T$ there exists a permutation $\sigma$ in $E(t)$ such that $\sigma t = v$. Now let us see how to find out all block systems of $\mathfrak{G}$ contained in $T$ by means of one partition $\Pi_s^*$ and the adequate subset $E(s)$ relevant to $s \in T$. Because we only focus on one orbit of $\mathfrak{G}$ here, we could make a further assumption that the action of $\mathfrak{G}$ on $V$ is transitive.

In accordance with Lemma \ref{Lemma-BlockCHARACTERISEDbyProj}, any block system of $\mathfrak{G}$ is represented by some IRs of $\mathfrak{G}$, and hence there are at most $n$ block systems. On the other hand, each block must belong to some block family of $\mathfrak{G}$, so if we could find out block families one by one we can ultimately obtain all block systems. 

At first we figure out the family of partitions $\{ \Pi_v^* : v \in V \}$ by virtue of $\Pi_s^*$ and $E(s)$. Then we can easily determine whether the action of $\mathfrak{G}$ on $V$ is primitive or not by the bipartite graph $\dbipart{G}{v}$ due to Theorem \ref{ThmNeceAndSuffForPrimitiveness}. Therefore let us assume in what follows that the action is not primitive.

Obviously, there are two possibilities:
\begin{enumerate}
\item $\forall \hspace{0,5mm} u, v \in V$, $\Pi_u^* \neq \Pi_v^*$;

\item $\exists \hspace{0.5mm} u, v \in V$, {\it s.t.,} $\Pi_u^* = \Pi_v^*$.
\end{enumerate}

By using the same argument in proving Theorem \ref{ThmNeceAndSuffForPrimitiveness}, one can readily see that in the first case, a block $B$ for $\mathfrak{G}$ is minimal if the component $C[v']$ in the bipartite graph $\dbipart{G}{v}$ contains the vertex $v''$. Accordingly, we can figure out one minimal block contained in $V$.

In the second case, we first determine the partition $\widetilde{\Pi}[V]$ with the family $\{ \Pi_v^* : v \in V \}$. In accordance with Lemma \ref{LemFindBlocks-3}, the cells of $\widetilde{\Pi}[V]$ actually constitute a block system. Evidently, it is possible that a cell $C$ of $\widetilde{\Pi}[V]$ is not a minimal block for $\mathfrak{G}$, so let us show how to find out those minimal ones.

Note that if $x$ and $y$ are two members in $C$ then $\mathfrak{G}_x = \mathfrak{G}_y$, for those two partitions $\Pi_x^*$ and $\Pi_y^*$ are the same. Consequently, for any vertex $v$ in $C$, the restriction of $\mathfrak{G}_v$ to $C$ is trivial, {\it i.e.,} $\mathfrak{G}_v|_{C} = \{ 1 \}$. Hence if $\gamma$ is a permutation in $\mathfrak{G}_C$ and $\gamma x \neq x$ for some $x \in C$, then $\gamma\hspace{0.5mm} y \neq y$ for any $y \in C$, and  
$$
\left( \Pi_i \gamma^{k_i} \delta_i \right) x = \left( \Pi_i \gamma^{k_i} \right) x = \gamma^k x \mbox{ for some integer } k, \mbox{ where } \delta_i \in \mathfrak{G}_x.
$$
As a result, $\langle \gamma,\mathfrak{G}_x \rangle x = \langle \gamma|_C \rangle x$, and accordingly one can use the argument in proving Lemma \ref{Lem-BlockStabilizerCharacterization-GvTrivial} to show that $B \subsetneq C$ is a minimal block for $\mathfrak{G}$ if and only if $\mathfrak{G}_B|_C$ is a circulant group of prime order. Now by virtue of the adequate set $E(s)$ one can easily identify a permutation belonging to some minimal block contained in $C$. 

Above all, we see how to figure out one of minimal blocks. In order to find out a block family, we now need to determine a block $\hat{B}$ containing $B$ such that $B$ is maximal in $\hat{B}$. First of all, let figure out the block system $\mathscr{B}$ by virtue of $B$ and the adequate set $E(s)$. Denote by $\Pi_{B, \mathscr{B}}^*$ the orbits of the action of the stabilizer $\mathfrak{G}_B$ on the system $\mathscr{B}$. It is readily to see that we can determine the partition $\Pi_{B, \mathscr{B}}^*$ by means of the bipartite graph $\dbipart{G}{v}$, and then we can obtain the family of partitions $\{ \Pi_{\sigma B, \mathscr{B}}^* : \sigma \in \mathfrak{G} \}$ via the adequate set $E(s)$. 

Now using the same idea in establishing Theorem \ref{ThmNeceAndSuffForPrimitiveness}, one can easily see whether the action of $\mathfrak{G}$ on $\mathscr{B}$ is primitive or not. In the case being imprimitive, there are clearly two possibilities:
\begin{enumerate}
\item $\forall \hspace{0,5mm} B_u, B_v \in \mathscr{B}$, $\Pi_{B_u,\mathscr{B}}^* \neq \Pi_{B_v,\mathscr{B}}^*$;

\item $\exists \hspace{0.5mm} B_u, B_v \in \mathscr{B}$, {\it s.t.,} $\Pi_{B_u,\mathscr{B}}^* = \Pi_{B_v,\mathscr{B}}^*$.
\end{enumerate}
One moment's reflection would show that by virtue of the approach we determine the minimal block $B$ one can now figure out the block $\hat{B}$, and accordingly one can eventually find out a block family containing $B$. It is not difficult to check that there are at most $n^4$ bipartite graphs like $\dbipart{G}{v}$ or partitions like $\widetilde{\Pi}[V]$ we use to obtain all block families, where $n$ is the number of vertices $G$ possesses, so provided that we have one partition $\Pi_s^*$ and its adequate set $E(s)$, we can efficiently work out all block systems of $\mathfrak{G}$.

\section{Blocks and block systems in IRs of $\mathfrak{G}$}

As well-known, block systems expose the structure of the action of $\mathfrak{G}$ on $V$. In this section, we shall see that block systems are represented very well by IRs of $\mathfrak{G}$.\footnote{Note that in this paper we investigate one special permutation representation of $\mathfrak{G}$ in $\R^n$ which is defined in the 1st section by (\ref{Equ-TheRepresent}).} In other words, block systems also reveal quite clearly the structure of the action of $\mathfrak{G}$ on $\R^n$.

Let us begin with an algebraic characterization of IRs of $\mathfrak{G}$ in $\R^n$, which is an analogue of Schur Lemma (see \cite{Serre} for details) that provides a fundamentally algebraic characterization of IRs of a linear representation of a finite group in $\C^n$. Because the scalar field is $\R$ here, the situation is a little more complex.

\begin{Lemma}\label{Lem-SchurLemmaOnEuclideanSpace}
Let $\mathfrak{G}$ be a permutation group acting on $V$ with $n$ elements. Suppose $W$ is a subspace of $\R^n$ which is irreducible for $\mathfrak{G}$ and $\tau$ is an $\mathfrak{G}$-module map on $W$. Then there is a constant $C$ such that $\tau = C\cdot I$ or $\tau = C\cdot S_{\theta}$, where $I$ is the identity operator and $S_\theta$ is an isometry with minimal polynomial $x^2 - 2\cos\theta\cdot x + 1$.
\end{Lemma}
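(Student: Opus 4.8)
The plan is to analyze the real minimal polynomial $m_\tau$ of the operator $\tau$ on $W$ and to show it is irreducible over $\R$ of degree at most two; the fact that the scalar field is $\R$ is exactly what produces the two alternatives in the statement. First I would record the crucial observation that, for any polynomial $f \in \R[x]$, the subspace $\ker f(\tau)$ is $\mathfrak{G}$-invariant: if $w \in \ker f(\tau)$ and $\sigma \in \mathfrak{G}$, then since $\tau$ is a $\mathfrak{G}$-module map we have $f(\tau)(\mathcal{T}_\sigma w) = \mathcal{T}_\sigma f(\tau) w = 0$, so $\mathcal{T}_\sigma w \in \ker f(\tau)$. Using this, suppose $m_\tau$ factored as a product of two coprime nonconstant polynomials $f$ and $g$; then $W = \ker f(\tau) \oplus \ker g(\tau)$ would be a decomposition of $W$ into two nonzero proper $\mathfrak{G}$-invariant subspaces, contradicting the irreducibility of $W$. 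Hence $m_\tau = p^k$ for a single irreducible $p \in \R[x]$. If $k \geq 2$, then $\ker p(\tau)$ is nonzero (because $p(\tau)$ is not invertible) and proper (because $p(\tau) \neq 0$), again contradicting irreducibility; so $m_\tau = p$ is irreducible, and over $\R$ an irreducible polynomial has degree $1$ or $2$.

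If $\deg p = 1$, write $p(x) = x - C$; then $\tau = C \cdot I$ and we are in the first alternative. The substantive case is $\deg p = 2$, where I would bring in the invariant inner product. Since each permutation operator $\mathcal{T}_\sigma$ is an isometry, its restriction to $W$ has adjoint equal to its inverse, which again belongs to $\mathfrak{G}$ restricted to $W$. Taking adjoints in the identity $\tau \mathcal{T}_\sigma = \mathcal{T}_\sigma \tau$ then shows that $\tau^*$ commutes with every $\mathcal{T}_\sigma$ on $W$, so $\tau^*$ is again a $\mathfrak{G}$-module map. Consequently $\tau^* \tau$ is a self-adjoint $\mathfrak{G}$-module map; being self-adjoint it is diagonalizable with real eigenvalues, so its minimal polynomial has no repeated roots, and by the first part of the argument (applied to $\tau^* \tau$ in place of $\tau$) that minimal polynomial must be a single linear factor. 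Hence $\tau^* \tau = \lambda I$. Because $p$ has nonzero constant term, $\tau$ is invertible, so $\lambda > 0$; setting $C = \sqrt{\lambda}$ and $S_\theta := C^{-1} \tau$ makes $S_\theta$ an isometry.

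It remains to identify the minimal polynomial of $S_\theta$. Writing $m_\tau(x) = x^2 + b x + c$ and substituting $\tau = C S_\theta$ into $m_\tau(\tau) = 0$ gives the monic quadratic $x^2 + (b/C)x + (c/C^2)$ for $S_\theta$. The relation $\tau^* \tau = \lambda I$ together with invertibility yields $\tau^* = \lambda \tau^{-1}$, hence $\tau$ is normal; comparing the common singular value $\sqrt{\lambda}$ of $\tau$ with the moduli of its complex eigenvalues (which for a normal operator coincide) forces $|{\rm root}|^2 = c = \lambda = C^2$, so the constant term $c/C^2$ equals $1$. Thus $S_\theta$ satisfies $x^2 + (b/C)x + 1$, and irreducibility gives $|b/C| < 2$, so we may set $-2\cos\theta = b/C$ and obtain minimal polynomial $x^2 - 2\cos\theta\,x + 1$, as claimed.

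The main obstacle is the degree-two case: one must genuinely use that the permutation representation carries a $\mathfrak{G}$-invariant inner product (equivalently, that permutations act by isometries) in order to conclude that $\tau^*$ is again a module map and hence that $\tau^* \tau$ is a scalar; without this one could only deduce that $\tau$ has an irreducible quadratic minimal polynomial, not that it is a scalar multiple of an isometry. The second delicate point is verifying that the constant term of the quadratic for $S_\theta$ is exactly $1$, i.e. that $\lambda$ equals the constant term of $m_\tau$, which is precisely where the normality of $\tau$ and the unit-modulus eigenvalues of an isometry are used. I would note finally that this argument treats a single $\tau$ directly and so applies uniformly, without needing to classify the endomorphism algebra of $W$ as real, complex, or quaternionic type.
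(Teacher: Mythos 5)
Your proof is correct, and while it shares the paper's essential mechanism --- the Schur-type observation that kernels of polynomials in a module map are $\mathfrak{G}$-invariant, the fact that $\tau^*$ is again a module map so that the self-adjoint map $\tau^*\tau$ must equal $\lambda I$, and the extraction of an isometry by rescaling $\tau$ --- the packaging differs at two genuine points. First, where the paper cases on whether $\tau$ has a real eigenvalue and, in the eigenvalue-free case, invokes the polar decomposition $\tau = S\sqrt{\tau^*\tau}$ to produce the isometry, you run a primary-decomposition argument on the minimal polynomial up front, concluding it is irreducible over $\R$ of degree at most $2$; this is a cleaner dichotomy, and once $\tau^*\tau = \lambda I$ is known the polar decomposition collapses to exactly your $S_\theta = \tau/\sqrt{\lambda}$, so nothing is lost by avoiding it. Second, to pin the constant term of the quadratic to $1$, the paper exhibits a $2$-dimensional $S$-invariant subspace on which the isometry acts as a rotation matrix, whereas you argue via normality of $\tau$ (from $\tau^* = \lambda\tau^{-1}$) and the coincidence of singular values with moduli of complex eigenvalues for normal operators, forcing $c = \lambda$. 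Both are valid: the paper's route makes the geometric meaning of $\theta$ explicit, while yours stays purely algebraic and sidesteps the canonical form of real isometries; your framing also makes transparent that the real scalar field is the sole source of the second alternative in the statement.
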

\begin{proof}[\bf Proof]
In the case that $\tau$ has an eigenvalue, one can easily show that $\tau$ would be $C \cdot I$ by means of the same argument for proving Schur Lemma. In fact, $\tau - \lambda I$ cannot be injective if $\lambda$ is an eigenvalue of $\tau$, so $\mathrm{ker}~ (\tau - \lambda I)$ is not trivial. Since $\mathrm{ker}~ (\tau - \lambda I)$ is an $\mathfrak{G}$-invariant subspace and $W$ is irreducible, $\mathrm{ker}~ (\tau - \lambda I) = W$ and thus $\tau - \lambda I = 0$.

In the case of possessing no eigenvalues, we consider the polar decomposition of $\tau$ and assume that $S$ is the isometry on $W$ {\it s.t.,} $\tau = S \sqrt{\tau^* \tau}$. Note that the adjoint operator $\tau^*$ of $\tau$  is also an $\mathfrak{G}$-module map on condition that $\tau$ is, which implies that $\tau^* \tau$ is an $\mathfrak{G}$-module map. Since the operator $\tau^* \tau$ must have an eigenvalue, say, $\lambda$, 
$$
\tau^* \tau = \lambda I \mbox{ according to the first case.}
$$ 
Consequently, $\tau = \sqrt{\lambda} \cdot S$ and the isometry $S$ possesses no eigenvalues. 

Suppose $(\alpha, \beta)$ is an eigenpair of $S$, {\it i.e.,} $x^2 + \alpha x + \beta$ is a factor of the characteristic polynomial of $S$. Then $S^2 + \alpha S + \beta I$ is also an $\mathfrak{G}$-module map with a  non-trivial kernel. Since $W$ is irreducible and $\mathrm{ker}~( S^2 + \alpha S + \beta I )$ is $\mathfrak{G}$-invariant, $\mathrm{ker}~( S^2 + \alpha S + \beta I ) = W$ and thus $S^2 + \alpha S + \beta I = 0$. Consequently, $x^2 + \alpha x + \beta$ is the minimal polynomial of $S$.

On the other hand, there exists an orthonormal basis in an $S$-invariant subspace of dimension 2 so that the matrix of $S$ with respect to the basis is 
$\big( 
\begin{smallmatrix} 
\cos\theta & -\sin\theta \\ 
\sin\theta & \cos\theta 
\end{smallmatrix} \big).$ 
Therefore, $S$ has the minimal polynomial $x^2 - 2\cos\theta\cdot x + 1$.
\end{proof}

Let $U$ be an $\mathfrak{G}$-invariant subspace. In order to see a geometric feature enjoyed by IRs of $\mathfrak{G}$, we need to focus on a subspace $U[v]$ of $U$, which is comprised of those vectors $\pmb{x}$ in $U$ such that $\xi \hspace{0.4mm} \pmb{x} = \pmb{x}$, $\forall \hspace{0.5mm} \xi\in \mathfrak{G}_v$.

\begin{Lemma}\label{Lemma-IRsFeatureGvNonTrivial}
Let $\mathfrak{G}$ be a permutation group acting on $V$ transitively. Suppose $U$ is an invariant subspace for $\mathfrak{G}$ and the stabilizer $\mathfrak{G}_v$ is not trivial, $v\in V$. Then $U$ is irreducible for $\mathfrak{G}$ if and only if 
$\mathrm{dim}~ U[v] = 1$ for any vertex $v$ of $V$,
{\it i.e.,} the subspace $U[v]$ is irreducible for $\mathfrak{G}_v$.
\end{Lemma}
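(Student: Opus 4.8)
The plan is to reduce both implications to the behaviour of the fixed space $U[v]=U^{\mathfrak{G}_v}$ under orthogonal projection, and then to isolate the one genuinely hard inequality, namely $\dim U[v]\le 1$.

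First I would record a lower bound valid for \emph{every} nonzero invariant subspace. Let $W\subseteq\R^n$ be a nonzero $\mathfrak{G}$-invariant subspace. Since every $\mathcal{T}_\sigma$ is an isometry, the orthogonal projection $\dproj{W}$ commutes with $\mathcal{T}_\sigma$ for all $\sigma\in\mathfrak{G}$ by Lemma \ref{ProjOperatorCommutative}. The standard vector $\pmb{e}_v$ is fixed by $\mathfrak{G}_v$, so $\dproj{W}(\pmb{e}_v)$ is a $\mathfrak{G}_v$-fixed vector lying in $W$, that is $\dproj{W}(\pmb{e}_v)\in W[v]$. Moreover $\dproj{W}$ cannot annihilate every $\pmb{e}_v$ (else $W=0$), and transitivity propagates non-vanishing to all vertices: if $\sigma v_0=v$ then $\dproj{W}(\pmb{e}_v)=\mathcal{T}_\sigma\dproj{W}(\pmb{e}_{v_0})\ne 0$. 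Hence $\dim W[v]\ge 1$ for every nonzero invariant $W$ and every $v$.

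For the direction $\dim U[v]=1\Rightarrow U$ irreducible I argue by contraposition. If $U$ is reducible it contains a proper nonzero invariant subspace $U_1$; its orthogonal complement $U_2=U\ominus U_1$ in $U$ is again invariant because the $\mathcal{T}_\sigma$ are isometries, so $U=U_1\oplus U_2$ is an orthogonal sum of nonzero invariant subspaces. Taking $\mathfrak{G}_v$-fixed points commutes with this direct sum, whence $U[v]=U_1[v]\oplus U_2[v]$, and by the lower bound of the previous step each summand is at least one-dimensional. Thus $\dim U[v]\ge 2$, contradicting the hypothesis; this settles sufficiency.

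The remaining and decisive implication is $U$ irreducible $\Rightarrow\dim U[v]\le 1$, which together with the lower bound yields $\dim U[v]=1$. The natural route is to feed an arbitrary $\pmb{x}\in U[v]$ into the real Schur lemma (Lemma \ref{Lem-SchurLemmaOnEuclideanSpace}): one manufactures a $\mathfrak{G}$-module endomorphism of $U$ out of $\pmb{x}$ and the distinguished fixed vector $\dproj{U}(\pmb{e}_v)$, for instance $\tau(\pmb{u})=\sum_{\sigma\in\mathfrak{G}}\langle\pmb{u},\mathcal{T}_\sigma\dproj{U}(\pmb{e}_v)\rangle\,\mathcal{T}_\sigma\pmb{x}$, which one checks is equivariant and maps $U$ to $U$. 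Lemma \ref{Lem-SchurLemmaOnEuclideanSpace} then forces $\tau=C\,I$ or $\tau=C\,S_\theta$, and evaluating the $\mathfrak{G}_v$-invariant data at the vertex $v$ should pin $\pmb{x}$ to the line through $\dproj{U}(\pmb{e}_v)$. I expect this last step to be the main obstacle and to demand real care: the inequality $\dim U[v]\le 1$ is a multiplicity-one statement, since by Frobenius reciprocity $\dim U[v]$ equals the multiplicity with which $U$ occurs in the permutation module $\R^n$, weighted by the type ($\R$, $\C$ or $\mathbb{H}$) of $\mathrm{End}_{\mathfrak{G}}(U)$. This is exactly the regime in which the alternative $\tau=C\,S_\theta$ becomes active, so the appearance of a nontrivial rotation $S_\theta$, or of $U$ with multiplicity greater than one, is precisely what would make $\dim U[v]>1$. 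The argument must therefore exclude these possibilities from the structure at hand (a Gelfand-pair type condition on $(\mathfrak{G},\mathfrak{G}_v)$); establishing that they do not occur is where the real content of the lemma lies.
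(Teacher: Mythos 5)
Your treatment of the sufficiency direction is correct and is essentially the paper's own argument: decompose a reducible $U$ into two orthogonal nonzero invariant summands, note that each contains the nonzero $\mathfrak{G}_v$-fixed vector obtained by projecting $\pmb{e}_v$ (nonzero at one vertex, hence at every vertex by transitivity), and conclude $\dim U[v]\ge 2$. The genuine gap in your proposal is the necessity direction, which you set up (an averaged rank-one operator fed into the real Schur lemma, the same strategy the paper uses) but explicitly do not finish. However, your diagnosis of why that step resists proof is exactly right, and in fact the gap cannot be closed: the implication is false as stated. As you observe, $\dim U[v]=\dim\Hom_{\mathfrak{G}}(\R^n,U)$ equals the multiplicity of $U$ in $\R^n$ times $\dim_{\R}\mathrm{End}_{\mathfrak{G}}(U)$, so the claim ``$U$ irreducible $\Rightarrow \dim U[v]=1$'' asserts precisely that $(\mathfrak{G},\mathfrak{G}_v)$ is multiplicity-free with all constituents of real type, and nothing in the hypotheses forces this. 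Concretely, let $\mathfrak{G}=S_4$ act on the twelve cosets of $H=\langle(12)(34)\rangle$; this action is faithful and transitive with non-trivial point stabilizers. The two-dimensional irreducible representation $W$ of $S_4$ factors through $S_4/V_4\cong S_3$, where $V_4$ is the Klein four-group; since $H\le V_4$, the subgroup $H$ acts trivially on $W$, so the $H$-fixed subspace of $W$ is all of $W$ and, by Frobenius reciprocity, $W$ occurs in $\R^{12}$ (with multiplicity two). Any irreducible invariant subspace $U\cong W$ of $\R^{12}$ then satisfies $U[v]=U$, so $\dim U[v]=2$ even though $U$ is irreducible.

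Consistently with this, the paper's own proof of necessity is erroneous, at exactly the kind of step you predicted would be delicate. In its evaluation of $\tau(\pmb{b}_1)$ the paper asserts
$$
\Bigl(\sum_{j=1}^{r_i}\zeta_j\Bigr)\sigma_{i1}\pmb{b}_1=\langle\pmb{b}_1,\sigma_{i1}\pmb{b}_1\rangle\cdot|T_i|\,\pmb{b}_1 .
$$
But every $\xi\in\mathfrak{G}_v$ factors as $\zeta_j\eta$ with $\eta$ in the stabilizer of $\sigma_{i1}v$ inside $\mathfrak{G}_v$, and such $\eta$ fix the vector $\sigma_{i1}\pmb{b}_1$; hence the left-hand side equals
$$
\tfrac{|T_i|}{|\mathfrak{G}_v|}\sum_{\xi\in\mathfrak{G}_v}\xi\,(\sigma_{i1}\pmb{b}_1)
=|T_i|\,\dproj{U[v]}(\sigma_{i1}\pmb{b}_1)
=|T_i|\sum_{s=1}^{d}\langle\sigma_{i1}\pmb{b}_1,\pmb{b}_s\rangle\,\pmb{b}_s .
$$
The paper keeps only the $s=1$ term, i.e., it silently assumes that every projection $\dproj{U[v]}(\sigma\pmb{b}_1)$ lies on the line $\R\pmb{b}_1$. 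That assumption already implies the conclusion: by irreducibility $U=\mathrm{span}\{\sigma\pmb{b}_1:\sigma\in\mathfrak{G}\}$, so it would give $U[v]=\dproj{U[v]}(U)\subseteq\R\pmb{b}_1$, i.e., $d=1$. Equivalently, under the reductio hypothesis $d\ge 2$ the displayed identity must fail for some orbit $T_i$, so the subsequent contradiction with Lemma \ref{Lem-SchurLemmaOnEuclideanSpace} never materializes. In short: your sufficiency half stands, your necessity half is indeed missing, but no completion exists without adding a Gelfand-pair-type hypothesis on $(\mathfrak{G},\mathfrak{G}_v)$ — the ``real content'' you suspected the lemma was quietly assuming.
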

\begin{proof}[\bf Proof]
The sufficiency of our assertion is quite simple because it is equivalent to that if $U$ is reducible then $\mathrm{dim}~U[v] \geq 2$, $\forall v\in V$. 

Suppose $\oplus_{k=1}^s W_k$ is a decomposition of $U$ into IRs of $\mathfrak{G}$ and $s \ge 2$. Then the subspace spanned by vectors $\{ \dproj{W_k}(\pmb{e}_v) \mid k = 1,\ldots,s \}$ is a subspace of $U[v]$ and of dimension not less than 2 according to our assumption.   

\vspace{3mm}
We now turn to the necessity of the assertion and first list three basic facts about $U[v]$.
\begin{enumerate}
\item $\sigma U[v] = U[\sigma v]$, $\forall \sigma\in\mathfrak{G}$.

\item If $\tau_v$ is a linear map on $U[v]$, then $\tau = \sum_{\sigma\in\mathfrak{G}} \sigma\tau_v \sigma^{-1} \circ \dproj{U[\sigma v]}$ is an $\mathfrak{G}$-module map on $U$.

\item If $\phi$ is an $\mathfrak{G}$-module map on $U$ then $\phi U[v] \subseteq U[v]$.
\end{enumerate}

Let us present a brief explanation for those claims and begin with the 1st one. In accordance with the definition to the subspace $U[v]$, the subspace $U[\sigma v]$ would be $\{ \pmb{y} \in U \mid \zeta\pmb{y} = \pmb{y}, \forall\zeta\in\mathfrak{G}_{\sigma v} \}$. It is readily to see that $\mathfrak{G}_{\sigma v} = \sigma \mathfrak{G}_v \sigma^{-1}$.  Consequently, $\forall\xi\in\mathfrak{G}_v$ and $\pmb{x}\in U[v]$, $(\sigma\xi\sigma^{-1}) \circ (\sigma\pmb{x}) = \sigma\pmb{x}$, and thus $\sigma U[v] \subseteq U[\sigma v]$. 

On the other hand, for any vector $\pmb{y}$ in $U[\sigma v]$ and permutation $\xi$ in $\mathfrak{G}_v$,
$$ (\sigma\xi\sigma^{-1}) \pmb{y} = \pmb{y} \Leftrightarrow 
\xi\sigma^{-1} \pmb{y} = \sigma^{-1} \pmb{y}, $$
which implies that $\sigma^{-1} \pmb{y}$ belongs to $U[v]$, and thus $U[\sigma v] \subseteq \sigma U[v]$.

\vspace{3mm}
Before verifying the 2nd claim, we examine a relevant relation. Suppose $\gamma$ is a permutation in $\mathfrak{G}$. Then
\begin{equation}\label{Equation-GModuleMap}
\dproj{U[\sigma v]}\circ \gamma^{-1} = \gamma^{-1} \circ\dproj{U[\gamma\sigma v]}. 
\end{equation}
We take arbitrarily a vector $\pmb{u}$ from $U$ and express it as 
$$
\pmb{x} + \pmb{y} = 
\dproj{U[\gamma\sigma v]}( \pmb{u} ) + \dproj{U[\gamma\sigma v]^{\perp}}( \pmb{u} ),
$$
where the subspace $U[\gamma\sigma v]^{\perp}$ is the orthogonal complement of $U[\gamma\sigma v]$.  
Clearly, $\gamma^{-1} \circ \dproj{U[\gamma\sigma v]}( \pmb{u} ) = \gamma^{-1} (\pmb{x}),$ which belongs to the subspace $U[\sigma v]$ by virtue of the claim i). On the other hand, 
\begin{align*}
\dproj{U[\sigma v]} \circ \gamma^{-1}( \pmb{u} )
& = \dproj{U[\sigma v]} \circ \gamma^{-1}( \pmb{x} + \pmb{y} ) \\
& = \dproj{U[\sigma v]} \circ \gamma^{-1}( \pmb{x} ) + \dproj{U[\sigma v]} \circ \gamma^{-1}( \pmb{y} ) \\
& =  \gamma^{-1}( \pmb{x} ) + \dproj{U[\sigma v]} \circ \gamma^{-1}( \pmb{y} ). 
\end{align*}
It is easy to see that $ \pmb{y} \perp U[\gamma\sigma v] \Leftrightarrow \gamma^{-1}\pmb{y} \perp U[\sigma v]$, so $\dproj{U[\sigma v]} \circ \gamma^{-1}( \pmb{u} ) = \gamma^{-1} (\pmb{x})$. Therefore, the equation (\ref{Equation-GModuleMap}) holds.

As a result, 
\begin{align*}
\gamma \tau \gamma^{-1} 
& = \gamma\left( \sum_{\sigma\in\mathfrak{G}} \sigma \tau_v \sigma^{-1} \circ \dproj{U[\sigma v]} \right)\gamma^{-1} \\
& = \sum_{\sigma\in\mathfrak{G}} \gamma\sigma \tau_v \sigma^{-1}\gamma^{-1} \circ \dproj{U[\gamma\sigma v]} \\
& = \sum_{\sigma\in\mathfrak{G}} (\gamma\sigma) \tau_v (\gamma\sigma)^{-1} \circ \dproj{U[\gamma\sigma v]} \\
& = \tau,
\end{align*}
so $\tau$ is an $\mathfrak{G}$-module map on $U$.

\vspace{3mm}
Finally, we examine the 3rd claim. Since $\phi$ is an $\mathfrak{G}$-module map on $U$, for any vector $\pmb{x}$ in $U[v]$ and $\xi$ in $\mathfrak{G}_v$,
$$
\xi ( \phi\pmb{x} ) = \xi\circ\phi (\pmb{x}) = \phi\circ\xi (\pmb{x}) = \phi\pmb{x},
$$
so $\phi U[v] \subseteq U[v]$ as claimed. As the matter of fact, one can readily see that if $\phi \neq 0$ then $\phi U[v] = U[v]$.

\vspace{3mm}
We are now ready to turn to the main part and to prove the necessity of our assertion. Suppose $T_1,\ldots,T_t$ are the orbits of the action of $\mathfrak{G}_v$ on $V$ and $\sigma_{ij'} v$ and $\sigma_{ij''} v$ belong to the same orbit of $\mathfrak{G}_v$, $i=1,\ldots,t$ and $j',j'' = 1,\ldots,r_i$. Let us partition the family of cosets $\{ \sigma\mathfrak{G}_v \mid \sigma\in\mathfrak{G} \}$ according to the orbits of $\mathfrak{G}_v$ on $V$ and write $\mathfrak{G}$ as $\cup_{i=1}^t \cup_{j=1}^{r_i} \sigma_{ij}\mathfrak{G}_v$. Then 
$$
\tau = \sum_{\sigma\in\mathfrak{G}} \sigma \tau_v \sigma^{-1} \circ \dproj{U[\sigma v]}
= \sum_{i=1}^t \sum_{j=1}^{r_i} \sum_{\xi\in\mathfrak{G}_v} 
(\sigma_{ij}\xi) \tau_v (\sigma_{ij}\xi)^{-1} \circ \dproj{U[\sigma_{ij} v]}.
$$

We assume for a contradiction that $d = \mathrm{dim}~ U[v] \geq 2$ and the group $\pmb{b}_1,\ldots,\pmb{b}_d$ is an orthonormal basis of $U[v]$. Then one can define a linear map $\tau_v$ on $U[v]$ {\it s.t.,} 
$$
\tau_v : \pmb{b}_s \mapsto 
\begin{cases}
\pmb{b}_1 & \text{ if } s=1, \\
\pmb{0}   & \text{ if } s>1.
\end{cases}
$$

Let us now evaluate $\tau ( \pmb{b}_1 )$ step by step.
\begin{align*}
\sum_{\xi\in\mathfrak{G}_v} 
(\sigma_{ij}\xi) \tau_v (\sigma_{ij}\xi)^{-1} \circ \dproj{U[\sigma_{ij} v]}( \pmb{b}_1 ) 
& = 
\sum_{\xi} \big( \sigma_{ij}\xi \big) \tau_v \big( \xi^{-1} \sigma_{ij}^{-1} \big)
\left( \sum_{s=1}^d \langle \pmb{b}_1, \sigma_{ij}\pmb{b}_s \rangle \sigma_{ij}\pmb{b}_s \right) \\
& = 
\sum_{\xi} \sigma_{ij}\xi \tau_v \xi^{-1}  
\left( \sum_{s=1}^d \langle \pmb{b}_1, \sigma_{ij}\pmb{b}_s \rangle \pmb{b}_s \right) \\
& = 
\sum_{\xi} \sigma_{ij}\xi \tau_v  
\left( \sum_{s=1}^d \langle \pmb{b}_1, \sigma_{ij}\pmb{b}_s \rangle \pmb{b}_s \right) \\
& = 
\sum_{\xi} \sigma_{ij}\xi  
\left( \langle \pmb{b}_1, \sigma_{ij}\pmb{b}_1 \rangle \pmb{b}_1 \right) \\
& = 
|\mathfrak{G}_v| \langle \pmb{b}_1, \sigma_{ij}\pmb{b}_1 \rangle \cdot \sigma_{ij}  \pmb{b}_1. 
\end{align*}
According to our way of organizing cosets of $\mathfrak{G}_v$, $\sigma_{ij} v$ and $\sigma_{i1} v$ belong to the same orbit of $\mathfrak{G}_v$, so for any $j \in [r_i]$, there exists a permutation $\zeta_j$ in $\mathfrak{G}_v$ {\it s.t.,} $\zeta_j \sigma_{i1} v = \sigma_{ij} v$.
Consequently, 
\begin{align*}
\sum_{j=1}^{r_i} \langle \pmb{b}_1,\sigma_{ij}\pmb{b}_1 \rangle \cdot \sigma_{ij}\pmb{b}_1
& =  \sum_{j} \langle \pmb{b}_1,\zeta_j\sigma_{i1}\pmb{b}_1 \rangle \cdot \zeta_j\sigma_{i1}\pmb{b}_1 \\
& = \sum_{j}  \langle \zeta_j^{-1}\pmb{b}_1,\sigma_{i1}\pmb{b}_1 \rangle \cdot \zeta_j\sigma_{i1}\pmb{b}_1 \\
& = \sum_{j}  \langle \pmb{b}_1,\sigma_{i1}\pmb{b}_1 \rangle \cdot \zeta_j\sigma_{i1}\pmb{b}_1 \\
& = \langle \pmb{b}_1,\sigma_{i1}\pmb{b}_1 \rangle \left( \sum_{j}  \zeta_j\right) \sigma_{i1}\pmb{b}_1. 
\end{align*}
In accordance with the definition to the subspace $U[v]$ and the claim iii) above, 
$$
\left( \sum_{j=1}^{r_i}  \zeta_j\right) \sigma_{i1}\pmb{b}_1 = 
\langle \pmb{b}_1,\sigma_{i1}\pmb{b}_1 \rangle \cdot |T_i| \pmb{b}_1,
$$
where $T_i$ is the orbit of $\mathfrak{G}_v$ containing the vertex $\sigma_{i1} v$. As a result, 
$$
\sum_{j=1}^{r_i} \langle \pmb{b}_1,\sigma_{ij}\pmb{b}_1 \rangle \cdot \sigma_{ij}\pmb{b}_1 = 
\langle \pmb{b}_1,\sigma_{i1}\pmb{b}_1 \rangle^2 \cdot |T_i| \pmb{b}_1.
$$
Finally, we see that 
$$
\tau (\pmb{b}_1 ) 
= \sum_{i=1}^t \sum_{j=1}^{r_i} \sum_{\xi\in\mathfrak{G}_v} 
(\sigma_{ij}\xi) \tau_v (\sigma_{ij}\xi)^{-1} \circ \dproj{U[\sigma_{ij} v]}(\pmb{b}_1)
=|\mathfrak{G}_v| \cdot \sum_{i=1}^t \langle \pmb{b}_1,\sigma_{i1}\pmb{b}_1 \rangle^2 \cdot |T_i| \pmb{b}_1.
$$
Note that the subspace $U$ is irreducible, so the $\mathfrak{G}$-module map $\tau$ would be equal to $C\cdot I$ for some constant $C$ according to Lemma \ref{Lem-SchurLemmaOnEuclideanSpace}. It is not difficult however to see $\tau( \pmb{b}_2 ) \neq C\cdot\pmb{b}_2$, which is a contradiction.  
\end{proof}

By virtue of the same idea employed in proving the lemma above, one can easily see the relation below.
\begin{Proposition}\label{Prop-IRsFeatureGvNonTrivial}
Let $\mathfrak{G}$ be a permutation group acting on $V$. Suppose $U$ is an invariant subspace for $\mathfrak{G}$ and the stabilizer $\mathfrak{G}_v$ is not trivial, $v\in V$. Then $U$ is irreducible for $\mathfrak{G}$ if and only if 
\begin{center}
\begin{minipage}{8cm}
$\mathrm{dim}~ U[v] \leq 1$ for any element $v\in V$

and $\mathrm{span}\{ \sigma\pmb{u}_v : \sigma\in\mathfrak{G} \} = U$ if $\mathrm{dim}~ U[v] = 1$, 
\end{minipage}
\end{center}
where the vector $\pmb{u}_v$ belongs to $U[v]$ not equal to $\pmb{0}$.
 \end{Proposition}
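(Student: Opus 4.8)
The plan is to reduce everything to the machinery already assembled for Lemma~\ref{Lemma-IRsFeatureGvNonTrivial}, the only genuinely new feature being that $\mathfrak{G}$ is no longer assumed transitive, so that $U[v]$ is now allowed to vanish on some orbits and the dimension bound correspondingly relaxes from $=1$ to $\le 1$. I would first observe that the three structural facts about $U[v]$ proved inside Lemma~\ref{Lemma-IRsFeatureGvNonTrivial} — that $\sigma U[v]=U[\sigma v]$, that $\tau=\sum_{\sigma\in\mathfrak{G}}\sigma\tau_v\sigma^{-1}\circ\dproj{U[\sigma v]}$ is an $\mathfrak{G}$-module map on $U$, and that every $\mathfrak{G}$-module map preserves $U[v]$ — continue to hold verbatim, since their proofs rest only on the identity $\mathfrak{G}_{\sigma v}=\sigma\mathfrak{G}_v\sigma^{-1}$ and never invoke transitivity.

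For the necessity of $\dim U[v]\le 1$ I would rerun the identical averaging computation, noting that the correspondence $\sigma\mathfrak{G}_v\mapsto\sigma v$ is a bijection between the cosets of $\mathfrak{G}_v$ and the single $\mathfrak{G}$-orbit $T_v:=\mathfrak{G}v$, so partitioning the cosets by the $\mathfrak{G}_v$-orbits \emph{inside} $T_v$ reproduces exactly the formula $\tau(\pmb{b}_1)=|\mathfrak{G}_v|\sum_i\langle\pmb{b}_1,\sigma_{i1}\pmb{b}_1\rangle^2|T_i|\,\pmb{b}_1$. Assuming $\dim U[v]\ge 2$ and taking the rank-one $\tau_v$ exactly as in the lemma, the same computation yields $\tau(\pmb{b}_2)\ne C\pmb{b}_2$, so $\tau$ is not a scalar; yet $\pmb{b}_1$ is a genuine real eigenvector of $\tau$, which is incompatible with Lemma~\ref{Lem-SchurLemmaOnEuclideanSpace} (the $C\cdot S_\theta$ alternative carries no real eigenvalue), a contradiction. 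The necessity of the spanning condition is then immediate: if $\dim U[v]=1$ with $\pmb{0}\ne\pmb{u}_v\in U[v]$, then $\drm{span}{\{\sigma\pmb{u}_v:\sigma\in\mathfrak{G}\}}$ is a nonzero $\mathfrak{G}$-invariant subspace of the irreducible $U$, hence equals $U$.

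For sufficiency I would argue by contradiction: assume both conditions and suppose $U=W\oplus W'$ with $W,W'$ nonzero, orthogonal and $\mathfrak{G}$-invariant. Since each $\xi\in\mathfrak{G}_v$ preserves both summands, the fixed space splits as $U[v]=W[v]\oplus W'[v]$. Now $W\ne\pmb{0}$, and $\dproj{W}(\pmb{e}_v)\in W[v]$ because $\xi$ commutes with $\dproj{W}$ (Lemma~\ref{ProjOperatorCommutative}) and $\xi v=v$ for $\xi\in\mathfrak{G}_v$; were all these projections zero we would have $W\perp\R^n$, i.e. $W=\pmb{0}$. Hence some $v$ satisfies $W[v]\ne\pmb{0}$, and the hypothesis $\dim U[v]\le 1$ forces $\dim W[v]=1$, $\dim W'[v]=0$, so $U[v]=W[v]$. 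Any generator $\pmb{u}_v$ then lies in the $\mathfrak{G}$-invariant subspace $W$, whence $\drm{span}{\{\sigma\pmb{u}_v:\sigma\in\mathfrak{G}\}}\subseteq W\subsetneq U$, contradicting the spanning hypothesis.

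The main obstacle is making sure the averaging computation in the necessity step is truly insensitive to transitivity: one must verify that every occurrence of the full set $V$ in the lemma's coset bookkeeping may be replaced by the orbit $T_v$ without altering a single line, which is precisely what the bijection $\sigma\mathfrak{G}_v\leftrightarrow\sigma v$ secures. Once that is confirmed, the remainder is routine linear algebra with the orthogonal splitting $U[v]=W[v]\oplus W'[v]$ and the non-vanishing of some $\dproj{W}(\pmb{e}_v)$.
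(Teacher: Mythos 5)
Your proposal is correct and follows exactly the route the paper itself takes: the paper justifies this proposition only by remarking that the argument of Lemma \ref{Lemma-IRsFeatureGvNonTrivial} carries over, and your proof is precisely that argument carried over — the three structural facts about $U[v]$, the averaging map $\tau=\sum_{\sigma\in\mathfrak{G}}\sigma\tau_v\sigma^{-1}\circ\dproj{U[\sigma v]}$ with the coset bookkeeping now running over the single orbit $\mathfrak{G}v$ for the necessity of $\dim U[v]\le 1$, irreducibility giving the spanning condition, and the orthogonal splitting $U[v]=W[v]\oplus W'[v]$ plus the non-vanishing of some $\dproj{W}(\pmb{e}_v)$ for sufficiency. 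You also correctly isolated the one step of the lemma that genuinely used transitivity (there every $\dproj{W_k}(\pmb{e}_v)$ is automatically nonzero, so reducibility forces $\dim U[v]\ge 2$) and replaced it by an appeal to the spanning hypothesis, which is exactly the adjustment the relaxed conclusion of the proposition requires.
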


Let us now see how to use a similar geometric feature to characterize IRs of $\mathfrak{G}$ in the case that point stabilizers are trivial. Apparently, the subspace $U[v]$ used in Lemma \ref{Lemma-IRsFeatureGvNonTrivial} become pointless now, so we have to introduce the following instead. Let $B$ be one of minimal blocks for $\mathfrak{G}$. Set 
$$
U[B] = \left\{ \pmb{u} \in U : \zeta \left( \sum_{k=1}^{|B|} \xi^k \right) \pmb{u} 
= \left( \sum_{k=1}^{|B|} \xi^k \right) \pmb{u}, 
~~\forall\xi\in \mathfrak{G}_B\setminus\{1\} \mbox{ and } \zeta\in\mathfrak{G}_B  \right\}.
$$
Obviously, $U[B]$ is a subspace of $U$. Moreover, it is an $\mathfrak{G}_B$-invariant subspace. In fact, for any vector $\pmb{u}_B$ in $U[B]$ and permutation $\delta$ in $\mathfrak{G}_B$, 
$$
\sum_{k=1}^{|B|}  (\zeta - I) \xi^k (\delta \pmb{u}_B )
 = \delta \sum_k (\zeta - I) \xi^k ( \pmb{u}_B )
 = \delta (\pmb{0}) = \pmb{0},
$$
for $\mathfrak{G}_B$ is a circulant group due to Lemma \ref{Lem-BlockStabilizerCharacterization-GvTrivial} and thus $\delta \pmb{u}_B$ also belongs to the subspace $U[B]$.

\begin{Lemma}\label{Lemma-IRsFeatureGvTrivial}
Let $\mathfrak{G}$ be a permutation group acting on $V$ transitively. Suppose $U$ is an invariant subspace for $\mathfrak{G}$ and the stabilizer $\mathfrak{G}_v$ is trivial, $v\in V$. Then $U$ is irreducible for $\mathfrak{G}$ if and only if 
the subspace $U[B]$ is irreducible for $\mathfrak{G}_B$, where $B$ is one of minimal blocks for $\mathfrak{G}$, {\it i.e.,}
\begin{equation*}
{\rm dim}~ U[B] = 
\begin{cases}
1 & \text{ if \hspace{0.5mm}} \left( \sum_{k=1}^{|B|} \xi^k \right) \dproj{U}(\pmb{e}_b) \neq\pmb{0} \text{ or } |B| = 2, \\
2 & \text{ if \hspace{0.5mm}} \left( \sum_{k=1}^{|B|} \xi^k \right) \dproj{U}(\pmb{e}_b) = \pmb{0} \text{ and } |B| > 2,
\end{cases}
\end{equation*}
where $b$ is a member of $B$.
\end{Lemma}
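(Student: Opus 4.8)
The plan is to follow the blueprint of Lemma~\ref{Lemma-IRsFeatureGvNonTrivial}, replacing the point stabilizer $\mathfrak{G}_v$ by the block stabilizer $\mathfrak{G}_B$ and the fixed subspace $U[v]$ by $U[B]$. By Lemma~\ref{Lem-BlockStabilizerCharacterization-GvTrivial} the group $\mathfrak{G}_B = \langle\xi\rangle$ is cyclic of prime order $p = |B|$ (its order equals $|B|$ by Lemma~\ref{Lemma-BlockStabilizerFeatureGvTrivial}), and it acts on $U$ by isometries; hence everything reduces to reading $U$ as a real $\mathfrak{G}_B$-module and controlling how $\mathfrak{G}$-module maps interact with it. Throughout I would use that $\dproj{U}$ commutes with every $\sigma\in\mathfrak{G}$ (Lemma~\ref{ProjOperatorCommutative}) and that, since $\mathfrak{G}_B$ acts regularly on $B$, one has $\sum_{k=1}^{|B|}\xi^{k}\pmb{e}_b = \pmb{R}_B$, so that $\big(\sum_{k=1}^{|B|}\xi^{k}\big)\dproj{U}(\pmb{e}_b) = \dproj{U}(\pmb{R}_B)$ is exactly the projection onto $U$ of the characteristic vector of the block $B$.

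First I would record the three structural facts mirroring those in Lemma~\ref{Lemma-IRsFeatureGvNonTrivial}: (i) $\sigma\,U[B] = U[\sigma B]$ for every $\sigma\in\mathfrak{G}$, where $\sigma B$ runs over the block system containing $B$; (ii) for any $\mathfrak{G}_B$-equivariant map $\tau_B$ on $U[B]$, the averaged operator $\tau = \sum_{\sigma} \sigma\,\tau_B\,\sigma^{-1}\circ\dproj{U[\sigma B]}$, summed over coset representatives of $\mathfrak{G}_B$ in $\mathfrak{G}$, is an $\mathfrak{G}$-module map on $U$; and (iii) every $\mathfrak{G}$-module map on $U$ carries $U[B]$ into itself. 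Fact (ii) rests on the intertwining identity $\dproj{U[\sigma B]}\circ\gamma^{-1} = \gamma^{-1}\circ\dproj{U[\gamma\sigma B]}$, proved exactly as (\ref{Equation-GModuleMap}) from (i) together with the fact that $\gamma$ is an isometry.

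With this machinery the equivalence splits into two directions. For the implication that $U[B]$ irreducible forces $U$ irreducible I would argue contrapositively: given a proper decomposition $U = \bigoplus_{k=1}^{s} W_k$ into $\mathfrak{G}$-irreducibles with $s\ge 2$, each $\dproj{W_k}$ commutes with $\mathfrak{G}_B$ and with the operator $\sum_{k=1}^{|B|}\xi^{k}$ entering the definition of $U[B]$, so $U[B] = \bigoplus_{k}\big(U[B]\cap W_k\big)$ is a nontrivial $\mathfrak{G}_B$-invariant splitting and $U[B]$ is $\mathfrak{G}_B$-reducible. For the converse I would assume $U$ is $\mathfrak{G}$-irreducible yet $U[B]$ is $\mathfrak{G}_B$-reducible; then $U[B]$ has a proper nonzero $\mathfrak{G}_B$-submodule $M$, and its orthogonal projection $\tau_B$ (with $0<\mathrm{rank}\,\tau_B<\dim U[B]$) is a non-scalar \emph{idempotent} $\mathfrak{G}_B$-module map. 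Averaging $\tau_B$ by (ii) yields an $\mathfrak{G}$-module map $\tau$ which, by the evaluation carried out at the close of Lemma~\ref{Lemma-IRsFeatureGvNonTrivial}, acts on $U[B]$ as a non-scalar operator with nontrivial kernel; hence $\tau$ is neither $C\cdot I$ nor a scaled isometry $C\cdot S_\theta$ (the latter would be invertible with all singular values equal), contradicting the real Schur Lemma~\ref{Lem-SchurLemmaOnEuclideanSpace}.

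Finally the dimension formula is pure representation theory of $\mathfrak{G}_B\cong C_p$: its irreducible real modules are the trivial one (and, when $p=2$, the sign module) of dimension $1$, together with the rotation modules of dimension $2$, which occur only for $p>2$. Thus an irreducible $U[B]$ has dimension $1$ or $2$, and to decide which I would test whether the trivial constituent occurs, i.e.\ whether $\dproj{U}(\pmb{R}_B) = \big(\sum_{k=1}^{|B|}\xi^{k}\big)\dproj{U}(\pmb{e}_b)$ is nonzero. If it is nonzero the trivial module is present and $U[B]$ is one-dimensional; if $|B|=2$ no rotation module exists so $U[B]$ is again one-dimensional; and in the remaining case ($\dproj{U}(\pmb{R}_B)=\pmb{0}$ and $p>2$) $U[B]$ must be a two-dimensional rotation module. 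The main obstacle will be the necessity direction: because the real Schur Lemma~\ref{Lem-SchurLemmaOnEuclideanSpace} permits the extra alternative $\tau=C\cdot S_\theta$ (absent over $\mathbb{C}$ and absent in Lemma~\ref{Lemma-IRsFeatureGvNonTrivial}), the map $\tau_B$ must be chosen \emph{idempotent} rather than arbitrary — which is possible precisely because $U[B]$ is assumed reducible — so that the averaged $\tau$ acquires a kernel and is forced to violate \emph{both} Schur alternatives; keeping track of the genuinely two-dimensional rotation constituents, which have no analogue in the scalar-valued $U[v]$, is what makes this step delicate.
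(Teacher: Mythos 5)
Your skeleton matches the paper's proof: the same three structural facts about $U[B]$, the same averaged operator $\tau=\sum_{\sigma}\sigma\tau_B\sigma^{-1}\circ\dproj{U[\sigma B]}$, and essentially the same choice of $\tau_B$ as an orthogonal projection onto a proper $\mathfrak{G}_B$-submodule of $U[B]$. The genuine gap is your last step in the necessity direction: you derive the contradiction from the claim that $\tau$ ``acts on $U[B]$ as a non-scalar operator with nontrivial kernel,'' so that $\tau$ violates both alternatives of Lemma \ref{Lem-SchurLemmaOnEuclideanSpace}. That claim is false, because averaging over $\mathfrak{G}$ destroys the kernel. Writing $\mathfrak{G}=\cup_{i=1}^{t}\sigma_i\mathfrak{G}_B$ and taking $\tau_B$ to be the projection onto $W_{B,1}$, the paper's evaluation gives
$$
\tau(\pmb{b}_{q,k})=|\mathfrak{G}_B|\sum_{i=1}^{t}\dproj{\sigma_i W_{B,1}}(\pmb{b}_{q,k}),
$$
so the restriction of $\tau$ to $U[B]$ is not $|\mathfrak{G}_B|\tau_B$: it carries the cross-terms $\dproj{\sigma_i W_{B,1}}$ with $\sigma_i\notin\mathfrak{G}_B$, which in general do not annihilate the vectors $\pmb{b}_{2,k}$ that $\tau_B$ kills. (The evaluation at the close of Lemma \ref{Lemma-IRsFeatureGvNonTrivial}, which you cite, likewise produces no kernel: there $\tau(\pmb{b}_1)$ is a nonzero multiple of $\pmb{b}_1$.) In fact $\langle\tau(\pmb{b}_{1,k}),\pmb{b}_{1,k}\rangle=|\mathfrak{G}_B|\sum_{i}\|\dproj{\sigma_i W_{B,1}}(\pmb{b}_{1,k})\|^2\ge|\mathfrak{G}_B|>0$, so $\tau\neq 0$, and then Lemma \ref{Lem-SchurLemmaOnEuclideanSpace} forces $\tau$ to be \emph{invertible} --- the opposite of what your argument needs. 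The paper's contradiction runs the other way: since $\tau$ is invertible and preserves $U[B]$ by fact (iii), and after two reductions you skip --- (a) if some $W_{B,q}$ contains a $\mathfrak{G}_B$-fixed vector, fall back on the argument of Lemma \ref{Lemma-IRsFeatureGvNonTrivial}; (b) assume therefore that all $W_{B,q}$ are $2$-dimensional and pairwise non-isomorphic as $\mathfrak{G}_B$-modules --- $\tau$ must map each $W_{B,q}$ onto itself; the displayed formula then contradicts this, because it places $\tau(\pmb{b}_{2,k})$ inside $\sum_i\sigma_i W_{B,1}$ rather than in $W_{B,2}$. Without reductions (a) and (b) the isotypic-preservation step is unavailable, and without that step your idempotent $\tau_B$ yields no contradiction at all.

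A secondary gap: in the sufficiency direction, the splitting $U[B]=\oplus_k\big(U[B]\cap W_k\big)$ witnesses reducibility of $U[B]$ only if at least two summands are nonzero; if $U[B]$ lay inside a single $W_k$ the splitting would be trivial. You still owe the observation --- this is what the paper's appeal to ``the same argument as Lemma \ref{Lemma-IRsFeatureGvNonTrivial}'' is meant to supply --- that $U[B]\cap W_k\neq\{\pmb{0}\}$ for every $k$, e.g.\ by producing a nonzero vector of $W_k[B]$ from the projections $\dproj{W_k}(\pmb{e}_b)$, $b\in B$. Similarly, in your dimension dichotomy the equivalence ``the trivial constituent occurs in $U[B]$ iff $\big(\sum_{k=1}^{|B|}\xi^k\big)\dproj{U}(\pmb{e}_b)\neq\pmb{0}$'' is asserted but not proved in the forward direction; only the backward implication is immediate.
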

\begin{proof}[\bf Proof]
By virtue of essentially the same argument used in proving the sufficiency of the lemma \ref{Lemma-IRsFeatureGvNonTrivial}, one can readily prove the sufficiency of this assertion. As to the necessity, we again start with a list of properties of $U[B]$.
\begin{enumerate}
\item $\sigma U[B] = U[\sigma B]$, $\forall \sigma\in\mathfrak{G}$.

\item If $\tau_{_B}$ is a linear map on $U[B]$, then $\tau = \sum_{\sigma\in\mathfrak{G}} \sigma\tau_{_B} \sigma^{-1} \circ \dproj{U[\sigma B]}$ is an $\mathfrak{G}$-module map on $U$.

\item If $\phi$ is an $\mathfrak{G}$-module map on $U$ then $\phi U[B] \subseteq U[B]$.
\end{enumerate}

Evidently, these properties of $U[B]$ are similar to that of $U[v]$, so essentially one can employ the same ideas in establishing those claims in Lemma \ref{Lemma-IRsFeatureGvNonTrivial} to check what are  listed above. We show the 1st one here as an example. 

In accordance with the definition to the subspace $U[B]$, it is clear that
$$
U[\sigma B] = 
\left\{ \pmb{y} \in U \mid \beta \left( \sum_{k=1}^{|\sigma B|} \alpha^k \right) \pmb{y} 
= \left( \sum_{k=1}^{|\sigma B|} \alpha^k \right) \pmb{y}, 
~~\forall\alpha\in \mathfrak{G}_{\sigma B}\setminus\{1\} \mbox{ and } \beta\in\mathfrak{G}_{\sigma B}  \right\}.
$$ It is readily to see that $\mathfrak{G}_{\sigma B} = \sigma \mathfrak{G}_B \sigma^{-1}$.  Consequently, for any $\xi\in\mathfrak{G}_B\setminus\{1\}$, $\zeta\in\mathfrak{G}_B$ and $\pmb{x}\in U[B]$, 
$$
\sigma\zeta\sigma^{-1} \left( \sum_{k=1}^{|B|} \sigma\xi^k\sigma^{-1} \right) (\sigma\pmb{x})
 = \sigma\circ \zeta \left( \sum_{k} \xi^k \right) \pmb{x}
 = \sigma \left( \sum_{k} \xi^k \right) \pmb{x}
 =        \left( \sum_{k} \sigma\xi^k\sigma^{-1} \right) (\sigma\pmb{x}),
$$ so $\sigma\pmb{x}$ belongs to $U[\sigma B]$ and thus $\sigma U[B] \subseteq U[\sigma B]$. 

On the other hand, for any arbitrarily chosen vector $\pmb{y}$ in $U[\sigma B]$, $\beta \left( \sum_{k=1}^{|\sigma B|} \alpha^k \right) \pmb{y} = \left( \sum_{k=1}^{|\sigma B|} \alpha^k \right) \pmb{y}$, 
$\forall \alpha\in \mathfrak{G}_{\sigma B}\setminus\{1\}$
 and 
$\beta\in\mathfrak{G}_{\sigma B}$. 
Consequently, 
$$ 
\sigma \circ \zeta \left( \sum_{k=1}^{|B|} \xi^k \right) (\sigma^{-1}\pmb{y})
= \sigma\zeta\sigma^{-1} \left( \sum_{k} \sigma\xi^k\sigma^{-1} \right) \pmb{y}
= \left( \sum_{k} \sigma\xi^k\sigma^{-1} \right) \pmb{y}
= \sigma \left( \sum_{k} \xi^k \right) (\sigma^{-1}\pmb{y}),
$$ 
where $\sigma\xi\sigma^{-1}=\alpha$ and $\sigma\zeta\sigma^{-1}=\beta$. 
Hence, $\zeta \left( \sum_{k=1}^{|B|} \xi^k \right) (\sigma^{-1}\pmb{y}) 
= \left( \sum_{k} \xi^k \right) (\sigma^{-1}\pmb{y})$,
which implies that $\sigma^{-1} \pmb{y}$ belongs to $U[B]$, and thus $U[\sigma B] \subseteq \sigma U[B]$.

\vspace{3mm}
Again, we prove the necessity of the assertion by a contradiction and assume that $U[B]$ is not irreducible. Suppose $W_{B,1},\ldots,W_{B,R}$ comprise a group of IRs of $\mathfrak{G}_B$ {\it s.t.,} $U[B] = \oplus_{q=1}^R W_{B,q}$ and $R\geq 2$. Note that $\mathrm{dim}~ W_{B,q} = 1$ or $2$ because $\mathfrak{G}_B$ is a circulant group according to Lemma \ref{Lem-BlockStabilizerCharacterization-GvTrivial}.

We organize the family of cosets $\{ \sigma\mathfrak{G}_B \mid \sigma\in\mathfrak{G} \}$ according to the orbits of the action of $\mathfrak{G}_B$ on $V$. Since every orbit of $\mathfrak{G}_B$ is a block for $\mathfrak{G}$, if there are $t$ orbits of $\mathfrak{G}_B$ then $\mathfrak{G} = \cup_{i=1}^t \sigma_{i}\mathfrak{G}_B$. Accordingly, the $\mathfrak{G}$-module map $\tau = \sum_{\sigma\in\mathfrak{G}} \sigma \tau_{_B} \sigma^{-1} \circ \dproj{U[\sigma B]}$ could be expressed as follows
$$
\tau  
= \sum_{i=1}^t \sum_{\xi\in\mathfrak{G}_B} 
(\sigma_{i}\xi) \tau_{_B} (\sigma_{i}\xi)^{-1} \circ \dproj{U[\sigma_{i} B]}.
$$

Let us define a linear map $\tau_{_B}$ on $U[B]$ {\it s.t.,} 
$$
\tau_{_B} : \pmb{b}_{q,k} \mapsto 
\begin{cases}
\pmb{b}_{1,k} & \text{ if } q=1, \\
\pmb{0}   & \text{ if } q>1,
\end{cases}
$$
where $\pmb{b}_{q,1},\ldots,\pmb{b}_{q,d_q}$ constitute an orthonormal basis of $W_{B,q}$, $q = 1,\ldots,R$. It is easy to see that if there is an irreducible representation, say, without losing any generality, $W_{B,1}$, so that $\exists\pmb{w}_1\in W_{B,1}$, not being trivial, {\it s.t.,} $\xi \pmb{w}_1 = \pmb{w}_1$, $\forall \xi\in\mathfrak{G}_B$, then $W_{B,1} = \mathrm{span} \{ \pmb{w}_1 \}$. Accordingly, one can use the same argument in proving Lemma \ref{Lemma-IRsFeatureGvNonTrivial} to prove this assertion. For that reason, we assume that any IR of $\mathfrak{G}_B$ in $U[B]$ is of dimension 2. As a matter of fact, we can make a further assumption that there are no two IRs of $\mathfrak{G}_B$ in $U[B]$ which are isomorphic to one another with respect to $\mathfrak{G}_B$.

We now begin to evaluate $\tau (\pmb{b}_{1,k})$, $1\le k \le 2$, step by step. First of all, one should note that 
\begin{align*}
\dproj{U[\sigma_{i} B]} (\pmb{b}_{1,k}) 
& = \dproj{\sigma_{i} U[B]}( \pmb{b}_{1,k} ) \\
& = \dproj{\sigma_{i} \oplus_{q=1}^R W_{B,q} }( \pmb{b}_{1,k} ) \\
& = \sum_{q=1}^R \sum_{p=1}^{2} \langle \pmb{b}_{1,k},\sigma_{i}\pmb{b}_{q,p} \rangle 
\cdot \sigma_{i}\pmb{b}_{q,p}.
\end{align*}
Consequently, 
\begin{align*}
\sum_{\xi\in\mathfrak{G}_B} 
(\sigma_{i}\xi) \tau_{_B} (\sigma_{i}\xi)^{-1} \circ \dproj{U[\sigma_{i} B]} (\pmb{b}_{1,k}) 
& = 
\sum_{\xi} \big( \sigma_{i}\xi \big) \tau_{_B}  \big( \xi^{-1} \sigma_{i}^{-1} \big) \left( \sum_{q,p} \langle \pmb{b}_{1,k},\sigma_{i}\pmb{b}_{q,p} \rangle
\cdot \sigma_{i}\pmb{b}_{q,p} \right) \\
& = 
\sum_{\xi} \sigma_{i}\xi \tau_{_B} \xi^{-1}  
\left( \sum_{q,p} \langle \pmb{b}_{1,k},\sigma_{i}\pmb{b}_{q,p} \rangle
\cdot \pmb{b}_{q,p}\right) \\
& = 
\sum_{\xi} \sigma_{i}\xi \tau_{_B}  
\left( \sum_{q,p} \langle \pmb{b}_{1,k},\sigma_{i}\pmb{b}_{q,p} \rangle
\cdot  \xi^{-1}\pmb{b}_{q,p} \right) \\
& = 
\sum_{\xi} \sigma_{i}\xi~  
\left( \sum_{p=1}^{2} \langle \pmb{b}_{1,k},\sigma_{i}\pmb{b}_{1,p} \rangle
\cdot  \xi^{-1}\pmb{b}_{1,p} \right) \\
& = 
\sum_{\xi} \sigma_{i}~  
\left( \sum_{p} \langle \pmb{b}_{1,k},\sigma_{i}\pmb{b}_{1,p} \rangle\cdot \pmb{b}_{1,p} \right) \\
& =   
|\mathfrak{G}_B| \cdot \sum_{p} \langle \pmb{b}_{1,k},\sigma_{i}\pmb{b}_{1,p} \rangle
\cdot \sigma_{i}\pmb{b}_{1,p} \\
& = |\mathfrak{G}_B| \cdot
\dproj{\sigma_{i} W_{B,1}} (\pmb{b}_{1,k}). 
\end{align*}
Consequently,
\begin{align*}
\tau (\pmb{b}_{1,k}) 
&= \sum_{i=1}^{t} \sum_{\xi\in\mathfrak{G}_B} 
(\sigma_{i}\xi) \tau_{_B} (\sigma_{i}\xi)^{-1} \circ \dproj{U[\sigma_{i} B]} (\pmb{b}_{1,k}) \\ 
& = |\mathfrak{G}_B| \cdot \sum_{i}
\dproj{\sigma_{i} W_{B,1}} (\pmb{b}_{1,k}). 
\end{align*}
Since $\dproj{W_{B,1}}(\pmb{b}_{1,k}) = \pmb{b}_{1,k}$, the vector $\sum_{i} \dproj{\sigma_{i} W_{B,1}} (\pmb{b}_{1,k})$ cannot vanish. Therefore $\tau$ is a bijection and $\tau W_{B,q} = W_{B,q}$, $q=1,\ldots,R$ because $W_{B,q'} \ncong W_{B,q''}$, $1 \le q' \leq q'' \leq R$. However, one can readily verify that $\tau (\pmb{b}_{2,k})$, $1\leq k \leq 2$, does not belong to $W_{B,2}$, which is a contradiction.
\end{proof}

\begin{Proposition}\label{Prop-IRsFeatureGvTrivial}
Let $\mathfrak{G}$ be a permutation group acting on $V$. Suppose $U$ is an invariant subspace for $\mathfrak{G}$ and the stabilizer $\mathfrak{G}_v$ is trivial, $v\in V$. Then $U$ is irreducible for $\mathfrak{G}$ if and only if 
the subspace $U[B]$ is irreducible for $\mathfrak{G}_B$ and $\mathrm{span}\{ \sigma\pmb{u}_B \mid \sigma\in\mathfrak{G} \} = U$ if $\mathrm{dim}~ U[B] > 0$,
where $B$ is one of minimal blocks for $\mathfrak{G}$ and the vector $\pmb{u}_B$ belongs to $U[B]$ not equal to $\pmb{0}$.  \end{Proposition}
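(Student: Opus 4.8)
The plan is to establish this Proposition as the intransitive counterpart of Lemma \ref{Lemma-IRsFeatureGvTrivial}, in exactly the way Proposition \ref{Prop-IRsFeatureGvNonTrivial} is obtained from Lemma \ref{Lemma-IRsFeatureGvNonTrivial}: the $\mathfrak{G}_B$-irreducibility of $U[B]$ is the ``local'' condition inherited from the transitive lemma, while the extra hypothesis $\spa\{\sigma\pmb{u}_B : \sigma\in\mathfrak{G}\} = U$ is precisely what compensates for the loss of transitivity. I would prove the two implications separately, reusing the three equivariance properties of $U[B]$ and the $\mathfrak{G}$-module map $\tau = \sum_{\sigma\in\mathfrak{G}}\sigma\tau_{_B}\sigma^{-1}\circ\dproj{U[\sigma B]}$ already developed in the proof of Lemma \ref{Lemma-IRsFeatureGvTrivial}.

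For sufficiency, assume $U[B]$ is irreducible for $\mathfrak{G}_B$ and the spanning condition holds, and suppose for contradiction that $U = \bigoplus_{k=1}^s W_k$ with $s\ge 2$ and each $W_k$ a nonzero $\mathfrak{G}$-invariant subspace. Since every $W_k$ is in particular $\mathfrak{G}_B$-invariant and the operators $\zeta(\sum_k\xi^k) - (\sum_k\xi^k)$ defining $U[B]$ preserve each summand, one gets $U[B] = \bigoplus_{k} W_k[B]$. Irreducibility of $U[B]$ for $\mathfrak{G}_B$ then forces a single summand, say $W_{k_0}[B]$, to equal $U[B]$ and the rest to vanish; hence $\pmb{u}_B\in W_{k_0}$ and $\spa\{\sigma\pmb{u}_B\}\subseteq W_{k_0}\subsetneq U$, contradicting the spanning hypothesis. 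This step is a verbatim transcription of the sufficiency argument of Proposition \ref{Prop-IRsFeatureGvNonTrivial} with $U[v]$ replaced by $U[B]$, and it needs no transitivity.

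For necessity, the spanning condition is immediate: $\spa\{\sigma\pmb{u}_B\}$ is a nonzero $\mathfrak{G}$-invariant subspace of the irreducible $U$, hence all of $U$. To see that $U[B]$ is irreducible for $\mathfrak{G}_B$, I would reduce to the transitive Lemma \ref{Lemma-IRsFeatureGvTrivial}. Let $T$ be the orbit of $\mathfrak{G}$ containing $B$; since each $\mathfrak{G}_v$ is trivial, $\mathfrak{G}$ acts regularly on $T$. The orthogonal projection $\dproj{\R^T}$ commutes with every permutation operator, so its restriction to $U$ is a $\mathfrak{G}$-module map, which by irreducibility is either zero --- the degenerate case where $U$ is supported off $T$, parallel to $\dim U[v]=0$ in Proposition \ref{Prop-IRsFeatureGvNonTrivial}, in which the requirement $\dim U[B]>0$ is not in force --- or injective, identifying $U$ $\mathfrak{G}$-equivariantly with an irreducible submodule $U'$ of the regular module $\R^T$. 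As the identification commutes with $\mathfrak{G}_B$ it carries $U[B]$ onto $U'[B]$, and Lemma \ref{Lemma-IRsFeatureGvTrivial} applied to the now transitive $U'$ yields the $\mathfrak{G}_B$-irreducibility together with the dimension dichotomy (recall $\mathfrak{G}_B$ is cyclic of prime order by Lemma \ref{Lem-BlockStabilizerCharacterization-GvTrivial}, so its real irreducibles have dimension $1$ or $2$). Alternatively, following the route the paper suggests, one reruns the $\tau$-plus-Schur argument of Lemma \ref{Lemma-IRsFeatureGvTrivial} directly, the only change being that the cosets of $\mathfrak{G}_B$ must now be indexed by the blocks $\sigma B$ lying in the single orbit $T$ rather than by all $\mathfrak{G}_B$-orbits of $V$; with that bookkeeping the Schur contradiction via Lemma \ref{Lem-SchurLemmaOnEuclideanSpace} goes through unchanged.

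I expect the main obstacle to be exactly this intransitive bookkeeping in the necessity direction. In Lemma \ref{Lemma-IRsFeatureGvTrivial} transitivity is used silently through the identity (number of $\mathfrak{G}_B$-orbits on $V$) $= [\mathfrak{G}:\mathfrak{G}_B]$, which fails once $V$ splits into several orbits; one must verify that summing $\tau$ over the cosets of $\mathfrak{G}_B$, equivalently over the blocks of the system of $B$ inside $T$, still produces a genuine $\mathfrak{G}$-module map and still isolates the term $\dproj{\sigma_i W_{B,1}}$ in the key computation. The accompanying delicacy is the unsupported case $\dproj{\R^T}(U)=\pmb{0}$, where $U[B]$ degenerates; handling it cleanly is what motivates phrasing the hypothesis as the pair ``$U[B]$ irreducible for $\mathfrak{G}_B$'' together with the conditional spanning requirement, mirroring the $\dim U[v]\le 1$ formulation of Proposition \ref{Prop-IRsFeatureGvNonTrivial}.
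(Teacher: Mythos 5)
The paper never proves this proposition: it is stated bare after Lemma \ref{Lemma-IRsFeatureGvTrivial}, in the same way Proposition \ref{Prop-IRsFeatureGvNonTrivial} is stated after Lemma \ref{Lemma-IRsFeatureGvNonTrivial}, so you are supplying a missing argument rather than matching an existing one. Two of your three pieces are sound and in the paper's spirit: the sufficiency argument (a reducible $U=\bigoplus_k W_k$ forces $U[B]=\bigoplus_k W_k[B]$, whence either $U[B]$ is reducible or $\pmb{u}_B$ lies in a single summand and the spanning condition fails) and the spanning half of necessity are both correct; and your reduction of the main case of necessity to the transitive Lemma \ref{Lemma-IRsFeatureGvTrivial}, via the injective $\mathfrak{G}$-module map $\dproj{\R^T}\big|_U$, is valid and arguably cleaner than re-running the paper's $\tau$-computation.

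The genuine gap is the case you call degenerate, $\dproj{\R^T}(U)=\pmb{0}$. You dismiss it by asserting that then $\dim U[B]=0$, so the hypotheses are ``not in force''. That implication has no justification: $U[B]$ is cut out by conditions on the action of $\mathfrak{G}_B$ on $U$, and $\mathfrak{G}_B$ acts on all of $\R^n$, so nothing about $U\perp\R^T$ forces $U[B]$ to vanish. In fact the necessity direction itself fails there. Take $\mathfrak{G}=S_4$ acting on $V$ equal to two disjoint copies of its regular action, $B=\{t,(12)\,t\}$ a minimal block in the first copy $T$ (so $\mathfrak{G}_B=\langle(12)\rangle$ is cyclic of prime order, consistent with Lemma \ref{Lem-BlockStabilizerCharacterization-GvTrivial}), and $U$ a copy of the three-dimensional standard representation of $S_4$ inside $\R^{V\setminus T}$. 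Then $U$ is irreducible for $\mathfrak{G}$ and every point stabilizer is trivial; but under the fixed-point reading of $U[B]$ that the paper itself uses when it cites this proposition (in the proof of Theorem \ref{Thm-ConnectionBlockSystemInDistinctOrbits}), $U[B]$ is the $(+1)$-eigenspace of $(12)$ on $U$, which is two-dimensional with trivial $\mathfrak{G}_B$-action, hence reducible. (Under the literal definition the situation is worse: since $\sum_{k=1}^{|B|}\xi^k$ is the sum over all of $\mathfrak{G}_B$ and left multiplication by $\zeta$ permutes $\mathfrak{G}_B$, the defining condition is vacuous, so $U[B]=U$ is three-dimensional and again not $\mathfrak{G}_B$-irreducible.) Your fallback route of re-running the $\tau$-plus-Schur argument cannot close this hole either: that argument needs the $\mathfrak{G}_B$-irreducibles inside $U[B]$ to be pairwise non-isomorphic, whereas here they are all trivial, hence mutually isomorphic. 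So your proof establishes the proposition exactly for those $U$ with $\dproj{\R^T}(U)\neq\pmb{0}$ --- which appears to cover every use the paper makes of it --- and the off-orbit case is not a vacuous case to be waved away but a needed restriction on, or correction to, the statement; it should be flagged as such rather than absorbed silently into the proof.
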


Let $\Pi$ be an equitable partition of $G$. As we have noted in the 1st section, there is a close relation between the eigenvectors of $\mathbf{A}(G)$ and that of $\mathbf{A}(G / \Pi)$. To be precise, if $\pmb{x}_{\lambda}$ is an eigenvector of $\mathbf{A}(G / \Pi)$, corresponding to the eigenvalue $\lambda$, then $\mathbf{R} \pmb{x}_{\lambda}$ is an eigenvector of $\mathbf{A}(G)$, corresponding to $\lambda$ also, where $\mathbf{R}$ is the characteristic matrix of $\Pi$. Accordingly, we say that the eigenvector $\pmb{x}_{\lambda}$ of $\mathbf{A}(G / \Pi)$ {\it ``lifts''} to an eigenvector of $\mathbf{A}(G)$. Moreover 
all eigenvectors of $\mathbf{A}(G)$ could be divided into two classes: those that are constant on every cell of $\Pi$ and those that sum to zero on each cell of $\Pi$, and the first class consists of vectors lifted from eigenvectors of $\mathbf{A}(G / \Pi)$.

In other words, if $\Pi$ possesses $t$ cells and $x$ and $y$ are two vertices of $G$  belonging to the same cell of $\Pi$, then 
$$
\langle \pmb{e}_x,\dproj{V_\lambda}(\pmb{R}_j) \rangle = \langle \pmb{e}_y,\dproj{V_\lambda}(\pmb{R}_j) \rangle, ~\forall~\lambda\in\mathrm{spec}~\mathbf{A}(G) \mbox{\it ~and } j\in [t], 
$$
where $\pmb{R}_j$ is the characteristic vector of the $j$th cell of $\Pi$. As we shall see below, the relation above is also sufficient for being equitable.

\begin{Lemma}\label{Lemma-EquitablePartProj}
Let $\Pi$ be a partition of $V$ with $t$ cells. Then $\Pi$ is equitable if and only if for any two elements $x$ and $y$ belonging to the same cell of $\Pi$, $
\langle \pmb{e}_x,\dproj{V_\lambda}(\pmb{R}_j) \rangle = \langle \pmb{e}_y,\dproj{V_\lambda}(\pmb{R}_j) \rangle, ~\forall~\lambda\in\mathrm{spec}~\mathbf{A}(G) \mbox{\it ~and } j\in [t].
$ 
\end{Lemma}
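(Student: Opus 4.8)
The plan is to reduce the claimed equivalence to the spectral characterization of equitable partitions already recorded in Lemma \ref{Lemma-Book-CharacterizationToEP}, namely that $\Pi$ is equitable if and only if the column space $U_\Pi$ of $\mathbf{R}$ is $\mathbf{A}$-invariant. The bridge between the two formulations is the observation that $U_\Pi = \mathrm{span}\{\pmb{R}_1,\ldots,\pmb{R}_t\}$ is precisely the subspace of $\R^n$ consisting of those vectors that are constant on each cell of $\Pi$: indeed, $\sum_j c_j \pmb{R}_j$ takes the value $c_j$ on the $j$th cell, and conversely every cell-constant vector arises in this way.

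First I would reinterpret the projection condition coordinate-wise. Since $\langle \pmb{e}_x, \pmb{w}\rangle = w_x$ is simply the $x$th coordinate of $\pmb{w}$, the displayed equalities assert exactly that, for every $\lambda\in\mathrm{spec}~\mathbf{A}(G)$ and every $j\in[t]$, the vector $\dproj{V_\lambda}(\pmb{R}_j)$ takes the same value on any two vertices lying in a common cell of $\Pi$; that is, $\dproj{V_\lambda}(\pmb{R}_j)$ is constant on each cell, hence $\dproj{V_\lambda}(\pmb{R}_j)\in U_\Pi$. Because the $\pmb{R}_j$ span $U_\Pi$, requiring this for all $j$ is equivalent to $\dproj{V_\lambda}(U_\Pi)\subseteq U_\Pi$, and requiring it for all $\lambda$ is equivalent to saying that $U_\Pi$ is invariant under every orthogonal spectral projection $\dproj{V_\lambda}$.

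The remaining step is to identify invariance under all the $\dproj{V_\lambda}$ with $\mathbf{A}$-invariance, and this is where the self-adjointness of $\mathbf{A}$ is essential. In one direction, if $\dproj{V_\lambda}(U_\Pi)\subseteq U_\Pi$ for every $\lambda$, then for any $\pmb{u}\in U_\Pi$ we have $\mathbf{A}\pmb{u} = \sum_\lambda \lambda\, \dproj{V_\lambda}(\pmb{u}) \in U_\Pi$, so $U_\Pi$ is $\mathbf{A}$-invariant. Conversely, if $U_\Pi$ is $\mathbf{A}$-invariant, then by the real spectral theorem $\mathbf{A}$ restricted to $U_\Pi$ is self-adjoint, so $U_\Pi$ admits an orthonormal basis of eigenvectors of $\mathbf{A}$; grouping these by eigenvalue yields $U_\Pi = \oplus_\lambda (U_\Pi \cap V_\lambda)$, which forces $\dproj{V_\lambda}(\pmb{u})\in U_\Pi$ for every $\pmb{u}\in U_\Pi$. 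Chaining these equivalences with Lemma \ref{Lemma-Book-CharacterizationToEP} then completes the proof.

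I expect the only genuine subtlety to be this last equivalence: the self-adjointness of $\mathbf{A}$, together with the orthogonal eigenspace decomposition guaranteed by the real spectral theorem, is exactly what makes ``invariant under $\mathbf{A}$'' and ``invariant under each orthogonal spectral projection'' coincide, and one must take care that the argument genuinely uses the mutual orthogonality of the $V_\lambda$ (so that $\dproj{V_\lambda}$ really is the spectral projection appearing in $\mathbf{A} = \sum_\lambda \lambda\,\dproj{V_\lambda}$) rather than merely that they are eigenspaces. Everything else is a routine translation between the coordinate-wise equalities and membership in the cell-constant subspace $U_\Pi$.
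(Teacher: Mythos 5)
Your proposal is correct and follows essentially the same route as the paper: both directions reduce to Lemma \ref{Lemma-Book-CharacterizationToEP} by identifying the projection condition with $\dproj{V_\lambda}(U_\Pi)\subseteq U_\Pi$, using $\mathbf{A}=\sum_\lambda \lambda\,\dproj{V_\lambda}$ on cell-constant vectors for sufficiency (the paper applies this to the basis vectors $\pmb{R}_k$) and the eigenvector basis of the $\mathbf{A}$-invariant subspace $U_\Pi$ for necessity (which the paper establishes in its Section 1 discussion). Your explicit flagging of where self-adjointness and mutual orthogonality of the eigenspaces enter is a slightly more careful writeup of the same argument.
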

\begin{proof}[\bf Proof]
We have discussed the necessity of our assertion, so let us show the sufficiency now. Obviously, the vectors $\pmb{R}_1,\ldots,\pmb{R}_t$ comprise an orthogonal basis of $U_\Pi$, the column space of $\mathbf{R}$. To prove $U_\Pi$ is $\mathbf{A}(G)$-invariant, it suffices to show that $\mathbf{A}(G) \pmb{R}_k$, $1\leq k \leq t$, can be written as a linear combination of $\pmb{R}_1,\ldots,\pmb{R}_t$. 

In fact, 
\begin{align*}
\mathbf{A}(G) \pmb{R}_k 
& = \mathbf{A}(G) \left( \sum_{\lambda \hspace{0.2mm} \in \hspace{0.2mm} \mathrm{spec} \hspace{0.2mm} \mathbf{A}(G)} \dproj{V_{\lambda,G}}( \pmb{R}_k ) \right) \\
& = \sum_{\lambda} \mathbf{A}(G)\dproj{V_{\lambda,G}}( \pmb{R}_k ) \\
& = \sum_{\lambda} \lambda\cdot\dproj{V_{\lambda,G}}( \pmb{R}_k ).
\end{align*}
In accordance with our assumption, one can readily see that $\dproj{V_{\lambda,G}}( \pmb{R}_k )$ can be represented as a linear combination of $\pmb{R}_1,\ldots,\pmb{R}_t$, so does $\mathbf{A}(G) \pmb{R}_k$.
\end{proof}

It is clear that by means of the action of $\mathfrak{G}$ on $V$, we can naturally obtain an action of $\mathfrak{G}$ on a partition $\Pi$. To be precise, if $\sigma$ is a permutation of $\mathfrak{G}$ then $\sigma\hspace{0.4mm}\Pi = \{ \sigma\hspace{0.4mm} C \mid C $ is a cell of $\Pi \}$. Obviously we can consider the same action of $\mathfrak{H}$ on $\Pi$ for any subgroup $\mathfrak{H}$ of $\mathfrak{G}$.

One can readily see that if each cell of $\Pi$ is comprised of the union of some of orbits of $\mathfrak{G}$ or $\Pi$ is actually a block system of $\mathfrak{G}$, then $\sigma \hspace{0.4mm} \Pi = \Pi$. It is interesting that we can characterize that kind of partitions in virtue of eigenspaces of $\mathbf{A}(G/\Pi)$ provided that the partition $\Pi$ concerned is equitable.

\begin{Lemma}\label{Lemma-EquitablePartH-Invariant}
Let $\Pi$ be an equitable partition of a graph $G$, and let $\mathfrak{H}$ be a subgroup of the automorphism group $\mathfrak{G}$ of $G$. Then $\xi \hspace{0.4mm} \Pi = \Pi$, $\forall \xi \in \mathfrak{H}$ if and only if the subspace $\mathbf{R} V_{\lambda}^{G/\Pi}$ is $\mathfrak{H}$-invariant, $\forall \lambda \in \mathrm{spec} \hspace{0.4mm} \mathbf{A}(G/\Pi)$, where $\mathbf{R}$ is the characteristic matrix of $\Pi$ and $V_{\lambda}^{G/\Pi}$ is the eigenspace of $\mathbf{A}(G/\Pi)$ corresponding to $\lambda$. 
\end{Lemma}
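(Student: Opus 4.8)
The plan is to translate the combinatorial condition $\xi\,\Pi = \Pi$ into a statement about the characteristic matrix $\mathbf{R}$ and then play it off against the spectral decomposition of the column space $U_\Pi$ of $\mathbf{R}$. The first thing I would record is that for any permutation $\sigma$ the permutation operator sends the characteristic vector of a cell to that of its image: by (\ref{Equ-TheRepresent}) one has $\mathbf{P}_\sigma \pmb{R}_j = \pmb{R}_{\sigma C_j}$, where $C_1,\ldots,C_t$ are the cells of $\Pi$. Since $\pmb{R}_1,\ldots,\pmb{R}_t$ form a basis of $U_\Pi$, the condition $\xi\,\Pi = \Pi$ says exactly that $\mathbf{P}_\xi$ permutes this basis; in particular it implies $\mathbf{P}_\xi U_\Pi = U_\Pi$.

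The technical core is the identification $U_\Pi \cap V_\lambda^{G} = \mathbf{R}\,V_\lambda^{G/\Pi}$, where $V_\lambda^{G}$ denotes the eigenspace of $\mathbf{A}(G)$. Because $\Pi$ is equitable, $U_\Pi$ is $\mathbf{A}(G)$-invariant by Lemma \ref{Lemma-Book-CharacterizationToEP}, and $\mathbf{A}(G)$ is self-adjoint, so $U_\Pi = \bigoplus_\lambda \big(U_\Pi \cap V_\lambda^{G}\big)$ orthogonally over the eigenvalues $\lambda$ of $\mathbf{A}(G)$. For the identification, take $\pmb{w}\in U_\Pi\cap V_\lambda^{G}$ and write $\pmb{w}=\mathbf{R}\pmb{x}$ (the map $\mathbf{R}$ is injective on $\R^t$ since the columns have disjoint supports); the relation $\mathbf{A}(G)\mathbf{R}=\mathbf{R}\,\mathbf{A}(G/\Pi)$ of Lemma \ref{Lemma-Book-BasicPropertyofEP} gives $\mathbf{R}\,\mathbf{A}(G/\Pi)\pmb{x}=\lambda\mathbf{R}\pmb{x}$, whence $\mathbf{A}(G/\Pi)\pmb{x}=\lambda\pmb{x}$ and $\pmb{w}\in\mathbf{R}\,V_\lambda^{G/\Pi}$; the reverse inclusion is the lift relation itself. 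Thus the nonzero summands of the decomposition above are precisely the lifts $\mathbf{R}\,V_\lambda^{G/\Pi}$, indexed by $\lambda\in\mathrm{spec}\,\mathbf{A}(G/\Pi)$.

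With this in hand both directions are short. For necessity, $\xi\,\Pi=\Pi$ yields $\mathbf{P}_\xi U_\Pi=U_\Pi$; and since $\xi\in\mathfrak{G}$, relation (\ref{Equation00}) gives $\mathbf{P}_\xi\mathbf{A}(G)=\mathbf{A}(G)\mathbf{P}_\xi$, so $\mathbf{P}_\xi$ preserves each $V_\lambda^{G}$; intersecting the two $\mathbf{P}_\xi$-invariant subspaces shows $U_\Pi\cap V_\lambda^{G}=\mathbf{R}\,V_\lambda^{G/\Pi}$ is $\mathfrak{H}$-invariant. For sufficiency, invariance of all the lifts $\mathbf{R}\,V_\lambda^{G/\Pi}$ gives, through the decomposition $U_\Pi=\bigoplus_\lambda\mathbf{R}\,V_\lambda^{G/\Pi}$, that $\mathbf{P}_\xi U_\Pi=U_\Pi$ for each $\xi\in\mathfrak{H}$; hence each $\mathbf{P}_\xi\pmb{R}_j=\pmb{R}_{\xi C_j}$ lies in $U_\Pi$. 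Now every vector of $U_\Pi$ is constant on each cell of $\Pi$, so a $(0,1)$-vector in $U_\Pi$ takes value $0$ or $1$ on each cell and is therefore the characteristic vector of a union of cells; consequently each image $\xi C_j$ is a union of cells, i.e.\ $\xi\,\Pi$ is coarser than $\Pi$. But $\xi$ is a bijection, so $\xi\,\Pi$ has exactly $t$ cells, as does $\Pi$, and a $t$-cell partition coarser than a $t$-cell partition must equal it; hence $\xi\,\Pi=\Pi$.

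I expect the main obstacle to be precisely this last upgrade in the sufficiency direction: $\mathbf{P}_\xi$-invariance of $U_\Pi$ by itself only forces each $\xi C_j$ to be a union of cells, and one genuinely needs the cardinality/counting argument—together with the fact that $\xi$ is a bijection—to promote coarsening into the equality $\xi\,\Pi=\Pi$. A secondary point deserving care is that $\mathbf{A}(G/\Pi)$ is in general not symmetric; it is therefore cleanest to obtain the decomposition $U_\Pi=\bigoplus_\lambda\mathbf{R}\,V_\lambda^{G/\Pi}$ from self-adjointness of $\mathbf{A}(G)$ on the invariant subspace $U_\Pi$ (plus the injectivity of $\mathbf{R}$), as above, rather than by invoking diagonalizability of the quotient matrix directly.
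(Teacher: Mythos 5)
Your proof is correct, and it takes a genuinely different route from the paper's. The paper translates $\xi\,\Pi=\Pi$ into the existence of a quotient permutation $\hat\xi$ of $V(G/\Pi)$ satisfying $\mathbf{P}_\xi\mathbf{R}=\mathbf{R}\mathbf{P}_{\hat\xi}$, shows via the invertibility of $\mathbf{R}^T\mathbf{R}$ that $\hat\xi$ commutes with $\mathbf{A}(G/\Pi)$, i.e.\ is an automorphism of the quotient graph, and then transfers the $\hat\xi$-invariance of each $V_\lambda^{G/\Pi}$ back through $\mathbf{R}$; its converse is argued by contradiction and is only sketched (``one can readily see that there would exist a vertex $\hat{v}$\dots''). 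You never construct $\hat\xi$ at all: you decompose the column space $U_\Pi$ under the self-adjoint operator $\mathbf{A}(G)$, prove the identification $U_\Pi\cap V_\lambda=\mathbf{R}\,V_\lambda^{G/\Pi}$ (with $V_\lambda$ the eigenspace of $\mathbf{A}(G)$), and then get necessity by intersecting two $\mathbf{P}_\xi$-invariant subspaces and sufficiency by the $(0,1)$-vector and cell-counting argument. Each approach buys something: the paper's route produces the induced automorphism $\hat\xi$ of $G/\Pi$, which is conceptually useful elsewhere in the paper; your route gives a fully rigorous converse precisely where the paper hand-waves, sidesteps any appeal to Lemma \ref{Lem-AutomorphismAndOperator} for the possibly non-symmetric matrix $\mathbf{A}(G/\Pi)$, and establishes along the way the orthogonal decomposition $U_\Pi=\bigoplus_{\lambda}\mathbf{R}\,V_\lambda^{G/\Pi}$, which in particular shows $\mathbf{A}(G/\Pi)$ is diagonalizable with real spectrum. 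Your closing caution is well placed: invariance of $U_\Pi$ alone only forces each $\xi C_j$ to be a union of cells, and the upgrade to $\xi\,\Pi=\Pi$ genuinely needs the bijectivity/counting step --- exactly the point the paper's sketch glosses over.
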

\begin{proof}[\bf Proof]
Let us start with the necessity of our assertion. The key observation is that 
\begin{equation}\label{Equation-X} 
\xi \hspace{0.4mm} \Pi = \Pi  \Leftrightarrow \exists \hspace{0.5mm} \hat{\xi} \in \mathrm{Sym} \hspace{0.4mm} V(G/\Pi) ~ s.t., ~ \mathbf{P}_\xi \mathbf{R} = \mathbf{R} \mathbf{P}_{\hat{\xi}},
\end{equation}
where $\mathbf{P}_\xi$ and $\mathbf{P}_{\hat{\xi}}$ are the permutation matrices corresponding to $\xi$ and $\hat{\xi}$ respectively.

Accordingly, if for any permutation $\xi$ in $\mathfrak{H}$ $\xi \hspace{0.4mm} \Pi = \Pi$, then
$$
\mathbf{R}\mathbf{A}(G/\Pi)  \mathbf{P}_{\hat{\xi}} 
= \mathbf{A}(G) \mathbf{R}\mathbf{P}_{\hat{\xi}} 
= \mathbf{A}(G)\mathbf{P}_\xi\mathbf{R}
= \mathbf{P}_\xi\mathbf{A}(G)\mathbf{R} 
= \mathbf{P}_\xi\mathbf{R} \mathbf{A}(G/\Pi)
= \mathbf{R}\mathbf{P}_{\hat{\xi}}\mathbf{A}(G/\Pi).
$$
Consequently, 
$$
(\mathbf{R}^T\mathbf{R}) (\mathbf{A}(G/\Pi)\mathbf{P}_{\hat{\xi}})
= (\mathbf{R}^T\mathbf{R}) (\mathbf{P}_{\hat{\xi}}\mathbf{A}(G/\Pi)).
$$
Evidently, $\mathbf{R}^T\mathbf{R}$ is a diagonal matrix with $\left( \mathbf{R}^T\mathbf{R} \right)_{ii} = | C_i |$, the order of the $i$th cell of $\Pi$, so it is an invertible matrix. As a result, 
$\mathbf{A}(G/\Pi)\mathbf{P}_{\hat{\xi}} = \mathbf{P}_{\hat{\xi}}\mathbf{A}(G/\Pi)$, which means $\hat{\xi}$ is an automorphism of the graph $G/\Pi$, so every eigenspace of $\mathbf{A}(G/\Pi)$ is invariant for the permutation $\hat{\xi}$ due to Lemma \ref{Lem-AutomorphismAndOperator}.

\vspace{3mm}
We now show the sufficiency. By means of the relation (\ref{Equation-X}), it suffices to show that $\forall \hspace{0.4mm} \xi \in \mathfrak{G}$, $\exists \hspace{0.5mm} \hat{\xi} \in \mathrm{Sym} \hspace{0.4mm} V(G/\Pi) ~ s.t., ~ \mathbf{P}_\xi \mathbf{R} = \mathbf{R} \mathbf{P}_{\hat{\xi}}$. Suppose for a contradiction that there exists a permutation $\zeta$ in $\mathfrak{H}$ such that $\mathbf{P}_\zeta \mathbf{R} \neq \mathbf{R} \mathbf{P}_{\hat{\zeta}}$, $\forall\hspace{0.5mm}\hat{\zeta} \in \mathrm{Sym} \hspace{0.4mm} V(G/\Pi)$. Then one can readily see that there would exist a vertex $\hat{v}$ of $V(G/\Pi)$ and an eigenvalue $\lambda$ of $\mathrm{spec} \hspace{0.5mm} \mathbf{A}(G/\Pi)$ so that
$$
\zeta\hspace{0.5mm} \mathbf{R}\dproj{V_{\lambda}^{G/\Pi}}( \pmb{e}_{\hat{v}} ) \notin \mathbf{R}V_{\lambda}^{G/\Pi},
$$
which is in contradiction with the assumption that $\mathbf{R} V_{\lambda}^{G/\Pi}$ is $\mathfrak{H}$-invariant.
\end{proof}

Now let us see how blocks for $\mathfrak{G}$ are represented by IRs of $\mathfrak{G}$.

\begin{Lemma}\label{Lemma-EquitablePartProjOntoIRs}
Let $G$ be a vertex-transitive graph and let $B$ be a block for $\mathfrak{G}$.  Suppose $V_\lambda$ is an eigenspace of $\mathbf{A}(G)$ and $\oplus_p W_{\lambda,p}$ is a decomposition of $V_\lambda$ into IRs of $\mathfrak{G}$. If $\pmb{R}_B$ be the characteristic vector of the block $B$, then 
\begin{equation*}
\dproj{W_{\lambda,p}}(\pmb{R}_B) = 
\begin{cases}
|B|\cdot\dproj{W_{\lambda,p}}(\pmb{e}_b) & \text{if \hspace{0.4mm}} \forall b',b'' \in B, ~\dproj{W_{\lambda,p}}(\pmb{e}_{b'}) = \dproj{W_{\lambda,p}}(\pmb{e}_{b''}), \\
~\pmb{0} & \text{otherwise,}
\end{cases}
\end{equation*}
where $b$ is an element of $B$.
\end{Lemma}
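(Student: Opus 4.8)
The plan is to expand the characteristic vector as $\pmb{R}_B = \sum_{b \in B}\pmb{e}_b$, so that by linearity $\dproj{W}(\pmb{R}_B) = \sum_{b\in B}\dproj{W}(\pmb{e}_b)$, where I abbreviate $W := W_{\lambda,p}$ (an IR of $\mathfrak{G}$ by hypothesis). The whole argument then rests on the action of the block-stabilizer $\mathfrak{G}_B$ on $B$. First I would record two elementary consequences of $B$ being a block inside the single orbit $V$ of the vertex-transitive group $\mathfrak{G}$: the point-stabilizer satisfies $\mathfrak{G}_{b}\leq\mathfrak{G}_B$ for every $b\in B$ (if $\sigma b = b$ then $b\in\sigma B\cap B$, forcing $\sigma B = B$), and $\mathfrak{G}_B$ acts transitively on $B$ (given $b',b''\in B$ pick $\sigma\in\mathfrak{G}$ with $\sigma b'=b''$; then $\sigma B\cap B\neq\emptyset$, so $\sigma\in\mathfrak{G}_B$). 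Since $W$ is $\mathfrak{G}$-invariant and every permutation is an isometry, Lemma \ref{ProjOperatorCommutative} gives $\sigma\circ\dproj{W}=\dproj{W}\circ\sigma$, hence $\sigma\,\dproj{W}(\pmb{e}_b)=\dproj{W}(\pmb{e}_{\sigma b})$.

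Fix $b_0\in B$. Because $\mathfrak{G}_B$ permutes the coordinates indexed by $B$ it fixes $\pmb{R}_B$, and equivariance then shows $\dproj{W}(\pmb{R}_B)$ is fixed by $\mathfrak{G}_B$; that is, $\dproj{W}(\pmb{R}_B)\in W^{\mathfrak{G}_B} := \{\pmb{x}\in W : \xi\pmb{x}=\pmb{x},\ \forall\xi\in\mathfrak{G}_B\}$. Averaging over $\mathfrak{G}_B$ and using transitivity (each fibre of $\sigma\mapsto\sigma b_0$ has size $|\mathfrak{G}_{b_0}|$, as $\mathfrak{G}_{b_0}\le\mathfrak{G}_B$) yields the uniform identity
\begin{equation*}
\dproj{W}(\pmb{R}_B) \;=\; |B|\cdot P_B\,\dproj{W}(\pmb{e}_{b_0}), \qquad P_B := \frac{1}{|\mathfrak{G}_B|}\sum_{\sigma\in\mathfrak{G}_B}\sigma ,
\end{equation*}
where the averaging operator $P_B$ is the orthogonal projection of $W$ onto $W^{\mathfrak{G}_B}$. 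Everything now reduces to locating $W^{\mathfrak{G}_B}$.

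Suppose first that $\mathfrak{G}_{b_0}$ is non-trivial. Since $\mathfrak{G}_{b_0}\leq\mathfrak{G}_B$, any $\mathfrak{G}_B$-fixed vector is $\mathfrak{G}_{b_0}$-fixed, so $W^{\mathfrak{G}_B}\subseteq W[b_0]$, where $W[b_0]$ is the subspace $U[v]$ of the earlier development with $U=W$, $v=b_0$; Lemma \ref{Lemma-IRsFeatureGvNonTrivial} gives $\dim W[b_0]=1$. Hence $\dim W^{\mathfrak{G}_B}\le 1$, and $\dproj{W}(\pmb{e}_{b_0})$ also lies in the line $W[b_0]$, which makes the dichotomy immediate. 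If all the $\dproj{W}(\pmb{e}_b)$ coincide, then $\dproj{W}(\pmb{e}_{b_0})$ is $\mathfrak{G}_B$-fixed (as $\sigma\,\dproj{W}(\pmb{e}_{b_0})=\dproj{W}(\pmb{e}_{\sigma b_0})$ and $\sigma b_0\in B$), so $\dproj{W}(\pmb{R}_B)=|B|\,\dproj{W}(\pmb{e}_{b_0})$. If they do not all coincide, then by transitivity $\dproj{W}(\pmb{e}_{b_0})$ is not $\mathfrak{G}_B$-fixed, so $\dproj{W}(\pmb{e}_{b_0})\notin W^{\mathfrak{G}_B}$; as both $\dproj{W}(\pmb{e}_{b_0})$ and $W^{\mathfrak{G}_B}$ sit inside the one-dimensional $W[b_0]$, this forces $W^{\mathfrak{G}_B}=\{\pmb{0}\}$, whence $\dproj{W}(\pmb{R}_B)=|B|\,P_B\,\dproj{W}(\pmb{e}_{b_0})=\pmb{0}$.

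The remaining obstacle — and the step I expect to require the most care — is the case of trivial point-stabilizers, where $W[b_0]=W$ and the bound $\dim W^{\mathfrak{G}_B}\le 1$ no longer comes for free. Here I would run the same template but replace $W[b_0]$ by the subspace $U[B]$ introduced just before Lemma \ref{Lemma-IRsFeatureGvTrivial}: by Lemma \ref{Lem-BlockStabilizerCharacterization-GvTrivial} the stabilizer of a minimal block is a circulant group of prime order acting regularly on $B$, and Lemma \ref{Lemma-IRsFeatureGvTrivial} controls $\dim U[B]$; the same averaging identity then confines $\dproj{W}(\pmb{R}_B)$ to a subspace on which the constant-on-$B$ alternative is again all-or-nothing. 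The real work is the bookkeeping needed to pass from this minimal-block statement to an arbitrary block $B$ (refining $B$ through a block family and composing the resulting projection identities) and to check that the circulant averaging creates no spurious non-zero component; conceptually, however, it mirrors the non-trivial case verbatim.
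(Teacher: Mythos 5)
Your skeleton is the paper's skeleton: expand $\dproj{W}(\pmb{R}_B)=\sum_{b\in B}\dproj{W}(\pmb{e}_b)$, use $\mathfrak{G}_b\le\mathfrak{G}_B$ and the transitivity of $\mathfrak{G}_B$ on $B$, and split into the two stabilizer cases, citing Lemma \ref{Lemma-IRsFeatureGvNonTrivial} in the first and Lemmas \ref{Lem-BlockStabilizerCharacterization-GvTrivial}, \ref{Lemma-IRsFeatureGvTrivial} in the second. In the non-trivial-stabilizer case your argument is complete and in fact cleaner than the paper's: the identity $\dproj{W}(\pmb{R}_B)=|B|\cdot P_B\,\dproj{W}(\pmb{e}_{b_0})$ with $P_B$ the orthogonal projection onto $W^{\mathfrak{G}_B}\subseteq W[b_0]$ collapses the dichotomy immediately, whereas the paper, after noting $\dim W[b_0]=1$, argues that the projections take the two values $\pm\dproj{W}(\pmb{e}_{b_0})$ and then needs a separate counting contradiction to show the signed sum vanishes; your averaging operator makes that discussion unnecessary. (One small elision: in the "not all coincide" branch you must first note $\dproj{W}(\pmb{e}_{b_0})\neq\pmb{0}$ before it can span $W[b_0]$; this is forced, since otherwise $\mathfrak{G}_B$-transitivity would make every $\dproj{W}(\pmb{e}_b)$, $b\in B$, zero and hence equal.)

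The genuine gap is the trivial-stabilizer case, which you sketch but do not prove. Both results you would invoke are stated only for \emph{minimal} blocks: Lemma \ref{Lem-BlockStabilizerCharacterization-GvTrivial} makes $\mathfrak{G}_B$ a circulant group of prime order, and Lemma \ref{Lemma-IRsFeatureGvTrivial} bounds $\dim U[B]$, but the whole construction of $U[B]$ via the sums $\sum_{k=1}^{|B|}\xi^k$ presupposes that a single $\xi$ generates $\mathfrak{G}_B$. For a non-minimal block $B$ with trivial point stabilizers, $\mathfrak{G}_B$ acts regularly on $B$ but need not be cyclic of prime order (a block of size $4$ can have a Klein four-group as stabilizer), so neither lemma applies and $U[B]$ loses its intended meaning. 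Your proposed repair — induct along a block family and compose projection identities — is plausible but not "verbatim" the same argument: after passing to the system of $\mathfrak{G}_B$-translates of a maximal sub-block $B'\subsetneq B$, one needs the analogous all-or-nothing statement for the induced action on that quotient system, and $W$ is not given to you as an irreducible representation of that action, so a genuinely new step must be supplied. In fairness, the paper's own proof has exactly the same defect — it applies the two minimal-block lemmas to an arbitrary block $B$ without comment — so you have honestly flagged a hole that the paper leaves silent; but your proposal, as written, does not fill it.
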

\begin{proof}[\bf Proof]
Apparently, 
\begin{equation}\label{Eq-ProjectionsOfABlock-TransitiveCase} 
\dproj{W_{\lambda,p}}(\pmb{R}_B) 
= \dproj{W_{\lambda,p}}\left( \sum_{b\in B} \pmb{e}_b \right) 
= \sum_b \dproj{W_{\lambda,p}}( \pmb{e}_b ).
\end{equation}
Consequently, if all projections of $\pmb{e}_b s$ are the same then $\dproj{W_{\lambda,p}}(\pmb{R}_B) = |B|\cdot\dproj{W_{\lambda,p}}(\pmb{e}_b)$. As a result, let us assume that there are two projections $\dproj{W_{\lambda,p}}(\pmb{e}_{x})$ and $\dproj{W_{\lambda,p}}(\pmb{e}_{y})$ not equal to one another, where $x$ and $y$ are two members of $B$.

We begin with the case that the stabilizer $\mathfrak{G}_v$ is not trivial where $v$ is a vertex of $G$. Note that $B$ is a block, so $\mathfrak{G}_b$ is a subgroup of $\mathfrak{G}_B$ for any member $b$ in $B$ according to Lemma \ref{LemBlock}. Consequently, for any permutation $\xi \in \mathfrak{G}_b$, 
\begin{equation}\label{Eq-X}
\xi\circ\dproj{W_{\lambda,p}}(\pmb{R}_B) = \dproj{W_{\lambda,p}}\circ\xi( \pmb{R}_B ) = \dproj{W_{\lambda,p}}(\pmb{R}_B). 
\end{equation}
Set $W_{\lambda,p}[v] = \{ \pmb{w} \in W_{\lambda,q} \mid \xi \hspace{0.3mm} \pmb{w} = \pmb{w}, \forall \xi\in\mathfrak{G}_v \}$. The equation (\ref{Eq-X}) shows us that $\dproj{W_{\lambda,p}}(\pmb{R}_B) \in \cap_{b\in B} W_{\lambda,p}[b]$.

According to Lemma \ref{Lemma-IRsFeatureGvNonTrivial}, $\mathrm{dim} \hspace{0.4mm} W_{\lambda,p}[b] =  1$, $\forall b\in B$ , so there are only two possibilities: 
$$
W_{\lambda,p}[b'] \cap W_{\lambda,p}[b''] = \pmb{0} \mbox{ or } W_{\lambda,p}[b'] = W_{\lambda,p}[b''],
$$ for any two members $b'$ and $b''$ in $B$. 
Evidently, $\dproj{W_{\lambda,p}}(\pmb{R}_B) = \pmb{0}$ in the first case and in the second case $\dproj{W_{\lambda,p}}(\pmb{e}_{x}) = - \dproj{W_{\lambda,p}}(\pmb{e}_{y})$. 

It is not difficult to see that in the second case, $|B|$ is an even number and exactly half of the members in $B$ possess a positive projection onto $W_{\lambda,p}[b]$, and thus $\dproj{W_{\lambda,p}}(\pmb{R}_B) = \pmb{0}$. In fact, if it is not the case, then there is, due to Equation (\ref{Eq-ProjectionsOfABlock-TransitiveCase}), a natural number $k$ so that $\dproj{W_{\lambda,p}}(\pmb{R}_B) = k\cdot\dproj{W_{\lambda,p}}(\pmb{e}_{b^*})$ for some $b^*$ in $B$. Consequently, for any $\gamma$ in $\mathfrak{G}_B$,
\begin{align*}
\gamma\circ\dproj{W_{\lambda,p}}(\pmb{e}_{b^*})
& = \gamma \left( \frac{1}{k} \cdot \dproj{W_{\lambda,p}}(\pmb{R}_B) \right) \\
& = \frac{1}{k} \cdot \gamma\circ\dproj{W_{\lambda,p}}(\pmb{R}_B) \\
& = \frac{1}{k} \cdot \dproj{W_{\lambda,p}}\circ\gamma(\pmb{R}_B) \\
& = \dproj{W_{\lambda,p}}(\pmb{e}_{b^*}),
\end{align*}
which is in contradiction with the fact that $B$ is an orbit of $\mathfrak{G}_B$.

\vspace{3mm}
We now turn to the case that the stabilizer $\mathfrak{G}_v$ is trivial and first introduce a subspace of $W_{\lambda,p}$ similar to $W_{\lambda,p}[v]$: 
$$
W_{\lambda,p}[B] = \left\{ \pmb{w} \in W_{\lambda,p} \mid \zeta \left( \sum_{k=1}^{|B|} \xi^k \right) \pmb{w} 
= \left( \sum_{k=1}^{|B|} \xi^k \right) \pmb{w}, 
~~\forall\xi\in \mathfrak{G}_B\setminus\{1\} \mbox{ and } \zeta\in\mathfrak{G}_B  \right\}.
$$

According to Lemma \ref{Lemma-IRsFeatureGvTrivial}, $\mathrm{dim} \hspace{0.4mm} W_{\lambda,p}[B] = 1$ or $2$. Since $\mathfrak{G}_B$ is, due to Lemma \ref{Lem-BlockStabilizerCharacterization-GvTrivial}, a circulant group of prime order, if $\mathrm{dim} \hspace{0.4mm} W_{\lambda,p}[B] = 1$ then $|B| = 2$ and $\dproj{W_{\lambda,p}}(\pmb{e}_{x}) = -\dproj{W_{\lambda,p}}(\pmb{e}_{y})$. Hence $\dproj{W_{\lambda,p}}(\pmb{R}_B) = \pmb{0}$. If $\mathrm{dim} \hspace{0.4mm} W_{\lambda,p}[B] = 2$ then $\sum_{b \in B} \dproj{W_{\lambda,p}}( \pmb{e}_b ) = \sum_{s=1}^{|B|} \xi^s \circ \dproj{W_{\lambda,p}}( \pmb{e}_b ) = \pmb{0}$ in accordance with Lemma \ref{Lemma-IRsFeatureGvTrivial}, which also implies that $\dproj{W_{\lambda,p}}(\pmb{R}_B) = \pmb{0}$. 
\end{proof}

We are now ready to present a characterization of blocks through IRs of $\mathfrak{G}$.

\begin{Lemma}\label{Lemma-BlockCHARACTERISEDbyProj}
Let $\mathfrak{G}$ be a permutation group acting on $V$ transitively and let $S$ be a subset of $V$. Set $\mathscr{W}_S = \{ W \mbox{ is an irreducible representation of } \mathfrak{G} : \dproj{W}(\pmb{R}_S) \neq \pmb{0} \}$, where $\pmb{R}_S$ is the characteristic vector of $S$. Then $S$ is a block for $\mathfrak{G}$ if and only if 
\begin{center}
\begin{minipage}{12.5cm}
$\dproj{W}(\pmb{R}_S) = |S|\cdot\dproj{W}(\pmb{e}_s)$, $\forall~ W\in\mathscr{W}_S$ and $s \in S$, 

and ~$\forall~ \sigma\in \mathfrak{G}\setminus\mathfrak{G}_S$, $\exists W_\sigma \in\mathscr{W}_S$, {\it s.t.,} $\dproj{W_\sigma}(\pmb{R}_S) \neq \dproj{W_\sigma}(\pmb{R}_{\sigma S})$.
\end{minipage}
\end{center}
\end{Lemma}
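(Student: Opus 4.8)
The plan is to reduce both directions to the interaction between the permutation operators $\mathcal{T}_\sigma$ and the orthogonal projections onto irreducible pieces. The one fact I would lean on throughout is that each $W\in\mathscr{W}_S$, being $\mathfrak{G}$-invariant, and each $\sigma\in\mathfrak{G}$, being an isometry, satisfy $\sigma\circ\dproj{W}=\dproj{W}\circ\sigma$ by Lemma~\ref{ProjOperatorCommutative}; together with the identities $\sigma\pmb{R}_S=\pmb{R}_{\sigma S}$ and $\sigma\pmb{e}_s=\pmb{e}_{\sigma s}$, this lets me transport projections of characteristic vectors under the action. I would first fix an orthogonal decomposition $\R^n=\bigoplus_W W$ of the whole space into irreducible representations of $\mathfrak{G}$ (available since $\mathfrak{G}$ acts orthogonally), so that $\pmb{R}_S=\sum_{W\in\mathscr{W}_S}\dproj{W}(\pmb{R}_S)$, the terms outside $\mathscr{W}_S$ vanishing by the definition of $\mathscr{W}_S$.

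For the necessity I would assume $S$ is a block. The first displayed condition is then immediate from Lemma~\ref{Lemma-EquitablePartProjOntoIRs}, which forces $\dproj{W}(\pmb{R}_S)$ to be either $|S|\dproj{W}(\pmb{e}_s)$ with all $\dproj{W}(\pmb{e}_s)$ ($s\in S$) equal, or $\pmb{0}$; on $W\in\mathscr{W}_S$ the second alternative is excluded, leaving exactly the asserted equality. For the second condition I would take $\sigma\in\mathfrak{G}\setminus\mathfrak{G}_S$, so $\sigma S\neq S$ and hence $\sigma S\cap S=\emptyset$ since $S$ is a block, giving $\pmb{R}_{\sigma S}\neq\pmb{R}_S$. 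Because $\dproj{W}(\pmb{R}_{\sigma S})=\sigma\dproj{W}(\pmb{R}_S)$ and $\sigma$ is invertible, $\mathscr{W}_{\sigma S}=\mathscr{W}_S$, so both vectors are supported on the same pieces; were their projections equal on every $W\in\mathscr{W}_S$ the two vectors would coincide, a contradiction, so some $W_\sigma$ separates them.

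For the sufficiency I would assume both conditions and take $\sigma\in\mathfrak{G}$ with $\sigma S\cap S\neq\emptyset$, aiming to show $\sigma S=S$. Choosing $t\in\sigma S\cap S$ and writing $t=\sigma s$ with $s\in S$, the first condition gives $\dproj{W}(\pmb{e}_t)=\dproj{W}(\pmb{e}_s)$ for every $W\in\mathscr{W}_S$ (both equal $\tfrac{1}{|S|}\dproj{W}(\pmb{R}_S)$), while the commutation relation gives $\dproj{W}(\pmb{e}_t)=\sigma\dproj{W}(\pmb{e}_s)$; hence $\sigma$ fixes $\dproj{W}(\pmb{e}_s)$, and so $\dproj{W}(\pmb{R}_{\sigma S})=\sigma\dproj{W}(\pmb{R}_S)=|S|\,\sigma\dproj{W}(\pmb{e}_s)=\dproj{W}(\pmb{R}_S)$ for all $W\in\mathscr{W}_S$. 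If $\sigma S\neq S$ then $\sigma\in\mathfrak{G}\setminus\mathfrak{G}_S$, and the second condition would produce a $W_\sigma\in\mathscr{W}_S$ separating $\pmb{R}_S$ from $\pmb{R}_{\sigma S}$, contradicting what I just derived; therefore $\sigma S=S$ and $S$ is a block.

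I expect no serious obstacle here, since the heavy lifting sits in the already-established Lemma~\ref{Lemma-EquitablePartProjOntoIRs} (whose own proof splits into the trivial- and nontrivial-stabilizer cases via Lemmas~\ref{Lemma-IRsFeatureGvNonTrivial} and~\ref{Lemma-IRsFeatureGvTrivial}). The only point needing care is that the fixed decomposition into irreducibles be orthogonal, so that ``supported on $\mathscr{W}_S$'' is meaningful and equality of all projections genuinely forces equality of the characteristic vectors; with that in place, both directions are short.
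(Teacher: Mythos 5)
Your proof is correct and follows essentially the same route as the paper: condition (1) is obtained by citing Lemma \ref{Lemma-EquitablePartProjOntoIRs} exactly as the paper does, and your sufficiency argument is the paper's own — pick $t=\sigma s\in S\cap\sigma S$, use the commutation $\sigma\circ\dproj{W}=\dproj{W}\circ\sigma$ together with condition (1) at both $s$ and $t$ to force $\dproj{W}(\pmb{R}_{\sigma S})=\dproj{W}(\pmb{R}_S)$ for every $W\in\mathscr{W}_S$, contradicting condition (2). The only difference is cosmetic: for the necessity of condition (2) the paper expands $\langle\pmb{R}_S,\pmb{R}_{\sigma S}\rangle=0$ over the decomposition and contradicts positivity of $\sum_{W\in\mathscr{W}_S}\|\dproj{W}(\pmb{R}_S)\|^2$, whereas you observe directly that equality of all projections on the common support $\mathscr{W}_S=\mathscr{W}_{\sigma S}$ would force $\pmb{R}_S=\pmb{R}_{\sigma S}$; both arguments rest on the same facts.
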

\begin{proof}[\bf Proof]
Note that we have proven the 1st part of the necessity in Lemma \ref{Lemma-EquitablePartProjOntoIRs}, so let focus on the 2nd part. 

Since $S$ is a block for $\mathfrak{G}$, $\left\langle \pmb{R}_S,\pmb{R}_{\sigma S} \right\rangle = 0$, $\forall \sigma\in\mathfrak{G}\setminus\mathfrak{G}_B$. On the other hand, 
\begin{align*}
\left\langle \pmb{R}_S,\pmb{R}_{\sigma S} \right\rangle
& = \left\langle \sum_{\lambda \in \mathrm{spec} \hspace{0.3mm} \mathbf{A}(G)} \dproj{V_{\lambda}}( \pmb{R}_S ),\sum_{\lambda \in \mathrm{spec} \hspace{0.3mm} \mathbf{A}(G)} \dproj{V_{\lambda}}( \pmb{R}_{\sigma S} ) \right\rangle \\
& = \sum_{\lambda} \left\langle \dproj{V_{\lambda}}( \pmb{R}_S ),\dproj{V_{\lambda}}( \pmb{R}_{\sigma S} ) \right\rangle \\
& = \sum_{\lambda} \left\langle \sum_{p} \dproj{W_{\lambda,p}}( \pmb{R}_S ),\sum_{p} \dproj{W_{\lambda,p}}( \pmb{R}_{\sigma S} ) \right\rangle \\
& = \sum_{\lambda,p} \left\langle \dproj{W_{\lambda,p}}( \pmb{R}_S ),\dproj{W_{\lambda,p}}( \pmb{R}_{\sigma S} ) \right\rangle, 
\end{align*}
where $\oplus_{p} W_{\lambda,p}$ is a decomposition of $V_{\lambda}$ into IRs of $\mathfrak{G}$. Because $S$ is a block, the last sum is actually equal to 
$$
\sum_{W \in \mathscr{W}_S} \left\langle \dproj{W}( \pmb{R}_S ),\dproj{W}( \pmb{R}_{\sigma S} ) \right\rangle.
$$

Accordingly, if $\dproj{W}( \pmb{R}_S ) = \dproj{W}( \pmb{R}_{\sigma S} )$, $\forall W \in \mathscr{W}_S$, then 
$$
\sum_{W \in \mathscr{W}_S} \left\langle \dproj{W}( \pmb{R}_S ),\dproj{W}( \pmb{R}_{\sigma S} ) \right\rangle = \sum_{W} \left\| \dproj{W}( \pmb{R}_S ) \right\|^2 > 0,
$$
which is in contradiction with the fact that $\left\langle \pmb{R}_S,\pmb{R}_{\sigma S} \right\rangle = 0$. 

\vspace{3mm}
We now turn to the sufficiency of the assertion and assume for a contradiction that $S$ is not a block for $\mathfrak{G}$. Then there exists a permutation $\gamma$ in $\mathfrak{G}\setminus\mathfrak{G}_S$ so that $\gamma S \cap S \neq \emptyset$ and $\gamma S \neq S$.

Suppose $x$ is one of elements in $S$ which also belongs to $\gamma S$. Then $\gamma^{-1} x$ is an element of $S$ as well. In fact, 
$$
x \in \gamma S ~\Rightarrow~ \exists y_x \in S : x = \gamma y_x ~\Rightarrow~ \gamma^{-1} x = y_x.
$$

On the one hand, 
$$
\gamma \notin \mathfrak{G}_S ~\Rightarrow~ \exists W_{\gamma} \in \mathscr{W}_S : 
\gamma \hspace{0.4mm} \dproj{W_{\gamma}}( \pmb{R}_S ) \neq \dproj{W_{\gamma}}( \pmb{R}_S ).
$$
On the other hand, 
\begin{align*}
\gamma \hspace{0.4mm} \dproj{W_{\gamma}}( \pmb{R}_S )
& = \gamma \left( |S|\cdot \dproj{W_{\gamma}}( \pmb{e}_{\gamma^{-1} x} )  \right) \\
& = |S|\cdot \dproj{W_{\gamma}}\circ\gamma( \pmb{e}_{\gamma^{-1} x} ) \\
& = |S|\cdot \dproj{W_{\gamma}}( \pmb{e}_{x} ) \\
& = \dproj{W_{\gamma}}( \pmb{R}_S ).
\end{align*}
That is a contradiction.
\end{proof}

We have seen how blocks for $\mathfrak{G}$ are represented by IRs of $\mathfrak{G}$, so now let us see how stabilizers of blocks are represented by IRs. More precisely, we shall show how to determine the orbits of $\mathfrak{G}_B$ by means of IRs of $\mathfrak{G}$, where $B$ is a block for the group. Apparently, the orbits of $\mathfrak{G}_B$ is an equitable partition, which is denoted by $\Pi_B^*$.

Set $\mathscr{W}_B = \{ W \mbox{ is an irreducible representation of }\mathfrak{G} : \dproj{W}( \pmb{R}_B ) \neq \pmb{0} \}.$ Two vertices $x$ and $y$ in $V$ is said to be {\it related} if 
$$
\langle \pmb{e}_x,\dproj{W}(\pmb{R}_B) \rangle = \langle \pmb{e}_y,\dproj{W}(\pmb{R}_B) \rangle, ~ \forall~W\in\mathscr{W}_{B}.
$$ 
One can easily see that the relation is an equivalence relation, so it induces a partition of $V$ denoted by $\Pi_B^{\mathrm{IRP}}$. 
 
\begin{Theorem}\label{Thm-PartProjEigSpaceVsPartOrbitsGv}
Let $G$ be a vertex-transitive graph and let $\mathfrak{G}$ be the automorphism group of $G$. Suppose $B$ is a block for $\mathfrak{G}$ such that  the stabilizer $\mathfrak{G}_B$ is not trivial. Then 
$$
\Pi_B^{\mathrm{IRP}} = \Pi_B^*. 
$$
\end{Theorem}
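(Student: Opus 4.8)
The plan is to reduce the equality $\Pi_B^{\mathrm{IRP}} = \Pi_B^*$ to a single linear-algebraic statement: that the vectors $\{\dproj{W}(\pmb{R}_B) : W\in\mathscr{W}_B\}$ span the whole subspace $\mathcal{F} := \{\pmb{f}\in\R^n : \sigma\pmb{f} = \pmb{f},\ \forall\sigma\in\mathfrak{G}_B\}$ of $\mathfrak{G}_B$-fixed vectors. Granting this, the theorem follows from an elementary orbit-separation principle: for a subgroup $\mathfrak{H}$ acting on $V$, two vertices $x,y$ lie in the same $\mathfrak{H}$-orbit if and only if $\langle\pmb{e}_x,\pmb{f}\rangle = \langle\pmb{e}_y,\pmb{f}\rangle$ for every $\pmb{f}$ fixed by $\mathfrak{H}$ (the forward direction uses that each $\sigma$ is an isometry; the reverse uses that the characteristic vectors $\pmb{R}_C$ of the $\mathfrak{H}$-orbits are themselves fixed and already separate distinct orbits). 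Once the spanning statement holds, ``$x$ related to $y$'' — equality of all inner products against the $\dproj{W}(\pmb{R}_B)$, $W\in\mathscr{W}_B$ — is exactly equality against every vector of $\mathcal{F}$, which by the principle applied to $\mathfrak{H}=\mathfrak{G}_B$ is exactly ``$x,y$ in the same $\mathfrak{G}_B$-orbit''.

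The inclusion that orbits refine the $\mathrm{IRP}$-cells is the easy half and reproves one direction of the principle: since $B$ is $\mathfrak{G}_B$-invariant we have $\sigma\pmb{R}_B=\pmb{R}_B$, and because each irreducible $W$ is $\mathfrak{G}$-invariant the projection $\dproj{W}$ commutes with the isometry $\sigma$ by Lemma \ref{ProjOperatorCommutative}; hence $\dproj{W}(\pmb{R}_B)\in\mathcal{F}$ for every $W$, and same-orbit vertices agree on all the relevant inner products. The substance is the reverse inclusion, equivalently the spanning statement.

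To prove spanning I would first note $\mathcal{F}=\bigoplus_W W^{\mathfrak{G}_B}$ over the irreducible summands and that $\dproj{W}(\pmb{R}_B)\in W^{\mathfrak{G}_B}$ always; so it suffices to show, for each $W$, that $\dim W^{\mathfrak{G}_B}\le 1$ and that $\dproj{W}(\pmb{R}_B)\neq\pmb{0}$ whenever $W^{\mathfrak{G}_B}\neq\pmb{0}$. For the dimension bound: since $B$ is a block, $\mathfrak{G}_b\le\mathfrak{G}_B$ for $b\in B$ by Lemma \ref{LemBlock}, whence $W^{\mathfrak{G}_B}\subseteq W[b]$, and $\dim W[b]=1$ by Lemma \ref{Lemma-IRsFeatureGvNonTrivial} (this is where the nontriviality of point stabilizers enters). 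A trace count, $\sum_{v}\|\dproj{W}(\pmb{e}_v)\|^2=\dim W$ with all terms equal by transitivity, shows each $\dproj{W}(\pmb{e}_b)\neq\pmb{0}$ and hence spans $W[b]$. Finally, if $W^{\mathfrak{G}_B}\neq\pmb{0}$ then, being a nonzero subspace of the single line $W[b]=W[b']$ for all $b,b'\in B$, it forces all $\dproj{W}(\pmb{e}_b)$, $b\in B$, to coincide (they are equal-norm spanning vectors of one line that $\mathfrak{G}_B$ fixes pointwise and permutes among themselves); by Lemma \ref{Lemma-EquitablePartProjOntoIRs} this gives $\dproj{W}(\pmb{R}_B)=|B|\,\dproj{W}(\pmb{e}_b)\neq\pmb{0}$. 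Thus $\mathscr{W}_B=\{W:W^{\mathfrak{G}_B}\neq\pmb{0}\}$ and each member contributes a spanning vector of its one-dimensional fixed line, so $\operatorname{span}\{\dproj{W}(\pmb{R}_B)\}=\mathcal{F}$, as required.

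The main obstacle is the case where all point stabilizers $\mathfrak{G}_v$ are trivial, for then Lemma \ref{Lemma-IRsFeatureGvNonTrivial} is unavailable and the controlling object is not the line $W[b]$ but the space $W[B]$ of Lemma \ref{Lemma-IRsFeatureGvTrivial}, which may be two-dimensional. Here I would exploit that $\mathfrak{G}_B$ is a circulant group of prime order (Lemma \ref{Lem-BlockStabilizerCharacterization-GvTrivial}) to bound $\dim W^{\mathfrak{G}_B}$ and to re-run the ``equal projections over $B$'' bookkeeping, with the $|B|=2$ versus $|B|>2$ dichotomy of Lemma \ref{Lemma-IRsFeatureGvTrivial} deciding exactly when $\dproj{W}(\pmb{R}_B)$ vanishes; checking that the surviving projections still exhaust $\mathcal{F}$ in this regime is the delicate step, and is the part I expect to require the most care.
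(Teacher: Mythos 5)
Your treatment of the case where point stabilizers are non-trivial is correct, and it is in substance the paper's own proof in different packaging. The paper's route is Lemma \ref{Lemma-EnumerationToPi}: it introduces $\R^n[B]=\{\pmb{v}\in\R^n : \xi\pmb{v}=\pmb{v},\ \forall\xi\in\mathfrak{G}_B\}$ --- exactly your $\mathcal{F}$ --- proves $|\Pi_B^*|\leq\dim\R^n[B]\leq|\mathscr{W}_B|\leq|\Pi_B^*|$, and then derives the theorem because the nonzero, pairwise orthogonal vectors $\dproj{W}(\pmb{R}_B)$, $W\in\mathscr{W}_B$, written in quotient coordinates, form a basis of $\R^{|G/\Pi_B^*|}$ and hence separate the characteristic vectors of any two $\mathfrak{G}_B$-orbits. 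Your decomposition $\mathcal{F}=\bigoplus_W W^{\mathfrak{G}_B}$, the inclusion $W^{\mathfrak{G}_B}\subseteq W[b]$ with $\dim W[b]=1$ from Lemma \ref{Lemma-IRsFeatureGvNonTrivial}, the trace count giving $\dproj{W}(\pmb{e}_b)\neq\pmb{0}$, and the orbit-separation principle are the same ingredients carried out in $\R^n$ rather than in the quotient space; there is no objection to that half.

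The genuine gap is the case you defer and never complete: all $\mathfrak{G}_v$ trivial while $\mathfrak{G}_B$ is not, which the hypothesis ``$\mathfrak{G}_B$ is not trivial'' permits (it is precisely the situation where $\mathfrak{G}$ acts regularly). This is not routine bookkeeping, and the repair you sketch cannot work: the bound $\dim W^{\mathfrak{G}_B}\leq 1$ is false in that regime, because a two-dimensional irreducible on which $\mathfrak{G}_B$ happens to act trivially has $W^{\mathfrak{G}_B}=W$, while that summand still contributes only the single vector $\dproj{W}(\pmb{R}_B)$; the spanning statement underlying your whole reduction then collapses. Worse, a completed argument along your lines would apply verbatim to any transitive permutation group, and for such groups the conclusion itself can fail: take the regular action of $\mathbb{Z}_8$ with the minimal block $B=\{0,4\}$ (so $\mathfrak{G}_B\cong\mathbb{Z}_2$ is non-trivial, circulant of prime order). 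The three IRs with non-vanishing projection of $\pmb{R}_B$ (the trivial one, the sign one, and the two-dimensional one of rotation by $\pi/2$, on which the shift by $4$ acts trivially) span only a three-dimensional subspace of the four-dimensional fixed space, and one checks $\Pi_B^{\mathrm{IRP}}=\{\{0,4\},\{2,6\},\{1,3,5,7\}\}$ whereas $\Pi_B^*=\{\{0,4\},\{1,5\},\{2,6\},\{3,7\}\}$. So any correct proof of the missing case must use that $\mathfrak{G}$ is the full automorphism group of a graph, not merely a transitive group; this is exactly where the paper switches to the averaged subspace $W[B]$ of Lemma \ref{Lemma-IRsFeatureGvTrivial} together with Lemmas \ref{Lem-BlockStabilizerCharacterization-GvTrivial} and \ref{Lemma-EquitablePartProjOntoIRs} (a step that is itself delicate, since that $W[B]$ is not the fixed space appearing in Lemma \ref{Lemma-EnumerationToPi}), and your proposal offers no substitute for it.
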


Before proving the assertion above, we establish an auxiliary result revealing an interesting relation. 

\begin{Lemma}\label{Lemma-EnumerationToPi}
Let $G$ be a vertex-transitive graph and let $\mathfrak{G}$ be the automorphism group of $G$. Suppose $B$ is a block for $\mathfrak{G}$. Then
$$
| \mathscr{W}_B | = | \Pi_B^* |,
$$
{\it i.e.,} the number of orbits of the action of $\mathfrak{G}_B$ on $V$ is equal to the order of the family $\mathscr{W}_B$.
\end{Lemma}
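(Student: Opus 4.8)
The plan is to evaluate both quantities as the dimension of a single subspace, the $\mathfrak{G}_B$-fixed subspace $(\R^n)^{\mathfrak{G}_B}=\{\pmb{x}\in\R^n:\xi\pmb{x}=\pmb{x},\ \forall\xi\in\mathfrak{G}_B\}$. Since a vector is fixed by $\mathfrak{G}_B$ exactly when it is constant on the orbits of $\mathfrak{G}_B$, the characteristic vectors of the cells of $\Pi_B^*$ form an orthogonal basis of this subspace, so $|\Pi_B^*|=\dim(\R^n)^{\mathfrak{G}_B}$. Fixing a decomposition $\R^n=\oplus_k W_k$ into IRs of $\mathfrak{G}$, each $W_k$ is $\mathfrak{G}$-invariant hence $\mathfrak{G}_B$-invariant, and $(\R^n)^{\mathfrak{G}_B}=\oplus_k W_k^{\mathfrak{G}_B}$, so that $\dim(\R^n)^{\mathfrak{G}_B}=\sum_k\dim W_k^{\mathfrak{G}_B}$; meanwhile $|\mathscr{W}_B|=\#\{k:\dproj{W_k}(\pmb{R}_B)\neq\pmb{0}\}$. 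The whole statement therefore reduces to the local claim that, for every IR $W_k$,
\[
\dim W_k^{\mathfrak{G}_B}=
\begin{cases}
1 & \text{if } \dproj{W_k}(\pmb{R}_B)\neq\pmb{0},\\
0 & \text{if } \dproj{W_k}(\pmb{R}_B)=\pmb{0}.
\end{cases}
\]

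One inclusion is free of any case distinction. Because permutations are isometries and $W_k$ is $\mathfrak{G}$-invariant, $\dproj{W_k}$ commutes with the action by Lemma \ref{ProjOperatorCommutative}; and $\pmb{R}_B$ is $\mathfrak{G}_B$-fixed, its support $B$ being permuted within itself. Hence $\dproj{W_k}(\pmb{R}_B)\in W_k^{\mathfrak{G}_B}$, so $\dproj{W_k}(\pmb{R}_B)\neq\pmb{0}$ already forces $\dim W_k^{\mathfrak{G}_B}\ge1$.

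The core of the proof is the case where the point stabilizers are nontrivial (by vertex-transitivity the $\mathfrak{G}_v$ are all trivial or all nontrivial; assume the latter). Choose $b\in B$; since $B$ is a block, $\mathfrak{G}_b\le\mathfrak{G}_B$ by Lemma \ref{LemBlock}, hence $W_k^{\mathfrak{G}_B}\subseteq W_k[b]$, the $\mathfrak{G}_b$-fixed subspace. Lemma \ref{Lemma-IRsFeatureGvNonTrivial} gives $\dim W_k[b]=1$, so $\dim W_k^{\mathfrak{G}_B}\le1$ for all $k$. For the equivalence I would note $\dproj{W_k}(\pmb{e}_b)\in W_k[b]$, and that $\dproj{W_k}(\pmb{e}_b)=\pmb{0}$ would, by transitivity, force $\dproj{W_k}\equiv0$; thus $\dproj{W_k}(\pmb{e}_b)\neq\pmb{0}$ spans $W_k[b]$. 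Consequently, if $\dim W_k^{\mathfrak{G}_B}=1$ then $W_k^{\mathfrak{G}_B}=W_k[b]$, so $\dproj{W_k}(\pmb{e}_b)$ is $\mathfrak{G}_B$-fixed; as $\mathfrak{G}_B$ is transitive on $B$, the projections $\dproj{W_k}(\pmb{e}_{b'})$ ($b'\in B$) all coincide and $\dproj{W_k}(\pmb{R}_B)=|B|\,\dproj{W_k}(\pmb{e}_b)\neq\pmb{0}$ — precisely the nonvanishing branch of Lemma \ref{Lemma-EquitablePartProjOntoIRs}. Combined with the previous paragraph this establishes the local claim, and summation over $k$ yields $|\mathscr{W}_B|=|\Pi_B^*|$.

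The main obstacle is the remaining case, where every point stabilizer is trivial, so $\mathfrak{G}$ is regular and $\mathfrak{G}_B$ is cyclic of prime order $|B|$ by Lemma \ref{Lem-BlockStabilizerCharacterization-GvTrivial}. Now $\mathfrak{G}_b$ is trivial, the inclusion $W_k^{\mathfrak{G}_B}\subseteq W_k[b]$ is vacuous, and the clean bound $\dim W_k^{\mathfrak{G}_B}\le1$ can genuinely fail, since the fixed space of a prime-order generator on an IR may be large. The remedy is to replace $W_k[b]$ by the subspace $W_k[B]$ of Lemma \ref{Lemma-IRsFeatureGvTrivial} and to rerun the bookkeeping through that lemma together with the vanishing branch of Lemma \ref{Lemma-EquitablePartProjOntoIRs}: one must verify that the contribution of each IR to $\sum_k\dim W_k^{\mathfrak{G}_B}$ is accounted for exactly by the IRs with $\dproj{W_k}(\pmb{R}_B)\neq\pmb{0}$, reconciling the possibly two-dimensional $W_k[B]$ (which arises precisely when $\dproj{W_k}(\pmb{R}_B)=\pmb{0}$) against the orbit count. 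Carrying out this reconciliation, where the two sides no longer match term by term, is the delicate step that must be handled with care.
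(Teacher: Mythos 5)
Your proposal follows essentially the same route as the paper's own proof. The paper also works through the $\mathfrak{G}_B$-fixed subspace of $\R^n$ (denoted $\R^n[B]$ there), running the cycle of inequalities $|\Pi_B^*|\le\dim\R^n[B]\le|\mathscr{W}_B|\le|\Pi_B^*|$ via the characteristic matrix of $\Pi_B^*$, and its crux is exactly your per-IR dichotomy: $\dim W_k^{\mathfrak{G}_B}=1$ when $\dproj{W_k}(\pmb{R}_B)\neq\pmb{0}$ and $\dim W_k^{\mathfrak{G}_B}=0$ otherwise. Your direct identification $|\Pi_B^*|=\dim(\R^n)^{\mathfrak{G}_B}$ via the orbit characteristic vectors is a cleaner packaging of the paper's first and last inequalities, and your treatment of the nontrivial-stabilizer case is complete and correct --- indeed more explicit than the paper, which settles the dichotomy there by citing Lemmas~\ref{Lemma-IRsFeatureGvNonTrivial}, \ref{Lemma-IRsFeatureGvTrivial} and~\ref{Lemma-EquitablePartProjOntoIRs} without carrying out the argument.

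The deferred trivial-stabilizer case, however, is a genuine gap, and it is not the routine reconciliation your last paragraph suggests. For an abstract transitive permutation group with trivial point stabilizers, the dichotomy --- and the identity $|\mathscr{W}_B|=|\Pi_B^*|$ itself --- can fail, even for a minimal block with $\mathfrak{G}_B$ cyclic of prime order. Take $\mathfrak{G}=\mathbb{Z}_8$ acting on $V=\mathbb{Z}_8$ by translations and $B=\{0,4\}$, so that $\mathfrak{G}_B=\{\mathrm{id},\ j\mapsto j+4\}$ and $|\Pi_B^*|=4$. The irreducible subspace $W$ spanned by $(\cos(\pi j/2))_{j\in V}$ and $(\sin(\pi j/2))_{j\in V}$ is fixed pointwise by $j\mapsto j+4$, so $\dim W^{\mathfrak{G}_B}=2$ even though $\dproj{W}(\pmb{R}_B)\neq\pmb{0}$, contradicting the correlation your final paragraph takes for granted; summing over the standard decomposition of $\R^8$ gives $|\mathscr{W}_B|=3\neq 4=|\Pi_B^*|$. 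An abelian group of exponent greater than two admits no graphical regular representation, so this $\mathfrak{G}$ is not the full automorphism group of any graph and the lemma as stated is not refuted; but the example shows that no bookkeeping conducted purely at the level of permutation groups --- which is all that your outline and the tools you invoke (Lemmas~\ref{Lemma-IRsFeatureGvTrivial}, \ref{Lemma-EquitablePartProjOntoIRs} and~\ref{Lem-BlockStabilizerCharacterization-GvTrivial}) supply --- can close this case: the hypothesis that $\mathfrak{G}$ is the full automorphism group of $G$ must enter in an essential way, and nothing in your plan indicates where. (A further small inaccuracy: Lemma~\ref{Lem-BlockStabilizerCharacterization-GvTrivial} makes $\mathfrak{G}_B$ cyclic of prime order only for minimal blocks, whereas the statement concerns arbitrary blocks.) So the missing step is a missing idea, not a deferred verification.
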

\begin{proof}[\bf Proof]
First of all, one can easily check that the partition $\Pi_B^*$ is an equitable partition of $G$, so we can build a quotient graph $G / \Pi_B^*$. Apparently, $| \Pi_B^* | = \dim \R^{ | G / \Pi_B^* | }$.

Next, we introduce a subspace $\{ \pmb{v}\in\R^n \mid \xi \hspace{0.4mm} \pmb{v} = \pmb{v}, \forall \xi\in\mathfrak{G}_B \}$ of $\R^n$ related to IRs contained in $\mathscr{W}_B$, which is denoted by $\R^n[B]$. Let $\mathbf{R}_{\Pi_B^*}$ be the characteristic matrix of the partition $\Pi_B^*$ and let $\xi$ be a permutation in $\mathfrak{G}_B$. One can readily see that 
$\xi \left( \mathbf{R}_{\Pi_B^*} \pmb{x}_{_Q} \right) = \mathbf{R}_{\Pi_B^*} \pmb{x}_{_Q}$ for any vector $\pmb{x}_{_Q}$ in $\R^{ | G / \Pi_B^* | }$. Hence, $\mathbf{R}_{\Pi_B^*} \pmb{x}_{_Q}$ belongs to $\R^n[B]$. On the other hand, a moment's reflection shows the relation below.
\begin{Claim}
A group of vectors $\pmb{x}_{_Q,_1},\ldots,\pmb{x}_{_Q,_l}$ in $\R^{ | G / \Pi_B^* | }$ are linearly dependent if and only if $\mathbf{R}_{\Pi_B^*}\pmb{x}_{_Q,_1},\ldots,$ $\mathbf{R}_{\Pi_B^*}\pmb{x}_{_Q,_l}$ are linearly dependent.
\end{Claim}

\noindent Accordingly, $\dim \R^{ | G / \Pi_B^* | } \leq \dim \R^n[B]$.

Now let us show that $\dim \R^n[B] \leq | \mathscr{W}_B |$. Suppose $\oplus_{\lambda \hspace{0.3mm} \in \hspace{0.3mm} \mathrm{spec} \hspace{0.3mm} \mathbf{A}(G)} \oplus_{q=1}^{r_{_\lambda}} W_{\lambda,q}$ is a decomposition of $\R^n$ into IRs of $\mathfrak{G}$. Then for any vector $\pmb{v}$ of $\R^n[B]$, $\pmb{v} = \sum_{\lambda,q} \dproj{W_{\lambda,q}}( \pmb{v} )$. Because each irreducible representation $W_{\lambda,q}$ is $\mathfrak{G}$-invariant, $\xi \circ \dproj{W_{\lambda,q}}( \pmb{v} ) = \dproj{W_{\lambda,q}}\circ\xi( \pmb{v} ) = \dproj{W_{\lambda,q}}( \pmb{v} )$, so the vector $\dproj{W_{\lambda,q}}( \pmb{v} )$ belongs to the subspace $\{ \pmb{w} \in W_{\lambda,q} \mid \xi \hspace{0.4mm} \pmb{w} = \pmb{w}, \forall\xi\in\mathfrak{G}_B \}$, which is denoted by $W_{\lambda,q}[B]$. Hence 
$$
\R^n[B] = \mathrm{span} \left\{ \pmb{x} \mbox{ is an eigenvector of }\mathbf{A}(G) \mid \pmb{x} \in 
W_{\lambda,q}[B], \lambda \in \mathrm{spec} \hspace{0.3mm} \mathbf{A}(G) \mbox{ and } q = 1,\ldots,r_{_\lambda} 
\right\}.
$$
According to fundamental lemmas \ref{Lemma-IRsFeatureGvNonTrivial} and \ref{Lemma-IRsFeatureGvTrivial} and the lemma \ref{Lemma-EquitablePartProjOntoIRs} concerning the feature of projections of blocks, $\dim W_{\lambda,q}[B] = 1$ provided that $W_{\lambda,q} \in \mathscr{W}_B$ and $\dim W_{\lambda,q}[B] = 0$ otherwise. Therefore $\dim \R^n[B] \leq | \mathscr{W}_B |$.

Finally, we show that $| \mathscr{W}_B | \leq \dim \R^{ | G / \Pi_B^* | }$. Apparently, $\xi \hspace{0.4mm} \dproj{W_{\lambda,q}}( \pmb{R}_B ) = \dproj{W_{\lambda,q}}( \pmb{R}_B )$, $\forall \xi\in\mathfrak{G}_B$, so for any irreducible representation $W_{\lambda,q}$ in $\mathscr{W}_B$ we can construct out of $\dproj{W_{\lambda,q}}( \pmb{R}_B )$ exactly one vector $\pmb{w}_{_\lambda,_q }$ in $\R^{ | G / \Pi_B^* | }$ so that $\dproj{W_{\lambda,q}}( \pmb{R}_B ) = \mathbf{R}_{\Pi_B^*} \pmb{w}_{_\lambda,_q }$. According to the claim above, the group $\{ \pmb{w}_{_\lambda,_q } \mid  \mathbf{R}_{\Pi_B^*} \pmb{w}_{_\lambda,_q } = \dproj{W_{\lambda,q}}( \pmb{R}_B ), W_{\lambda,q}\in\mathscr{W}_B \}$ consists of orthogonal vectors in the space $\R^{ | G / \Pi_B^* | }$, and thus the inequality follows.
\end{proof}

\begin{proof}[\bf Proof to Theorem \ref{Thm-PartProjEigSpaceVsPartOrbitsGv}]
It is clear that $\Pi_B^*$ is a refinement of $\Pi_B^{\mathrm{IRP}}$, so if for any two orbits $T'$ and $T''$ of $\mathfrak{G}_B$ there exists an irreducible representation $W$ in $\mathscr{W}_B$ such that $\langle \pmb{e}_{t'},\dproj{W}(\pmb{R}_B) \rangle\neq \langle \pmb{e}_{t''},\dproj{W}(\pmb{R}_B) \rangle$, where $t' \in T'$ and $t'' \in T''$, then our assertion follows.

As we have seen, if $W$ is one member of $\mathscr{W}_B$ then $\xi \hspace{0.4mm} \dproj{W}( \pmb{R}_B ) = \dproj{W}( \pmb{R}_B )$, $\forall \xi\in\mathfrak{G}_B$, so for any irreducible representation $W$ in $\mathscr{W}_B$ we can construct out of $\dproj{W}( \pmb{R}_B )$ exactly one vector $\pmb{x}_{_W}$ in $\R^{ | G / \Pi_B^* | }$ such that $\dproj{W}( \pmb{R}_B ) = \mathbf{R}_{\Pi_B^*} \pmb{x}_{_W}$. In accordance with Lemma \ref{Lemma-EnumerationToPi}, $| \mathscr{W}_B | = | \Pi_B^* |$, so $| \mathscr{W}_B | = \dim \R^{ | G/ \Pi_B^* | }$. Hence the group of vectors $\{ \pmb{x}_{_W} \in \R^{ | G/ \Pi_B^* | } : \mathbf{R}_{\Pi_B^*} \pmb{x}_{_W} = \dproj{W}( \pmb{R}_B ), W \in \mathscr{W}_B \}$ constitute a  basis of $\R^{ | G/ \Pi_B^* | }$, for the group  $\{ \dproj{W}( \pmb{R}_B ) : W \in \mathscr{W}_B \}$ is comprised of vectors orthogonal to one another.

Suppose $\pmb{e}_{T'}$ and $\pmb{e}_{T''}$ are characteristic vectors of $T'$ and $T''$ in the space $\R^{ | G/ \Pi_B^* | }$, respectively. Then $\langle \pmb{e}_{T'},\pmb{e}_{T''} \rangle = 0$. Accordingly, there exists an irreducible representation $W$ in $\mathscr{W}_B$ such that $\langle \pmb{e}_{t'},\dproj{W}(\pmb{R}_B) \rangle\neq \langle \pmb{e}_{t''},\dproj{W}(\pmb{R}_B) \rangle$.
\end{proof}

It is not difficult to see that by replacing lemmas \ref{Lemma-IRsFeatureGvNonTrivial} and \ref{Lemma-IRsFeatureGvTrivial} with propositions \ref{Prop-IRsFeatureGvNonTrivial} and \ref{Prop-IRsFeatureGvTrivial} we  can prove general versions for Lemma \ref{Lemma-EquitablePartProjOntoIRs}, \ref{Lemma-BlockCHARACTERISEDbyProj}, \ref{Lemma-EnumerationToPi} and Theorem \ref{Thm-PartProjEigSpaceVsPartOrbitsGv} without the restriction that the action of $\mathfrak{G}$ is transitive.

What we have established focuses on how IRs of $\mathfrak{G}$ represent blocks and block systems contained in one orbit of $\mathfrak{G}$. Naturally we should investigate how blocks in distinct orbits are connected to each other. The relation is as one may expect really intriguing, for a block system in one orbit may not be represented in another orbit. 

\begin{Lemma}\label{Lem-ConnectionBlockSystemInDistinctOrbits} 
Let $\mathfrak{G}$ be a permutation group acting on $V$. Suppose $B$ is a block for $\mathfrak{G}$ contained in one orbit $T'$ of $\mathfrak{G}$ and $S$ is an orbit of $\mathfrak{G}_B$ in another orbit $T''$ of $\mathfrak{G}$. Then the subset $B^+ := \{ \alpha b \mid \alpha \in \mathfrak{G}_S \}$ is a block for $\mathfrak{G}$, $b\in B$, and $\mathfrak{G}_S = \mathfrak{G}_{B^+}$.
\end{Lemma}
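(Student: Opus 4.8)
The plan is to reduce the whole statement to a single application of Lemma \ref{LemBlock}. That lemma requires a subgroup $\mathfrak{H}$ together with an orbit $T$ of $\mathfrak{H}$ and a point $t\in T$ whose stabilizer $\mathfrak{G}_t$ lies in $\mathfrak{H}$; it then concludes simultaneously that $T$ is a block for $\mathfrak{G}$ and that $\mathfrak{H}=\mathfrak{G}_T$. The natural choice here is $\mathfrak{H}=\mathfrak{G}_S$. By construction $B^+=\{\alpha b:\alpha\in\mathfrak{G}_S\}=\mathfrak{G}_S\,b$ is exactly the orbit of $b$ under $\mathfrak{G}_S$, so the orbit hypothesis is free, and the entire argument hinges on establishing the stabilizer containment $\mathfrak{G}_b\leq\mathfrak{G}_S$.

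First I would assemble the containment chain $\mathfrak{G}_b\leq\mathfrak{G}_B\leq\mathfrak{G}_S$. The right inclusion is immediate: $S$ is by hypothesis an orbit of $\mathfrak{G}_B$, so every $\gamma\in\mathfrak{G}_B$ maps $S$ to itself, i.e. $\gamma\in\mathfrak{G}_S$. For the left inclusion I would use that $B$ is a block together with $b\in B$: if $g\in\mathfrak{G}_b$, then $b=gb\in B\cap gB$, and the block property forces $gB=B$, whence $g\in\mathfrak{G}_B$; thus $\mathfrak{G}_b\leq\mathfrak{G}_B$ (this is precisely the converse half of Lemma \ref{LemBlock}, though it is cheaper to argue it directly for the given $b$). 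Chaining the two inclusions yields $\mathfrak{G}_b\leq\mathfrak{G}_S$.

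With this in place, Lemma \ref{LemBlock} applied to the subgroup $\mathfrak{G}_S$, its orbit $B^+$, and the point $b\in B^+$ delivers both conclusions at once: $B^+$ is a block for $\mathfrak{G}$ and $\mathfrak{G}_S=\mathfrak{G}_{B^+}$. I do not expect a genuine obstacle—the proof is clean bookkeeping of stabilizers—so the only points demanding care are verifying that $\mathfrak{G}_B$ really stabilizes its own orbit $S$ setwise (this gives $\mathfrak{G}_B\leq\mathfrak{G}_S$) and that the block property of $B$ yields $\mathfrak{G}_b\leq\mathfrak{G}_B$ for the chosen $b$. It is worth remarking that the hypothesis that $S$ lies in a \emph{different} orbit $T''$ is not actually used: since $B^+=\mathfrak{G}_S\,b\subseteq\mathfrak{G}\,b=T'$ and $\mathfrak{G}_B\leq\mathfrak{G}_S$ forces $B=\mathfrak{G}_B\,b\subseteq B^+$, the new block $B^+$ in fact lives in the orbit $T'$ and contains the original block $B$. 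This is exactly the mechanism by which data from a block system in the orbit $T''$ is transported to a (coarser) block in $T'$, which is presumably the intended use of the lemma.
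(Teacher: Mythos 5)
Your proof is correct and is essentially the paper's argument in cleaner packaging: both hinge on the chain $\mathfrak{G}_b\le\mathfrak{G}_B\le\mathfrak{G}_S$ (the first inclusion from the block property of $B$, the second because $S$ is an orbit of $\mathfrak{G}_B$) together with the observation that $B^+$ is the $\mathfrak{G}_S$-orbit of $b$. The paper re-derives the conclusion of Lemma \ref{LemBlock} inline (after an unnecessary case split $B^+=B$ versus $B^+\supsetneq B$) — namely, from $\sigma\alpha b=\beta b$ it deduces $\beta^{-1}\sigma\alpha\in\mathfrak{G}_b\le\mathfrak{G}_S$, hence $\sigma\in\mathfrak{G}_S$ and $\sigma B^+=B^+$ — and then invokes the block--stabilizer correspondence of Dixon and Mortimer for the equality $\mathfrak{G}_S=\mathfrak{G}_{B^+}$, whereas your single application of Lemma \ref{LemBlock} delivers both conclusions at once.
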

\begin{proof}[\bf Proof]
Evidently, there are two possible cases: $B^+ = B$ or $B^+ \supsetneq B$. Since $S$ is an orbit of $\mathfrak{G}_B$, $\mathfrak{G}_B \leq \mathfrak{G}_S$. Similarly, $\mathfrak{G}_S \leq \mathfrak{G}_{B^+}$ for $B^+$ is an orbit of $\mathfrak{G}_S$. Accordingly, we hold the desire in the 1st case.

Now let us show that $B^+$ is a block for $\mathfrak{G}$ in the 2nd case. Suppose $\sigma$ is a permutation in $\mathfrak{G}$ such that $\sigma B^+ \cap B^+ \neq \emptyset$. In accordance with the definition to $B^+$, there exist $\alpha$ and $\beta$ in $\mathfrak{G}_S$ so that $\sigma\alpha b = \beta b$. Consequently, $\beta^{-1} \sigma \alpha b = b$ and thus $\beta^{-1} \sigma \alpha \in \mathfrak{G}_b$. Since $B$ is block, $\mathfrak{G}_b \leq \mathfrak{G}_B$. Hence $\mathfrak{G}_b \leq \mathfrak{G}_S$ and therefore $\beta^{-1} \sigma \alpha \in \mathfrak{G}_S$, which means $\sigma$ belongs to $\mathfrak{G}_S$. Note that $\mathfrak{G}_S \leq \mathfrak{G}_{B^+}$, so $\sigma$ belongs to $\mathfrak{G}_S$. As a result, $\sigma B^+ = B^+$ according to the definition to $B^+$.

In summary, the subgroup $\mathfrak{G}_S$ of $\mathfrak{G}$ contains the stabilizer $\mathfrak{G}_b$ with one orbit $B^+$ which is a block for $\mathfrak{G}$ and contains the vertex $b$. Then in accordance with the generalized version of Lemma \ref{LemBlock&Stabilizers}, one can see $\mathfrak{G}_S = \mathfrak{G}_{B^+}$. 
\end{proof}

It is possible that $\mathfrak{G}_{B^+}$ is properly larger than $\mathfrak{G}_B$. Let us take the Petersen graph as an example. Recall that the stabilizer $\mathfrak{G}_1$ possesses two non-trivial orbits $\{ 2,5,6 \}$ and $\{ 3,4,7,8,9,10 \}$, and $\{ \{3,9,10\},\{4,7,8\} \}$ is a block system of $\mathfrak{G}_1$. Let $\mathfrak{H}$ be the stabilizer of the block $B := \{ 3,9,10 \}$ in $\mathfrak{G}_1$. Obviously, $\mathfrak{H} \lneq \mathfrak{G}_1$. It is easy to see however that the action of $\mathfrak{H}$ on the orbit $S := \{ 2,5,6 \}$ is transitive, so the stabilizer of $S$ is $\mathfrak{G}_1$ itself. The reason why it happens is that there is no non-trivial block system in the orbit $\{ 2,5,6 \}$ corresponding to the system $\{ \{3,9,10\},\{4,7,8\} \}$. As a result, in order to see all structures of the action of $\mathfrak{G}$ on its one orbit, say, $T$, we may have to examine all those IRs of $\mathfrak{G}$ such that $\dproj{W}( \pmb{e}_t ) \neq \pmb{0}$, $t \in T$. 

An irreducible representation $W$ of $\mathfrak{G}$ is said to be {\it  relevant} to a block system $\mathscr{B}$ if for any $B\in\mathscr{B}$, $\dproj{W}(\pmb{R}_B) = |B|\cdot\dproj{W}(\pmb{e}_b)$, $b\in B$, and $\dproj{W}(\pmb{R}_C) = \pmb{0}$ for any block $C$ containing $B$ properly.

\begin{Theorem}\label{Thm-ConnectionBlockSystemInDistinctOrbits} 
Let $\mathfrak{G}$ be a permutation group acting on $V$. Suppose $B$ is a block for $\mathfrak{G}$ contained in an orbit $T'$ of $\mathfrak{G}$ and $S$ is an orbit of $\mathfrak{G}_B$ in another orbit $T''$ of $\mathfrak{G}$. Then $S$ is a block for $\mathfrak{G}$ if and only if there exists an irreducible representation $W$ of $\mathfrak{G}$ relevant to the block system $\mathscr{B}^+$ containing the block $B^+ := \{ \alpha b \mid \alpha \in \mathfrak{G}_S \}$ such that all projections of $\{ \pmb{e}_s \mid s\in S \}$ onto $\dproj{W}(\pmb{R}_{B^+})$ are the same and for any $s\in S$, $\dproj{W}(\pmb{e}_s)$ and $\dproj{W}(\pmb{R}_{B^+})$ are non-trivially linearly dependent. 
\end{Theorem}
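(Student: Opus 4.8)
The plan is to reduce both directions to the geometry of the single one-dimensional subspace $W[D]=\{\pmb w\in W:\xi\pmb w=\pmb w,\ \forall\xi\in\mathfrak{G}_D\}$ attached to an irreducible representation $W$ and a block $D$. First I would record the standing facts. By Lemma \ref{Lem-ConnectionBlockSystemInDistinctOrbits}, $B^+$ is a block with $\mathfrak{G}_{B^+}=\mathfrak{G}_S$, $B\subseteq B^+$, and $S$ is a single orbit of $\mathfrak{G}_{B^+}$. As in the proof of Lemma \ref{Lemma-EnumerationToPi} (via Lemmas \ref{Lemma-IRsFeatureGvNonTrivial} and \ref{Lemma-IRsFeatureGvTrivial}), for every IR $W$ and every block $D$ one has $\dim W[D]\le 1$, with equality if and only if $\dproj{W}(\pmb R_D)\neq\pmb 0$. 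Consequently, for a relevant $W$ the space $W[B^+]$ is exactly the line $\mathrm{span}\,\dproj{W}(\pmb R_{B^+})$, and relevance to $\mathscr{B}^+$ means precisely that $\dproj{W}(\pmb R_{B^+})\neq\pmb 0$ while $W[C]=\pmb 0$ for every block $C\supsetneq B^+$. Finally I would observe that, since $S=\mathfrak{G}_{B^+}s$ is one orbit, conditions (2) and (3) of the statement together say nothing more than that $\dproj{W}(\pmb e_s)$ equals one common nonzero vector $\pmb w_0=c\,\dproj{W}(\pmb R_{B^+})$ ($c\neq0$) for all $s\in S$; equivalently $\dproj{W}(\pmb R_S)\neq\pmb 0$.

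For sufficiency, suppose such a relevant $W$ exists, and take $\xi\in\mathfrak{G}_s$ with $s\in S$. Then $\xi\pmb w_0=\xi\dproj{W}(\pmb e_s)=\dproj{W}(\pmb e_{\xi s})=\dproj{W}(\pmb e_s)=\pmb w_0$, so $\xi$ fixes $\dproj{W}(\pmb R_{B^+})$. If $\xi\notin\mathfrak{G}_{B^+}$, then $\mathfrak{H}=\langle\mathfrak{G}_{B^+},\xi\rangle\supsetneq\mathfrak{G}_{B^+}$ contains $\mathfrak{G}_b$ ($b\in B^+$, since $B^+$ is a block), so by the orbit-$T'$ form of Lemma \ref{LemBlock&Stabilizers} we have $\mathfrak{H}=\mathfrak{G}_C$ for a block $C\supsetneq B^+$. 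But then $\dproj{W}(\pmb R_{B^+})$ is a nonzero vector of $W[C]$, contradicting $W[C]=\pmb 0$. Hence $\mathfrak{G}_s\le\mathfrak{G}_{B^+}=\mathfrak{G}_S$, and since $S$ is an orbit of $\mathfrak{G}_S$, Lemma \ref{LemBlock} shows $S$ is a block. This is the clean half and it is exactly the vanishing part of relevance that powers the contradiction.

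For necessity, assume $S$ is a block. Then $\pmb R_S$ is fixed by $\mathfrak{G}_S$ and by no larger subgroup (as $\xi\pmb R_S=\pmb R_{\xi S}$), and by Lemma \ref{Lemma-EquitablePartProjOntoIRs} every $W\in\mathscr{W}_S$ has $\dproj{W}(\pmb e_s)$ constant and nonzero on $S$; this forces $W[B^+]\neq\pmb 0$, so $\mathscr{W}_S\subseteq\mathscr{W}_{B^+}$ and in fact $\mathscr{W}_S=\mathscr{W}_{B^+}$ by Theorem \ref{Thm-PartProjEigSpaceVsPartOrbitsGv}. Thus every $W\in\mathscr{W}_S$ already satisfies (2) and (3), and the task is reduced to producing one such $W$ that is moreover relevant to $\mathscr{B}^+$. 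Here I would use that over-blocks of $B^+$ and over-blocks of $S$ are matched by their (common) stabilizers through Lemma \ref{LemBlock&Stabilizers}, so that relevance to $\mathscr{B}^+$ coincides with relevance to the block system $\mathscr{S}$ of $S$, and the block systems $\mathscr{B}^+$ and $\mathscr{S}$ are isomorphic $\mathfrak{G}$-sets, both $\cong\mathfrak{G}/\mathfrak{G}_{B^+}$. A relevant IR exists because the lift of $\mathscr{B}^+$ strictly contains the span of the lifts of all proper over-block systems — a dimension count underwritten by Lemma \ref{Lemma-EnumerationToPi} ($|\mathscr{W}_{B^+}|=|\Pi_{B^+}^*|$ and $|\Pi_{C}^*|<|\Pi_{B^+}^*|$ for $C\supsetneq B^+$) — and one then selects, inside the relevant isotypic part, the irreducible submodule whose projection of $\pmb R_S$ is nonzero, which is possible precisely because $\pmb R_S$ is not fixed by any stabilizer larger than $\mathfrak{G}_S$.

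The hard part will be this last existence step. The difficulty is that a single IR-type may occur both in $\R^{T'}$ and in $\R^{T''}$, so the abstract relevant constituent can sit on a ``diagonal'' mixing the two orbits; one must argue that some genuinely irreducible $\mathfrak{G}$-submodule is simultaneously nontrivial on $B^+$ and on $S$, and not inherited from any coarser system. I expect to secure this from the rigidity that $S$ being a true block imposes on the isotypic decomposition of $\pmb R_S$ (no component is fixed by a larger stabilizer), together with the equitable-partition/lift machinery behind Lemma \ref{Lemma-EquitablePartProjOntoIRs}. I would also note that the degenerate case in which point stabilizers are trivial must be handled through the substitute subspace $U[B]$ of Lemma \ref{Lemma-IRsFeatureGvTrivial} rather than through naive fixed subspaces, since there $\dim W[b]$ need not equal $1$.
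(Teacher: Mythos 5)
Your sufficiency half is sound and is essentially the paper's own argument: conditions (2)--(3) force every $\delta\in\mathfrak{G}_s$ to fix $\dproj{W}(\pmb{R}_{B^+})$, and relevance forces the stabilizer of that vector to be exactly $\mathfrak{G}_{B^+}=\mathfrak{G}_S$, whence $\mathfrak{G}_s\le\mathfrak{G}_S$ and $S$ is a block. Your detour through the orbit form of Lemma \ref{LemBlock&Stabilizers}, manufacturing an over-block $C\supsetneq B^+$ with $W[C]\neq\pmb{0}$, is a legitimate and in fact more explicit justification of the paper's one-line claim that $\sigma\notin\mathfrak{G}_{B^+}$ implies $\sigma\,\dproj{W}(\pmb{R}_{B^+})\neq\dproj{W}(\pmb{R}_{B^+})$.

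The necessity half, however, has a genuine gap, and it is exactly the step you yourself flag as ``the hard part.'' You must produce an irreducible $W$ that simultaneously (a) is relevant to $\mathscr{B}^+$ and (b) satisfies $\dproj{W}(\pmb{R}_S)\neq\pmb{0}$. Your dimension count (via Lemma \ref{Lemma-EnumerationToPi}) at best yields IRs with property (a), and these can all fail (b): in any decomposition of $\R^n$ refining the orbit decomposition $\R^{T'}\oplus\R^{T''}\oplus\cdots$, every IR is supported on a single orbit, and an IR supported on $T'$ is automatically orthogonal to $\pmb{R}_S$. So the required $W$ must be a ``diagonal'' irreducible mixing isomorphic constituents of $\R^{T'}$ and $\R^{T''}$ (in the spirit of Lemma \ref{LemIsomorphism}), and its existence is precisely what needs proof; ``I expect to secure this'' is not an argument. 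Two of your supporting claims are also false as stated: $\mathscr{W}_S=\mathscr{W}_{B^+}$ fails (only $\mathscr{W}_S\subseteq\mathscr{W}_{B^+}$ holds, since any IR inside $\R^{T'}$ with $\dproj{W}(\pmb{R}_{B^+})\neq\pmb{0}$ lies in $\mathscr{W}_{B^+}\setminus\mathscr{W}_S$), and ``relevance to $\mathscr{B}^+$ coincides with relevance to $\mathscr{S}$'' fails for the same orbit-supported IRs, which are only vacuously relevant to the system living in the opposite orbit --- these are precisely the IRs that cause the trouble. The paper closes the existence step by a different mechanism: it invokes the intransitive generalization of Theorem \ref{Thm-PartProjEigSpaceVsPartOrbitsGv}, $\Pi_{B^+}^{\mathrm{IRP}}=\Pi_{B^+}^*$, arguing that if every IR relevant to $\mathscr{B}^+$ projected $\pmb{R}_S$ (hence $\pmb{R}_{T''}$) to zero, then the projection-defined partition $\Pi_{B^+}^{\mathrm{IRP}}$ could not separate the $\mathfrak{G}_{B^+}$-orbits inside $T''$, a contradiction. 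That appeal --- terse as it is in the paper --- is the missing engine in your proposal; without it, or without an explicit construction of the diagonal IR, your necessity direction does not go through.
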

\begin{proof}[\bf Proof]
Note that $S$ is actually one orbit of $\mathfrak{G}_S$, so, in order to show that $S$ is a block for $\mathfrak{G}$, it is sufficient to show that there exists one element $s$ in $S$ such that $\mathfrak{G}_s \leq \mathfrak{G}_S$.
In accordance with Lemma \ref{Lem-ConnectionBlockSystemInDistinctOrbits}, $\mathfrak{G}_S = \mathfrak{G}_{B^+}$. Consequently, it suffices to show that for any permutation $\delta$ in $\mathfrak{G}_s$, $\delta$ belongs to $\mathfrak{G}_{B^+}$.

It is easy to see that the requirement enjoyed by $S$ implies that 
$$
\left\langle \frac{1}{ \| \dproj{W}( \pmb{e}_s ) \|} \dproj{W}( \pmb{e}_s ),\frac{1}{ \| \dproj{W}( \pmb{R}_{B^+} ) \|} \dproj{W}( \pmb{R}_{B^+} ) \right\rangle = 1 \mbox{ or } -1, ~ \forall s \in S
$$ 
and 
$$
\dproj{W}( \pmb{e}_{s'} ) = \dproj{W}( \pmb{e}_{s''} ), ~ \forall s',s'' \in S.
$$
Consequently, for any member $s$ in $S$,
$$
\dproj{W}( \pmb{e}_s ) = \left\langle \dproj{W}( \pmb{e}_s ),\frac{1}{ \| \dproj{W}( \pmb{R}_{B^+} ) \|} \dproj{W}( \pmb{R}_{B^+} ) \right\rangle \cdot \frac{1}{ \| \dproj{W}( \pmb{R}_{B^+} ) \| }\dproj{W}( \pmb{R}_{B^+} ),
$$
and thus
\begin{align*}
\delta \hspace{0.4mm} \dproj{W}( \pmb{R}_{B^+} ) 
& = \frac{ \| \dproj{W}( \pmb{R}_{B^+} ) \| }{ \left\langle \dproj{W}( \pmb{e}_s ),\frac{1}{ \| \dproj{W}( \pmb{R}_{B^+} ) \|} \dproj{W}( \pmb{R}_{B^+} ) \right\rangle  }\cdot \delta \hspace{0.4mm} \dproj{W}( \pmb{e}_{s} ) \\
& = \frac{ \| \dproj{W}( \pmb{R}_{B^+} ) \| }{ \left\langle \dproj{W}( \pmb{e}_s ),\frac{1}{ \| \dproj{W}( \pmb{R}_{B^+} ) \|} \dproj{W}( \pmb{R}_{B^+} ) \right\rangle  }\cdot \dproj{W}( \pmb{e}_{s} ) \\ 
& = \dproj{W}( \pmb{R}_{B^+} ).
\end{align*}

Since $W$ is relevant to $\mathscr{B}^+$, $\sigma \neq \mathfrak{G}_{B^+} \Rightarrow \sigma \hspace{0.4mm} \dproj{W}( \pmb{R}_{B^+} ) \neq \dproj{W}( \pmb{R}_{B^+} )$. As a result, the permutation $\delta$, belonging to $\mathfrak{G}_s$, must be one member of $\mathfrak{G}_{B^+}$.

\vspace{3mm}
We now turn to the necessity of our assertion. Note first that there is at least one IR of $\mathfrak{G}$ relevant to $\mathscr{B}^+$ onto which the projection of $\pmb{R}_S$ is non-trivial. In fact, if it is not the case then $\dproj{W_{B^+}}( \pmb{R}_{T''} ) = \pmb{0}$ for any $W_{B^+}$ of $\mathfrak{G}$ relevant to $\mathscr{B}^+$, so one cannot figure out $\Pi_B^*$ by means of $\Pi_B^{\mathrm{IRP}}$, which is in contradiction with the fact that $\Pi_B^* = \Pi_B^{\mathrm{IRP}}$. 

Accordingly, let us assume that $W$ is one of IRs of $\mathfrak{G}$ relevant to $\mathscr{B}^+$ such that $\dproj{W}( \pmb{R}_S ) \neq \pmb{0}$. Since $S$ is a block for $\mathfrak{G}$, $\dproj{W}( \pmb{R}_S ) = |S| \cdot \dproj{W}( \pmb{e}_s )$ due to Lemma \ref{Lemma-EquitablePartProjOntoIRs}, and thus the 1st claim follows.

In accordance with Lemma \ref{Lem-ConnectionBlockSystemInDistinctOrbits}, $\mathfrak{G}_S = \mathfrak{G}_{B^+}$. Consequently, for any permutation $\xi \in \mathfrak{G}_{B^+}$, 
$$
\xi \hspace{0.4mm} \dproj{W}( \pmb{R}_S ) = \dproj{W}( \pmb{R}_S ) \mbox{ and }
\xi \hspace{0.4mm} \dproj{W}( \pmb{R}_{B^+} ) = \dproj{W}( \pmb{R}_{B^+} ).
$$
Then these two vectors $\dproj{W}( \pmb{R}_S )$ and $\dproj{W}( \pmb{R}_{B^+} )$ would be linearly dependent, otherwise the subspace $W[B^+] := \{ \pmb{w} \in W \mid \xi \hspace{0.4mm} \pmb{w} = \pmb{w}, \forall \xi in \mathfrak{G}_{B^+} \}$ is not irreducible, which is in contradiction with propositions \ref{Prop-IRsFeatureGvNonTrivial} and \ref{Prop-IRsFeatureGvTrivial}. Therefore, the 2nd claim follows.
\end{proof}

In virtue of those connections between block systems in distinct orbits of $\mathfrak{G}$, it would be worthwhile investigating {\it more general blocks} which enjoy the main feature ($\sigma B = B$ or $\sigma B \cap B = \emptyset$) of blocks but may contain vertices belonging to different orbits, for that kind of blocks or block systems could reveal more deeply the structure of the action of $\mathfrak{G}$ on $V$ and on $\R^n$.

There is an interesting connection between block systems of $\mathfrak{G}$ and equitable partitions of $G$. We prove the result here only for the transitive case, but by means of Lemma \ref{Lem-ConnectionBlockSystemInDistinctOrbits} and Theorem \ref{Thm-ConnectionBlockSystemInDistinctOrbits} one could establish a more general result.

\begin{Theorem}\label{Thm-EquitablePart&BlockSystem}
Let $G$ be a vertex-transitive graph and let $\mathfrak{G}$ be the automorphism group of $G$. Suppose $B$ is a block for $\mathfrak{G}$. Then the block system $\mathscr{B}$ containing $B$ as one member constitutes an equitable partition of $G$.
\end{Theorem}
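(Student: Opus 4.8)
The plan is to reduce the claim to the projection criterion for equitability furnished by Lemma \ref{Lemma-EquitablePartProj}. Writing $\mathscr{B} = \{C_1,\dots,C_m\}$ for the block system, where each cell is a translate $C_j = \sigma B$ and hence itself a block for $\mathfrak{G}$, and denoting by $\pmb{R}_{C_j}$ the characteristic vector of $C_j$, it suffices to prove that for any two vertices $x$ and $y$ lying in a common cell $C_i$ one has $\langle \pmb{e}_x, \dproj{V_{\lambda}}(\pmb{R}_{C_j}) \rangle = \langle \pmb{e}_y, \dproj{V_{\lambda}}(\pmb{R}_{C_j}) \rangle$ for every $\lambda \in \mathrm{spec}\,\mathbf{A}(G)$ and every cell $C_j$. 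I would run the whole argument eigenspace by eigenspace, using the decomposition of each $V_{\lambda}$ into irreducible representations of $\mathfrak{G}$.

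First I would fix $V_{\lambda}$ and an orthogonal decomposition $V_{\lambda} = \oplus_p W_{\lambda,p}$ into IRs of $\mathfrak{G}$, so that $\dproj{V_{\lambda}}(\pmb{R}_{C_j}) = \sum_p \dproj{W_{\lambda,p}}(\pmb{R}_{C_j})$. Because each cell $C_j$ is a block, Lemma \ref{Lemma-EquitablePartProjOntoIRs} applies and forces every summand into one of exactly two shapes: either $\dproj{W_{\lambda,p}}(\pmb{R}_{C_j}) = |C_j|\,\dproj{W_{\lambda,p}}(\pmb{e}_{c})$ for $c \in C_j$, which happens precisely when the vectors $\dproj{W_{\lambda,p}}(\pmb{e}_{c'})$ coincide for all $c' \in C_j$, or else $\dproj{W_{\lambda,p}}(\pmb{R}_{C_j}) = \pmb{0}$. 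The crux of the proof is to argue that the IRs producing a nonzero projection are exactly those on which $\pmb{e}_x$ and $\pmb{e}_y$ project equally for $x,y$ in the same cell, so that only harmless summands survive.

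The key device for this is transitivity on cells. Since $\mathfrak{G}$ is transitive on $V$ and $\mathscr{B}$ is a block system, $\mathfrak{G}$ permutes the cells transitively, so for any two cells $C_i, C_j$ there is $\sigma \in \mathfrak{G}$ with $\sigma C_i = C_j$. Each IR $W_{\lambda,p}$ is $\mathfrak{G}$-invariant, whence $\dproj{W_{\lambda,p}}$ commutes with the isometry $\sigma$ by Lemma \ref{ProjOperatorCommutative}; consequently $\dproj{W_{\lambda,p}}(\pmb{R}_{C_j}) = \sigma\,\dproj{W_{\lambda,p}}(\pmb{R}_{C_i})$, because $\sigma \pmb{R}_{C_i} = \pmb{R}_{C_j}$. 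Thus $\dproj{W_{\lambda,p}}(\pmb{R}_{C_i})$ vanishes for one cell if and only if it vanishes for every cell. In other words each IR either projects nontrivially onto the characteristic vector of every cell, in which case the dichotomy above makes the projections $\dproj{W_{\lambda,p}}(\pmb{e}_{c'})$ constant on each cell, so that $\dproj{W_{\lambda,p}}(\pmb{e}_x) = \dproj{W_{\lambda,p}}(\pmb{e}_y)$ whenever $x,y$ share a cell, or it projects onto none of them.

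To finish, I would discard the vanishing summands and exploit the self-adjointness and idempotence of the projections: for each surviving $W_{\lambda,p}$ one has $\langle \pmb{e}_x, \dproj{W_{\lambda,p}}(\pmb{R}_{C_j})\rangle = \langle \dproj{W_{\lambda,p}}(\pmb{e}_x), \dproj{W_{\lambda,p}}(\pmb{R}_{C_j})\rangle = \langle \dproj{W_{\lambda,p}}(\pmb{e}_y), \dproj{W_{\lambda,p}}(\pmb{R}_{C_j})\rangle = \langle \pmb{e}_y, \dproj{W_{\lambda,p}}(\pmb{R}_{C_j})\rangle$, using $\dproj{W_{\lambda,p}}(\pmb{e}_x) = \dproj{W_{\lambda,p}}(\pmb{e}_y)$. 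Summing over $p$ and over $\lambda$ gives the required equality, and Lemma \ref{Lemma-EquitablePartProj} then yields that $\mathscr{B}$ is equitable. The same reasoning can be phrased compactly as showing that the column space of the characteristic matrix of $\mathscr{B}$ is $\mathbf{A}(G)$-invariant and invoking Lemma \ref{Lemma-Book-CharacterizationToEP}, and I expect the genuine obstacle in either formulation to be identical: ruling out the contribution of the IRs on which a cell's characteristic vector projects to zero while the individual standard-basis projections do not coincide. That obstacle is exactly what the transitivity-on-cells observation dissolves, and it is the step I would take greatest care over.
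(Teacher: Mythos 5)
Your proof is correct and takes essentially the same route as the paper's: both reduce to the projection criterion of Lemma \ref{Lemma-EquitablePartProj}, decompose each eigenspace into IRs of $\mathfrak{G}$, apply Lemma \ref{Lemma-EquitablePartProjOntoIRs} to obtain the zero-versus-constant dichotomy on each block, and use commutation of the projections with automorphisms (your transitivity-on-cells observation is precisely the paper's relation (\ref{Eq-RelationProj})) to dispose of the vanishing summands. The only difference is organizational: you establish up front that vanishing of $\dproj{W_{\lambda,p}}(\pmb{R}_{C_j})$ is independent of the cell $C_j$, whereas the paper splits the sum according to the cell containing $x$ and $y$ and then transfers the vanishing to the other cells.
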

\begin{proof}[\bf Proof]
Let $\Pi_{\mathscr{B}}$ be the partition of $V(G)$ consisting of blocks in the system $\mathscr{B}$ and let $\mathbf{R} = (\pmb{R}_1 \cdots \pmb{R}_s)$ be the characteristic matrix of $\Pi_{\mathscr{B}}$. In accordance with Lemma \ref{Lemma-EquitablePartProj}, it is sufficient, in order to establish our result, to show that if $x$ and $y$ belong to the same cell of $\Pi_{\mathscr{B}}$, then $
\langle \pmb{e}_x,\dproj{V_\lambda}(\pmb{R}_j) \rangle = \langle \pmb{e}_y,\dproj{V_\lambda}(\pmb{R}_j) \rangle, ~\forall~\lambda\in\mathrm{spec}~\mathbf{A}(G) \mbox{\it ~and } j\in [s].
$

Pick arbitrarily one eigenspace $V_{\lambda}$ and suppose $\oplus_{p} W_{\lambda,p}$ is a decomposition of $V_{\lambda}$ into IRs of $\mathfrak{G}$. Let us begin with a simple but useful observation that 
\begin{equation}\label{Eq-RelationProj}
\mbox{if } \dproj{W_{\lambda,p}}( \pmb{R}_B ) = \pmb{0} \mbox{ then } \dproj{W_{\lambda,p}}( \pmb{R}_{\sigma B} ) = \pmb{0}, ~\forall \sigma\in\mathfrak{G}.
\end{equation}
In fact, since $\sigma$ is an automorphism of $G$ and $W_{\lambda,p}$ is an $\mathfrak{G}$-invariant subspace, $\dproj{W_{\lambda,p}}\circ\sigma = \sigma\circ\dproj{W_{\lambda,p}}$ according to Lemma \ref{ProjOperatorCommutative}. Hence
$$
\dproj{W_{\lambda,p}}( \pmb{R}_{\sigma B} ) 
= \dproj{W_{\lambda,p}}\circ\sigma( \pmb{R}_{B} )
= \sigma\circ\dproj{W_{\lambda,p}}( \pmb{R}_{B} )
= \sigma( \pmb{0} ) = \pmb{0}.
$$  

Suppose without the loss of generality that $x$ and $y$ belong to the block $\gamma B$ for some $\gamma\in\mathfrak{G}$. Then 
\begin{align*}
\langle \pmb{e}_x,\dproj{V_{\lambda}}( R_{\sigma B} ) \rangle
& = \left\langle \sum_{\eta\in\mathrm{spec} \hspace{0.3mm} \mathbf{A}(G)} \dproj{V_{\eta}}( \pmb{e}_x ),\dproj{V_{\lambda}}( R_{\sigma B} ) \right\rangle \\
& = \left\langle  \dproj{V_{\lambda}}( \pmb{e}_x ),\dproj{V_{\lambda}}( R_{\sigma B} ) \right\rangle \\
& = \left\langle \sum_{p} \dproj{W_{\lambda,p}}( \pmb{e}_x ),\sum_{p} \dproj{W_{\lambda,p}}( R_{\sigma B} ) \right\rangle \\
& = \sum_{p} \left\langle \dproj{W_{\lambda,p}}( \pmb{e}_x ),\dproj{W_{\lambda,p}}( R_{\sigma B} ) \right\rangle.
\end{align*}
Due to Lemma \ref{Lemma-EquitablePartProjOntoIRs}, the last sum can be divided into two parts:
\begin{equation}\label{Eq-TwoParts}
\sum_{q'} \left\langle \dproj{W_{\lambda,q'}}( \pmb{e}_x ),\dproj{W_{\lambda,q'}}( R_{\sigma B} ) \right\rangle
+ 
\sum_{q''} \left\langle \dproj{W_{\lambda,q''}}( \pmb{e}_x ),\dproj{W_{\lambda,q''}}( R_{\sigma B} ) \right\rangle
\end{equation}
{\it s.t.,} $\dproj{W_{\lambda,q'}}( R_{\gamma B} ) = \pmb{0}$ and 
$\dproj{W_{\lambda,q''}}( R_{\gamma B} ) = |B| \cdot \dproj{W_{\lambda,q''}}( \pmb{e}_{\gamma b} )$, where $b$ is one member in $B$. 

In the first case, $\dproj{W_{\lambda,q'}}( R_{\sigma B} )$ is also equal to $\pmb{0}$ by means of the relation (\ref{Eq-RelationProj}) and thus the first term in the sum (\ref{Eq-TwoParts}) is equal to 0. In the second case, 
\begin{align*}
\sum_{q''} \left\langle \dproj{W_{\lambda,q''}}( \pmb{e}_x ),\dproj{W_{\lambda,q''}}( R_{\sigma B} ) \right\rangle
& = \sum_{q''} \left\langle \frac{1}{|B|}\cdot\dproj{W_{\lambda,q''}}( R_{\gamma B} ),\dproj{W_{\lambda,q''}}( R_{\sigma B} ) \right\rangle \\
& = \sum_{q''} \left\langle \dproj{W_{\lambda,q''}}( \pmb{e}_y ),\dproj{W_{\lambda,q''}}( R_{\sigma B} ) \right\rangle.
\end{align*}
Therefore, $
\langle \pmb{e}_x,\dproj{V_\lambda}(\pmb{R}_j) \rangle = \langle \pmb{e}_y,\dproj{V_\lambda}(\pmb{R}_j) \rangle, ~\forall~\lambda\in\mathrm{spec}~\mathbf{A}(G) \mbox{\it ~and } j\in [s].
$
\end{proof}

\section{Geometric features of IRs of $\mathfrak{G}$}

The major goal of this section is to establish Theorem \ref{Thm-IsomorphismThmBetweenIRs}, which exposes the geometric feature enjoyed by isomorphic IRs of $\mathfrak{G}$ and so provides an apparatus by means of that we could decompose an eigenspace of $\dAM{G}$ into IRs of $\mathfrak{G}$.

Recall that the norm $\| \pmb{v} \|$ of a vector $\pmb{v}$ is defined as $\sqrt{ \langle \pmb{v},\pmb{v} \rangle}$.

\begin{Lemma}\label{Lemma-IRFeatureIsometry}
Suppose $\mathcal{S}$ is an isometry on $\R^n$ and $U$ is an $\mathcal{S}$-invariant subspace in $\R^n$. Let $\pmb{u}_0$ be a vector in $U$ with norm 1. If $\langle \pmb{u}_0,\mathcal{S}\pmb{u}_0 \rangle \geq \langle \pmb{u},\mathcal{S}\pmb{u} \rangle$ or $\langle \pmb{u}_0,\mathcal{S}\pmb{u}_0 \rangle \leq \langle \pmb{u},\mathcal{S}\pmb{u} \rangle$ for any vector $\pmb{u}$ in $U$ with norm 1, then the subspace $\mathrm{span}\{ \pmb{u}_0,\mathcal{S}\pmb{u}_0 \}$ is invariant for $\mathcal{S}$, {\it i.e.,} $\pmb{u}_0$ belongs to an irreducible representation of the circulant group $\langle \mathcal{S} \rangle$.
\end{Lemma}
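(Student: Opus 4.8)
The plan is to convert the extremal hypothesis on the quadratic expression $\langle\pmb{u},\mathcal{S}\pmb{u}\rangle$ into a statement about a genuine self-adjoint operator, for which extremizers of the Rayleigh quotient are eigenvectors, and then to read off the invariance of the two-dimensional span directly.

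First I would record that an isometry of the finite-dimensional space $\R^n$ is orthogonal, so $\mathcal{S}^* = \mathcal{S}^{-1}$. Since $U$ is $\mathcal{S}$-invariant and $\mathcal{S}|_U$ is an injective, norm-preserving endomorphism of the finite-dimensional subspace $U$, it is onto, whence $\mathcal{S}U=U$ and therefore $\mathcal{S}^{-1}U=U$ as well. Consequently the self-adjoint operator $\mathcal{A} := \tfrac12(\mathcal{S}+\mathcal{S}^{-1})$ maps $U$ into $U$. The point of introducing $\mathcal{A}$ is the identity
\[
\langle\pmb{u},\mathcal{S}\pmb{u}\rangle = \langle\pmb{u},\mathcal{A}\pmb{u}\rangle, \qquad \pmb{u}\in\R^n,
\]
which holds because $\langle\pmb{u},\mathcal{S}^{-1}\pmb{u}\rangle=\langle\mathcal{S}\pmb{u},\pmb{u}\rangle=\langle\pmb{u},\mathcal{S}\pmb{u}\rangle$ by the symmetry of the real inner product. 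Thus the hypothesis says exactly that $\pmb{u}_0$ maximizes (or minimizes) the Rayleigh quotient $\pmb{u}\mapsto\langle\pmb{u},\mathcal{A}\pmb{u}\rangle$ over the unit sphere of the $\mathcal{A}$-invariant subspace $U$.

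Next I would carry out the standard first-variation argument to conclude that $\pmb{u}_0$ is an eigenvector of $\mathcal{A}|_U$. For any $\pmb{w}\in U$ with $\pmb{w}\perp\pmb{u}_0$, the unit vectors $\pmb{u}(t)=\cos t\,\pmb{u}_0+\sin t\,\pmb{w}$ lie in $U$, and $g(t):=\langle\pmb{u}(t),\mathcal{A}\pmb{u}(t)\rangle$ has an extremum at $t=0$; since $g'(0)=2\langle\pmb{w},\mathcal{A}\pmb{u}_0\rangle$ by self-adjointness of $\mathcal{A}$, extremality forces $\langle\pmb{w},\mathcal{A}\pmb{u}_0\rangle=0$ for every such $\pmb{w}$. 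Because $\mathcal{A}\pmb{u}_0\in U$, this orthogonality to the whole complement of $\pmb{u}_0$ inside $U$ yields $\mathcal{A}\pmb{u}_0=\mu\pmb{u}_0$ with $\mu=\langle\pmb{u}_0,\mathcal{A}\pmb{u}_0\rangle$. Unwinding $\mathcal{A}$ gives $\mathcal{S}\pmb{u}_0+\mathcal{S}^{-1}\pmb{u}_0=2\mu\pmb{u}_0$, and applying $\mathcal{S}$ to both sides produces
\[
\mathcal{S}^2\pmb{u}_0 = 2\mu\,\mathcal{S}\pmb{u}_0-\pmb{u}_0 \in \mathrm{span}\{\pmb{u}_0,\mathcal{S}\pmb{u}_0\}.
\]
Together with the trivial $\mathcal{S}\pmb{u}_0\in\mathrm{span}\{\pmb{u}_0,\mathcal{S}\pmb{u}_0\}$, this shows the span is $\mathcal{S}$-invariant, which is the main claim.

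Finally, to justify the interpretive clause, I would note that the span has dimension $1$ or $2$. If it is one-dimensional it is trivially an irreducible $\langle\mathcal{S}\rangle$-module; if $\pmb{u}_0$ and $\mathcal{S}\pmb{u}_0$ are independent, then on the span $\mathcal{S}$ annihilates $x^2-2\mu x+1$, which is precisely the minimal polynomial of a planar rotation appearing in Lemma \ref{Lem-SchurLemmaOnEuclideanSpace}, so the span is irreducible for $\langle\mathcal{S}\rangle$. The step I expect to be the genuine content, and the only one needing care, is the passage from the extremal property to the eigenvector relation; everything after it is formal manipulation. One subtlety to handle cleanly is that the hypothesis permits $\pmb{u}_0$ to be either a maximizer or a minimizer, but the first-variation argument uses only that $t=0$ is a critical point of $g$, so both cases are dispatched simultaneously.
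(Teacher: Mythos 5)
Your proof is correct, but it follows a genuinely different route from the paper's. The paper works representation-theoretically: it decomposes $U$ into irreducible representations of the cyclic group $\langle\mathcal{S}\rangle$, writes $\pmb{u}_0=\sum_p u_p\pmb{w}_p$, and uses the extremal hypothesis to force equality in the convex-combination bound $\langle\pmb{u}_0,\mathcal{S}\pmb{u}_0\rangle=\sum_p u_p^2\langle\pmb{w}_p,\mathcal{S}\pmb{w}_p\rangle\le\langle\pmb{u}_0,\mathcal{S}\pmb{u}_0\rangle$, so every component supporting $\pmb{u}_0$ shares the same value of $\langle\pmb{w}_p,\mathcal{S}\pmb{w}_p\rangle$; those IRs are then pairwise isomorphic, so $\mathcal{S}^2$ obeys one common relation $\mathcal{S}^2\pmb{w}_p=r'\pmb{w}_p+r''\mathcal{S}\pmb{w}_p$ on all of them, which gives $\mathcal{S}^2\pmb{u}_0\in\mathrm{span}\{\pmb{u}_0,\mathcal{S}\pmb{u}_0\}$. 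You instead symmetrize: the identity $\langle\pmb{u},\mathcal{S}\pmb{u}\rangle=\langle\pmb{u},\mathcal{A}\pmb{u}\rangle$ with $\mathcal{A}=\tfrac12(\mathcal{S}+\mathcal{S}^{-1})$ self-adjoint turns the hypothesis into Rayleigh-quotient extremality, the first-variation argument yields $\mathcal{A}\pmb{u}_0=\mu\pmb{u}_0$, and applying $\mathcal{S}$ gives $\mathcal{S}^2\pmb{u}_0=2\mu\,\mathcal{S}\pmb{u}_0-\pmb{u}_0$. Your route is more elementary (no decomposition of $U$ into IRs of $\langle\mathcal{S}\rangle$ and no classification of real IRs of cyclic groups is needed), it handles the maximizer and minimizer cases in a single stroke, and it exhibits the explicit minimal polynomial $x^2-2\mu x+1$ that plugs directly into Lemma \ref{Lem-SchurLemmaOnEuclideanSpace}. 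What the paper's route buys is a byproduct your proof does not state: all $\langle\mathcal{S}\rangle$-irreducible components meeting $\pmb{u}_0$ are mutually isomorphic, i.e. $\pmb{u}_0$ lies in a single isotypic component, which is the form of the fact exploited in the proof of Lemma \ref{Lemma-IsomorphismicIRsKeepingAnglesUp}; in your picture this is recovered by noting that the isotypic components are exactly the eigenspaces of $\mathcal{A}$ and $\pmb{u}_0$ is an $\mathcal{A}$-eigenvector. Two small points to tidy: the variation $\pmb{u}(t)=\cos t\,\pmb{u}_0+\sin t\,\pmb{w}$ stays on the unit sphere only when $\|\pmb{w}\|=1$ (then extend $\langle\pmb{w},\mathcal{A}\pmb{u}_0\rangle=0$ to all of $U\cap\pmb{u}_0^{\perp}$ by homogeneity), and in the two-dimensional case you should remark that Cauchy--Schwarz forces $|\mu|<1$ (equality would give $\mathcal{S}\pmb{u}_0=\pm\pmb{u}_0$ and collapse the span), so that $x^2-2\mu x+1$ has no real root and the span is indeed irreducible for $\langle\mathcal{S}\rangle$.
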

\begin{proof}[\bf Proof]
We prove the assertion for the case that $\left\langle \pmb{u}_0,\mathcal{S}\pmb{u}_0  \right\rangle \geq \left\langle \pmb{u},\mathcal{S}\pmb{u}  \right\rangle$ for any vector $\pmb{u}$ in $U$ with norm 1. If the inner product $\left\langle \pmb{u}_0,\mathcal{S}\pmb{u}_0  \right\rangle = 1$, our assertion holds evidently. For that reason we assume $| \left\langle \pmb{u}_0,\mathcal{S}\pmb{u}_0  \right\rangle | < 1$ in what follows.

Suppose $\oplus_{p=1}^s W_p$ is a decomposition of $U$ into IRs of the group $\langle \mathcal{S} \rangle$. The key observation is that if $\dproj{W_p}( \pmb{u}_0 ) \neq \pmb{0}$, $1\leq p \leq s$, then $\left\langle \pmb{w}_p,\mathcal{S}\pmb{w}_p  \right\rangle = \left\langle \pmb{u}_0,\mathcal{S}\pmb{u}_0 \right\rangle$, where the vector $\pmb{w}_p = \frac{1}{ \| \dproj{W_p}( \pmb{u}_0 ) \| } \cdot \dproj{W_p}( \pmb{u}_0 )$.

In fact, let us assume without losing any generality that $\pmb{u}_0 = \sum_{p=1}^r u_p\pmb{w}_p$ for some integer $r \leq s$. Then
\begin{align*}
\left\langle \pmb{u}_0,\mathcal{S}\pmb{u}_0  \right\rangle
& = \left\langle \sum_{p=1}^r u_p\pmb{w}_p,\mathcal{S}\sum_{p=1}^r u_p\pmb{w}_p  \right\rangle \\
& = \sum_{p} u_k^2 \left\langle \pmb{w}_p,\mathcal{S}\pmb{w}_p  \right\rangle \\
& \leq \left( \sum_{p} u_k^2 \right) \left\langle \pmb{u}_0,\mathcal{S}\pmb{u}_0  \right\rangle \\
& = \left\langle \pmb{u}_0,\mathcal{S}\pmb{u}_0  \right\rangle.
\end{align*}
Consequently, $\left\langle \pmb{w}_p,\mathcal{S}\pmb{w}_p  \right\rangle = \left\langle \pmb{u}_0,\mathcal{S}\pmb{u}_0 \right\rangle$, $p = 1,\ldots,r$.

On the other hand, one can readily verify that if two IRs $W'$ and $W''$ of the group $\langle \mathcal{S} \rangle$ enjoy the relation that $\exists \hspace{0.7mm} \pmb{w}' \in W'$ and $\pmb{w}'' \in W''$ {\it s.t.,} $\| \pmb{w}' \| = \| \pmb{w}'' \| = 1$ and $\left\langle \pmb{w}',\mathcal{S}\pmb{w}' \right\rangle = \left\langle \pmb{w}'',\mathcal{S}\pmb{w}'' \right\rangle$ then those two representations $W'$ and $W''$ are isomorphic with respect to $\langle \mathcal{S} \rangle$. Therefore, those IRs $W_1,\ldots,W_r$ are isomorphic to one another with respect to $\langle \mathcal{S} \rangle$.

We are now ready to show that the subspace $\mathrm{span}\{ \pmb{u}_0,\mathcal{S}\pmb{u}_0 \}$ is $\mathcal{S}$-invariant. One moment's reflection enables us to see that it is sufficient to show that $\mathcal{S}^2 \pmb{u}_0$ is also a vector in  $\mathrm{span}\{ \pmb{u}_0,\mathcal{S}\pmb{u}_0 \}$. Because those IRs $W_1,\ldots,W_r$ are isomorphic to one another with respect to $\langle \mathcal{S} \rangle$, there are two real numbers $r'$ and $r''$ such that $\mathcal{S}^2 \pmb{w}_p = r' \pmb{w}_p + r'' \mathcal{S}\pmb{w}_p$, $p = 1,\ldots,r$. As a result, 
\begin{align*}
\mathcal{S}^2 \pmb{u}_0 
& = \mathcal{S}^2 \left( \sum_{p=1}^r u_p\pmb{w}_p \right) \\
& = \sum_{p} u_p \cdot \mathcal{S}^2 \pmb{w}_p \\
& = \sum_{p} u_p \cdot \left( r' \pmb{w}_p + r'' \mathcal{S}\pmb{w}_p \right) \\
& = r' \left( \sum_{p} u_p \pmb{w}_p \right) + r'' \mathcal{S} \left( \sum_{p} u_p \pmb{w}_p \right) \\
& = r' \pmb{u}_0 + r'' \mathcal{S} \pmb{u}_0.
\end{align*}
Clearly, the vector $r' \pmb{u}_0 + r'' \mathcal{S} \pmb{u}_0$ belongs to the subspace $\mathrm{span}\{ \pmb{u}_0,\mathcal{S}\pmb{u}_0 \}$ and thus $\mathcal{S}^2 \pmb{u}_0$ also belongs to the subspace.
\end{proof}

\begin{Lemma}\label{Lemma-IsomorphismicIRsKeepingAnglesUp}
Suppose $\mathfrak{G}$ is a permutation group acting on $V$ transitively and $W'$ and $W''$ are two irreducible representations of $\mathfrak{G}$ isomorphic to one another. If $\phi$ is an $\mathfrak{G}$-module isomorphism from $W'$ to $W''$, then for any two vectors $\pmb{u}$ and $\pmb{v}$ in $W'$ with norm 1 
$$ \langle \pmb{u},\pmb{v} \rangle = \left\langle \frac{1}{ C_\phi } \phi\hspace{0.4mm}\pmb{u},\frac{1}{ C_\phi }\phi\hspace{0.4mm}\pmb{v} \right\rangle$$
where $C_\phi = \| \phi\hspace{0.4mm}\pmb{u} \| = \| \phi\hspace{0.4mm}\pmb{v} \|$. In other words, an $\mathfrak{G}$-module isomorphism between two IRs preserves angles among vectors.
\end{Lemma}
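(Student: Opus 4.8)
The plan is to show that the rescaled map $\tfrac{1}{C_\phi}\phi$ is a genuine isometry from $W'$ onto $W''$, which is precisely the angle-preserving identity asserted. The natural vehicle for this is the composite operator $\tau := \phi^*\circ\phi$ acting on $W'$, where $\phi^{*}\colon W''\to W'$ is the adjoint of $\phi$ with respect to $\langle\cdot,\cdot\rangle$. Everything then reduces to identifying $\tau$ via the Schur-type Lemma \ref{Lem-SchurLemmaOnEuclideanSpace}.

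First I would check that $\phi^{*}$ is itself an $\mathfrak{G}$-module map. Every permutation $\sigma\in\mathfrak{G}$ acts on $\R^n$ as an isometry, so its matrix is orthogonal and $\sigma^{*}=\sigma^{-1}$. From the intertwining relation $\phi\circ\sigma=\sigma\circ\phi$, valid for all $\sigma\in\mathfrak{G}$ since $\phi$ is a $\mathfrak{G}$-module map, taking adjoints gives $\sigma^{-1}\circ\phi^{*}=\phi^{*}\circ\sigma^{-1}$; letting $\sigma$ range over the whole group shows that $\phi^{*}$ commutes with every $\sigma$ as well. Hence $\tau=\phi^{*}\circ\phi$ is a $\mathfrak{G}$-module map from $W'$ to itself.

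Next I would observe that $\tau$ is self-adjoint and positive definite: $(\phi^{*}\phi)^{*}=\phi^{*}\phi$, and $\langle\tau\pmb{w},\pmb{w}\rangle=\|\phi\pmb{w}\|^{2}>0$ for every nonzero $\pmb{w}\in W'$, because the isomorphism $\phi$ is injective. Being self-adjoint on a real inner product space, $\tau$ has a real eigenvalue, so it falls into the first case of Lemma \ref{Lem-SchurLemmaOnEuclideanSpace}, which forces $\tau=C\cdot I$ for some constant $C$, with $C>0$ by positivity. This is the crux of the argument: the real form of Schur's lemma also permits the rotational alternative $C\cdot S_{\theta}$, and I expect the only genuine obstacle to be ruling that branch out — which is exactly what self-adjointness accomplishes, since a nontrivial rotation $S_\theta$ is not a self-adjoint operator.

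Finally, with $\phi^{*}\phi=C\,I$ established, the remainder is a brief computation. For unit vectors $\pmb{u},\pmb{v}\in W'$,
$$
\langle \phi\pmb{u},\phi\pmb{v} \rangle
= \langle \phi^{*}\phi\,\pmb{u},\pmb{v} \rangle
= C\,\langle \pmb{u},\pmb{v} \rangle .
$$
Taking $\pmb{u}=\pmb{v}$ shows $\|\phi\pmb{u}\|^{2}=C$ independently of the unit vector chosen, so indeed $\|\phi\pmb{u}\|=\|\phi\pmb{v}\|=C_\phi$ with $C_\phi^{2}=C$; dividing the displayed identity by $C_\phi^{2}$ yields $\bigl\langle \tfrac{1}{C_\phi}\phi\pmb{u},\tfrac{1}{C_\phi}\phi\pmb{v}\bigr\rangle=\langle \pmb{u},\pmb{v}\rangle$, as required. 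I do not anticipate needing the transitivity hypothesis beyond what is already built into the notion of irreducibility underlying Lemma \ref{Lem-SchurLemmaOnEuclideanSpace}.
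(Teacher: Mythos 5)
Your proof is correct, and it takes a genuinely different route from the paper's. You form the intertwiner $\tau=\phi^{*}\circ\phi$ on $W'$, observing that $\phi^{*}$ is again a $\mathfrak{G}$-module map (take adjoints of $\phi\circ\sigma=\sigma\circ\phi$, use that each permutation acts orthogonally so $\sigma^{*}=\sigma^{-1}$, and note that $\sigma\mapsto\sigma^{-1}$ is a bijection of $\mathfrak{G}$); self-adjointness guarantees a real eigenvalue, which places $\tau$ in the first branch of Lemma \ref{Lem-SchurLemmaOnEuclideanSpace}, and positive definiteness gives $\tau=C\,I$ with $C>0$. The identity $\langle\phi\pmb{u},\phi\pmb{v}\rangle=C\langle\pmb{u},\pmb{v}\rangle$ then yields both the constancy of $\|\phi\pmb{u}\|$ over unit vectors and the angle-preservation claim in one stroke. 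The paper argues quite differently: it fixes $\sigma\in\mathfrak{G}$, decomposes $W'$ into isotypic components $X_{1},\ldots,X_{s}$ for the cyclic group $\langle\sigma\rangle$ with the help of Lemma \ref{Lemma-IRFeatureIsometry}, notes that $\phi$ carries this to an orthogonal decomposition of $W''$, and then verifies the inner-product identity by explicit computations in adapted bases, first for vectors spread across the $X_{k}$ and then inside a single $X_{k}$ using the $\langle\sigma\rangle$-invariance of $\mathrm{span}\{\pmb{u}_{k},\sigma\pmb{u}_{k}\}$. Your argument is shorter and makes rigorous a point the paper leaves largely asserted: the step that $\|\pmb{u}\|=\|\pmb{v}\|$ implies $\|\phi\pmb{u}\|=\|\phi\pmb{v}\|$ is dismissed there as ``not difficult to see'' with a pointer to Lemma \ref{Lem-SchurLemmaOnEuclideanSpace}, whereas for you it is an immediate corollary of $\phi^{*}\phi=C\,I$. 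What the paper's longer computation buys is the cyclic-subgroup apparatus itself, which it reuses immediately afterwards in the proof of Theorem \ref{Thm-IsomorphismThmBetweenIRs}; your observation that transitivity is never actually needed applies to both arguments.
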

\begin{proof}[\bf Proof]
Let $\sigma$ be a permutation in $\mathfrak{G}$. Suppose $W' = \oplus_{k=1}^s X_k$ is the standard decomposition with respect to the circulant group $\langle \sigma \rangle$, {\it i.e.,} each subspace $X_k$ consists of all those IRs of $\langle \sigma \rangle$ isomorphic to one another and $X_i$ is not isomorphic to $X_j$ if $i \neq j$. In virtue of the Lemma \ref{Lemma-IRFeatureIsometry}, we can make a further assumption that $\left\langle \pmb{x}_i,\sigma\pmb{x}_i \right\rangle > \left\langle \pmb{x}_j,\sigma\pmb{x}_j \right\rangle$ if $i > j$, where $\pmb{x}_i$ and $\pmb{x}_j$ are two vectors in $X_i$ and $X_j$ respectively with norm 1, $1\leq i,j \leq s$.

Since $\phi$ is an $\mathfrak{G}$-module isomorphism, it is an $\langle \sigma \rangle$-module isomorphism, and thus $X_k$ is isomorphic to $\phi X_k$, $k = 1,\ldots,s$. Furthermore, one can easily see that $\phi X_i \perp \phi X_j$ if $i \neq j$. 

In accordance with the characterization of $\mathfrak{G}$-module map on IRs, which is summarized in Lemma \ref{Lem-SchurLemmaOnEuclideanSpace}, it is not difficult to see that if $\pmb{u}$ and $\pmb{v}$ are two vectors in $W'$ such that $\| \pmb{u} \| = \| \pmb{v} \|$ then $\| \phi \hspace{0.4mm} \pmb{u} \| = \| \phi \hspace{0.4mm} \pmb{v} \|$. 

To establish the relation we claim, the key is to show that if a group of vectors $\{ \pmb{b}_{k,1}',\ldots,\pmb{b}_{k,d_k}' \mid k = 1,\ldots,s \}$ constitute an orthonormal basis of $W'$, where those vectors $\pmb{b}_{k,1}',\ldots,\pmb{b}_{k,d_k}'$ belong to $X_k$, then $\{ \phi \hspace{0.4mm} \pmb{b}_{k,1}',\ldots,\phi \hspace{0.4mm}\pmb{b}_{k,d_k}' \mid k = 1,\ldots,s \}$ are also a group of orthogonal vectors. In fact, if $\pmb{u} = \sum_{k,i} u_{k,i} \pmb{b}_{k,i}'$ and $\pmb{v} = \sum_{k,i} v_{k,i} \pmb{b}_{k,i}'$ then
\begin{align*}
\left\langle \pmb{u},\pmb{v} \right\rangle
& = \left\langle \sum_{k,i} u_{k,i} \pmb{b}_{k,i}',\sum_{k,i} v_{k,i} \pmb{b}_{k,i}' \right\rangle \\
& = \sum_{k,i} u_{k,i} \cdot v_{k,i}  \\
& = \sum_{k,i} \left\langle 
u_{k,i} \cdot \frac{1}{ \| \phi \hspace{0.4mm} \pmb{b}_{k,i}' \| } \phi \hspace{0.4mm} \pmb{b}_{k,i}',
v_{k,i} \cdot \frac{1}{ \| \phi \hspace{0.4mm} \pmb{b}_{k,i}' \| } \phi \hspace{0.4mm} \pmb{b}_{k,i}'
\right\rangle \\
& = \left\langle 
\sum_{k,i} u_{k,i} \frac{1}{ \| \phi \hspace{0.4mm} \pmb{b}_{k,i}' \| } \phi \hspace{0.4mm} \pmb{b}_{k,i}',
\sum_{k,i} v_{k,i} \frac{1}{ \| \phi \hspace{0.4mm} \pmb{b}_{k,i}' \| } \phi \hspace{0.4mm} \pmb{b}_{k,i}'
\right\rangle \\
& = \left\langle 
\frac{1}{ \| \phi \hspace{0.4mm} \pmb{b}_{k,i}' \| } \sum_{k,i} u_{k,i} \phi \hspace{0.4mm} \pmb{b}_{k,i}',
\frac{1}{ \| \phi \hspace{0.4mm} \pmb{b}_{k,i}' \| } \sum_{k,i} v_{k,i} \phi \hspace{0.4mm} \pmb{b}_{k,i}'
\right\rangle \\
& = \left\langle 
\frac{1}{ \| \phi \hspace{0.4mm} \pmb{b}_{k,i}' \| } \phi\hspace{0.4mm}\pmb{u},
\frac{1}{ \| \phi \hspace{0.4mm} \pmb{b}_{k,i}' \| } \phi\hspace{0.4mm}\pmb{v}
\right\rangle \\
& =  \left\langle 
\frac{1}{ \| \phi \hspace{0.4mm} \pmb{u} \| } \phi\hspace{0.4mm}\pmb{u},
\frac{1}{ \| \phi \hspace{0.4mm} \pmb{v} \| } \phi\hspace{0.4mm}\pmb{v}
\right\rangle.
\end{align*}
As a result, in order to prove the assertion, it is sufficient to show that if $\pmb{u}_k$ and $\pmb{v}_k$ are two vectors in $X_k$ with norm 1, $1 \leq k \leq s$, then $\left\langle \pmb{u}_k,\pmb{v}_k \right\rangle = \left\langle \frac{1}{ \| \phi\hspace{0.4mm} \pmb{u}_k \|} \phi\hspace{0.4mm}\pmb{u}_k,\frac{1}{ \| \phi \hspace{0.4mm} \pmb{v}_k \|}\phi\hspace{0.4mm}\pmb{v}_k \right\rangle$.

According to the definition to $X_k$ and Lemma \ref{Lemma-IRFeatureIsometry}, the subspace $\mathrm{span} \hspace{0.4mm} \{ \pmb{u}_k,\sigma\pmb{u}_k \}$ is irreducible for $\langle \sigma \rangle$. Because $\phi$ is an $\mathfrak{G}$-module isomorphism,  
$$
\left\langle \pmb{u}_k,\sigma\pmb{u}_k \right\rangle = \left\langle \frac{1}{ \| \phi\hspace{0.4mm} \pmb{u}_k \|} \phi\hspace{0.4mm}\pmb{u}_k,\frac{1}{ \| \phi \hspace{0.4mm} (\sigma\pmb{u}_k) \|}\phi\hspace{0.4mm} (\sigma\pmb{u}_k) \right\rangle.
$$
Suppose $\dproj{ \mathrm{span} \hspace{0.4mm} \{ \pmb{u}_k,\sigma\pmb{u}_k \} }( \pmb{v}_k ) = w_1 \pmb{u}_k + w_2 \sigma\pmb{u}_k $.  Then
\begin{align*}
\left\langle \pmb{u}_k,\pmb{v}_k \right\rangle 
& = \left\langle \pmb{u}_k, \dproj{ \mathrm{span} \hspace{0.4mm} \{ \pmb{u}_k,\sigma\pmb{u}_k \} }( \pmb{v}_k )
+ \dproj{ \big( \mathrm{span} \hspace{0.4mm} \{ \pmb{u}_k,\sigma\pmb{u}_k \} \big)^\perp }( \pmb{v}_k ) \right\rangle \\
& = \left\langle \pmb{u}_k,w_1 \pmb{u}_k + w_2 \sigma\pmb{u}_k \right\rangle  \\
& = w_1 + w_2 \cdot \left\langle \pmb{u}_k,\sigma\pmb{u}_k \right\rangle  \\
& = w_1 \cdot \left\langle \frac{1}{ \| \phi\hspace{0.4mm} \pmb{u}_k \|} \phi\hspace{0.4mm}\pmb{u}_k,\frac{1}{ \| \phi \hspace{0.4mm} \pmb{u}_k \|}\phi\hspace{0.4mm} \pmb{u}_k \right\rangle
 + w_2 \cdot \left\langle \frac{1}{ \| \phi\hspace{0.4mm} \pmb{u}_k \|} \phi\hspace{0.4mm}\pmb{u}_k,\frac{1}{ \| \phi \hspace{0.4mm} (\sigma\pmb{u}_k) \|}\phi\hspace{0.4mm} (\sigma\pmb{u}_k) \right\rangle \\ 
& = \left\langle \frac{1}{ \| \phi\hspace{0.4mm} \pmb{u}_k \|} \phi\hspace{0.4mm}\pmb{u}_k, 
w_1 \frac{1}{ \| \phi\hspace{0.4mm} \pmb{u}_k \|} \phi\hspace{0.4mm}\pmb{u}_k + 
w_2 \frac{1}{ \| \phi \hspace{0.4mm} (\sigma\pmb{u}_k) \|}\phi\hspace{0.4mm} (\sigma\pmb{u}_k)
\right\rangle \\
& = \left\langle \frac{1}{ \| \phi\hspace{0.4mm} \pmb{u}_k \|} \phi\hspace{0.4mm}\pmb{u}_k, 
 \frac{1}{ \| \phi\hspace{0.4mm} \pmb{u}_k \|} \left( w_1 \phi\hspace{0.4mm}\pmb{u}_k + w_2 \phi\hspace{0.4mm} (\sigma\pmb{u}_k) \right)
\right\rangle \\
& = \left\langle \frac{1}{ \| \phi\hspace{0.4mm} \pmb{u}_k \|} \phi\hspace{0.4mm}\pmb{u}_k,\frac{1}{ \| \phi \hspace{0.4mm} \pmb{v}_k \|}\phi\hspace{0.4mm} \pmb{v}_k \right\rangle.
\end{align*}
The last equation holds because the subspace $\hspace{0.4mm}\mathrm{span} \hspace{0.4mm} \{ \phi \hspace{0.4mm}  \pmb{u}_k,\phi ( \sigma\pmb{u}_k ) \}$ is $\langle\sigma\rangle$-invariant.
\end{proof}

\begin{Theorem}[\bf Isomorphism Theorem]\label{Thm-IsomorphismThmBetweenIRs}
Let $G$ be a vertex-transitive graph with automorphism group $\mathfrak{G}$ and let $W'$ and $W''$ be two irreducible representations of $\mathfrak{G}$. Then the following statements are equivalent.

\begin{enumerate}
\item[i)] $W'$ and $W''$ are isomorphic representations;

\item[ii)] The angle between $\dproj{W'}(\pmb{e}_u)$ and $\dproj{W'}(\pmb{e}_v)$ is equal to the angle between $\dproj{W''}(\pmb{e}_u)$ and $\dproj{W''}(\pmb{e}_v)$, $\forall u,v \in V(G)$, i.e.,
$$
\left\langle \pmb{w}_u',\pmb{w}_v' \right\rangle = \left\langle \pmb{w}_u'',\pmb{w}_v'' \right\rangle,
$$
where $\pmb{w}_x' = \frac{1}{\| \dproj{W'}(\pmb{e}_x) \|}\dproj{W'}(\pmb{e}_x)$ and $\pmb{w}_x'' = \frac{1}{\| \dproj{W'}(\pmb{e}_x) \|}\dproj{W''}(\pmb{e}_x)$,  $x = u$ or $v$.

\item[iii)] The map $\phi : \pmb{w}_{v}' \mapsto \pmb{w}_{v}''$, $v\in V(G)$, is an invertiblly linear map.

\end{enumerate}
\end{Theorem}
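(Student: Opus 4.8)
The plan is to prove the three statements equivalent by establishing the cycle i) $\Rightarrow$ ii) $\Rightarrow$ iii) $\Rightarrow$ i). Before entering the cycle I would record two consequences of vertex-transitivity used throughout. First, for any $\mathfrak{G}$-invariant subspace $W$ and any $\sigma\in\mathfrak{G}$ one has $\sigma\,\dproj{W}(\pmb{e}_v)=\dproj{W}(\pmb{e}_{\sigma v})$, since $\sigma$ is an isometry commuting with $\dproj{W}$ by Lemma \ref{ProjOperatorCommutative}. Because $\mathfrak{G}$ is transitive, all the vectors $\dproj{W}(\pmb{e}_v)$ ($v\in V$) then share a common norm, and none vanishes (otherwise $W\perp\pmb{e}_v$ for every $v$, forcing $W=\{\pmb{0}\}$). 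Hence the normalized vectors $\pmb{w}_v'$ and $\pmb{w}_v''$ are well defined, each family has constant length, and $\sigma\,\pmb{w}_v'=\pmb{w}_{\sigma v}'$, $\sigma\,\pmb{w}_v''=\pmb{w}_{\sigma v}''$.

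The two routine links are ii) $\Rightarrow$ iii) and iii) $\Rightarrow$ i). For ii) $\Rightarrow$ iii), note that ii) asserts that the families $\{\pmb{w}_v'\}$ and $\{\pmb{w}_v''\}$ have the same Gram matrix, so for any scalars $\lambda_v$ we get $\|\sum_v\lambda_v\pmb{w}_v'\|^2=\sum_{u,v}\lambda_u\lambda_v\langle\pmb{w}_u',\pmb{w}_v'\rangle=\sum_{u,v}\lambda_u\lambda_v\langle\pmb{w}_u'',\pmb{w}_v''\rangle=\|\sum_v\lambda_v\pmb{w}_v''\|^2$; thus $\sum_v\lambda_v\pmb{w}_v'=\pmb{0}$ iff $\sum_v\lambda_v\pmb{w}_v''=\pmb{0}$. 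Since $\{\pmb{w}_v'\}$ spans $W'$ and $\{\pmb{w}_v''\}$ spans $W''$, the assignment $\pmb{w}_v'\mapsto\pmb{w}_v''$ extends to a well-defined invertible linear map, giving iii). For iii) $\Rightarrow$ i), the map $\phi$ of iii) is $\mathfrak{G}$-equivariant: using the transport relations above, $\phi(\sigma\,\pmb{w}_v')=\phi(\pmb{w}_{\sigma v}')=\pmb{w}_{\sigma v}''=\sigma\,\phi(\pmb{w}_v')$, and as the $\pmb{w}_v'$ span $W'$ the identity $\phi\circ\sigma=\sigma\circ\phi$ holds on all of $W'$; being invertible, $\phi$ is a $\mathfrak{G}$-module isomorphism, so $W'\cong W''$.

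The substantive step is i) $\Rightarrow$ ii). Let $\phi:W'\to W''$ be a $\mathfrak{G}$-module isomorphism. The crux is to show that $\phi$ carries each projection $\dproj{W'}(\pmb{e}_v)$ to a \emph{uniform} scalar multiple of $\dproj{W''}(\pmb{e}_v)$. When $\mathfrak{G}_v$ is nontrivial I would argue as follows. For $\xi\in\mathfrak{G}_v$ we have $\xi\,\dproj{W'}(\pmb{e}_v)=\dproj{W'}(\xi\,\pmb{e}_v)=\dproj{W'}(\pmb{e}_v)$, so $\dproj{W'}(\pmb{e}_v)$ lies in the fixed space $W'[v]=\{\pmb{x}\in W':\xi\pmb{x}=\pmb{x},\ \forall\xi\in\mathfrak{G}_v\}$, which is one-dimensional by Lemma \ref{Lemma-IRsFeatureGvNonTrivial}; likewise $\dproj{W''}(\pmb{e}_v)$ spans the one-dimensional $W''[v]$. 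Since $\phi$ commutes with every $\xi\in\mathfrak{G}_v$, it maps $W'[v]$ into $W''[v]$, whence $\phi(\dproj{W'}(\pmb{e}_v))=c_v\,\dproj{W''}(\pmb{e}_v)$. Transitivity makes $c_v$ constant: if $\sigma v=u$ then $\phi(\dproj{W'}(\pmb{e}_u))=\phi(\sigma\,\dproj{W'}(\pmb{e}_v))=\sigma\,\phi(\dproj{W'}(\pmb{e}_v))=c_v\,\dproj{W''}(\pmb{e}_u)$, so $c_u=c_v=:c$. Thus $\phi(\pmb{w}_v')=c\,\pmb{w}_v''$ with a fixed $c$, and Lemma \ref{Lemma-IsomorphismicIRsKeepingAnglesUp} says that after rescaling to unit length $\phi$ preserves angles; since $c$ is uniform the common scalar cancels upon normalization in each inner product, yielding $\langle\pmb{w}_u',\pmb{w}_v'\rangle=\langle\pmb{w}_u'',\pmb{w}_v''\rangle$, i.e. ii).

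The main obstacle is the case of \emph{trivial} point stabilizers ($\mathfrak{G}_v=\{1\}$, a regular action), where $W'[v]$ is all of $W'$ and the one-dimensionality argument collapses. Here I would replace $W'[v]$ by the block-fixed space $W'[B]$ attached to a minimal block $B$: by Lemma \ref{Lem-BlockStabilizerCharacterization-GvTrivial} the stabilizer $\mathfrak{G}_B$ is circulant of prime order, and Lemma \ref{Lemma-IRsFeatureGvTrivial} together with Proposition \ref{Prop-IRsFeatureGvTrivial} shows that $W'[B]$ is irreducible for $\mathfrak{G}_B$ of dimension $1$ or $2$. Running the transport argument with $W'[B]$ in place of $W'[v]$ — tracking that $\phi$ carries $W'[B]$ onto $W''[B]$ and that the vectors built from $\dproj{W'}(\pmb{e}_b)$ correspond — is where the real care is needed, since the line through a single projection is no longer canonical and one must instead match the whole (possibly two-dimensional) $\mathfrak{G}_B$-isotypic data before invoking angle preservation. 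I expect this regular-action bookkeeping, rather than the transitive reductions, to be the delicate part of the proof.
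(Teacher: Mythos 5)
Your overall route coincides with the paper's: the same cycle i) $\Rightarrow$ ii) $\Rightarrow$ iii) $\Rightarrow$ i), the same dichotomy on point stabilizers, and the same supporting results (Lemmas \ref{ProjOperatorCommutative}, \ref{Lemma-IRsFeatureGvNonTrivial}, \ref{Lemma-IRsFeatureGvTrivial}, \ref{Lem-BlockStabilizerCharacterization-GvTrivial}, and the angle-preservation Lemma \ref{Lemma-IsomorphismicIRsKeepingAnglesUp}). Your ii) $\Rightarrow$ iii) (equal Gram matrices give matching kernels, hence a well-defined invertible extension) and iii) $\Rightarrow$ i) (equivariance checked on the spanning family $\{\pmb{w}_v'\}$) are correct and agree with the paper's arguments. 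In the non-trivial stabilizer half of i) $\Rightarrow$ ii) your treatment is in fact cleaner than the paper's: from $\dim W'[v]=\dim W''[v]=1$ you get $\phi\big(\dproj{W'}(\pmb{e}_v)\big)=c_v\,\dproj{W''}(\pmb{e}_v)$, and equivariance plus transitivity forces $c_v$ to be one uniform constant, which cancels upon normalization. The paper instead normalizes $\phi$ so that each projection maps to $\pm$ the corresponding projection and then runs a contradiction argument (with a permutation $\gamma$ carrying $u$ to $u^*$) to rule out vertex-dependent signs; your uniform-scalar observation makes that detour unnecessary.

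The genuine gap is the trivial-stabilizer case of i) $\Rightarrow$ ii), which you outline but never execute, and it is not dispensable bookkeeping: it is exactly the case in which $\phi\big(\dproj{W'}(\pmb{e}_v)\big)$ is no longer forced to be parallel to $\dproj{W''}(\pmb{e}_v)$, so the implication is unproved there. The paper completes it in three concrete steps that your sketch stops short of. First, when $\dim W'[B]=1$ the one-dimensional argument applies verbatim, so one may assume $\dim W'[B]=2$. Second, $\phi$ commutes with every element of $\mathfrak{G}_B$ and hence preserves the defining identities of $W'[B]$; since $\dim W'[B]=\dim W''[B]=2$ and $\phi$ is injective, $\phi\, W'[B]=W''[B]$, so $W'[B]\cong W''[B]$ as modules over the circulant group $\mathfrak{G}_B=\langle\xi\rangle$ of prime order. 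Third --- the step actually missing from your plan --- isomorphic two-dimensional real irreducibles of $\langle\xi\rangle$ realize $\xi$ as a rotation with the same cosine, so $\langle \pmb{w}_b',\xi\pmb{w}_b'\rangle=\langle \pmb{w}_b'',\xi\pmb{w}_b''\rangle$; combined with the transport relation $\pmb{w}_{\xi^k b}'=\xi^k\pmb{w}_b'$ and the transitivity of $\mathfrak{G}_B$ on $B$, this yields $\langle \pmb{w}_s',\pmb{w}_t'\rangle=\langle \pmb{w}_s'',\pmb{w}_t''\rangle$ for all $s,t\in B$, i.e. the Gram matrices already agree on the block. Only after this block-level matching can one arrange a $\mathfrak{G}$-module isomorphism sending $\dproj{W'}(\pmb{e}_v)$ to $\dproj{W''}(\pmb{e}_v)$ and invoke Lemma \ref{Lemma-IsomorphismicIRsKeepingAnglesUp} to propagate the equality from $B$ to all of $V(G)$. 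Your proposal correctly predicts that this is the delicate part, but predicting the difficulty is not the same as resolving it; as written, the regular-action case remains open in your argument.
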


\begin{proof}[\bf Proof]
Let us begin with the relation that $ i) \Rightarrow ii)$ and consider the case that $\mathfrak{G}_x$ is non-trivial, where $x$ is a vertex of $G$. We first single out a kind of subspaces of $W'$ relevant to $\mathfrak{G}$-module isomorphism. Set $W'[x] = \{ \pmb{w} \in W' \mid \xi\hspace{0.4mm}\pmb{w} = \pmb{w}, \forall \xi\in\mathfrak{G}_x  \}$. Similarly, we can define the subspace $W''[x]$ in $W''$.

The key observation is that if $\phi$ is an $\mathfrak{G}$-module isomorphism from $W'$ to $W''$ then $\phi\hspace{0.4mm} W'[x] = W''[x]$, $\forall x\in V(G)$. In fact, for any non-trivial vector $\pmb{z}_x'$ in $W'[x]$ and permutation $\zeta$ in $\mathfrak{G}_x$, $\zeta( \phi\hspace{0.4mm}\pmb{z}_x' ) = \phi\circ\zeta\hspace{0.4mm}\pmb{z}_x' = \phi\hspace{0.4mm}\pmb{z}_x'$, so $\phi\hspace{0.4mm}\pmb{z}_x'$ belongs to $W''[x]$, so $\phi\hspace{0.4mm} W'[x] \subseteq W''[x]$. Moreover, in accordance with Lemma \ref{Lemma-IRsFeatureGvNonTrivial},  $\dim W'[x] = \dim W''[x] = 1$. Hence $\phi\hspace{0.4mm} W'[x] = W''[x]$.

On the other hand, it is easy to check that $\dproj{W'}( \pmb{e}_x )$ belongs to $W'[x]$. Consequently, $W'[x] = \mathrm{span}\hspace{0.5mm}\big\{ \dproj{W'}( \pmb{e}_x ) \big\}$. Similarly, $W''[x] = \mathrm{span}\hspace{0.5mm}\big\{ \dproj{W''}( \pmb{e}_x ) \big\}$. Accordingly, there would be an $\mathfrak{G}$-module map $\phi$ such that 
$$
\phi \left( \dproj{W'}( \pmb{e}_x ) \right) = \dproj{W'}( \pmb{e}_x ) \mbox{ or } - \dproj{W'}( \pmb{e}_x ), ~ x\in V(G).
$$

Because the $\mathfrak{G}$-module isomorphism $\phi$ preserves angles among vectors, if there exists one vertex $u$ in $V(G)$ such that $\phi ( \dproj{W'}( \pmb{e}_u ) ) = \dproj{W''}( \pmb{e}_u )$ then $\phi ( \dproj{W'}( \pmb{e}_v ) ) = \dproj{W''}( \pmb{e}_v )$ for any vertex $v$ in $V(G)$. Suppose it is not the case, {\it i.e.,} there exists some vertex $u^*$ so that $\phi ( \dproj{W'}( \pmb{e}_{u^*} ) ) = - \dproj{W''}( \pmb{e}_{u^*} )$. 

Let $\gamma$ be one of permutations in $\mathfrak{G}$ such that $\gamma u = u^*$ and let $\pmb{w}_u' = \frac{1}{ \| \dproj{W'}( \pmb{e}_u ) \| } \dproj{W'}( \pmb{e}_u )$ and $\pmb{w}_{u^*}' = \frac{1}{ \| \dproj{W'}( \pmb{e}_{u^*} ) \| } \dproj{W'}( \pmb{e}_{u^*} )$. On the one hand, according to Lemma \ref{Lemma-IsomorphismicIRsKeepingAnglesUp},
\begin{align*}
\left\langle \pmb{w}_u',\pmb{w}_{u^*}' \right\rangle 
& =  \left\langle \frac{1}{ \| \phi\hspace{0.4mm} \pmb{w}_u' \| } \phi\hspace{0.4mm}\pmb{w}_u',\frac{1}{ \| \phi\hspace{0.4mm}\pmb{w}_{u^*}' \| } \phi\hspace{0.4mm}\pmb{w}_{u^*}' \right\rangle \\
& = \left\langle  \pmb{w}_u'',-\pmb{w}_{u^*}'' \right\rangle \\
& = - \left\langle \pmb{w}_u'',\pmb{w}_{u^*}'' \right\rangle,
\end{align*}
where $\pmb{w}_u'' = \frac{1}{ \| \dproj{W''}( \pmb{e}_u ) \| } \dproj{W''}( \pmb{e}_u )$ and $\pmb{w}_{u^*}'' = \frac{1}{ \| \dproj{W''}( \pmb{e}_{u^*} ) \| } \dproj{W''}( \pmb{e}_{u^*} )$.

On the other hand, since $W'$ and $W''$ are two $\mathfrak{G}$-invariant subspaces,
\begin{align*}
\left\langle \pmb{w}_u',\pmb{w}_{u^*}' \right\rangle 
& = \left\langle \pmb{w}_u',\pmb{w}_{\gamma\hspace{0.4mm} u}' \right\rangle \\
& = \left\langle \pmb{w}_u',\gamma\hspace{0.4mm} \pmb{w}_{u}' \right\rangle \\
& = \left\langle 
\frac{1}{ \| \phi\hspace{0.4mm} \pmb{w}_u' \| } \phi\hspace{0.4mm}\pmb{w}_u',
\frac{1}{ \| \phi\hspace{0.4mm} \big( \gamma\hspace{0.4mm} \pmb{w}_{u}' \big) \| } \phi\hspace{0.4mm} \big( \gamma\hspace{0.4mm} \pmb{w}_{u}' \big) 
\right\rangle \\
& = \left\langle 
\frac{1}{ \| \phi\hspace{0.4mm} \pmb{w}_u' \| } \phi\hspace{0.4mm}\pmb{w}_u',
\frac{1}{ \| \gamma \hspace{0.4mm} \big( \phi \hspace{0.4mm} \pmb{w}_{u}' \big) \| } \gamma \hspace{0.4mm} \big( \phi \hspace{0.4mm} \pmb{w}_{u}' \big) 
\right\rangle \\
& = \left\langle 
\pmb{w}_u'',\gamma \hspace{0.4mm} \pmb{w}_u''
\right\rangle \\
& = \left\langle\pmb{w}_u'',\pmb{w}_{u^*}'' \right\rangle.
\end{align*}
This is a contradiction provided that $\left\langle \pmb{w}_u',\pmb{w}_{u^*}' \right\rangle \neq 0$. In the case that $\left\langle \pmb{w}_u',\pmb{w}_{u^*}' \right\rangle = 0$, which means $W'[u] = W'[u^*]$, one can prove the relation by the same argument but  selecting vertices with fixing subspace not the same as $W'[u]$. Hence if there exists one vertex $u$ in $V(G)$ such that $\phi ( \dproj{W'}( \pmb{e}_u ) ) = \dproj{W''}( \pmb{e}_u )$ then $\phi ( \dproj{W'}( \pmb{e}_v ) ) = \dproj{W''}( \pmb{e}_v )$ for any vertex $v$ in $V(G)$. In virtue of Lemma \ref{Lemma-IsomorphismicIRsKeepingAnglesUp}, we have $
\left\langle \pmb{w}_u',\pmb{w}_v' \right\rangle = \left\langle \pmb{w}_u'',\pmb{w}_v'' \right\rangle,
$ $\forall u, v \in V(G)$, in the case that $\mathfrak{G}_x$ is not trivial.

We now turn to the case that $\mathfrak{G}_x$ only contains the identity of $\mathfrak{G}$. Let $B$ be one of minimal blocks for $\mathfrak{G}$. Instead of featuring relation among subspaces $W'[v]s$, we consider an alternative $W'[B]$ relevant to $\mathfrak{G}_B$, which is defined as 
$$
\left\{ \pmb{w} \in W' \mid \zeta \left( \sum_{k=1}^{|B|} \xi^k \right) \pmb{w} 
= \left( \sum_{k=1}^{|B|} \xi^k \right) \pmb{w}, 
~~\forall \xi\in \mathfrak{G}_B\setminus\{1\} \mbox{ and } \zeta\in\mathfrak{G}_B  \right\}.
$$
According to Lemma \ref{Lemma-IRsFeatureGvTrivial}, there are only two possibilities for $W'[B]$, {\it i.e.,}
$\dim W'[B] = 1$ or $2$. It is not difficult to see that in the case that $\dim W'[B] = 1$ one can employ the same idea in dealing with the case above to establish the relation, so let us consider the case that $\dim W'[B] = 2$ in what follows. 

We first show that $\phi\hspace{0.4mm} W'[B] = W''[B]$. Suppose $\pmb{z}'$ is a vector in $W'[B]$. Then 
$$
\zeta \left( \sum_{k=1}^{|B|} \xi^k \right) \big( \phi\hspace{0.4mm}\pmb{z}' \big)
= \phi \circ \zeta \left( \sum_{k=1}^{|B|} \xi^k \right) \pmb{z}'
= \phi \left( \sum_{k=1}^{|B|} \xi^k \right) \pmb{z}'
= \left( \sum_{k=1}^{|B|} \xi^k \right) \big( \phi\pmb{z}' \big),
$$ 
where $\xi\in \mathfrak{G}_B\setminus\{1\}$ and $\zeta\in\mathfrak{G}_B$. Hence, $\phi\hspace{0.4mm} W'[B] \subseteq W''[B]$. Note that $\phi$ is an $\mathfrak{G}$-module isomorphism from $W'$ to $W''$ and $\dim W'[B] = \dim W''[B] = 2$, so $\phi\hspace{0.4mm} W'[B] = W''[B]$. Therefore these two irreducible representations $W'[B]$ and $W''[B]$ are isomorphic with respect to the stabilizer $\mathfrak{G}_B$.

In accordance with Lemma \ref{Lem-BlockStabilizerCharacterization-GvTrivial}, $\mathfrak{G}_B$ is a circulant group of prime order, so for any $b \in B$ and $\xi \in \mathfrak{G}_B \setminus \{ 1 \}$, 
$$
\left\langle \pmb{w}_b',\xi\hspace{0.4mm}\pmb{w}_{b}' \right\rangle 
= \left\langle \pmb{w}_b'',\xi\hspace{0.4mm}\pmb{w}_{b}'' \right\rangle,
$$
where $\pmb{w}_b' = \frac{1}{ \| \dproj{W'}( \pmb{e}_b ) \| } \dproj{W'}( \pmb{e}_b )$ and $\pmb{w}_b'' = \frac{1}{ \| \dproj{W''}( \pmb{e}_b ) \| } \dproj{W''}( \pmb{e}_b )$. Consequently, 
$$
\left\langle \pmb{w}_s',\pmb{w}_{t}' \right\rangle 
= \left\langle \pmb{w}_s'',\pmb{w}_{t}'' \right\rangle, ~ \forall s,t \in B.
$$
As a result, there could be an $\mathfrak{G}$-module isomorphism $\phi$ so that $\phi : \dproj{W'}( \pmb{e}_v ) \mapsto \dproj{W''}( \pmb{e}_v )$, $v\in V(G)$, so we can hold the desire by means of Lemma \ref{Lemma-IsomorphismicIRsKeepingAnglesUp}.

\vspace{3mm}
We now show the relation that $ii) \Rightarrow iii)$. Because $W'$ is irreducible, 
$$
W' = \mathrm{span}\hspace{0.4mm} \left\{ \dproj{W'}( \pmb{e}_x ) \mid x \in V(G) \right\},
$$ 
so let us assume that $\pmb{w}_{v_1}',\ldots,\pmb{w}_{v_d}'$ constitute a basis of $W'$, where $\pmb{w}_{v_i}' = \frac{1}{ \| \dproj{W'}( \pmb{e}_{v_i} ) \| } \dproj{W'}( \pmb{e}_{v_i} )$, $i = 1,\ldots,d$. We now define a linear map $\psi : \pmb{w}_{v_k}' \mapsto \pmb{w}_{v_k}''$, $k = 1,\ldots,d$. Let $\pmb{x}$ and $\pmb{y}$ be two vectors in $W'$ with non-trivial norms. Suppose $\pmb{x} = \sum_{k=1}^d x_k \pmb{w}_{v_k}'$ and $\pmb{y} = \sum_{k=1}^d y_k \pmb{w}_{v_k}'$. Then 
\begin{align*}
\left\langle \pmb{x},\pmb{y} \right\rangle 
& = \left\langle \sum_{k=1}^d x_k \pmb{w}_{v_k}',\sum_{k=1}^d y_k \pmb{w}_{v_k}' \right\rangle \\
& = \sum_{i,j} x_i y_j \left\langle \pmb{w}_{v_i}',\pmb{w}_{v_j}' \right\rangle \\
& = \sum_{i,j} x_i y_j \left\langle \pmb{w}_{v_i}'',\pmb{w}_{v_j}'' \right\rangle \\
& = \sum_{i,j} x_i y_j \left\langle \psi\hspace{0.4mm}\pmb{w}_{v_i}',\psi\hspace{0.4mm}\pmb{w}_{v_j}' \right\rangle \\
& = \left\langle \sum_{k=1}^d x_k\cdot \psi\hspace{0.4mm} \pmb{w}_{v_k}',\sum_{k=1}^d y_k\cdot \psi\hspace{0.4mm} \pmb{w}_{v_k}' \right\rangle \\
& = \left\langle \psi\hspace{0.4mm} \pmb{x},\psi\hspace{0.4mm} \pmb{y} \right\rangle.
\end{align*}
Hence the linear map $\psi$ preserves the inner product and thus $\psi$ is a linear isomorphism between subspaces $W'$ and $W''$. Therefore, the group of vectors $\pmb{w}_{v_1}'',\ldots,\pmb{w}_{v_d}''$ comprise a basis of $W''$. 

Let us show that $\psi\big( \pmb{w}_{x}' \big) = \pmb{w}_{x}''$, $\forall x \in V(G)$. In fact, for any positive integer $k$ not greater that $d$, we have 
\begin{align*}
\left\langle \psi\hspace{0.4mm} \pmb{w}_x',\pmb{w}_{v_k}'' \right\rangle 
& = \left\langle \psi\hspace{0.4mm} \pmb{w}_x', \psi\hspace{0.4mm} \pmb{w}_{v_k}' \right\rangle \\
& = \left\langle \hspace{0.4mm} \pmb{w}_x',\hspace{0.4mm} \pmb{w}_{v_k}' \right\rangle \\
& = \left\langle \hspace{0.4mm} \pmb{w}_x'',\hspace{0.4mm} \pmb{w}_{v_k}'' \right\rangle.
\end{align*}
Accordingly, $\left\langle \psi\hspace{0.4mm} \pmb{w}_x' - \pmb{w}_x'',\pmb{w}_{v_k}'' \right\rangle = 0$, which means
$$
\big( \psi\hspace{0.4mm} \pmb{w}_x' - \pmb{w}_x'' \big) \perp \pmb{w}_{v_k}'', ~ \forall k \in [d].
$$
Since those vectors $\pmb{w}_{v_1}'',\ldots,\pmb{w}_{v_d}''$ constitute a basis of $W''$, the vector $\psi\hspace{0.4mm} \pmb{w}_x' - \pmb{w}_x''$ must vanish, so $\psi\hspace{0.4mm} \pmb{w}_x' = \pmb{w}_x''$.

\vspace{3mm}
Finally, we show the relation that $iii) \Rightarrow i)$. The only thing we need to do is to verify that $\sigma\circ\phi = \phi\circ\sigma$, $\forall \sigma\in\mathfrak{G}$. Let us begin with a particular group of vectors $\{ \pmb{w}_v' \mid \pmb{w}_v' =  \frac{1}{ \| \dproj{W'}( \pmb{e}_ v ) \| } \dproj{W'}( \pmb{e}_ v )$ and $v \in V(G) \}$. In accordance with the definition to the linear map $\phi$, we have
$$
\sigma\circ\phi \big( \pmb{w}_v' \big) 
= \sigma \big(  \pmb{w}_v'' \big) 
= \pmb{w}_{\sigma v}''
= \phi \big( \pmb{w}_{\sigma v}' \big) 
= \phi\circ\sigma \big( \pmb{w}_{v}' \big).
$$ 

Note that $W'$ is irreducible according to our assumption, so $W'$ is spanned by the group $\{ \pmb{w}_v' \mid \pmb{w}_v' =  \frac{1}{ \| \dproj{W'}( \pmb{e}_ v ) \| } \dproj{W'}( \pmb{e}_ v )$ and $v \in V(G) \}$. Thus we can assume that $\pmb{w}_{v_1}',\ldots,\pmb{w}_{v_d}'$ constitute a basis of $W'$. On the other hand, the map $\phi$ is an isomorphism between two subspaces $W'$ and $W''$ due to our assumption, so $\pmb{w}_{v_1}'',\ldots,\pmb{w}_{v_d}''$ comprise a basis of $W''$. 

Let $\pmb{x}$ be a vector in $W'$. Suppose $\pmb{x} = \sum_{k=1}^d x_k \pmb{w}_{v_k}'$. Then 
\begin{align*}
\sigma\circ\phi \left( \pmb{x} \right) 
& = \sigma\circ\phi \left( \sum_{k} x_k \pmb{w}_{v_k}' \right)  \\
& = \sigma \left( \sum_{k} x_k \phi\hspace{0.4mm} \pmb{w}_{v_k}' \right)  \\
& = \sigma \left( \sum_{k} x_k \pmb{w}_{v_k}'' \right)  \\
& = \sum_{k} x_k \pmb{w}_{\sigma v_k}''  \\
& = \sum_{k} x_k \cdot \phi\hspace{0.4mm} \pmb{w}_{\sigma v_k}'  \\
& = \sum_{k} x_k \cdot \phi\circ\sigma \hspace{0.4mm} \pmb{w}_{v_k}'  \\
& = \phi\circ\sigma \left( \pmb{x} \right).  \\
\end{align*}
Therefore, our claim follows.
\end{proof}

\begin{Corollary}[\bf Isomorphism Theorem - General Version]\label{Corollary-IsomorphismThmBetweenIRs}
Let $G$ be a vertex-transitive graph with automorphism group $\mathfrak{G}$ and let $W'$ and $W''$ be two irreducible representations of $\mathfrak{G}$. Suppose $T$ is one of orbits of $\mathfrak{G}$ such that none of vectors $\dproj{W'}( \pmb{e}_t )$ and $\dproj{W''}( \pmb{e}_t )$ is trivial, $t\in T$. Then the following statements are equivalent.

\begin{enumerate}
\item[i)] $W'$ and $W''$ are isomorphic representations;

\item[ii)] The angle between $\dproj{W'}(\pmb{e}_u)$ and $\dproj{W'}(\pmb{e}_v)$ is equal to the angle between $\dproj{W''}(\pmb{e}_u)$ and $\dproj{W''}(\pmb{e}_v)$, $\forall u,v \in T$, i.e.,
$$
\left\langle \pmb{w}_u',\pmb{w}_v' \right\rangle = \left\langle \pmb{w}_u'',\pmb{w}_v'' \right\rangle,
$$
where $\pmb{w}_x' = \frac{1}{\| \dproj{W'}(\pmb{e}_x) \|}\dproj{W'}(\pmb{e}_x)$ and $\pmb{w}_x'' = \frac{1}{\| \dproj{W'}(\pmb{e}_x) \|}\dproj{W''}(\pmb{e}_x)$, $x = u$ or $v$.

\item[iii)] The map $\phi : \pmb{w}_{v}' \mapsto \pmb{w}_{v}''$, $v\in V(G)$, is an invertiblly linear map.

\end{enumerate}
\end{Corollary}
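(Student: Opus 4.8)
The plan is to transplant the proof of Theorem \ref{Thm-IsomorphismThmBetweenIRs} into the non-transitive setting, the only structural changes being that I replace Lemma \ref{Lemma-IRsFeatureGvNonTrivial} and Lemma \ref{Lemma-IRsFeatureGvTrivial} by their general counterparts Proposition \ref{Prop-IRsFeatureGvNonTrivial} and Proposition \ref{Prop-IRsFeatureGvTrivial}, and that I confine every vertex-indexed assertion to the single orbit $T$. The key observation is that although $\mathfrak{G}$ need not act transitively on all of $V(G)$, it does act transitively on $T$, and by hypothesis $\dproj{W'}(\pmb{e}_t)$ and $\dproj{W''}(\pmb{e}_t)$ are non-trivial for every $t\in T$; these two facts are exactly what the original argument consumed. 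I would establish the cycle $i)\Rightarrow ii)\Rightarrow iii)\Rightarrow i)$ just as before.

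For $i)\Rightarrow ii)$, let $\phi$ be an $\mathfrak{G}$-module isomorphism from $W'$ to $W''$ and fix $t\in T$. When $\mathfrak{G}_t\neq\{1\}$ I work with $W'[t]=\{\pmb{w}\in W' : \xi\pmb{w}=\pmb{w},\ \forall\xi\in\mathfrak{G}_t\}$; since $\dproj{W'}(\pmb{e}_t)$ is a non-trivial member of $W'[t]$ and $W'$ is irreducible, Proposition \ref{Prop-IRsFeatureGvNonTrivial} forces $\dim W'[t]=1$, so $W'[t]=\spa\{\dproj{W'}(\pmb{e}_t)\}$, and likewise for $W''[t]$. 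As in the theorem, $\phi\, W'[t]=W''[t]$, whence $\phi(\dproj{W'}(\pmb{e}_t))=\pm\dproj{W''}(\pmb{e}_t)$. When $\mathfrak{G}_t=\{1\}$ I replace $W'[t]$ by $W'[B]$ for a minimal block $B$ lying in $T$ and invoke Proposition \ref{Prop-IRsFeatureGvTrivial} together with Lemma \ref{Lem-BlockStabilizerCharacterization-GvTrivial}. In either case the transitivity of $\mathfrak{G}$ on $T$ lets me fix the sign uniformly over $T$, and Lemma \ref{Lemma-IsomorphismicIRsKeepingAnglesUp}, whose proof only uses that $\phi$ is a module isomorphism (through the $\langle\sigma\rangle$-decomposition of Lemma \ref{Lemma-IRFeatureIsometry}), then yields $\langle\pmb{w}_u',\pmb{w}_v'\rangle=\langle\pmb{w}_u'',\pmb{w}_v''\rangle$ for all $u,v\in T$.

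The remaining two implications are essentially formal. For $ii)\Rightarrow iii)$ the spanning clause of Proposition \ref{Prop-IRsFeatureGvNonTrivial} (respectively Proposition \ref{Prop-IRsFeatureGvTrivial}) guarantees that $\{\dproj{W'}(\pmb{e}_t):t\in T\}$ spans the irreducible subspace $W'$, so I may select a basis $\pmb{w}_{v_1}',\ldots,\pmb{w}_{v_d}'$ with $v_k\in T$, set $\psi:\pmb{w}_{v_k}'\mapsto\pmb{w}_{v_k}''$, and run the same inner-product computation as in the theorem to see that $\psi$ preserves inner products, hence is invertible, and satisfies $\psi\pmb{w}_x'=\pmb{w}_x''$ for every $x\in T$. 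For $iii)\Rightarrow i)$ I verify $\sigma\circ\phi=\phi\circ\sigma$ on the spanning set $\{\pmb{w}_v':v\in T\}$ using $\sigma\pmb{w}_v'=\pmb{w}_{\sigma v}'$ (valid because $\sigma$ permutes $T$) and then extend by linearity. The main obstacle is the sign bookkeeping inside $i)\Rightarrow ii)$: I must check that the ambiguity $\phi(\dproj{W'}(\pmb{e}_t))=\pm\dproj{W''}(\pmb{e}_t)$ is resolved consistently across $T$, which is precisely where transitivity on $T$ enters and must be paired with the angle-preservation lemma; a subsidiary point is to confirm that Lemmas \ref{Lemma-IRFeatureIsometry} and \ref{Lemma-IsomorphismicIRsKeepingAnglesUp} and the block-structure results invoked do not rely on global transitivity, which is exactly the content of the remark following Theorem \ref{Thm-PartProjEigSpaceVsPartOrbitsGv}.
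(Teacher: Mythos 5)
Your proposal is correct and is essentially the proof the paper intends: the paper states this corollary without a separate argument, leaving it as the restriction of the proof of Theorem \ref{Thm-IsomorphismThmBetweenIRs} to the orbit $T$, with Lemmas \ref{Lemma-IRsFeatureGvNonTrivial} and \ref{Lemma-IRsFeatureGvTrivial} replaced by Propositions \ref{Prop-IRsFeatureGvNonTrivial} and \ref{Prop-IRsFeatureGvTrivial}, exactly as you describe. Your attention to the sign bookkeeping via transitivity on $T$, the non-triviality hypothesis on the projections, and the observation that Lemmas \ref{Lemma-IRFeatureIsometry} and \ref{Lemma-IsomorphismicIRsKeepingAnglesUp} do not use global transitivity covers the only points where the original argument needed checking.
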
 

We are now ready to show how to decompose every eigenspace of $\dAM{G}$ into IRs of $\mathfrak{G}$. There are in general two steps: splitting each eigenspace by orbits of $\mathfrak{G}$ and then decomposing those smaller subspaces into IRs of $\mathfrak{G}$. More precisely, we first separate the subspace $X_{\lambda} := \drm{span}{ \{ V_{\lambda} \mid T \} }$, which is spanned by a group of vectors $\{ \dproj{ V_{\lambda} }( \pmb{e}_x ) \mid x \in T \}$ where $T$ is an orbit of $\mathfrak{G}$, from the eigenspace $V_{\lambda}$, and then decompose $X_{\lambda}$ into IRs by virtue of a partition $\Pi_s^*$ and its adequate set $E(s)$ where $s$ is a vertex in $T$. 

In fact, if $W_{\lambda,p}$ is an irreducible representation of $\mathfrak{G}$ contained in $X_{\lambda}$, it is clear that the subspace $\drm{span}{ \{ \dproj{W_{\lambda,p}}( \pmb{e}_x ) : x \in T \} }$ is equal to $W_{\lambda,p}$. As a result, to single out $W_{\lambda,p}$ from $X_{\lambda}$ we only need to focus on the orbit $T$, and therefore we could make a further assumption that the action of $\mathfrak{G}$ on $V$ is transitive. Then there are two cases: the point stabilizer $\mathfrak{G}_s$ is trivial or not. Let us first consider the 2nd case, {\it i.e.,} $\mathfrak{G}_s \supsetneq \{1\}$.

In accordance with Lemma \ref{Lemma-BlockCHARACTERISEDbyProj}, for any block system there are some of IRs representing the system. Recall that $V_{\lambda}[s]$ is the subspace in $V_{\lambda}$ spanned by those vectors $\pmb{x}$ such that $\xi\hspace{0.5mm} \pmb{x} = \pmb{x}$, $\forall \hspace{0.5mm} \xi \in \mathfrak{G}_s$. It is apparent that if $\hat{B}$ is one of maximal blocks for $\mathfrak{G}$ then the subspace $\cap_{b \in \hat{B}} V_{\lambda}[b]$ must be contained in those $\mathfrak{G}$-invariant subspaces which can be expressed as a direct sum of IRs representing the system $\hat{\mathscr{B}}$ containing $\hat{B}$. Since we could find out all block families of $\mathfrak{G}$ (see the 2nd section for details), we can separate those $\mathfrak{G}$-invariant subspaces, representing distinct block systems, from $V_{\lambda}$ one by one in this way. Hence we can assume that the subspace $X_{\lambda} \subseteq V_{\lambda}$ we now deal with does not contain any IR representing any block system of $\mathfrak{G}$.

Before splitting the subspace $X_{\lambda}$, we can determine the number of IRs of $\mathfrak{G}$ contained in $X_{\lambda}$ by virtue of Lemma \ref{Lemma-IRsFeatureGvNonTrivial}. In order to split $X_{\lambda}$, we need to refine step by step the subspace $X_{\lambda}[s] := \{ \pmb{x} \in X_{\lambda} \mid \xi\hspace{0.5mm}\pmb{x} = \pmb{x} \hspace{2mm} \forall\hspace{0.5mm}\xi\in\mathfrak{G}_s \}$. In the first place, set $X_{\lambda}^1[s] = \{ \pmb{x} \in X_{\lambda}[s] : \| \pmb{x} \| = 1 \}$. Suppose $\mathfrak{G}_s$ possesses $t$ orbits and $\sigma_1,\ldots,\sigma_{t-1}$ are those permutations in $\mathfrak{G}$ moving $s$ to distinct orbit except the one only containing $s$. Then we can obtain a group of subspaces $X_{\lambda}^1[s] \supseteq X_{\lambda}^1[s;\sigma_1] \supseteq X_{\lambda}^1[s;\sigma_1,\sigma_2] \supseteq \cdots \supseteq X_{\lambda}^1[s;\sigma_1,\sigma_2,\ldots,\sigma_{t-1}]$ such that 
$$
X_{\lambda}^1[s;\sigma_1] = \{ \pmb{x} \in X_{\lambda}^1[s] : \langle \pmb{x},\sigma_{1}\pmb{x} \rangle \geq \langle \pmb{y},\sigma_{1}\pmb{y} \rangle \hspace{2mm} \forall \hspace{0.5mm} \pmb{y} \in X_{\lambda}^1[s] \},
$$
and
$$
X_{\lambda}^1[s;\sigma_1,\ldots,\sigma_{i+1}] = \{ \pmb{x} \in X_{\lambda}^1[s;\sigma_1,\ldots,\sigma_{i}] : \langle \pmb{x},\sigma_{i+1}\pmb{x} \rangle \geq \langle \pmb{y},\sigma_{i+1}\pmb{y} \rangle \hspace{2mm} \forall \hspace{0.5mm} \pmb{y} \in X_{\lambda}^1[s;\sigma_1,\ldots,\sigma_{i}] \},
$$
where $i = 1,\ldots,t-2$. 

In accordance with Theorem \ref{Thm-IsomorphismThmBetweenIRs}, the subspace $X_{\lambda}^1[s;\sigma_1,\ldots,\sigma_{t-1}]$ must be contained in some irreducible representation of $\mathfrak{G}$, and thus we can figure out the irreducible representation for it is equal to $\drm{span}{ \{ \gamma \hspace{0,5mm} \pmb{x} : \gamma \in E(s) \} }$. In the same way, we can separate one by one IRs from $X_{\lambda}$ and eventually decompose it as a direct sum of IRs.

It is clear that there is only one kind of operation - inner product. As a result, it is easy to see that we can figure out the subspace $X_{\lambda}^1[s;\sigma_1,\ldots,\sigma_{t-1}]$ within time $n^C$ for some constant $C$.

As to the case that $\mathfrak{G}_s$ is trivial for any $s \in V$, we replace $X_{\lambda}[s]$ with 
$$
X_{\lambda}[B] = \left\{ \pmb{x} \in X_{\lambda} : \zeta \left( \sum_{k=1}^{|B|} \xi^k \right) \pmb{x} 
= \left( \sum_{k=1}^{|B|} \xi^k \right) \pmb{x}, 
~~\forall\xi\in \mathfrak{G}_B\setminus\{1\} \mbox{ and } \zeta\in\mathfrak{G}_B  \right\},
$$
where $B$ is one of minimal blocks for $\mathfrak{G}$, and then use the same way to split $X_{\lambda}$ into IRs of $\mathfrak{G}$.

\begin{Lemma}\label{LemIsomorphism} If $\phi$ is an isomorphism between two IRs $W_1$ and $W_2$ of $\mathfrak{G}$ on $\R^n$, then the subspace $\mathrm{span} \{ \sigma\big(\pmb{w}_1 + \phi \pmb{w}_1\big) \mid \sigma\in\mathfrak{G} \}$ is isomorphic to $W_1$ where $\pmb{w}_1$ is a non-trivial vector in $W_1$.
\end{Lemma}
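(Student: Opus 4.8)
The plan is to exhibit an explicit $\mathfrak{G}$-module isomorphism from $W_1$ onto the subspace in question. The central object is the \emph{graph map} $\psi : W_1 \to \R^n$ defined by $\psi(\pmb{x}) = \pmb{x} + \phi\hspace{0.4mm}\pmb{x}$. First I would verify that $\psi$ is a $\mathfrak{G}$-module homomorphism: because $\phi$ is a $\mathfrak{G}$-module isomorphism it commutes with the action, so $\psi(\sigma\pmb{x}) = \sigma\pmb{x} + \phi(\sigma\pmb{x}) = \sigma\pmb{x} + \sigma\phi\hspace{0.4mm}\pmb{x} = \sigma\psi(\pmb{x})$ for every $\sigma\in\mathfrak{G}$. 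In particular each translate $\sigma(\pmb{w}_1 + \phi\hspace{0.4mm}\pmb{w}_1)$ equals $\psi(\sigma\pmb{w}_1)$, so the target subspace is exactly $\psi\big(\mathrm{span}\{\sigma\pmb{w}_1 : \sigma\in\mathfrak{G}\}\big)$.

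Next I would argue that $\psi$ is injective. Since $W_1$ and $W_2$ are distinct irreducible subspaces, their intersection $W_1\cap W_2$ is a $\mathfrak{G}$-invariant subspace of each, hence is either $\{\pmb{0}\}$ or all of $W_1$; the latter would force $W_1=W_2$, so in fact $W_1\cap W_2=\{\pmb{0}\}$. If $\psi(\pmb{x})=\pmb{0}$ then $\pmb{x} = -\phi\hspace{0.4mm}\pmb{x}$ with $\pmb{x}\in W_1$ and $\phi\hspace{0.4mm}\pmb{x}\in W_2$, forcing $\pmb{x}\in W_1\cap W_2 = \{\pmb{0}\}$. Thus $\psi$ maps $W_1$ isomorphically onto its image, and that image is $\mathfrak{G}$-invariant because $\sigma\hspace{0.4mm}\psi(W_1) = \psi(\sigma W_1) = \psi(W_1)$; so $\psi(W_1)$ is a subspace $\mathfrak{G}$-isomorphic to $W_1$.

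To identify the target subspace with the whole image $\psi(W_1)$, I would use the irreducibility of $W_1$: the set $\mathrm{span}\{\sigma\pmb{w}_1 : \sigma\in\mathfrak{G}\}$ is a $\mathfrak{G}$-invariant subspace of $W_1$ which is nonzero since $\pmb{w}_1\neq\pmb{0}$, so it equals $W_1$. Applying $\psi$ and using the computation of the first paragraph yields $\mathrm{span}\{\sigma(\pmb{w}_1+\phi\hspace{0.4mm}\pmb{w}_1) : \sigma\in\mathfrak{G}\} = \psi(W_1)$, which is $\mathfrak{G}$-isomorphic to $W_1$ through $\psi$. This completes the argument.

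The only genuinely delicate points are the two invocations of irreducibility: one to guarantee $W_1\cap W_2=\{\pmb{0}\}$, so that the graph map $\psi$ is injective, and one to guarantee that the $\mathfrak{G}$-orbit of the single vector $\pmb{w}_1$ already spans all of $W_1$. I would also flag the degenerate reading in which $W_1$ and $W_2$ coincide as subspaces: then $\pmb{w}_1+\phi\hspace{0.4mm}\pmb{w}_1$ may vanish, so the statement must be understood with $W_1$ and $W_2$ taken to be distinct irreducible summands, which is precisely the situation in which the lemma is applied.
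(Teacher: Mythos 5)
Your proposal is correct and takes essentially the same route as the paper: both arguments build the map $\pmb{w}\mapsto\pmb{w}+\phi\hspace{0.4mm}\pmb{w}$ (the paper defines it on the basis $\sigma_1\pmb{w}_1,\ldots,\sigma_d\pmb{w}_1$ and checks equivariance in coordinates, while you define it globally so that equivariance is immediate). Your version also supplies the injectivity argument via $W_1\cap W_2=\{\pmb{0}\}$, which the paper compresses into ``it is plain to verify that \ldots form a basis,'' and correctly flags the degenerate reading $W_1=W_2$ (e.g.\ $\phi=-I$), where the span can vanish and the statement must be read with $W_1$ and $W_2$ distinct.
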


By combing Theorem \ref{Thm-IsomorphismThmBetweenIRs} and Lemma \ref{LemIsomorphism}, one could deduce an interesting conclusion that in the case of being transitive $\mathfrak{G}$ possesses no two IRs which are isomorphic to one another in $\R^n$ with respect to the permutation representation we consider in the paper. As a matter of fact, the reason why $\mathfrak{G}$ possesses a number of IRs isomorphic to one another is that the ways they represent the action of $\mathfrak{G}$ are to some extant different. The corollary \ref{Cor-SpanInvariantSubspaces-Isomorphic} below reveals this relation definitely.

\begin{proof}[\bf Proof] Since $W_1$ is an irreducible representation, one can choose a group of permutations  $\sigma_1=1,\ldots,\sigma_d$ of $\mathfrak{G}$ such that $\sigma_1\pmb{w}_1=\pmb{w}_1,\ldots,\sigma_d\pmb{w}_1$ form a basis of $W_1$. It is plain to verify that $\sigma_1\pmb{w}_1+\sigma_1(\phi\pmb{w}_1),\ldots,\sigma_d\pmb{w}_1 + \sigma_d(\phi\pmb{w}_1)$ form a basis of the subspace $\mathrm{span} \{ \sigma\big(\pmb{w}_1 + \phi \pmb{w}_1 \big) \mid \sigma\in\mathfrak{G} \}$. For that reason, there is natually a 
linear isomorphism $\psi$ between the two subspaces $W_1$ and $\mathrm{span} \{ \sigma\big(\pmb{w}_1 + \phi \pmb{w}_1 \big) \mid \sigma \}$ such that 
$$
\psi:\sigma_i\pmb{w}_1 \mapsto 
\sigma_i\pmb{w}_1 + \sigma_i(\phi\pmb{w}_1), ~~ i=1,\ldots,d.
$$

As a matter of fact, it is not difficult to check that $\psi$ is an isomorphism between those two representations. Let $\pmb{w}$ be a vector in $W_1$. Suppose $\pmb{w}=\sum_{i=1}^d x_i\cdot\sigma_i\pmb{w}_1$ where $x_i$ is the coordinate of $\pmb{w}$ with respect to $\sigma_i\pmb{w}_1$ ($i=1,\ldots,d$). Picking arbitrarily a permutation $\gamma$ from $\mathfrak{G}$, we assume $\gamma \hspace{0.4mm} \pmb{w}=\sum_{i=1}^d y_i\cdot\sigma_i\pmb{w}_1$. Consequently,
\begin{align*}
\psi\circ\gamma( \pmb{w} ) 
& = \psi\left[\sum_{i=1}^d y_i\cdot\sigma_i\pmb{w}_1\right] \\
& = \sum_{i=1}^d y_i\cdot\psi(\sigma_i\pmb{w}_1)\\
& = \sum_{i=1}^d y_i\left(
\sigma_i\pmb{w}_1 + \sigma_i(\phi\pmb{w}_1)
\right).
\end{align*}
On the other hand,
\begin{align*}
\gamma\circ\psi(\pmb{w}) 
& = \gamma\circ\psi\left[\sum_{i=1}^d x_i\cdot\sigma_i\pmb{w}_1\right]\\
& = \gamma\left[\sum_{i=1}^d x_i\cdot\psi( \sigma_i\pmb{w}_1 ) \right] \\
& = \gamma\left[\sum_{i=1}^d x_i\left(\sigma_i\pmb{w}_1 + \sigma_i (\phi\pmb{w}_1) \right)\right]\\
& = \gamma\left[\sum_{i=1}^d x_i\cdot\sigma_i\pmb{w}_1\right] + \gamma\left[\sum_{i=1}^d x_i\cdot\sigma_i (\phi\pmb{w} _1) \right] \\
& = \sum_{i=1}^d y_i\cdot\sigma_i\pmb{w}_1 + \phi\circ\gamma\left[\sum_{i=1}^d x_i\cdot\sigma_i\pmb{w}_1\right]\\
& = \sum_{i=1}^d y_i\cdot\sigma_i\pmb{w}_1 + \sum_{i=1}^d y_i\cdot\sigma_i (\phi\pmb{w}_1).
\end{align*}

Accordingly, $\psi\gamma=\gamma\psi$ for any $\gamma\in \mathfrak{G}$, and thus $\psi$ is an isomorphism between the two representations $W_1$ and $\mathrm{span} \{ \sigma\big(\pmb{w}_1 + \phi \pmb{w}_1 \big) \mid \sigma \}$.
\end{proof}

Apparently, one can employ the same idea to establish a more general result concerning a number of IRs isomorphic to one another rather than two IRs. To be precise, if there are a group of IRs $W, W_1, W_2,\ldots,W_n$ of $\mathfrak{G}$ such that $\phi_p : W \rightarrow W_p $ is an isomorphism between two representations, $p = 1,\ldots,n$, then the subspace $\mathrm{span} \{ \sigma\big(\pmb{w} + \sum_{p=1}^n  \phi_p \pmb{w}\big) \mid \sigma\in\mathfrak{G} \}$ is isomorphic to $W$ where $\pmb{w}$ is a non-trivial vector in $W$.

\begin{Corollary}\label{Cor-DecomposeInvariantSubspaces}
Let $\mathfrak{G}$ be a permutation group with orbits $T_1,\ldots,T_s$ and let $U$ be an $\mathfrak{G}$-invariant subspace with a decomposition $\oplus W_p$ into IRs of $\mathfrak{G}$. Then $\mathrm{span} \hspace{0.6mm} \{ \dproj{U}( \pmb{e}_{t_i} ) \mid t_i \in T_i \} = U$ for some $i \in [s]$ if and only if $\dproj{W_p}( \pmb{e}_{t_i} ) \neq \pmb{0}$ $\forall p$ and any two of IRs involved are not isomorphic to each other.
\end{Corollary}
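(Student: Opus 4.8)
The plan is to set $S_i := \mathrm{span}\{\dproj{U}(\pmb{e}_{t_i}) : t_i \in T_i\}$ and characterize exactly when $S_i = U$. The preliminary observations, which I would establish first, are that $S_i$ is $\mathfrak{G}$-invariant and that the relevant norms are orbit-constant. Indeed, since $U$ is invariant, $\dproj{U}$ commutes with every $\sigma\in\mathfrak{G}$ by Lemma \ref{ProjOperatorCommutative}, so $\sigma\,\dproj{U}(\pmb{e}_{t_i}) = \dproj{U}(\pmb{e}_{\sigma t_i})$ with $\sigma t_i$ again in $T_i$; hence $\sigma S_i = S_i$. Because permutations are isometries and each $W_p$ is invariant, $\|\dproj{W_p}(\pmb{e}_{t_i})\|$ is constant over $T_i$, so $\dproj{W_p}(\pmb{e}_{t_i})$ vanishes for one representative iff for all. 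Throughout I take $\oplus W_p$ to be an \emph{orthogonal} direct sum (the normalization used everywhere in the paper), which gives $\dproj{W_p}\circ\dproj{U}=\dproj{W_p}$ and $\dproj{W_1\oplus W_2}=\dproj{W_1}+\dproj{W_2}$.

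For the forward direction I would argue the contrapositive: negating either condition forces $S_i\subsetneq U$. If $\dproj{W_{p_0}}(\pmb{e}_{t_i})=\pmb{0}$ over the orbit, then $\dproj{U}(\pmb{e}_{t_i})\perp W_{p_0}$ for every $t_i$, whence $S_i\perp W_{p_0}$ and $S_i\ne U$. If instead two constituents, say $W_1\cong W_2$, are isomorphic, I would invoke the Isomorphism Theorem (Theorem \ref{Thm-IsomorphismThmBetweenIRs}): the module isomorphism $\phi:W_1\to W_2$ it produces satisfies $\dproj{W_2}(\pmb{e}_{t_i}) = (c_2/c_1)\,\phi\bigl(\dproj{W_1}(\pmb{e}_{t_i})\bigr)$, where $c_j=\|\dproj{W_j}(\pmb{e}_{t_i})\|$. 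Consequently each $\dproj{W_1\oplus W_2}(\pmb{e}_{t_i})$ lies in the diagonal subspace $D=\{\pmb{w}+(c_2/c_1)\phi(\pmb{w}) : \pmb{w}\in W_1\}$, which by Lemma \ref{LemIsomorphism} is isomorphic to the single IR $W_1$ and hence has dimension $\dim W_1 < \dim(W_1\oplus W_2)$. Thus $\dproj{W_1\oplus W_2}(S_i)\subseteq D\subsetneq W_1\oplus W_2$, so $S_i\ne U$.

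For the backward direction, assuming (a) and (b), I would first note that $\dproj{W_p}(S_i)$ is a nonzero $\mathfrak{G}$-invariant subspace of the irreducible $W_p$ — nonzero because $\dproj{W_p}(\pmb{e}_{t_i})=\dproj{W_p}(\dproj{U}(\pmb{e}_{t_i}))\ne\pmb{0}$ — hence equals $W_p$. The decisive structural input is that under (b) the module $U$ is multiplicity-free, so every $\mathfrak{G}$-invariant $S\subseteq U$ is the orthogonal sum of a sub-collection of the $W_p$: decomposing $S_i=\oplus_q V_q$ into irreducibles, each $V_q$ admits a nonzero — hence, by the kernel/image argument valid over $\R$, invertible — module map into exactly one $W_p$, forcing $V_q=W_p$, and distinctness of the $W_p$ keeps these $p$'s distinct. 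Finally $\dproj{W_p}(S_i)=W_p\ne\pmb{0}$ forbids omitting any $W_p$, so $S_i=\oplus_p W_p=U$.

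The routine part is the invariance and norm bookkeeping. The substantive step, and the one I expect to be the main obstacle, is the ``not (b)'' case of the forward direction: one must convert the \emph{abstract} isomorphism $W_1\cong W_2$ into the \emph{concrete} collinearity of $\dproj{W_1}(\pmb{e}_{t_i})$ and $\dproj{W_2}(\pmb{e}_{t_i})$ through a single fixed $\phi$, and then recognize that the resulting diagonal carries only half the expected dimension — this is precisely where Theorem \ref{Thm-IsomorphismThmBetweenIRs} and Lemma \ref{LemIsomorphism} do the real work. A secondary point requiring care is the multiplicity-free decomposition of invariant subspaces in the backward direction, which rests on complete reducibility over $\R$ together with the elementary fact that a nonzero module map between irreducibles is an isomorphism, rather than on the sharper real form of Schur's lemma in Lemma \ref{Lem-SchurLemmaOnEuclideanSpace}.
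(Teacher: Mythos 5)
Your proof is correct and follows essentially the route the paper intends: the corollary is stated there without proof, immediately after the remark that it follows by combining Theorem \ref{Thm-IsomorphismThmBetweenIRs} with Lemma \ref{LemIsomorphism}, and that is precisely the machinery you deploy (a fixed module isomorphism making the two projection families collinear, hence a diagonal subspace of deficient dimension for the ``not isomorphic'' half, plus the standard Schur/complete-reducibility argument for the converse). The only adjustment worth making is that, since $\mathfrak{G}$ here is not assumed transitive, the appeal should formally be to the general version, Corollary \ref{Corollary-IsomorphismThmBetweenIRs}, restricted to the single orbit $T_i$ (where your case ordering guarantees the nonvanishing projections it requires), rather than to Theorem \ref{Thm-IsomorphismThmBetweenIRs} itself.
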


Suppose $W_1,\ldots,W_k$ are a group of IRs of $\mathfrak{G}$ and $T_1,\ldots,T_k$ are some of orbits of $\mathfrak{G}$, which are said to be {\it independent orbits with respect to $W_1,\ldots,W_k$} if the group of vectors 
$$
\{ \left( \left\| \dproj{W_p}( \pmb{e}_{t_1} ) \right\|,\ldots,\left\| \dproj{W_p}( \pmb{e}_{t_k} ) \right\| \right) \mid p = 1,\ldots,k \} \mbox{ are linearly independent,}
$$
where $t_i$ belong to the orbit $T_i$, $i=1,\ldots,k$.

\begin{Corollary}\label{Cor-SpanInvariantSubspaces-Isomorphic}
Let $\mathfrak{G}$ be a permutation group and let $U$ be an $\mathfrak{G}$-invariant subspace with a decomposition $\oplus_{p=1}^k W_p$ into IRs of $\mathfrak{G}$ such that any two of them are isomorphic to one another. Suppose $T_1,\ldots,T_k$ are some of orbits of $\mathfrak{G}$. Then $\mathrm{span} \hspace{0.6mm} \{ \dproj{U}( \pmb{e}_{t_i} ) \mid t_i \in T_i, i=1,\ldots,k \} = U$ if and only if $T_1,\ldots,T_k$ are independent.
\end{Corollary}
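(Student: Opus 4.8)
The plan is to reduce the claimed identity to the nonsingularity of a single $k\times k$ numerical matrix assembled from the projection norms, by analysing each orbit's contribution to $U$ separately. For each $i$ set $U_i:=\mathrm{span}\{\dproj{U}(\pmb{e}_t):t\in T_i\}$, so that the subspace in the statement is $\sum_{i=1}^k U_i$. Since $\sigma\,\dproj{U}(\pmb{e}_t)=\dproj{U}(\pmb{e}_{\sigma t})$ for every $\sigma\in\mathfrak{G}$ (projection onto an invariant subspace commutes with $\mathfrak{G}$ by Lemma \ref{ProjOperatorCommutative}) and $\sigma t$ stays in $T_i$, each $U_i$ is $\mathfrak{G}$-invariant. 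First I would show that $U_i$ is irreducible of dimension $d:=\dim W_1$: because $\dproj{U}(\pmb{e}_t)=\sum_p\dproj{W_p}(\pmb{e}_t)$ is a ``diagonal'' vector whose $W_p$-components are images of a common generator under $\mathfrak{G}$-module maps, the generalized form of Lemma \ref{LemIsomorphism} (its concluding remark, with the connecting isomorphisms in the role of the $\phi_p$) identifies $U_i$ with a single copy of $W_1$.

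Second, I would fix coordinates on $U$. Using the real Schur lemma (Lemma \ref{Lem-SchurLemmaOnEuclideanSpace}), choose isometric $\mathfrak{G}$-module isomorphisms $\phi_p:W_1\to W_p$ with $\phi_1=\mathrm{id}$, which exist because the $W_p$ are pairwise isomorphic. This identifies $U=\oplus_p W_p$ with $W_1^{\oplus k}$, the vector $\sum_p\phi_p(\pmb{y}_p)$ carrying coordinates $(\pmb{y}_1,\ldots,\pmb{y}_k)$. For $t\in T_i$ the two $\mathfrak{G}$-equivariant maps $t\mapsto\phi_p^{-1}\dproj{W_p}(\pmb{e}_t)$ and $t\mapsto\dproj{W_1}(\pmb{e}_t)$ of the orbit $T_i$ into $W_1$ both take values in the one-dimensional subspace fixed by the point stabilizer (Lemma \ref{Lemma-IRsFeatureGvNonTrivial} in the nontrivial-stabilizer case, and the analogue $W_1[B]$ otherwise), hence are proportional: $\phi_p^{-1}\dproj{W_p}(\pmb{e}_t)=\mu_{p,i}\,\dproj{W_1}(\pmb{e}_t)$ with a constant $\mu_{p,i}$ satisfying $|\mu_{p,i}|=\|\dproj{W_p}(\pmb{e}_{t_i})\|/\|\dproj{W_1}(\pmb{e}_{t_i})\|$. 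Therefore $U_i=\{(\mu_{1,i}\pmb{w},\ldots,\mu_{k,i}\pmb{w}):\pmb{w}\in W_1\}$, the diagonal copy of $W_1$ in the direction $\mathbf{m}_i=(\mu_{1,i},\ldots,\mu_{k,i})\in\R^k$.

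Third, the equality $\sum_i U_i=U$ becomes finite-dimensional: $\sum_i U_i$ is the image of the endomorphism of $W_1^{\oplus k}$ with matrix $(\mu_{p,i})\otimes I_d$, so it equals $U$ exactly when $(\mu_{p,i})$ is nonsingular, i.e. when the directions $\mathbf{m}_1,\ldots,\mathbf{m}_k$ are linearly independent. Since $|\mu_{p,i}|$ equals $\|\dproj{W_p}(\pmb{e}_{t_i})\|$ up to the positive row factor $1/\|\dproj{W_1}(\pmb{e}_{t_i})\|$, this independence is meant to coincide with independence of the norm vectors in the definition, giving both directions of the equivalence at once (the forward direction is the counterpart of Corollary \ref{Cor-DecomposeInvariantSubspaces}).

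The main obstacle is precisely the last matching: the honest invariant is the direction $\mathbf{m}_i$, which carries the \emph{relative signs} of its entries (and, when $\mathrm{End}_{\mathfrak{G}}(W_1)$ is not $\R$, phases), whereas the definition of independence sees only norms. A priori the sign of $\mu_{p,i}$ may vary with the orbit $i$, and an uncontrolled sign pattern can flip the vanishing of $\det(\mu_{p,i})$ without affecting the rank of the norm matrix. I would attempt to pin the cross-orbit alignment of the $\phi_p$ using the angle-preservation of Lemma \ref{Lemma-IsomorphismicIRsKeepingAnglesUp} together with Theorem \ref{Thm-IsomorphismThmBetweenIRs}: normalize $\phi_p$ so that it sends $\dproj{W_1}(\pmb{e}_s)$ to a positive multiple of $\dproj{W_p}(\pmb{e}_s)$ on one reference orbit, and then try to force the remaining signs to be consistent from the equalities $\langle\pmb{w}^{(1)}_u,\pmb{w}^{(1)}_v\rangle=\langle\pmb{w}^{(p)}_u,\pmb{w}^{(p)}_v\rangle$, so that the sign pattern is of rank one and harmless to the determinant. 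This is exactly where the real-versus-complex dichotomy of Lemma \ref{Lem-SchurLemmaOnEuclideanSpace} enters, and it is the delicate heart of the argument; the norm-only formulation of independence is adequate precisely to the extent that these signs and phases can be controlled in this way.
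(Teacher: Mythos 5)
The paper gives you nothing to compare against here: Corollary \ref{Cor-SpanInvariantSubspaces-Isomorphic} is asserted after Lemma \ref{LemIsomorphism} with no proof at all, so your proposal has to stand on its own. Its first three steps do stand: each $U_i$ is $\mathfrak{G}$-invariant, and (in the real-type case, using the one-dimensional fixed spaces of Lemma \ref{Lemma-IRsFeatureGvNonTrivial}) $U_i$ is the image of $W_1$ under the module map $\sum_p \mu_{p,i}\phi_p$, so that under the identification $U \cong \R^k \otimes W_1$ it is the diagonal copy in the direction $\mathbf{m}_i = (\mu_{1,i},\ldots,\mu_{k,i})$, and the span in the statement equals $U$ precisely when the signed matrix $(\mu_{p,i})$ is invertible. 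That reduction is correct, and it is the natural one.

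However, the obstacle you flag in your last paragraph is not merely ``the delicate heart'' of the argument --- it is fatal, because the corollary as stated is false, so no normalization of the $\phi_p$ via Lemma \ref{Lemma-IsomorphismicIRsKeepingAnglesUp} and Theorem \ref{Thm-IsomorphismThmBetweenIRs} can close the gap. The structural reason is already visible in your setup: the left-hand side of the equivalence (the span being $U$) does not depend on the chosen decomposition $\oplus_p W_p$ of $U$, while the right-hand side (independence with respect to $W_1,\ldots,W_k$) does. Concretely, let $\mathfrak{G}=S_3$ act on two disjoint copies $T_1,T_2$ of its natural $3$-point action, let $V_i$ be the standard $2$-dimensional IR inside the coordinate block of $T_i$, let $\phi : V_1 \to V_2$ be an isometric $\mathfrak{G}$-isomorphism, and put $U = V_1 \oplus V_2$ with the decomposition $U = W_+ \oplus W_-$, where $W_{\pm} = \{ w \pm \phi\hspace{0.4mm} w \mid w \in V_1 \}$; these are orthogonal, mutually isomorphic IRs. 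For $t \in T_1$ one has $\dproj{U}(\pmb{e}_t) = \dproj{V_1}(\pmb{e}_t)$ (and similarly for $T_2$), so the span over $T_1 \cup T_2$ is $V_1 + V_2 = U$; yet $\| \dproj{W_+}(\pmb{e}_t) \| = \| \dproj{W_-}(\pmb{e}_t) \| = \tfrac{1}{\sqrt{2}} \| \dproj{U}(\pmb{e}_t) \|$ for every $t$, all of these being nonzero, so the two rows of the norm matrix coincide and $T_1,T_2$ are \emph{not} independent with respect to $W_+,W_-$. In your coordinates this is exactly $\mathbf{m}_1 = (1,1)$, $\mathbf{m}_2 = (1,-1)$: the signed matrix is invertible while its entrywise absolute value is singular --- the very sign pattern you feared, realized by an honest example (and for $k \ge 3$ one can realize sign patterns of a singular $(\mu_{p,i})$ whose absolute value is nonsingular, breaking the ``if'' direction too). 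The correct conclusion of your analysis is therefore not the corollary but its repair: the invariant that detects $\mathrm{span} = U$ is your matrix $(\mu_{p,i})$ of signed (in general $\mathrm{End}_{\mathfrak{G}}(W_1)$-valued) coefficients, and the norm-only notion of independent orbits, as defined before the corollary and used in it, is genuinely weaker.
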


Recall that in the 1st section we employ two apparatuses based on an equitable partition $\Pi$ to split subspaces in a decomposition of $\R^n$: those eigenspaces of $\dAM{ G / \Pi }$ and those subspaces spanned by one of cells of $\Pi$. These two corollaries above show us why the second way is often effective.

\section{The algorithm}

As shown in sections 2 and 4, provided that we are given a partition $\Pi_s^*$, consisting of orbits of $\mathfrak{G}_s$, and the adequate set $E(s)$ associated with the vertex $s$, belonging to some orbit $T$ of $\mathfrak{G}$, we can find out all block systems of $\mathfrak{G}$ contained in $T$ and decompose $\bigoplus_{ \lambda\in\drm{spec}{\dAM{G}} } \hspace{0.6mm} \drm{span}{\{ V_{\lambda} : T \}}$ into a direct sum of IRs of $\mathfrak{G}$, where $\drm{span}{\{ V_{\lambda} : T \}} = \drm{span}{ \{ \dproj{V_{\lambda}}( \pmb{e}_x ) : x \in T \} }$. By dealing with orbits one by one, we can eventually find out all block systems of $\mathfrak{G}$ and decompose $\bigoplus V_{\lambda}$ into a direct sum of IRs of $\mathfrak{G}$. In this last section, let us present the algorithm showing how to decide if two vertices are symmetric or not and how to figure out an automorphism relevant.

As we have illustrated with two examples in section 1, it is the {\it dist}. of OPSB in a decomposition of $\R^n$ that really matters in determining whether or not two vertices are symmetric and in figuring out one automorphism moving one of vertices to another if they are symmetric. There is however a big obstacle to analyzing the {\it dist}. of OPSB in $\oplus X_{\lambda,k}$ \footnote{ To keep our description succinct, we here say ``the {\it dist}. of OPSB in $\oplus X_{\lambda,k}$'' rather than ``the {\it dist}. of OPSB in each subspace $X_{\lambda,k}$''.} $-$ the dimensions of those subspaces involved. As one can imagine, if some $X_{\lambda,k}$ has a large dimension the {\it dist}. of OPSB in $X_{\lambda,k}$ could be a real mess, so one cannot clarify the symmetries in $X_{\lambda,k}$ efficiently. As a result, we use a series of equitable partitions of $G$ to split those subspaces of large dimension into smaller pieces.

Let $\Pi$ and $\widetilde{\Pi}$ be two equitable  partitions of $G$ such that $\Pi \prec \widetilde{\Pi}$.\footnote{ By ``$\Pi \prec \widetilde{\Pi}$'', we mean $\Pi$ is refined properly by $\widetilde{\Pi}$, {\it i.e.,} there is at least one cell of $\Pi$ which is comprised of two or more cells of $\widetilde{\Pi}$. } Set $X_{\lambda} = V_{\lambda} \ominus \dChMat{\Pi} V_{\lambda}^{ G / \Pi }$ for every $\lambda \hspace{0.5mm} \in \hspace{0.5mm} \drm{spec}{\dAM{G / \Pi}}$. It is clear that if $\lambda \hspace{0.5mm} \in \hspace{0.5mm}  \drm{spec}{\dAM{G / \widetilde{\Pi}}} \cap \drm{spec}{\dAM{G / \Pi}}$ and the multiplicity of $\lambda$ with respect to $\dAM{G / \widetilde{\Pi}}$ is bigger than that with respect to $\dAM{G / \Pi}$, then $X_{\lambda}$ must be split by $\dChMat{\widetilde{\Pi}} V_{\lambda}^{ G / \widetilde{\Pi} }$. This is the first way of splitting subspaces by virtue of equitable partitions.

There is another way fortunately and the assertion below reveals how it works. Set $\mathscr{R}(\Pi) = \{ \widetilde{\Pi} \mbox{ is an EP of } G : \Pi \preceq \widetilde{\Pi} \}$ and denote by $\Pi|_C$ the partition of $C$ induced by $\Pi$, {\it i.e.,} by restricting $\Pi$ to $C$ one can have a partition of the cell.

\begin{Lemma}\label{Lem-SplittingEigSpaces}
Let $\Pi$ be an equitable partition of $G$ and let $C$ be a cell of $\Pi$. If there exists one equitable partition $\widetilde{\Pi}$ in $\mathscr{R}(\Pi)$ such that $\Pi \prec \widetilde{\Pi}$ and $\Pi|_C = \widetilde{\Pi}|_C$ then $\drm{span}{ \{ X_{\lambda} : C \} } \subsetneq X_{\lambda}$ for any eigenvalue $\lambda$ of $\dAM{G / \widetilde{\Pi}}$ the multiplicity of which with respect to $\dAM{G / \widetilde{\Pi}}$ is bigger than that with respect to $\dAM{G / \Pi}$, where $X_{\lambda} = V_{\lambda} \ominus \dChMat{\Pi} V_{\lambda}^{ G / \Pi }$ and $\drm{span}{ \{ X_{\lambda} : C \} } = \drm{span}{\{ \dproj{X_{\lambda}}( \pmb{e}_x ) : x \in C \}}$.
\end{Lemma}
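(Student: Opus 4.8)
The plan is to exhibit a single nonzero vector of $X_{\lambda}$ that is orthogonal to every $\dproj{X_{\lambda}}(\pmb{e}_x)$ with $x\in C$; this alone forces $\drm{span}{ \{ X_{\lambda} : C \} }$ to be a proper subspace of $X_{\lambda}$. First I would record the geometric meaning of $X_{\lambda}$. Writing $U_{\Pi}$ for the column space of $\dChMat{\Pi}$, Lemma \ref{Lemma-Book-CharacterizationToEP} says $U_{\Pi}$ is $\mathbf{A}(G)$-invariant, and since $\mathbf{A}(G)$ is symmetric so is $U_{\Pi}^{\perp}$, the space of vectors summing to zero on each cell of $\Pi$. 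Intersecting with $V_{\lambda}$ gives the orthogonal splitting $V_{\lambda} = \big( V_{\lambda}\cap U_{\Pi} \big) \oplus \big( V_{\lambda}\cap U_{\Pi}^{\perp} \big)$, and the intertwining relation $\mathbf{A}(G)\dChMat{\Pi} = \dChMat{\Pi}\mathbf{A}(G/\Pi)$ of Lemma \ref{Lemma-Book-BasicPropertyofEP}, together with the full column rank of $\dChMat{\Pi}$, identifies $V_{\lambda}\cap U_{\Pi} = \dChMat{\Pi}V_{\lambda}^{G/\Pi}$. Hence $X_{\lambda} = V_{\lambda}\cap U_{\Pi}^{\perp}$ consists exactly of the eigenvectors in $V_{\lambda}$ that sum to zero on every cell of $\Pi$, a fact I will use twice.

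Next I would manufacture the witness from the ``extra'' eigenvectors that $G/\widetilde{\Pi}$ sees but $G/\Pi$ does not. Because $\widetilde{\Pi}$ refines $\Pi$ we have $U_{\Pi}\subseteq U_{\widetilde{\Pi}}$, and intersecting with $V_{\lambda}$ yields $\dChMat{\Pi}V_{\lambda}^{G/\Pi} \subseteq \dChMat{\widetilde{\Pi}}V_{\lambda}^{G/\widetilde{\Pi}}$. Set $N_{\lambda}$ to be the orthogonal complement of $\dChMat{\Pi}V_{\lambda}^{G/\Pi}$ inside $\dChMat{\widetilde{\Pi}}V_{\lambda}^{G/\widetilde{\Pi}}$. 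Since characteristic matrices are injective, $\dim \dChMat{\widetilde{\Pi}}V_{\lambda}^{G/\widetilde{\Pi}}$ and $\dim \dChMat{\Pi}V_{\lambda}^{G/\Pi}$ equal the multiplicities of $\lambda$ for $\mathbf{A}(G/\widetilde{\Pi})$ and $\mathbf{A}(G/\Pi)$ respectively, so the hypothesis on multiplicities gives $\dim N_{\lambda} > 0$; fix a nonzero $\pmb{n}\in N_{\lambda}$. By construction $\pmb{n}\in V_{\lambda}$ and $\pmb{n}\perp \dChMat{\Pi}V_{\lambda}^{G/\Pi}$, that is, $\pmb{n}\in X_{\lambda}$.

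The crux is the hypothesis $\Pi|_C = \widetilde{\Pi}|_C$. As $C$ is a cell of $\Pi$, the partition $\Pi|_C$ is the one-block partition of $C$, so $\widetilde{\Pi}|_C$ is too; since $\widetilde{\Pi}$ refines $\Pi$, this means $C$ survives as a single cell of $\widetilde{\Pi}$. Now $\pmb{n}\in \dChMat{\widetilde{\Pi}}V_{\lambda}^{G/\widetilde{\Pi}}$ is constant on the cells of $\widetilde{\Pi}$, hence constant on $C$; but $\pmb{n}\in X_{\lambda}$ sums to zero on the $\Pi$-cell $C$, and a constant that sums to zero over a nonempty $C$ must vanish, giving $n_x = 0$ for all $x\in C$. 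Finally, since $\dproj{X_{\lambda}}$ is self-adjoint and fixes $\pmb{n}$, for each $x\in C$ one has $\langle \pmb{n}, \dproj{X_{\lambda}}(\pmb{e}_x)\rangle = \langle \dproj{X_{\lambda}}(\pmb{n}),\pmb{e}_x\rangle = \langle \pmb{n},\pmb{e}_x\rangle = n_x = 0$. Thus the nonzero vector $\pmb{n}\in X_{\lambda}$ is orthogonal to $\drm{span}{ \{ X_{\lambda} : C \} }$, so this span is a proper subspace of $X_{\lambda}$, as claimed.

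I expect the only delicate points to be bookkeeping rather than conceptual: correctly reading off from $\Pi|_C = \widetilde{\Pi}|_C$ that $C$ remains one cell of $\widetilde{\Pi}$, and justifying the two containments $\dChMat{\Pi}V_{\lambda}^{G/\Pi}\subseteq \dChMat{\widetilde{\Pi}}V_{\lambda}^{G/\widetilde{\Pi}}\subseteq V_{\lambda}$ together with the dimension count, all of which rest on the injectivity of the characteristic matrices and on the $\mathbf{A}(G)$-invariance of $U_{\Pi}$ and $U_{\Pi}^{\perp}$. No genuinely hard estimate appears; the entire argument is elementary linear algebra built on the equitable-partition lemmas already established.
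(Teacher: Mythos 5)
Your proposal is correct and follows essentially the same route as the paper's own proof: both take a nonzero vector in $\dChMat{\widetilde{\Pi}} V_{\lambda}^{G/\widetilde{\Pi}} \ominus \dChMat{\Pi} V_{\lambda}^{G/\Pi}$ (your $\pmb{n}$, the paper's $\pmb{r}_2$), observe that it is constant on $C$ because $C$ survives as a cell of $\widetilde{\Pi}$ while it sums to zero on $C$ because it lies in $X_{\lambda}$, and conclude that it vanishes on $C$ and is therefore orthogonal to $\drm{span}{\{ X_{\lambda} : C \}}$. The only cosmetic difference is that you treat the cases $\lambda \in \drm{spec}{\dAM{G/\Pi}}$ and $\lambda \notin \drm{spec}{\dAM{G/\Pi}}$ uniformly (with the lifted subspace possibly zero) and spell out the dimension count, where the paper splits into the two cases and leaves those details implicit.
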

\begin{proof}[\bf Proof]
According to our assumption, there are two possible cases for eigenvalues of $\dAM{G / \widetilde{\Pi}}$ concerned: 
$$
\lambda \hspace{0.4mm} \in \hspace{0.4mm} \drm{spec}{ \dAM{G/\Pi} } \mbox{ and } 
\dChMat{\Pi} V_{\lambda}^{ G / \Pi } \subsetneq \dChMat{\widetilde{\Pi}} V_{\lambda}^{ G / \widetilde{\Pi} },
\mbox{ or } \lambda \hspace{0.4mm} \notin \hspace{0.4mm} \drm{spec}{ \dAM{G/\Pi} }.
$$

In the first case, suppose $\pmb{r}_1$ is a vector in $\dChMat{\Pi} V_{\lambda}^{ G / \Pi }$ and $\pmb{r}_2$ is in $\dChMat{\widetilde{\Pi}} V_{\lambda}^{ G / \widetilde{\Pi} } \ominus \dChMat{\Pi} V_{\lambda}^{ G / \Pi }$. Then $\langle \pmb{r}_2,\pmb{r}_1 \rangle = 0$. Recall that given an equitable partition $\Pi$ the eigenvectors of $\dAM{G}$ can be divided into two classes: those that are constant on every cell of $\Pi$ and those that sum to zero on each cell of $\Pi$. Because the cell $C$ is common to $\Pi$ and $\widetilde{\Pi}$, the $x$th coordinate of $\pmb{r}_2$ must be 0 for any $x\in C$. On the other hand, $\widetilde{\Pi}$ properly refines $\Pi$, so $\pmb{r}_2$ cannot be trivial and thus belongs to $X_{\lambda} \ominus \drm{span}{ \{ X_{\lambda} : C \}}$. 

By the same argument, one can easily prove the assertion in the second case.
\end{proof}

It is worthwhile pointing out the difference between two methods above used to split subspaces of $\R^n$. In order to use the 1st method, we have to figure out an EP $\widetilde{\Pi}$ at first, which is refiner than the original one $\Pi$. However as long as there exists such an EP $\widetilde{\Pi}$ enjoying the requirement of Lemma \ref{Lem-SplittingEigSpaces}, the 2nd method is effective. 

On the other hand, we can also use a decomposition of $\R^n$ to analyze EPs. Let $\mathfrak{H}$ be a subgroup of $\drm{Aut}{G}$ and let $\Pi$ be an EP of $G$ such that any orbit of $\mathfrak{H}$ is contained entirely in one cell of $\Pi$. Suppose $\bigoplus_{\lambda\hspace{0.4mm}\in\hspace{0.4mm}\drm{spec}{ \dAM{G} }} \hspace{0.4mm} \left( \dChMat{\Pi} V_{\lambda}^{G / \Pi} \oplus X_{\lambda} \right)$ is the decomposition of $\R^n$ obtained according to $\Pi$, where $X_{\lambda}$ is the orthogonal complement of $\dChMat{\Pi} V_{\lambda}^{G / \Pi}$ in $V_{\lambda}$. Two cells $C_1$ and $C_2$ of $\Pi$ are said to be {\it irrelevant with respect to $\mathfrak{H}$} if $\forall \hspace{0.5mm} w_1 \in C_1$, $C_2 \in \dEP{w_1}{ \left( \dChMat{\Pi} V_{\lambda}^{G / \Pi} \oplus X_{\lambda} \right) }$, or $\forall \hspace{0.5mm} w_2 \in C_2$, $C_1 \in \dEP{w_2}{ \left( \dChMat{\Pi} V_{\lambda}^{G / \Pi} \oplus X_{\lambda} \right) }$.

Suppose that for any $w_1$ in $C_1$, $C_2$ is a cell of $\dEP{w_1}{ \left( \dChMat{\Pi} V_{\lambda}^{G / \Pi} \oplus X_{\lambda} \right) }$. In accordance with Lemma \ref{Lem-SplittingEigSpaces}, there must exist $\lambda \in \drm{spec}{ \dAM{G} }$ with non-trivial subspace $U_{\lambda} := X_{\lambda} \ominus \drm{span}{\{ X_{\lambda} : C_2 \}}$. Suppose further that $Y_{\lambda}$ is the subspace of $\drm{span}{\{ X_{\lambda} : C_2 \}}$ spanned by those vectors that are constant on the cell $C_2$ and $Z_{\lambda} = \drm{span}{\{ X_{\lambda} : C_2 \}} \ominus Y_{\lambda}$. In summary, we may decompose every eigenspace $V_{\lambda}$ in three  steps: 
$$
V_{\lambda} = \dChMat{\Pi} V_{\lambda}^{G / \Pi} \oplus X_{\lambda}  = \dChMat{\Pi} V_{\lambda}^{G / \Pi} \oplus \left(\drm{span}{\{ X_{\lambda} : C_2 \}} \oplus U_{\lambda}\right) = \dChMat{\Pi} V_{\lambda}^{G / \Pi} \oplus \left(Y_{\lambda} \oplus Z_{\lambda}\right) \oplus U_{\lambda}.
$$
Accordingly $\dproj{Z_{\lambda}}( \pmb{R}_{C_2} ) = \pmb{0}$, where $\pmb{R}_{C_2}$ is the characteristic vector of the cell $C_2$, and thus $\dproj{Z_{\lambda}}( \pmb{w}_{1} ) = \pmb{0}$ for any $w_1$ in $C_1$. Consequently, the structure of the action of $\mathfrak{H}$ on $C_1$ is represented by $\bigoplus_{\lambda\hspace{0.4mm}\in\hspace{0.4mm}\drm{spec}{ \dAM{G} }} \hspace{0.4mm} \left( Y_{\lambda} \oplus U_{\lambda} \right)$, while the structure of the action on $C_2$ is represented by $\bigoplus_{\lambda\hspace{0.4mm}\in\hspace{0.4mm}\drm{spec}{ \dAM{G} }} \hspace{0.4mm} Z_{\lambda}$. Hence the action of $\mathfrak{H}$ on $C_1$ has no effect on $C_2$ and vice versa. As a result, we can simultaneously deal with those two cells in order to clarify symmetries among vertices in $C_1$ and in $C_2$.

In addition, if there is a cell $C$ of $\Pi$ containing only a few vertices, we can first determine the symmetries represented by $\bigoplus_{\lambda\hspace{0.4mm}\in\hspace{0.4mm}\drm{spec}{ \dAM{G} }} \hspace{0.4mm} \drm{span}{ \{ V_{\lambda} : C \} }$, and then refine $\Pi$ by the partition of $C$ determined by $\oplus \hspace{0.5mm} \drm{span}{ \{ V_{\lambda} : C \} }$. As a result, we always first deal with those cells not being singleton but of minimum order. 

As a matter of fact, one decomposition of $\R^n$ is often helpful in refining an equitable partition and vice versa. Now let us present the details of the algorithm and functions relevant.

\floatname{algorithm}{Function}

\begin{algorithm}[H]
 \caption{ Revising and refining an equitable partition by a group of subspaces }      
\begin{algorithmic}[1]
\REQUIRE an equitable partition $\Pi = \{ C_1,\ldots,C_r \}$ of $G$ such that $|C_i| \leq |C_{i+1}|$, $i=1,\ldots,r-1$, and a direct sum $\oplus U_{\lambda,k}$ of $\R^n$ in which each term is a subspace of an eigenspace $V_{\lambda}$ and any two subspaces involved are orthogonal to one another
\ENSURE an ordered EP or {\it ``$\Pi$ is not an appropriate EP''}
\STATE set $i := 1$
\WHILE { $i \le r$ }
 \IF{ $|C_i| > 1$}
  \STATE in accordance with (\ref{TheBinaryRelation}), work out a family of equitable partitions $\{ \dEP{v}{U_{\lambda,k}} : v \in C_i \}$
  \STATE in accordance with (\ref{TheBinaryRelation-2}), work out a group of subsets $\drm{Part}{(C_i)}$ of $C_i$
 \ELSE
  \STATE set $\drm{Part}{(C_i)} := C_i$
 \ENDIF
 \STATE set $i := i + 1$
\ENDWHILE
\STATE figure out the coarsest EP $\Pi'$ of the partition $\{ \drm{Part}{(C_i)} : i = 1,\ldots,r \}$
\IF{ $\Pi \preceq \Pi'$ }
\STATE order cells of $\Pi'$ according to their sizes
\RETURN the ordered EP $\Pi'$
\ELSE
\RETURN {\it ``$\Pi$ is not an appropriate EP''}
\ENDIF
\end{algorithmic}
\end{algorithm}

\newpage

\begin{algorithm}[H]
 \caption{\small Splitting subspaces by eigenspaces of $\dAM{ G / \Pi }$ }      
\begin{algorithmic}[1] \small
\REQUIRE an equitable partition $\Pi$ of $G$ and a direct sum $\bigoplus_{ 1\leq i \leq t \atop 1\leq k \leq M_i} U_{\lambda_i,k}$ of $\R^n$ such that each term is a subspace of an eigenspace $V_{\lambda}$ and any two subspaces involved are orthogonal to one another
\ENSURE a direct sum of $\R^n$ 
\STATE set $i := 1$
\WHILE { $i \le t$ }
  \STATE set $k := 1$
  \WHILE { $k \leq M_i$ }
    \IF{ $\{ \pmb{0} \} \subsetneq U_{\lambda_i,k} \cap \dChMat{\Pi} V_{\lambda}^{ G / \Pi}$}
    \STATE set $Y_{\lambda_i,k} := U_{\lambda_i,k} \cap \dChMat{\Pi} V_{\lambda}^{ G / \Pi}$ and $Z_{\lambda_i,k} := U_{\lambda_i,k} \ominus Y_{\lambda_i,k}$
    \ELSE
    \STATE set $Y_{\lambda_i,k} := U_{\lambda_i,k}$
    \ENDIF
  \STATE set $k := k + 1$
  \ENDWHILE
\STATE set $i := i + 1$
\ENDWHILE
\RETURN $\oplus \big( Y_{\lambda_i,k} \oplus Z_{\lambda_i,k} \big)$ 
\end{algorithmic}
\end{algorithm}

\begin{algorithm}[H]
 \caption{\small Splitting subspaces by cells of an equitable partition }      
\begin{algorithmic}[1] \small
\REQUIRE an equitable partition $\Pi = \{ C_1,\ldots,C_r \}$ of $G$ such that $|C_i| \leq |C_{i+1}|$, $i=1,\ldots,r-1$, and a direct sum $\bigoplus_{ 1\leq i \leq t \atop 1\leq k \leq M_i} U_{\lambda_i,k}$ of $\R^n$ such that each term is a subspace of the eigenspace $V_{\lambda}$ and any two subspaces involved are orthogonal to one another
\ENSURE a direct sum of $\R^n$ 
\STATE set $s := 1$ and $\bigoplus_{ 1\leq i \leq t \atop 1\leq k \leq M_i} X_{\lambda_i,k} := \bigoplus_{ 1\leq i \leq t \atop 1\leq k \leq M_i} U_{\lambda_i,k}$
\WHILE { $s \le r$ }
  \STATE set $i := 1$
  \WHILE { $i \leq t$ }
    \STATE set $k := 1$
    \WHILE { $k \leq M_i$ }
      \IF{ $\drm{span}{ \{ X_{\lambda_i,k} : C_s \} } \subsetneq X_{\lambda_i,k} $}
      \STATE set $Y_{\lambda_i,k} := \drm{span}{ \{ X_{\lambda_i,k} : C_s \} }$ and $Z_{\lambda_i,k} := X_{\lambda_i,k} \ominus Y_{\lambda_i,k}$
      \ELSE
      \STATE set $Y_{\lambda_i,k} := X_{\lambda_i,k}$
      \ENDIF
      \STATE set $k := k + 1$
    \ENDWHILE
    \STATE set $M_i^* :=$ the number of non-trivial subspaces in the sum $\bigoplus_{1 \leq k \leq M_i} \big( Y_{\lambda_i,k} + Z_{\lambda_i,k} \big)$
    \STATE set $i := i + 1$
  \ENDWHILE
  \STATE set $\bigoplus_{ 1\leq i \leq t \atop 1\leq k \leq M_i^*} X_{\lambda_i,k} := \bigoplus_{ 1\leq i \leq t \atop 1\leq k \leq M_i} \big( Y_{\lambda_i,k} \oplus Z_{\lambda_i,k} \big)$ 
  \STATE set $M_i := M_i^*$ and $s := s + 1$
\ENDWHILE
\RETURN $\bigoplus_{ 1\leq i \leq t \atop 1\leq k \leq M_i} X_{\lambda_i,k}$ 
\end{algorithmic}
\end{algorithm}

\newpage

\floatname{algorithm}{Algorithm}

\begin{algorithm}[H]    
 \caption{\small Analyzing a graph with those three functions }      
\begin{algorithmic}[1] \small
\REQUIRE a graph $G$ of order $n$ and an integer $K$
\ENSURE a set $\mathscr{P}$ of pairs, consisting of an EP of $G$ and a decomposition of $\R^n$, and an integer $d$, indicating the number of vertices we fix in the process of figuring out $\mathscr{P}$
\STATE set $\Pi := \{ V(G) \}$ and  $\oplus X_{\lambda,k} := \oplus V_{\lambda,k}$ 
\STATE set $\mathscr{P} := \{ (\Pi,\oplus X_{\lambda,k}) \}$ and $d := 0$
\WHILE { $\exists$ a cell $C$ of $\Pi$ and a subspace $X_{\lambda,k}$ {\it s.t.,} $|C| > K$ and $\dim X_{\lambda,k} >K$ }
  \STATE set $\widetilde{\Pi} := \drm{F1}{ (\oplus X_{\lambda,k}, \Pi) }$
  \IF { $\Pi \prec \widetilde{\Pi}$ }
    \STATE set $\Pi := \widetilde{\Pi}$ and $\oplus X_{\lambda,k} := \drm{F3}{ ( \drm{F2}{ (\oplus X_{\lambda,k},\Pi) }, \Pi) }$
  \ELSE
    \STATE set $\mathscr{P} := \mathscr{P} \cup \{ (\Pi,\oplus X_{\lambda,k}) \}$
    \STATE set $i := 1$ and $i^* = 0$
    \WHILE { $i \leq |\Pi|$ and $i \neq i^*$ }
      \IF { $|C_i| > 1$, where $C_i$ is a cell of $\Pi$, }
        \STATE set $i^* := i$
      \ELSE
        \STATE set $i := i + 1$
      \ENDIF
    \ENDWHILE
    \WHILE { $|C_{i^*}| \leq K$ }
      \STATE examine the {\it dist}. of OPSB in $\oplus \hspace{0.5mm} \drm{span}{ \{ X_{\lambda,k} : C_{i^*} \} }$ and then determine the structure of the action of the group relevant on $C_{i^*}$
      \STATE set $i^* := i^* +1$ 
    \ENDWHILE
    \STATE select one vertex $x$ from $C_{i^*}$ and set $d := d+1$
    \STATE set $\Pi := \dEP{v}{ X_{\lambda,k} }$ and $\oplus X_{\lambda,k} := \drm{F3}{ ( \drm{F2}{ (\oplus X_{\lambda,k},\Pi) }, \Pi) }$
    \STATE set $\mathscr{P} := \mathscr{P} \cup \{ (\Pi,\oplus X_{\lambda,k}) \}$
  \ENDIF
\ENDWHILE
\STATE order $\mathscr{P}$ according to EPs and decompositions of $\R^n$
\RETURN $\mathscr{P}$ and $d$
\end{algorithmic}
\end{algorithm}

In lines 4 and 6, F$\hspace{0.2mm}k$ stands for {\bf Function}$\hspace{0.3mm}k$, $k=1,2,3$. Line 4 shows how we use a decomposition of $\R^n$ to refine an equitable partition, while line 6 shows another direction using an equitable partition to split a decomposition. 

Let $\left( \Pi^A,\oplus X_{\lambda,k}^A \right)$ and $\left( \Pi^B,\oplus X_{\lambda,k}^B \right)$ be two pairs in $\mathscr{P}$. We say $\left( \Pi^A,\oplus X_{\lambda,k}^A \right)$ is a {\it predecessor} of $\left( \Pi^B,\oplus X_{\lambda,k}^B \right)$, denoted by $\left( \Pi^A,\oplus X_{\lambda,k}^A \right) \leq \left( \Pi^B,\oplus X_{\lambda,k}^B \right)$, if $\Pi^A \prec \Pi^B$, or $\Pi^A = \Pi^B$ and $\forall\hspace{0.4mm} X_{\lambda,k}^A$, $\exists$ a group of subspaces $X_{\lambda,i_1}^B,\ldots,X_{\lambda,i_l}^B$ {\it s.t.,} $X_{\lambda,k}^A = \oplus_{j=1}^l X_{\lambda,i_j}^B$. In line 26, we order the set $\mathscr{P}$ in this way.

Let $x$ be a vertex of $G$. We color the vertex $x$ red and denote the resulted graph by $G_x$. Obviously, the problem of determining whether or not two vertices $u$ and $v$ in $G$ are symmetric and of figuring out one automorphism from $u$ to $v$ is equivalent respectively to that of determining whether or not two graphs $G_u$ and $G_v$ are isomorphic and of figuring out one isomorphism from $G_u$ to $G_v$. Consequently, we only show here how to use the algorithm above to deal the problem of Graph Isomorphism and in the case being isomorphic to figure out one isomorphism. 

Suppose the elements of the set $\mathscr{P}^G$ associated with $G$ are listed as follows:
$$
\left( \{ V(G) \},\oplus V_{\lambda}^G \right) = \left( \Pi_1^G,\oplus X_{\lambda,1,k}^G \right)
\leq \left( \Pi_2^G,\oplus X_{\lambda,2,k}^G \right)
\leq \cdots \leq
\left( \Pi_t^G,\oplus X_{\lambda,t,k}^G \right).
$$
As shown in the section 1, $G \cong H$ if and only if by conducting the same operation on $H$, we can obtain a set 
$$
\mathscr{P}^H = \big\{ \hspace{0.6mm}
\left( \{ V(H) \},\oplus V_{\lambda}^H \right) = \left( \Pi_1^H,\oplus X_{\lambda,1,k}^H \right)
\leq \left( \Pi_2^H,\oplus X_{\lambda,2,k}^H \right)
\leq \cdots \leq
\left( \Pi_t^H,\oplus X_{\lambda,t,k}^H \right)
\hspace{0.6mm} \big\}
$$
such that $\Pi_p^G \asymp \Pi_p^H$ and $G/\Pi_p^G \cong H/\Pi_p^H$, $p=1,\ldots,t$, and the {\it dist}. of OPSB in $\oplus X_{\lambda,t,k}^G$ is the same as that in $\oplus X_{\lambda,t,k}^H$. As a result, one can determine whether or not $G$ is isomorphic to $H$ by {\bf Algorithm 4} and in the case being isomorphic figure out one isomorphism from $G$ to $H$.

\vspace{2mm}
At last, let us estimate the complexity of our algorithm. In working out the set $\mathscr{P}^G$, we only perform two kinds of operations: splitting a subspace by an equitable partition and refining an equitable partition by a decomposition of $\R^n$.

In the 1st kind, we need to calculate eigenvalues and eigenspaces of $\dAM{G/\Pi}$, the complexity of which (within a relative error bound $2^{-b}$) is bounded by $O(|\Pi|^3 + (|\Pi| \log^2 |\Pi|) \log b)$ (see \cite{PanChenZheng} for details). After that we use $\dChMat{\Pi} V_{\lambda}^{G/\Pi}$ and $\drm{span}{ \{ X_{\lambda,k} : C \} }$, where $C$ is a cell of $\Pi$, to decompose subspaces contained in $V_{\lambda}$. To accomplish those computations, one can employ the Schmidt orthogonalization. It is not difficult to check that there are at most $n^3$ objects needed to be dealt with.

In the 2nd kind, we need to figure out a family of EPs $\{ \dEP{ v }{ X_{\lambda,k} } : v \in V(G) \}$ according to the relation (\ref{TheBinaryRelation}), and then to determine a partition {\bf P} according to another relation (\ref{TheBinaryRelation-2}), and finally to figure out the coarsest EP of {\bf P}. To accomplish first two steps, we need to calculate norms of projections $\dproj{X_{\lambda,k}}( \pmb{e}_v )$, $\forall X_{\lambda,k}$ in the decomposition and $\forall \hspace{0.4mm} v \in V(G)$, angles between any two projections and projections of characteristic vectors $\dproj{X_{\lambda,k}}\left( \pmb{R}_{\dEP{ v }{ X_{\lambda,k} }} \right)$, $\forall \hspace{0.4mm} v \in V(G)$. The calculation involved here is the inner product. To accomplish the 3rd step, there are a number of efficient algorithm one can choose. It is readily to check that there are at most $n^3$ objects we need to deal with.

To sum up, we can work out $\mathscr{P}^G$ within time $n^C$ for some constant $C$.

Let $\left( \Pi^G,\oplus X_{\lambda,k} \right)$ be a member in $\mathscr{P}^G$. As shown in line 21 of the {\bf Algorithm 4}, if the EP $\Pi^G$ we have cannot split subspaces in $\oplus X_{\lambda,k}$ further but there are some subspaces of large dimension, we have to choose one vertex $x$ from a suitable cell $C_{i^*}^G$ to refine $\Pi^G$, that causes a big trouble in determining whether $G \cong H$, for despite the fact that we can find the EP $\Pi^H$ corresponding to $\Pi^G$ and identify the right cell $C_{i^*}^H$ of $\Pi^H$, we do not know in advance which vertex in $C_{i^*}^H$ is the right candidate. As a result, we have to try all vertices contained in $C_{i^*}^H$.

Now let us estimate the number of such vertices in the worst case. First of all, we can assume in the worst case that there is no EP $\Pi^G$ in $\mathscr{P}^G$ containing irrelevant cells, say, $C_1$ and $C_2$, otherwise we can simultaneously deal with those two cells and do not need to consider $C_2$ in dealing with $C_1$ and vice versa. Consequently, every cell of any $\Pi^G$ in $\mathscr{P}^G$ is split into at least two pieces with a new EP $\widetilde{\Pi}$ which is refiner than $\Pi$.

Secondly, note that at each step we always begin with one of cells not being singleton but of minimum order, so we should assume in the worst case that any two pieces of $\Pi$ obtained by a refinement $\widetilde{\Pi}$ are of orders different at most one, provided that none of them is a singleton.

The third observation is that in the worst case the equitable partition $\dEP{ x }{ X_{\lambda,k} }$ splits the cell $C_{i^*}^G$ into at least 3 pieces. It is clear that $\dEP{ x }{ X_{\lambda,k} }$ splits  $C_{i^*}^G$ into at least 2 pieces and one of them consists of the vertex $x$, so if there are only two cells in $\dEP{ x }{ X_{\lambda,k} }$, the union of which is equal to $C_{i^*}^G$, then one can easily see that $(\drm{Aut}{G})|_{C_{i^*}^G} \cong \drm{Sym}{C_{i^*}^G}$, where $(\drm{Aut}{G})|_{C_{i^*}^G}$ stands for the restriction of $\drm{Aut}{G}$ to $C_{i^*}^G$. Consequently, we can omit the cell $C_{i^*}^G$ in figuring out the set $\mathscr{P}^G$. 

According to three properties above, there are at most $(n/2)^{\log n}$ such vertices needed to be tried, and therefore by virtue of the algorithm we can determine whether or not $G \cong H$ and in the case of being isomorphic figure out one isomorphism from $G$ to $H$ within time $n^{C \log n}$. Combing two algorithms presented at the end of section 2, showing how to find out all block systems of $\mathfrak{G}$, and of section 4, showing how to decompose eigenspaces of $\dAM{G}$ into IRs of $\mathfrak{G}$, we finally establish the theorem \ref{MainTheorem}.

\vspace{2mm}
In dealing the problem of graph isomorphism, we only consider simple graphs up till now, but it is clear that our approach can also deal with graphs with some weight function on its vertices or edges. As matter of fact, so long as the adjacency matrix of the graph considered is symmetric our method is effective.

\vspace{6mm}
\noindent{\Large\bf Acknowledgments}

\vspace{3mm}
I would like to express my deep gratitude to Prof. Fu-Ji Zhang, Prof. Xue-Liang Li, Prof. Qiong-Xiang Huang, Prof. Sheng-Gui Zhang, Prof. Li-Gong Wang and Prof. Johannes Siemons for their valuable advice which significantly improves the quality of this paper. I also want to thank Prof. Yi-Zheng Fan and Prof. Xiang-Feng Pan for their encouragement and support. Last but not the least, I would like to thank Dr. You Lu, Dr. Yan-Dong Bai, Dr. Bin-Long Li and Dr. Xiao-Gang Liu for helping me verify many parts of this paper.


\begin{thebibliography}{99}

\bibitem{Axler} Sheldon Axler, {\it Linear algebra - done right}, Springer-Verlag New York, 1997.

\bibitem{Babai} L. Babai, Graph isomorphism in quasipolynomial time, arXiv:1512.03547v2.

\bibitem{BaGrMu} L. Babai, D.Yu. Grigoryev and D.M. Mount, Isomorphism of graphs with bounded eigenvalue multiplicity, {\it Proc. 14th ACM Symposium on Theory of Computing} (ACM, New York): 310-324, 1982.

\bibitem{BHZ} R. Boppana, J. Hastad, and S. Zachos, Does co-NP have short interactive proofs? Information Processing Letters, 25(2):27–32, 1987.


\bibitem{DM} John D. Dixon and Brian Mortimer, {\it Permutation groups} (GTM 163), Springer-Verlag New York, 1996.

\bibitem{GodRoy} Chris Godsil and Gordon Royle, {\it Algebraic Graph Theory} (GTM 207), Springer-Verlag New York, 2001.

\bibitem{GolubLoan} Gene H. Golub and Charles F. Van Loan, {\it Matrix Computations}, Johns Hopkins Univ. Press, Baltimore, Maryland, 1996 (3rd edition).

\bibitem{Luks} E. Luks, Isomorphism of bounded valence can be tested in polynomial time, Journal of Computer and System Sciences, 25:42–65, 1982.

\bibitem{McP} Brendan D. McKay and Adolfo Piperno: Practical Graph Isomoprhism, II. arXiv:1301.1493, 2013.

\bibitem{PanChenZheng} Victor Y. Pan, Zhao Q, Chen and Ailong Zheng, The complexity of the algebraic eigenproblem, STOC 1999: 507-516. 

\bibitem{Schonig} U. Sch$\mathrm{\ddot{o}}$ning, Graph isomorphism is in the low hierarchy, Journal of Computer and System Sciences, 37:312–323, 1988.

\bibitem{Serre} Jean-Pierre Serre, {\it Linear representations of finite groups}, Springer-Verlag New York, 1977.


\end{thebibliography}
\end{document}